\documentclass[12pt]{article}

\usepackage{amsthm, amsmath, amssymb, dsfont, mathrsfs, graphicx, color, bbm, float, xcolor, enumitem, subcaption}
\usepackage[sorting=none]{biblatex}
\usepackage[margin=1in]{geometry}
\usepackage[colorlinks=true, linkcolor=blue, citecolor=red, urlcolor=cyan]{hyperref}
\usepackage{authblk}

\newtheorem{definition}{Definition}

\newtheorem{remark}{Remark}
\newtheorem{theorem}{Theorem}
\newtheorem*{theorem*}{Theorem}
\newtheorem{lemma}{Lemma}
\newtheorem{proposition}[lemma]{Proposition}
\newtheorem{claim}{Claim}
\newtheorem{corollary}[lemma]{Corollary}
\theoremstyle{definition}
\newtheorem*{definition*}{Definition}
\newtheorem{example}{Example}

\theoremstyle{definition}
\newtheorem*{example*}{Example}

\newcommand{\rmg}{\mathrm{g}}
\newcommand{\rmgonegtwo}{(\mathrm g_1,\mathrm g_2)}

\title{Interface for variants of the contact process}

\author[1]{Isabella Alvarenga}
\author[2]{Daniel Valesin}

\affil[1]{Universit\'e d'Orl\'eans, Orl\'eans, France}
\affil[2]{University of Warwick, Coventry, United Kingdom}

\affil[]{\textit{Corresponding author:} \texttt{isabella.goncalves-de-alvarenga@cnrs.fr}}
\affil[]{\textit{Email:} \texttt{daniel.valesin@warwick.ac.uk}}

\date{}

\begin{document}

\maketitle

\begin{abstract}
We study two one-dimensional variants of the contact process: the contact-and-barrier process, where the population evolves in a region delimited by a randomly moving barrier, and the multitype contact process, in which two species compete for space. The contact-and-barrier process is started with the barrier at the origin and all sites to its right occupied, while the multitype contact process is started from the Heaviside configuration with species~$1$ to the left of the origin and species~$2$ to the right. We prove that both models exhibit tight interfaces and that, after centring by an appropriate deterministic speed, the interface position satisfies a central limit theorem. Our analysis relies on a renewal-time method based on a novel construction called patchwork construction, in which the processes are built by concatenating space–time evolutions over successive time intervals of random length, providing a more convenient framework for defining the renewal times that drive the proofs.
\end{abstract}

\section{Introduction}

\paragraph{} The classical contact process, introduced in \cite{harris1974contact}, is a continuous-time Markov process on $\{0,1\}^{\mathbb Z}$ modelling the spread of an infection or population growth. Sites in state $1$ are occupied and sites in state $0$ are empty; occupied sites become empty at rate $1$, while empty sites become occupied at rate $\lambda$ times the number of occupied nearest neighbours. We study two variations of this process.

The first one is the contact-and-barrier, a modification in which the population evolves in an environment delimited by a randomly moving barrier. We restrict attention to the case of a single barrier. The process $(\beta_t)_{t\ge0}$ takes values in $\{-1,0,1\}^{\mathbb Z}$, where $1$ denotes an occupied site, $0$ an empty site, and $-1$ the barrier. The barrier attempts to move as a nearest-neighbour continuous-time random walk, while particles evolve according to the usual contact-process dynamics except that births at the barrier location are forbidden. A formal graphical construction is given in Subsection~\ref{GraphicalConstructionCBP}.

The second variation considered is the multitype contact process \cite{Neuhauser1992}, a continuous-time Markov process on $\{0,1,2\}^{\mathbb Z}$ representing two competing species. Each type attempts to evolve according to its own contact-process dynamics, with births onto sites occupied by the other type prohibited. A formal construction is provided in Section~\ref{sec_GCMCP}.

\subsection{Definition of the model and main results}

\subsection*{Classical contact process}

\paragraph*{}

We briefly recall a few fundamental facts about the classical contact process that are needed to state the main results of this paper. The first one is that the identically-zero configuration, denoted~$\underbar{0}$, is an absorbing state for the process. 

Let~$(\zeta_t^0)_{t\geq 0}$ denote the contact process  started from the configuration where a single particle is placed at the origin. 
It is shown in~\cite{harris1974contact, bezuidenhout1990critical} that there exists~$\lambda_c\in(0,\infty)$ such that if~$\lambda\leq\lambda_c$ the process dies out with probability one, i.e.,~$\mathbb P\left(\exists t\geq 0 \,:\,\zeta^0_t=\underbar{0}\right)=1$, whereas if~$\lambda>\lambda_c$, the process has positive probability of survival, meaning that $\mathbb P\left(\zeta^0_t\neq \underbar{0}\; \forall t \ge 0\right)>0$.

Let~$R_t^0 := \sup\{x\in\mathbb Z\,:\,\zeta^0_t(x)=1\}$ be the position of the rightmost occupied site of this process. It is shown in~\cite{durrett1980growth} that if~$\lambda > \lambda_c$, conditioning on the event that the process survives, there exists a deterministic value~$\alpha=\alpha(\lambda)>0$ such that
\begin{equation}
    \label{speedCP}
        \frac{R_t^0}{t}\xrightarrow{t \to \infty}\alpha(\lambda)\quad \text{ almost surely and in }L^1.
\end{equation}

\subsection*{Contact-and-barrier process}

\paragraph{} 

Define the space of configurations
\begin{equation} \label{Eq_Space_Config}
	\mathcal B:= \bigcup_{x \in \mathbb Z}\{\beta \in \{-1,0,1\}^{\mathbb Z}:\; \beta(y)=0 \;\forall y < x,\; \beta(x)=-1,\; \beta(y)\in \{0,1\} \;\forall y > x\}.
\end{equation}

Throughout this work, we consider only contact-and-barrier processes~$(\beta_t)_{t \geq 0}$ with initial configuration~$\beta_0 \in \mathcal{B}$; that is, configurations in which a single site is occupied by the barrier and all particles are located to the right of it. From the configurations in~$\mathcal B$, we highlight the Heaviside configuration of this model, namely the one where the barrier is placed at the origin and all the sites on the right of it are occupied. 

This model depends on the choice of four real parameters:~$\lambda$,~$r_{\leftarrow}$,~$r_{\rightarrow}^0$ and~$r^1_{\rightarrow}$. The first one will dictate the reproduction dynamics of particles; the last three will be the ones that will govern the behaviour of the barrier. 

The dynamics runs as follows. Particles die at rate~$1$, leaving the site empty, and send a copy of themselves to each empty neighbouring site at rate~$\lambda$ (births onto the site occupied by the barrier are forbidden). The barrier jumps one unit to the left with rate~$r_{\leftarrow}$, and jumps one unit to the right with rate~$r_{\rightarrow}^0$ if the destination site is empty and with rate~$r_{\rightarrow}^1$ if the destination site is occupied (in that case, the particle that was in that destination site is destroyed). 

Note that if~$\beta_0\in \mathcal B$, then one has that~$\beta_t\in\mathcal B$ for all~$t\geq 0$. 
\begin{definition}[Interface for contact-and-barrier process] \label{def_interfaceCBP}
    For a contact-and-barrier process~$(\beta_t)_{t\geq 0}$ started from~$\beta_0\in\mathcal B$, define
\begin{align*}
    &B_t:= x \text{ such that }\beta_t(x)=-1;\\
    &\ell_t:=\inf\{x\in\mathbb Z\,:\,\beta_t(x)=1\}.
\end{align*}
    The process~$(I_t)_{t\geq 0}:=(B_t,\ell_t)_{t \ge 0}$ is called the interface process. We call~$i_t:=\frac{B_t+\ell_t}{2}$ the interface position. 
\end{definition}

 In all that follows, we assume that~$\lambda > \lambda_c$ (the critical value of the classical contact process, with no barrier), and let~$\alpha$ be as in~\eqref{speedCP}.

In case~$r^1_\rightarrow \le r^0_\rightarrow$, the barrier can only be slowed down by particles, causing the barrier motion to be subadditive. This leads to the following.
\begin{proposition} \label{prop_real_speed_barrier}
    Assume that~$r^1_\rightarrow\leq r^0_\rightarrow$ and that the contact-and-barrier process~$(\beta_t)_{t\geq 0}$ is started from the Heaviside configuration. Then, there exists a deterministic~$\mathbf{B} \in \mathbb R$ such that
    \begin{equation} \label{real_speed_barrier0}
        \lim_{t\rightarrow\infty}\frac{B_t}{t}=\mathbf{B} \quad \text{almost surely and in }L^1.
    \end{equation}
\end{proposition}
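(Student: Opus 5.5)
The plan is to apply Liggett's version of Kingman's subadditive ergodic theorem to $-B$, using the graphical construction of Subsection~\ref{GraphicalConstructionCBP} and a monotonicity property that holds exactly when $r^1_\rightarrow\le r^0_\rightarrow$. I then pass from integer to real times by a simple interpolation.

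\textbf{Step 1: monotone coupling.} I realise the barrier's right jumps by a single Poisson clock of rate $r^0_\rightarrow$ with independent uniform marks $U$. At a ring, the barrier jumps if the target site is empty, or if it is occupied and $U\le r^1_\rightarrow/r^0_\rightarrow$. Left jumps, deaths and infection arrows are the usual Poisson processes. On $\mathcal B$ I define the partial order
\[
\beta\preccurlyeq\beta' \iff B(\beta)\ge B(\beta') \text{ and } \{x:\beta(x)=1\}\subseteq\{x:\beta'(x)=1\}.
\]
I check, mark by mark, that this order is preserved by the graphical construction; the barrier-jump marks are where $r^1_\rightarrow\le r^0_\rightarrow$ is used.
\begin{itemize}
\item \emph{Right-jump mark, barriers equal.} If the target is occupied in $\beta'$ but empty in $\beta$, then $\beta$'s barrier jumps whenever $\beta'$'s does. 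A particle destroyed in $\beta'$ only shrinks the larger set.
\item \emph{Infection arrows.} A birth forbidden in $\beta'$ at $B(\beta')$ cannot occur in $\beta$, since $\beta$ has no particles at or left of $B(\beta)\ge B(\beta')$. Conversely, a birth onto $B(\beta)$ is forbidden in $\beta$, which only helps the inclusion.
\item \emph{Left jumps.} These are common to both processes. If $B(\beta)=B(\beta')+1$ and only the upper barrier moves, it lands at $B(\beta')$ and the order persists.
\end{itemize}
This case check is routine but is the one place where care is needed.

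\textbf{Step 2: superadditive family.} For integers $0\le m\le n$, let $X_{m,n}$ be the displacement between times $m$ and $n$ of the barrier of the process restarted at time $m$ from the Heaviside configuration translated to $B_m$. It is driven by the graphical construction on $[m,n]$ shifted in space by $B_m$, so $X_{0,n}=B_n$. Since the Heaviside configuration at $B_m$ is $\succcurlyeq\beta_m$ (same barrier, maximal particle set), Step 1 gives
\[
B_n\ge B_m+X_{m,n}, \qquad\text{that is,}\qquad X_{0,n}\ge X_{0,m}+X_{m,n}.
\]
By translation invariance of the Poisson graphical construction, and because $B_m$ is measurable with respect to the past, the following hold.
\begin{itemize}
\item The shifted construction after time $m$ is independent of $\mathcal F_m$ and has the same law as the original one. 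Hence the law of $(X_{m,m+k})_{k\ge1}$ does not depend on $m$.
\item The variables $X_{nk,(n+1)k}$, $n\ge0$, are i.i.d., hence stationary and ergodic.
\end{itemize}

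\textbf{Step 3: moment conditions.} The barrier's total number of jumps up to time $n$ is dominated by a Poisson variable of mean $(r_\leftarrow+r^0_\rightarrow)n$, using $r^1_\rightarrow\le r^0_\rightarrow$. This gives $\mathbb E|X_{0,1}|<\infty$ and $|\mathbb E X_{0,n}|\le cn$.

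\textbf{Step 4: apply Liggett and interpolate.} Liggett's theorem applied to $-X_{m,n}$ yields $B_n/n\to\mathbf B$ almost surely and in $L^1$. The limit is deterministic by the ergodicity from Step 2, and $\mathbf B\in[-r_\leftarrow,r^0_\rightarrow]$. For real $t\in[n,n+1]$, $|B_t-B_n|$ is at most the number of barrier clock rings in $[n,n+1]$. These counts are i.i.d.\ Poisson, so Borel–Cantelli shows they are $o(n)$ almost surely. Together with uniform integrability from the Poisson domination, this gives the full almost-sure and $L^1$ statement.

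I expect the main obstacle to be Step 2. Liggett's theorem is stated for a fixed array, but here the restart location $B_m$ is random. The subadditivity must be checked only in the form $X_{0,n}\ge X_{0,m}+X_{m,n}$, which is all Liggett's version requires. The stationarity claims rest on a strong-Markov and spatial-shift argument: the construction seen from $(B_m,m)$ is again a Poisson graphical construction independent of $\mathcal F_m$.
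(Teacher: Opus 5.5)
Your proposal is correct and follows essentially the same route as the paper: the monotone coupling of Lemma~\ref{prop_coupling} shows that restarting from the Heaviside configuration at the current barrier position $B_m$ gives the superadditive array $X_{m,n}$ (the paper's $B_s-\tilde B^s_t$, up to sign). Liggett's subadditive ergodic theorem then applies, and a Poisson/Borel--Cantelli interpolation passes from integer to real times. One harmless slip: left jumps are unconditional, so both coupled barriers always move left together and that case of your order-preservation check is trivial.
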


{Our methods allow us to prove the existence of the speed of the barrier under a different set of assumptions, namely:
\begin{equation}\label{eq_A1}\tag{$\mathrm{A1}$}
        \min\{r_\rightarrow^0,r_\rightarrow^1\} - r_{\leftarrow}>-\alpha. 
\end{equation}
Putting this together with Proposition~\ref{prop_real_speed_barrier}, we have the following:}
\begin{theorem}[Speed of the barrier]\label{thmCBP_Speed}
    For the contact-and-barrier process with~$\lambda > \lambda_c$, barrier jump rates satisfying either~$r_\rightarrow^1\le r_\rightarrow^0$ or~$(\mathrm{A1})$, and started from the Heaviside configuration, there exists a deterministic~$\mathbf{B} \in \mathbb R$ such that
    \begin{equation} \label{real_speed_barrier}
        \lim_{t\rightarrow\infty}\frac{B_t}{t}=\mathbf{B} \quad \text{almost surely and in }L^1.
    \end{equation}
\end{theorem}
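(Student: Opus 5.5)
The theorem splits into two cases. When $r^1_\rightarrow\le r^0_\rightarrow$, the claim is exactly Proposition~\ref{prop_real_speed_barrier}, so nothing more is needed. The substantive case is (A1) without $r^1_\rightarrow\le r^0_\rightarrow$, where subadditivity fails. There I will use a renewal argument: build random times $0=\tau_0<\tau_1<\tau_2<\cdots$ such that the increments $(\tau_{k+1}-\tau_k,\;B_{\tau_{k+1}}-B_{\tau_k})_{k\ge1}$ are i.i.d. and have finite first moments (in fact exponential tails). The renewal law of large numbers then gives $B_{\tau_k}/\tau_k\to \mathbf B:=\mathbb E[B_{\tau_2}-B_{\tau_1}]/\mathbb E[\tau_2-\tau_1]$ almost surely. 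To pass from the subsequence to all $t$, I will bound $\sup_{\tau_k\le t\le\tau_{k+1}}|B_t-B_{\tau_k}|$ by the number of barrier jump marks in $[\tau_k,\tau_{k+1}]$. This is an i.i.d. sequence with a finite mean, so Borel--Cantelli makes it $o(k)$. The $L^1$ convergence follows because $|B_t|/t$ is dominated by $N_t/t$, where $N_t$ is a Poisson process of rate $r_\leftarrow+\max(r^0_\rightarrow,r^1_\rightarrow)$, and this family is uniformly integrable.

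The renewal structure will come from the patchwork construction mentioned in the abstract. Assumption (A1) is what makes it work. By~\eqref{speedCP}, the rightmost particle of a surviving contact process started from a half-line to the left travels at speed $\alpha$. Seen from the left edge of a population occupying a right half-line, the particle front therefore recedes leftward from the barrier's perspective at speed at most $-\alpha$. Meanwhile, regardless of the environment, the barrier drifts at least at rate $\min\{r^0_\rightarrow,r^1_\rightarrow\}-r_\leftarrow$. Condition (A1) says the barrier outruns the receding left edge, so the barrier stays within $O(1)$ distance of $\ell_t$. In other words, the interface $\ell_t-B_t$ is tight with exponential tails.

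I will define a renewal time as a time $\tau$ with two properties. First, the barrier is adjacent to occupied sites. Second, the configuration to the right of the barrier coincides with the one generated by a fresh half-line of infection started at the barrier and evolving in the future graphical construction; that is, it agrees with the Heaviside configuration viewed from $B_\tau$, in the sense of the future evolution. I will then check that after $\tau$ the process depends only on the graphical construction after $\tau$, translated by $B_\tau$, which gives the i.i.d. property.

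The main obstacle is proving that the gaps between renewals have exponential tails. My first approach is a restart scheme. Split time into attempts. In each attempt, after waiting until the interface is tight (possible with uniformly positive probability by the drift comparison above), try to observe that the infection from a single site near the barrier survives and couples with the half-line process within a unit time window. Positive survival probability for $\lambda>\lambda_c$, combined with exponential estimates on the coupling time for supercritical contact processes, should bound the failure probability of each attempt away from one. Each failed attempt costs a random time with exponential tails, so the total renewal gap has exponential tails. The delicate point is making the attempts genuinely independent: a failed attempt reveals information about the future. I expect the patchwork concatenation over random intervals to handle this by regenerating the graphical construction at each stopping time.
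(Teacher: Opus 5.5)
You handle the case $r^1_\rightarrow\le r^0_\rightarrow$ correctly. The closing steps of the $(\mathrm{A1})$ case also agree with the paper once renewal times exist: the renewal law of large numbers, interpolation by counting barrier jumps inside a renewal block, and uniform integrability from Poisson domination. The gap is that you never construct renewal times with i.i.d.\ increments, and neither of your candidate definitions gives them.

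\textbf{First candidate.} Suppose $\tau$ is called a renewal when the future barrier path from $\beta_\tau$ coincides with the one from the Heaviside configuration at $B_\tau$. The defining event then depends jointly on $\beta_\tau$ and on the graphical construction after $\tau$. Conditioning on it tilts the future in a way that depends on $\beta_\tau$: which sites near the barrier are empty in $\beta_\tau$ determines which future discrepancies must be excluded. So the post-$\tau$ increments are neither independent of the past nor identically distributed.

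\textbf{Second candidate.} Your restart scheme uses a Kuczek-type event: survival of the barrier-free infection from the site $B_\tau+1$. This is a genuine future-only event; by crossing of barrier-free paths, on it the future barrier path is the same from every configuration with $B_\tau+1$ occupied. But it can have probability zero under $(\mathrm{A1})$. If $\min\{r^0_\rightarrow,r^1_\rightarrow\}-r_\leftarrow>\alpha$, which $(\mathrm{A1})$ allows together with $r^1_\rightarrow>r^0_\rightarrow$, the barrier dominates a walk with drift larger than $\alpha$ and overruns every finite population. In that regime the barrier keeps meeting particles descending from the configuration at time $\tau$ arbitrarily far to the right, so no single survival event erases the inherited configuration. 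Appealing to the patchwork to ``regenerate the graphical construction at each stopping time'' does not help. The future graphical construction is already independent of the past by the strong Markov property; what carries the memory is $\beta_\tau$.

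\textbf{The missing idea.} You need a representation of the configuration ahead of the barrier whose memory can be tracked and then cut. The paper does this in several steps.
\begin{itemize}
\item It encodes that configuration by a trail $\mathrm g$ and the law $\mu_{\mathrm g}$. This separates special particles (infinite ancestry, distributed as $\nu_\lambda$) from the others.
\item It stops at the first time $T\ge1$ at which the barrier is adjacent to a barrier-free descendant of a special particle. This is where the drift comparison in $(\mathrm{A1})$ enters, giving exponential tails in Lemma~\ref{TBehavesWell}. Uniform tightness of $\ell_t-B_t$ is a consequence of the renewal structure, not an input.
\item It uses the right-preserving path $\Gamma$ and Lemma~\ref{lem_RPRIP_property} to show (Proposition~\ref{lem_first_law}) that, given $\mathcal I,T,\Gamma,\mathrm{Left}_\Gamma(\mathcal H)$, the configuration seen from the barrier at time $T$ has law $\mu_{\mathrm g\sqcup\Gamma}$. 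Thus only the trail carries memory.
\item The depth $D$ measures how far into the past the non-special sites between the barrier and $\mathbf X$ are connected. The condition $\kappa_n=\infty$ says that no later patch is connected to times before the start of patch $n$.
\item Lemma~\ref{lem_coupled_patchworks} shows that on $\{\kappa_0=\infty\}$ the law of the patch sequence does not depend on the trail. This is exactly condition (iii) of Lemma~\ref{lem_renewal}.
\end{itemize}
The resulting tails between renewals are only stretched-exponential, because $\mathbb P(D>d)\le Ce^{-\sqrt d}$. That is still enough for your law-of-large-numbers and interpolation steps.
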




\begin{theorem}[Tightness of the interface] \label{ThmCBP_Tightness}
    For the contact-and-barrier process with~$\lambda > \lambda_c$, barrier jump rates satisfying either~$(\mathrm{A1})$ or
    \begin{equation}\label{eq_A2} \tag{$\mathrm{A2}$} r_{\rightarrow}^1\leq r_{\rightarrow}^0 \quad \text{ and} \quad \mathbf B >-\alpha,
\end{equation}
    and started from the Heaviside configuration, we have that the size of the interface is tight: 
    \begin{equation*}
        \lim_{L \to \infty} \sup_{t \ge 0} \mathbb P(\ell_t - B_t > L) = 0.
    \end{equation*}
\end{theorem}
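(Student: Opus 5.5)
The plan is to control $\ell_t-B_t$ by comparing, over a window of time before $t$, the motion of the barrier with the left edge of a classical contact process. The starting idea is simple. Suppose that at some earlier time $s=t-u$ some site $x$ near $B_s$ hosts a particle whose descendants survive, and that these descendants stay to the right of the barrier during $[s,t]$. Then $\ell_t$ is at most the leftmost point of that descendant cluster at time $t$. By \eqref{speedCP} and symmetry, that leftmost point moves left at speed $\alpha$. The barrier, in turn, moves with an effective drift strictly greater than $-\alpha$: either $\mathbf B$ from Theorem~\ref{thmCBP_Speed} under (A2), or at least $\min\{r^0_\rightarrow,r^1_\rightarrow\}-r_\leftarrow$ under (A1). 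So the surviving cluster tends to run into the barrier, which pins $\ell_t$ close to $B_t$.

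In more detail, I would fix $t$ and $L$ and look back a time $u=u(L)$, proportional to $L$. The event $\{\ell_t-B_t>L\}$ then requires one of two things to happen. Either every infection path that starts at time $s$ from a site in $[B_s,B_s+K]$ fails to reach $(-\infty,B_t+L]$ at time $t$, or the barrier has moved abnormally in $[s,t]$. The first part uses the duality/graphical construction. Blocking only matters when the barrier kills or blocks paths, and the barrier drift exceeds $-\alpha$. So the left edge of a surviving contact process started near $B_s$ travels a distance about $\alpha u$ left, while the barrier travels at least roughly $(\mathbf B\wedge \cdot)\,u> -\alpha u$, so the left edge reaches and hugs the barrier. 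Its distance from the barrier at time $t$ is then controlled by the standard exponential estimates for the contact process: large deviations for edge speeds (Durrett), and the fact that a surviving process has its occupied set dense behind its edges.

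The existence of an occupied site near the barrier at time $s$ comes from the Heaviside start when $t$ is small. For larger $t$ I would use a renewal/restart argument. From the barrier position, attempt to find a surviving particle within distance $K$, and retry at geometric times; each attempt succeeds with probability bounded below uniformly in the configuration, provided there is at least one particle. I still need to rule out extinction of the particles altogether. Starting from infinitely many particles to the right, extinction cannot happen: far to the right the barrier never interferes, by the speed comparison.

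Under (A1) the barrier is stochastically dominated from below by a random walk with drift $\min\{r^0_\rightarrow,r^1_\rightarrow\}-r_\leftarrow>-\alpha$. This comes from a coupling in which right jumps at rate $\min(r^0,r^1)$ always occur, so large deviation bounds are immediate. Under (A2), only the almost sure speed $\mathbf B>-\alpha$ is available. I would upgrade it to a uniform-in-start lower deviation bound using subadditivity and monotonicity: when $r^1\le r^0$, more particles slow the barrier, so the Heaviside start is the worst case for pushing it right. This gives $\mathbb P(B_{s+u}-B_s<(\mathbf B-\epsilon)u)\to0$ uniformly in $s$.

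The main obstacle will be this uniformity in (A2), together with the fact that the barrier and the particle cluster interact. When the cluster hugs the barrier, the barrier is slowed, and that actually helps; the comparison must therefore be arranged so that the event of the cluster overtaking the barrier is monotone in the right direction. I expect to handle this by coupling the truncated process with the one started from the Heaviside configuration at time $s$, shifted to $B_s$, and then using attractiveness to transfer the barrier speed estimate.
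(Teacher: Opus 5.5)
\textbf{The restart step fails.} Several pieces of your plan are right and match the paper. The random-walk comparison under (A1) is correct. Under (A2), the uniform lower-deviation bound for the barrier is Lemma~\ref{lem_bar_Behaves_Well}, obtained via Lemma~\ref{prop_coupling}. The picture of a surviving cluster drifting left at speed $\alpha$ into a barrier of speed $>-\alpha$ is exactly the proof of Lemma~\ref{TBehavesWell}. The gap is your claim that each attempt succeeds ``with probability bounded below uniformly in the configuration, provided there is at least one particle''. This is false. An attempt started at time $s$ can only produce a barrier-free path reaching $B+1$ within time $u$ if $\beta_s$ has a particle within distance $O(u)$ of $B_s$. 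After a failed attempt, the nearest particles may be at distance $\gg u$, and then the success probability is essentially zero. So you need control of $\ell_s-B_s$ at restart times, conditionally on the past, which is essentially the statement you are proving. The region right of the barrier is also depleted by killing and blocking, so it is not a priori $\nu_\lambda$-like.

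Attractiveness cannot supply this control. If $\tilde\beta$ is the Heaviside configuration restarted at $B_s$, then $\beta_s\lesssim\tilde\beta_s$, and Lemma~\ref{prop_coupling} gives $B_t\ge\tilde B_t$ but also $\ell_t\ge\tilde\ell_t$. The second inequality goes the wrong way, because $\ell_t-B_t$ is not monotone in $\lesssim$; under (A1) alone there is no monotonicity at all.

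The ``hugging'' step has the same problem. One encounter with the barrier does not keep $\ell-B$ small up to a fixed time $t$. After the encounter the cluster is no longer a free contact process, since free paths crossing the barrier carry nothing. The particle at $B+1$ can also die, so $\ell$ can jump far to the right. What bounds $\ell_t$ is a fresh encounter shortly before $t$, which again needs the restart to work.

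\textbf{What the paper supplies instead.} The missing idea is control of the conditional law of the configuration seen from the barrier at restart times, given the whole past. The paper gets this from the patchwork construction:
\begin{enumerate}
\item Configurations are generated from a trail $\rmg$ via~\eqref{eq_def_mu_g}, so they always contain the special particles $\mathcal S$, which form a $\nu_\lambda$-sample on $\mathbb N$.
\item Restarts happen at adjacency times $T$, where $\ell_T-B_T=1$.
\item Proposition~\ref{lem_first_law} shows that, conditionally on $\mathcal I,T,\Gamma,\mathrm{Left}_\Gamma(\mathcal H)$, the configuration at time $T$ seen from the barrier has law $\mu_{\rmg\sqcup\Gamma}$. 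This is proved through Lemma~\ref{lem_change_H_patchwork}, using the right-preserving random infection path property (Lemma~\ref{lem_RPRIP_property}). So a fresh set of special particles is always available.
\end{enumerate}
Consequently the exponential tails of Lemma~\ref{TBehavesWell} for $T$ and $\mathbf X$ hold uniformly in the trail, hence uniformly in the past, for every patch. The renewal structure built on this (Lemmas~\ref{lem_renewal} and~\ref{time_and_space_betweenRenewals}) gives Lemma~\ref{lem_timeSinceLastRenewal_CBP}. It says that $t-\sigma(t)$ and the interface displacement after $\sigma(t)$ have stretched-exponential tails. Tightness then follows because $\ell-B=1$ at renewal times. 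Without an analogue of Proposition~\ref{lem_first_law}, your retry scheme does not close.
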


\begin{theorem}[CLT for interface position]\label{ThmCBP_CLT_Speed}
     For the contact-and-barrier process with~$\lambda > \lambda_c$, barrier jump rates satisfying either~$(\mathrm{A1})$ or~$(\mathrm{A2})$, and started from the Heaviside configuration, there exists~$\sigma>0$ such that
    \begin{equation*}
             \frac{i_t- \mathbf B \cdot t}{\sqrt{t}}\xrightarrow[t \to \infty]{\mathrm{(dist)}} \mathcal{N}(0,\sigma^2)
    \end{equation*}
\end{theorem}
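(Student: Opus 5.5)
The plan is to prove the CLT through a renewal structure, built from the patchwork construction mentioned in the abstract. The goal is an increasing sequence of random times $0=\tau_0<\tau_1<\tau_2<\dots$ with the following property. The increments $(\tau_{k+1}-\tau_k,\;B_{\tau_{k+1}}-B_{\tau_k})$ for $k\ge1$ are i.i.d., and $(\tau_1,B_{\tau_1})$ is independent of them. At each renewal time the configuration, seen from the barrier, is exactly the Heaviside configuration shifted to $B_{\tau_k}$.

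Exact regeneration needs all sites to the right of the barrier occupied, which never literally happens. So I will use the patchwork construction: on successive random intervals the process is driven by fresh graphical constructions. I then define $\tau_k$ as times at which the process restricted to a neighbourhood of the barrier coincides with the one started from Heaviside at $B_{\tau_k}$. This uses the standard fact that for $\lambda>\lambda_c$ the contact process started from a half-line, seen from its left edge, couples with the one started from $\mathbb Z_{\ge x}$ after a ``break point''. This is where (A1) or (A2) enters. The speed condition $\mathbf B>-\alpha$ (or its analogue guaranteed by (A1)) ensures that the left edge of the free contact process, moving at speed $-\alpha$, does not outrun the barrier. The particles therefore stay attached to the barrier, and the region near $B_t$ is governed by the infection coming from the right.

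The main obstacle is to show that the increments have exponential (or at least second) moments: $\mathbb E[e^{c(\tau_2-\tau_1)}]<\infty$ and $\mathbb E[e^{c|B_{\tau_2}-B_{\tau_1}|}]<\infty$. I would attempt this via a restart argument. At any time, with probability bounded below uniformly, a regeneration occurs within a bounded time window, with the window chosen so that dual paths from sites near the barrier survive and the barrier takes a controlled sequence of steps. On failure one restarts, using tightness (Theorem~\ref{ThmCBP_Tightness}) and the exponential estimates for the edge speed of the supercritical contact process to control the cost of failures geometrically. The space increment is bounded by a Poisson count of barrier jumps over the time increment, so its tail follows from that of $\tau_2-\tau_1$.

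Given the renewal structure, write $B_t=B_{\tau_{N(t)}}+(B_t-B_{\tau_{N(t)}})$, where $N(t)=\max\{k:\tau_k\le t\}$. The standard renewal-reward CLT (Anscombe's theorem) gives
$$\frac{B_t-\mathbf B t}{\sqrt t}\xrightarrow{\mathrm{(dist)}}\mathcal N(0,\sigma^2),\qquad \mathbf B=\frac{\mathbb E[B_{\tau_2}-B_{\tau_1}]}{\mathbb E[\tau_2-\tau_1]},$$
with $\sigma^2=\mathrm{Var}\big((B_{\tau_2}-B_{\tau_1})-\mathbf B(\tau_2-\tau_1)\big)/\mathbb E[\tau_2-\tau_1]$. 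Here $\mathbf B$ agrees with the constant of Theorem~\ref{thmCBP_Speed} by the law of large numbers. The remainder $B_t-B_{\tau_{N(t)}}$ is $o(\sqrt t)$ in probability thanks to the moment bound. Finally, $i_t-B_t=(\ell_t-B_t)/2$ is tight by Theorem~\ref{ThmCBP_Tightness}, so dividing by $\sqrt t$ it vanishes, and Slutsky's lemma transfers the CLT to $i_t$. Positivity of $\sigma$ holds because the centred increment is not a.s.\ zero. Indeed, with positive probability the barrier makes one extra left jump (rate $r_\leftarrow>0$) without changing the renewal time, which changes $B_{\tau_2}-B_{\tau_1}$ while leaving $\tau_2-\tau_1$ fixed. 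If $r_\leftarrow=0$, one instead varies the right jumps.
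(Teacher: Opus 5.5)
There is a genuine gap. The renewal structure is the whole difficulty, and you only postulate it; the one concrete mechanism you name fails under the theorem's hypotheses.

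Agreement with shifted Heaviside on a neighbourhood of the barrier at a single time is not a regeneration. The future interface also depends on the configuration beyond that neighbourhood, which carries the memory of the past. Driving successive intervals by fresh graphical constructions is just the strong Markov property and does not remove that memory.

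To make the agreement persist, you appeal to a break point in Kuczek's sense: a time $t$ with $\ell_t=B_t+1$ at which the particle at $\ell_t$ has an infinite barrier-free \emph{forward} infection path. By the argument of Lemma~\ref{lem_SameInterface}, such a time would indeed force the future interface to coincide with that of the process restarted from shifted Heaviside. But such times can have probability zero. Take $r_\leftarrow=0$ and $\lambda<r^1_\rightarrow\le r^0_\rightarrow$.
\begin{itemize}
\item Both $(\mathrm{A1})$ and $(\mathrm{A2})$ hold, and the barrier's motion genuinely depends on the particles.
\item Every arrival of $J_\rightarrow$ with mark at most $r^1_\rightarrow/r_\rightarrow$ moves the barrier right. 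So $B$ dominates a rate-$r^1_\rightarrow$ Poisson process.
\item The right edge of the descendants of a single particle is dominated by a rate-$\lambda$ Poisson process.
\end{itemize}
Hence every finite cluster is eventually overrun, and no forward break points exist in this regime.

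Even where forward survival has positive probability, the regeneration event depends on the entire future. A step like ``a regeneration occurs within a bounded window with probability bounded below'' is therefore not meaningful. You would need the decomposition of Lemma~\ref{lem_renewal} and a proof of its condition (iii), which you do not address.

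The missing idea is to regenerate using \emph{backward} paths, which the barrier cannot destroy.
\begin{itemize}
\item The paper calls $x$ special if $-\infty\rightsquigarrow(x,0)$. It stops at the first time $T\ge 1$ at which $B_T+1$ is occupied by a barrier-free descendant of a special particle.
\item Using the RPRIP resampling Lemma~\ref{lem_RPRIP_property} on the leftmost such path $\Gamma$, Proposition~\ref{lem_first_law} shows the configuration at $T$, seen from the barrier, has law $\mu_{\mathrm g\sqcup\Gamma}$. So it is rebuilt from fresh randomness, up to a trail.
\item Renewals are indices $n$ with $\kappa_n=\infty$: every later patch has depth $D^{n'}<T^n+\cdots+T^{n'-1}$. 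The trail accumulated before patch $n$ is then never felt.
\item Lemma~\ref{lem_coupled_patchworks} shows that on $\{\kappa_0=\infty\}$ the law of the patchwork sequence does not depend on the trail. This yields condition (iii) via Corollary~\ref{cor_sameAsInitial}.
\end{itemize}
The configuration at a renewal is never Heaviside. What regenerates is the law of the future on $\{\kappa_n=\infty\}$. The tails come from Lemmas~\ref{TBehavesWell} and~\ref{DepthBehavesWell} together with a geometric bound on the number of failed indices.

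Your final step would be fine once such a structure exists: a renewal--reward CLT for $B_t$, then Slutsky using tightness of $\ell_t-B_t$. The paper instead applies Lemma~\ref{lem_CLT_RenewalProcess} directly to $i_t$.
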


\begin{remark}
In~\eqref{eq_A2}, we assume that~$r^1_\rightarrow \le r^0_\rightarrow$ to guarantee that~$\mathbf B$ is well defined. Also note that when~$\mathbf B$ is well defined, it satisfies~$\mathbf B \ge \min\{r^0_\rightarrow,r^1_\rightarrow\}- r_\leftarrow$.
\end{remark}

\subsection*{Multitype contact process}
\paragraph{} Consider the space of configurations~$\mathcal C$ defined by:
\begin{equation*}
    \mathcal C:=\bigcup_{x\in\mathbb Z}\left\{\xi\in\{0,1,2\}^{\mathbb Z}\,:\,\xi(y) \in \{0,1\}\;\forall y\leq x\text{ and }\xi(y) \in \{0,2\}\;\forall y>x\right\}
\end{equation*}
In words, configurations in~$\mathcal C$ are such that there exists a site for which all individuals of type~$1$ are on the left of that site (or on that site included) and all individuals of type~$2$ are on the right of that site.
Throughout all this work, we only consider multitype contact process whose initial configurations lie in~$\mathcal C$. From this space of configurations, we single out the \emph{Heaviside configuration}, obtained by placing an individual of type~$1$ at every site~$x \leq 0$, and an individual of type~$2$ at every site~$x \geq 1$.

The model is governed by two parameters,~$\lambda_1, \lambda_2 \in [0,\infty)$. For fixed values of these parameters, the dynamics evolves as follows: every occupied site, regardless of type, becomes empty at rate~$1$. A site occupied by an individual of type~$i\in\{1,2\}$ sends a copy of itself to each empty nearest neighbour at rate~$\lambda_i$. Births onto sites already occupied by the opposite type are prohibited.

\begin{definition}[Interface for multitype contact process] \label{def_interfaceMCP}
    For a multitype contact process~$(\xi_t)_{t\geq 0}$ started from~$\xi_0\in\mathcal B$, the process~$(I_t)_{t\geq 0}$ given by the pair~$I_t:=(r_t,\ell_t)$ where~$r_t := \sup\{x\in\mathbb Z\,:\,\xi_t(x)=1\}$ and~$\ell_t := \inf\{x\in\mathbb Z\,:\,\xi_t(x)=2\}$ is called the interface process. Let~$i_t:=\frac{r_t+\ell_t}{2}$ be the interface position. 
\end{definition}

Note that if~$\xi_0\in\mathcal C$, it follows that~$\xi_t\in\mathcal C$ for all~$t\geq 0$. Therefore,~$r_t$ and~$\ell_t$ are well-defined for all~$t\geq 0$, and also satisfy~$r_t<\ell_r$.

\begin{theorem}[Tightness of the interface] \label{Thm_MCP_Tight}
    For a multitype contact process started from the Heaviside configuration with parameters~$\lambda_1,\lambda_2>\lambda_c$, we have that the size of the interface is tight:
    \begin{equation} \label{eq_tightnessMCP}
        \lim_{L \to \infty} \sup_{t \ge 0} \mathbb P(\ell_t - r_t > L) = 0
    \end{equation}
\end{theorem}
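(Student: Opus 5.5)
The plan is to use the renewal-time structure the abstract announces, but for tightness alone a direct comparison with the classical contact process should suffice. We can always reflect space and swap the roles of the two species. So the core task is to bound the probability that the type-$2$ population has left a large gap to the right of $r_t$. In other words, we must show that $\ell_t - r_t > L$ is unlikely uniformly in $t$. Fix $t$ and use Harris' graphical construction with a single family of Poisson processes: death marks at rate $1$ on each site, and arrows of rate $\lambda_1$ and $\lambda_2$ (or one rate-$\max$ process with thinning). In the Heaviside start, every site $x\ge 1$ is occupied by type $2$.

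\textbf{Key step 1 (dual description of $\ell_t$).} A site $y$ is occupied by type $2$ at time $t$ exactly when two things hold. First, there is a $\lambda_2$-infection path from some $(x,0)$ with $x\ge1$ to $(y,t)$. Second, that path is never blocked by type-$1$ individuals. Blocking can only happen from the left, because in $\mathcal C$ the two types are separated. Conversely, if type $1$ occupies no site in $[r_t+1,\ell_t-1]$, then the gap $(r_t,\ell_t)$ is empty of both types. We then look at the dual (time-reversed) contact processes started from the sites $y\in(r_t, r_t+L)$. For the gap to be large, each such dual process run backward from time $t$ must either die before time $0$, or be blocked, meaning every path meets type-$1$ territory. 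Since type $2$ is the only type to the right of $r_t$ and $\lambda_2>\lambda_c$, I would show the following. With probability $\ge 1-Ce^{-cL}$, among $L$ consecutive sites at least one site $y$ has a dual $\lambda_2$-contact process that survives for a time $s$ of order $L$. Its rightmost extent then moves at speed $\alpha(\lambda_2)>0$ (by \eqref{speedCP}) and reaches into the region occupied only by type $2$ at time $t-s$.

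\textbf{Key step 2 (control of the blocking).} The main obstacle is that the dual path may be blocked by type-$1$ particles that invade rightward. To handle this I would use the fact that the rightmost type-$1$ particle $r_t$ is dominated by the rightmost particle of a classical $\lambda_1$-contact process. That process moves at most ballistically, with exponential tail estimates for large deviations. By contrast, the set of sites reachable by type-$2$ paths from $[1,\infty)$ forms a "wedge" structure. I would first try the standard Durrett-type argument: a surviving dual $\lambda_2$-path from $(y,t)$ started near $r_t$ can be coupled with the rightmost path of the upper-invariant edge process. Any site to the right of $r_t$ that is connected by a $\lambda_2$-path to $\{x\ge1\}\times\{0\}$ while staying to the right of the type-$1$ region is then occupied by type $2$. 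The estimate needed is that, conditionally on $r_t=x$, the sites $x+1,\dots,x+L$ all fail to have such a path with probability $\le Ce^{-cL}$, uniformly in $t$. I would obtain it by a block/renormalisation argument: oriented percolation comparison for $\lambda_2>\lambda_c$ (Bezuidenhout–Grimmett) in space-time boxes. Good boxes have density close to $1$, so a long stretch without good connected boxes is exponentially unlikely. The delicate point is that conditioning on $r_t$ involves information to the left. This should be handled by using only graphical marks strictly to the right of the type-$1$ trajectory, which is a stopping-line-type object, so the marks on the right are independent.

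\textbf{Conclusion.} Combining the two steps gives $\mathbb P(\ell_t-r_t>L)\le Ce^{-cL}$, with constants independent of $t$. This yields \eqref{eq_tightnessMCP}. If the stopping-line independence proves awkward, I would instead use the paper's patchwork construction and renewal times: each renewal interval has exponential tails, and the interface size within an interval is bounded by the maximal spatial displacement. Tightness would then follow from a renewal-theorem argument.
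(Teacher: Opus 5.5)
Your main route has a genuine gap, and it sits exactly at the step you call delicate.

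The type-1 front is not a stopping line for the marks on its right. A type-1 birth onto $r_s+1$ succeeds only if that site is free of type~$2$, and that is decided by the $\mathcal H_2$ marks to the right of the front. So conditioning on $r_t=x$, or on the whole type-1 trajectory, changes the law of precisely the marks you want to treat as fresh in the block argument. It also biases them the wrong way: a front that has advanced is evidence that the sites ahead of it were empty of type~$2$.

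The independence actually available (Lemma~\ref{lem_RPRIP_property}, as used in Lemma~\ref{changeGraphicalConstructionMCP}) is different. It gives freshness of $\mathcal H_2$ to the right of the extremal \emph{type-2} path $\Gamma_2$, and of $\mathcal H_1$ to the left of $\Gamma_1$. The $\mathcal H_2$ marks in the gap lie to the left of $\Gamma_2$ and are conditioned on, not fresh.

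Your other ingredients do not compensate:
\begin{itemize}
\item Domination of $r_t$ by a classical $\lambda_1$-edge only gives a type-1-free wedge $\{z\ge(\alpha(\lambda_1)+\epsilon)u\}$. Its boundary is at distance of order $t$ from the interface whenever the interface speed is below $\alpha(\lambda_1)$, so it says nothing uniform in $t$ about the gap.
\item The same domination does not control retreats of type~$1$: a single death of the front particle can move $r$ far to the left. So a backward dual $\lambda_2$-path from $(y,t)$ can still be blocked at times $u<t$ with $r_u>r_t$.
\item In Key step~1, reaching the region to the right of $r_{t-s}$ is not enough. The dual must hit an \emph{occupied} type-2 site at time $t-s$ through an unblocked path, which presupposes that there is no large gap at time $t-s$. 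As written, the argument is circular.
\end{itemize}
The uniform bound $Ce^{-cL}$ you aim for is also stronger than anything the paper proves.

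Your fallback is the paper's route, but your sketch omits the decisive point. Renewal times are patch boundaries, i.e.\ adjacency times, and Definition~\ref{defTMCP} forces $\ell_T=r_T+1$ there. So the gap is exactly $1$ at the last renewal $\sigma(t)$ before $t$.

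The paper then uses Lemma~\ref{lem_timeSinceLastRenewal_MCP}, a tail bound on $t-\sigma(t)$ and on the interface motion over $[\sigma(t),t]$ that is uniform in $t$. This comes from decomposing on the index of the last renewal and using that renewal increments are at least $1$, so $\sum_k\bar{\mathbb P}(\tau_k\in[i-1,i])\le 1$; it is not an asymptotic renewal theorem. The paper then bounds the growth of the gap over a window of bounded length.

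When you write that last step, be careful. A bound on $i_t=(r_t+\ell_t)/2$ alone does not bound $\ell_t-r_t$, and fronts can retreat by large jumps. So each front must be controlled separately. One way is Proposition~\ref{lemma_lawatTimeTMCP}: at an adjacency time, the configuration on each side of the interface dominates the corresponding upper invariant measure on a half-line.
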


\begin{theorem}[CLT for interface position]\label{Thm_MCP_CLT}
    For a multitype contact process started from the Heaviside configuration with parameters~$\lambda_1,\lambda_2>\lambda_c$, there exists real constants~$\mu$ and~$\sigma>0$ such that:
    \begin{equation*}
             \frac{i_t- \mathbf \mu t}{\sqrt{t}}\xrightarrow[t \to \infty]{\mathrm{(dist)}} \mathcal{N}(0,\sigma^2)
    \end{equation*}
\end{theorem}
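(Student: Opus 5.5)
We prove Theorem~\ref{Thm_MCP_CLT} by a renewal-time argument built on the patchwork construction mentioned in the abstract, in the spirit of the renewal proofs of CLTs for the rightmost particle of the contact process (Kuczek's method). The idea is to find random times $0=\tau_0<\tau_1<\tau_2<\dots$ such that the increments $(\tau_{k+1}-\tau_k,\ i_{\tau_{k+1}}-i_{\tau_k})_{k\ge1}$ are i.i.d.\ and have exponential (or at least second) moments. Then $\mu:=\mathbb E[i_{\tau_2}-i_{\tau_1}]/\mathbb E[\tau_2-\tau_1]$, and the renewal CLT (Anscombe's theorem applied to $N_t=\max\{k:\tau_k\le t\}$) gives $(i_{\tau_{N_t}}-\mu t)/\sqrt t\Rightarrow\mathcal N(0,\sigma^2)$, where
\[
\sigma^2=\frac{\mathrm{Var}\bigl(i_{\tau_2}-i_{\tau_1}-\mu(\tau_2-\tau_1)\bigr)}{\mathbb E[\tau_2-\tau_1]}.
\]
It then remains to check that $\sup_{s\in[\tau_{N_t},\tau_{N_t+1}]}|i_s-i_{\tau_{N_t}}|/\sqrt t\to0$ in probability. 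This follows from the exponential tails of the increments together with the linear-speed bound on the spread of the contact process, since $|i_s-i_{\tau_k}|$ is at most of order the maximal displacement in a graphical construction over the interval.

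To define the renewals, we use the fact that type~$1$ to the left of the interface behaves like a supercritical contact process whose rightmost path, on survival, is driven only by the Harris construction to its left until it meets type~$2$ (and symmetrically for type~$2$). A renewal time is a time $\tau$ at which the following hold:
\begin{itemize}[label={}]
\item \emph{(i)} the configuration seen from the interface is in a fixed ``good'' state, namely the Heaviside configuration shifted to $r_\tau$, or a configuration that can be coupled to it;
\item \emph{(ii)} the relevant infection paths from $r_\tau$ (leftwards-infinite for type~$1$) and from $\ell_\tau$ (for type~$2$) survive forever.
\end{itemize}
In the patchwork construction, after $\tau_k$ we restart from a shifted Heaviside configuration and use fresh Poisson processes on the future space--time region. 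The usual failure--retry scheme (run until a certain path dies, then retry with fresh randomness, with a geometric number of attempts) then produces i.i.d.\ cycles. Tightness (Theorem~\ref{Thm_MCP_Tight}) is the key input for showing that a good configuration recurs: at any time the gap $\ell_t-r_t$ is $O(1)$ with high probability, so with probability bounded below the interval between $r_t$ and $\ell_t$ can be cleared and refilled to look like Heaviside within unit time.

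The main obstacle is reconciling the requirement that the process be exactly Heaviside-like at renewals with the fact that the true configuration far from the interface is not Heaviside. We would handle this with the standard observation that, for the one-dimensional contact process, $r_t$ computed from the configuration $\{x\le r_s\}$ coincides with $r_t$ computed from any configuration agreeing at $r_s$ on the survival event. Hence only the extremal particles at $r_\tau$ and $\ell_\tau$ matter, and their future depends on the graphical construction in a cone. Making this coupling compatible with the competition between the two types is delicate: births of type~$1$ may be blocked by type~$2$, so "rightmost path" arguments must be applied to the two-type dynamics. Our approach is to show that $r_t$ is determined by type~$1$ paths emanating from the left region of the Heaviside configuration, which is a monotone coupling. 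Exponential tails for $\tau_{k+1}-\tau_k$ then come from combining the exponential decay of finite-survival lifetimes when $\lambda_i>\lambda_c$ with the tightness estimate, which we would need to strengthen into a uniform exponential bound on the interface size. Finally, $\sigma>0$ follows because $i_{\tau_2}-i_{\tau_1}-\mu(\tau_2-\tau_1)$ is non-degenerate: two cycles of equal duration can end with different interface displacements, each with positive probability.
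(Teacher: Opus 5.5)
Your plan is a Kuczek-type scheme based on \emph{forward} immortality of the extremal particles, which is a different route from the paper's. As written, though, it has a genuine gap at the step that carries the whole proof: tail bounds for the renewal increments that are uniform in the past.

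A failure--retry scheme needs two bounds after every failed attempt, each conditional on everything revealed so far. That includes the failure itself, which is information about the future of the previous attempt. You need (a) a bound on the time until the next adjacent configuration, and (b) a bound on the time until the next attempt is declared failed.
\begin{itemize}
\item For (a) you invoke Theorem~\ref{Thm_MCP_Tight}, ``strengthened into a uniform exponential bound''. This cannot serve as an input. In the paper tightness is an \emph{output} of the renewal structure: it follows from Lemma~\ref{lem_timeSinceLastRenewal_MCP}, since renewal times are adjacency times. Moreover, $\sup_t\mathbb P(\ell_t-r_t>L)\to0$ concerns fixed-time marginals. It says nothing about the configuration at a random time conditioned on earlier failures.
\item For (b), \eqref{eq_small_cluster} controls single-type clusters that die out. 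In the two-type dynamics, the type-$1$ lineage of $r_\tau$ can be extinguished by type-$2$ blocking while its single-type cluster survives. So its extinction time is not controlled by that bound.
\item You also assert, without proof, two further two-type facts: that joint immortality of the lineages at $r_\tau$ and $\ell_\tau$ has probability bounded below starting from an adjacent pair, and that on this event the future interface depends only on the future graphical construction. Your ``monotone coupling'' does not give these. The multitype process is monotone for the order in which type~$1$ increases and type~$2$ decreases. The shifted Heaviside configuration exceeds the true one in \emph{both} types, so it is not comparable in that order.
\end{itemize}

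The missing idea is a mechanism that makes the estimates uniform over the history, and this is what the patchwork construction supplies; the paper never uses forward survival.
\begin{itemize}
\item Special particles are those with infinite \emph{backward} ancestry, $-\infty\rightsquigarrow(x,0)$. They are determined by the graphical construction at negative times, where there is no competition, and their law does not depend on the pair of trails. Hence Lemmas~\ref{TBehavesWellMCP} and~\ref{DepthBehavesWellMCP} hold uniformly in $(\rmg_1,\rmg_2)$.
\item Proposition~\ref{lemma_lawatTimeTMCP} shows that, conditionally on the past, the configuration at the adjacency time is again trail-generated, with law $\mu_{(\rmg_1\sqcup\Gamma_1,\rmg_2\sqcup\Gamma_2)}$. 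This uses resampling of the graphical construction on the far sides of $\Gamma_1$ and $\Gamma_2$ (Lemma~\ref{changeGraphicalConstructionMCP}). It is not a restart from a shifted Heaviside configuration: the law still depends on the past through the trails.
\item Renewals are the indices with $\kappa_n=\infty$ (Definition~\ref{def_kappa_n_MCP}), meaning no later patch has depth reaching back beyond $\tau_n$. Lemma~\ref{lem_coupled_patchworks_MCP} then makes the post-renewal law trail-independent, and Lemma~\ref{lem_renewal} gives i.i.d.\ cycles.
\item The tails in Lemma~\ref{time_and_space_betweenRenewals_MCP} follow from the uniform bounds on $T$ and $D$ together with $\mathbb P(\kappa_0=\infty)>0$.
\end{itemize}
Your final steps, the renewal CLT via Anscombe and the control of the last incomplete cycle, are fine and match Lemma~\ref{lem_CLT_RenewalProcess}.
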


We have used the same notation~$(I_t)_{t\geq 0}$ for both the interface process of the contact-and barrier process and of the multitype contact process as it shall be clear from the context as to which interface we are referring to. The same notation highlights the fact that, in some sense, those two interfaces are the same object, and the strategy developed in Sections~\ref{section_PatchworkCBP} (for the contact and barrier process) and~\ref{section_patchworkMCP} (for the multitype contact process) is the same: although the results are not directly transferable and the proofs require non-trivial adjustments, the core structure of the argument is unchanged.

\subsection{Interfaces and related work}

\paragraph{} Many spatial stochastic systems exhibit two large homogeneous regions separated by a narrow transition zone, or \emph{interface}, whose long-term behaviour—whether it expands, fluctuates diffusively, or remains localized—is a central problem in interacting particle systems. One of the first works to highlight this phenomenon in a probabilistic setting is \cite{CoxDurrett1995}, where the authors study the one-dimensional voter model and prove that the interface started from the Heaviside configuration remains tight. Alternative proofs for the same result were later obtained using inversion-counting arguments \cite{Sturm2008}.

Interface questions also arise naturally in the multitype contact process. For nearest-neighbour interactions on $\mathbb Z$ and symmetric infection rates $\lambda_1=\lambda_2>\lambda_c$, the interface is tight \cite{Valesin2010Multitype}, and its position converges under diffusive scaling to Brownian motion \cite{Mountford2016}. These results rely on duality and ancestor-process techniques, which become considerably more difficult to apply in asymmetric settings \cite{Neuhauser1992}. Interface tightness has also been established in related competitive growth models such as the grass–bushes–trees process \cite{Andjel2018}, while several fundamental questions for the multitype contact process still remain open.

In one dimension, the rightmost particle of a species in the multitype contact process effectively acts as a moving boundary restricting the region accessible to the competing type. This observation motivates the introduction of the \emph{contact-and-barrier} process, which preserves the key geometric mechanism governing the interface while avoiding some of the technical difficulties of the fully multitype system. The model also places the problem within the broader framework of stochastic processes evolving in dynamic random environments.

More precisely, the contact-and-barrier process connects two active research directions: contact processes in random dynamical environments and random walks in random dynamical environments. Previous studies of contact processes in evolving environments typically assume that the environment evolves independently of the particle system \cite{broman2007stochastic, steif2007critical, remenik2008contact, cardona2024contact, leite2024contact, schapira2023contact, linker2020contact, hilario2022results}, whereas in the present model the environment is \emph{interdependent}. From another perspective, the barrier may be viewed as a random walk in a dynamic random environment, linking the model to the extensive random walks in random dynamical environments literature \cite{Boldrighini1992, Madras1992, Sznitman2002, BolthausenSznitman2002}. Related works where the environment is generated by a contact process include \cite{Bethuelsen2016, den2014scaling}, whose infection-path constructions will play an important role in our analysis. A distinctive feature of the present setting is the absence of uniform ellipticity (which guarantees a positive chance for the walker to move in any direction at all times), since the barrier’s transition probabilities may vanish in certain local configurations.

\subsection{Overview}

\paragraph{} We conclude with a brief outline of the paper. In Section~\ref{SectionClassicalCP}, we review the classical contact process. Section~\ref{section_lemmasRenewal} presents two renewal lemmas: the first identifies an i.i.d.\ structure within a stochastic process, while the second yields a central limit theorem once suitable control of temporal and spatial increments between these renewal times is established. The proofs of these lemmas are deferred to Section~\ref{sec_proofRenewalLemmas}.

Sections~\ref{sec_CBP} and~\ref{sec_MCP} contain the main contributions. In Section~\ref{sec_CBP}, we analyse the contact-and-barrier process and prove Theorems~\ref{thmCBP_Speed},~\ref{ThmCBP_Tightness}, and~\ref{ThmCBP_CLT_Speed}, while Section~\ref{sec_MCP} establishes Theorems~\ref{Thm_MCP_Tight} and~\ref{Thm_MCP_CLT} for the multitype contact process. In both settings, after presenting the graphical construction, we introduce the \emph{patchwork construction}, which yields renewal times with i.i.d.\ temporal and spatial increments and allows the renewal lemmas to be applied.

\section{Preliminaries on the contact process} \label{SectionClassicalCP}

\paragraph{} 
The classical contact process can be constructed using a Harris (or graphical) construction. This consists of a collection of independent Poisson point processes
\begin{equation*}
    \mathcal{H} = \left((D^x)_{x \in \mathbb{Z}},\ (D^{x,y})_{x,y \in \mathbb{Z},\, |x - y| = 1}\right),
\end{equation*}
where each~$D^x$ is a Poisson process on~$\mathbb R$ of rate~1 (representing death events at site~$x$), and each~$D^{x,y}$ is a Poisson process on~$\mathbb R$ of rate~$\lambda$ (representing potential reproduction events from site~$x$ to its neighbour~$y$).

We understand this graphical construction as follows. For each point~$x\in\mathbb Z$, we append a temporal axis which is just a half-line~$[0,\infty)$. For each non-negative realisation~$t\in D^x$, we draw a~$\times$ at~$(x,t)$, and for each non-negative realisation~$t\in D^{x,y}$ we draw an arrow~$\rightarrow$ from~$(x,t)$ to~$(y,t)$. 

\begin{definition}[Infection path] \label{def_InfectionPath}
    Consider a graphical construction~$\mathcal H$. Let~$I\subset [0,\infty)$ be an interval. We say that a function~$\gamma:I\rightarrow \mathbb Z$ is an infection path (in~$\mathcal H$ in case we want to highlight the construction used) if the following conditions are satisfied:
    \begin{itemize}
        \item~$t\notin D^{\gamma(t)}$ for all~$t\in I$
        \item if~$\gamma(t) \neq \gamma(t-):=\lim_{s\uparrow t}\gamma(s)$, then~$t\in D^{\gamma(t-), \gamma(t)}$
    \end{itemize}
    
    For~$x,y\in\mathbb Z$ and~$s,t\in[0,\infty)$ with~$s\leq t$, we write~$(x,s)\rightsquigarrow(y,t)$ if there exists~$\gamma:[s,t]\rightarrow \mathbb Z$ infection path with~$\gamma(s)=x$ and~$\gamma(t)=y$. If such a path does not exist, we write~$(x,s)\not\rightsquigarrow(y,t)$. We write~$A\times \{s\}\rightsquigarrow (y,t)$ to indicate that there exists~$x\in A$ such that~$(x,s)\rightsquigarrow(y,t)$. Similarly, we write~$(x,s)\rightarrow B\times \{t\}$ if there exists~$y\in B$ such that~$(x,s)\rightsquigarrow(y,t)$. We also write~$(x,t)\rightsquigarrow \infty$ in case there exists an infinite infection path starting from~$(x,t)$. 
    
    In all of those cases, we may replace~$\rightsquigarrow$ by~$\overset{\mathcal H}{\rightsquigarrow}$ in case we want to highlight the graphical construction used. Moreover, for the cases where there are more than one graphical construction being considered, we also write~$\overset{\mathcal H_1\text{ or }\mathcal H_2}{\rightsquigarrow}$ in order to denote the existence of an infection path either in the graphical construction~$\mathcal H_1$ or in the graphical construction~$\mathcal H_2$. 
\end{definition}

From a graphical construction alongside with the notion of infection path, one can construct the classical contact process~$(\zeta_t)_{t\geq 0}$ started from an initial configuration~$\zeta_0\in\{0,1\}^{\mathbb Z}$ in the following way: for any~$t\in[0,\infty)$ and for any~$x\in \mathbb Z$, we claim that~$\zeta_t(x)=1$ if and only if there exists an infection path~$(y,0)\rightsquigarrow(x,t)$ from some~$y\in \mathbb Z$ with~$\zeta_0(y)=1$. 

Let~$\delta_{\underbar{0}}$ be the probability measure on the space of configurations of the contact process that attributes mass~$1$ to the configuration~$\underbar{0}$. Clearly,~$\delta_{\underbar{0}}$ is an invariant measure for the contact process. To characterise other invariant measures for this process, we recall the complete convergence theorem~\cite{liggett1985interacting}. For $\lambda>\lambda_c$, there exists a probability measure $\nu_\lambda$ on $\{0,1\}^{\mathbb Z}$, called the upper invariant measure, such that the process started from the fully occupied configuration converges in distribution to $\nu_\lambda$. More generally, if the process is started from $\mathds{1}_A$ for $A\subset\mathbb Z$, then
\[
\zeta_t^A \xrightarrow[t\to\infty]{(\mathrm{dist})}
\mathbb P(\exists t\ge0:\zeta_t^A=\underline{0})\,\delta_0
+ \mathbb P(\forall t\ge0:\zeta_t^A\neq\underline{0})\,\nu_\lambda.
\]

A useful characterization of $\nu_\lambda$ follows from the self-duality of the contact process (see Section~III.4 and Example~4.18 in \cite{liggett1985interacting}). For any finite set $A\subset\mathbb Z$,
\[
\mathbb P\big(\zeta_t^1(x)=1\ \forall x\in A\big)
=\mathbb P\big((x,0)\rightsquigarrow \mathbb Z\times\{t\}\ \forall x\in A\big)
\xrightarrow[t\to\infty]{}
\mathbb P\big((x,0)\rightsquigarrow \infty\ \forall x\in A\big).
\]

Consequently, a configuration sampled from $\nu_\lambda$ can be obtained from the graphical construction by declaring $\zeta(x)=1$ if and only if $(x,0)\rightsquigarrow\infty$. This observation will play an important heuristic role in Section~\ref{section_PatchworkCBP}.

Let $A\subset\mathbb Z$ be finite and let $(\zeta_t^A)_{t\ge0}$ be the contact process with parameter $\lambda>\lambda_c$ started from $\mathds{1}_A$. Denote by~$\tau_A^{\mathrm{ext}}=\inf\{t\ge0:\zeta_t^A(x)=0\ \forall x\in\mathbb Z\}$ its extinction time. Then there exist constants $c,C>0$, depending only on $\lambda$, such that (Theorems~3.23 and~3.29 in \cite{liggett1985interacting}):
\begin{align}
\label{eq_tau_ext_A}
\mathbb P\big(\tau_A^{\mathrm{ext}}=\infty\big) &\ge 1-Ce^{-c|A|},\\
\label{eq_small_cluster}
\mathbb P\big(t<\tau_A^{\mathrm{ext}}<\infty\big) &\le Ce^{-ct}.
\end{align}

Combining Equation 33 of~\cite{griffeath1983basic} and Theorem 4 of~\cite{durrett1983supercritical}, we state the following result:

\begin{proposition}[Bounds for the position of the rightmost particle \cite{griffeath1983basic,durrett1983supercritical}] \label{RightmostWellBehaved}
    Let~$(\zeta^h_t)_{t\geq 0}$ be a contact process with parameter~$\lambda>\lambda_c$ started from the Heaviside configuration~$\mathds{1}_{(-\infty,0]}$ and let~$\alpha$ be its associated speed as given in~\eqref{speedCP}. Let~$R_t^h := \sup\{x\in \mathbb Z\,:\,\zeta_t^h(x)=1\}$. Then, for any~$\epsilon > 0$ there exists~$c > 0$ such that
    \begin{equation}\label{eq_R_within_bounds}
        \mathbb P\left(\left|\frac{R_t^h}{t}  - \alpha\right| \leq  \epsilon\right) \geq 1-e^{-ct} \quad \text{for all } t > 0.
    \end{equation} 
\end{proposition}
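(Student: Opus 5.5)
This statement is attributed to \cite{griffeath1983basic,durrett1983supercritical}, so the plan is to reduce it to two one-sided large-deviation bounds from those references and then glue them together.

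\emph{Upper tail.} We need $\mathbb P(R^h_t > (\alpha+\epsilon)t)\le e^{-ct}$. The process started from the Heaviside configuration is obtained by monotone coupling inside the graphical construction $\mathcal H$. Any infection path reaching $\{x>(\alpha+\epsilon)t\}\times\{t\}$ must start at some $(y,0)$ with $y\le 0$. By translation invariance and subadditivity of the rightmost point $R_t$ of the process started from $\mathds 1_{(-\infty,0]}$, $(R^h_t)$ is subadditive in the sense of Durrett, with $\mathbb E R^h_t/t\to\alpha$. Equation~33 of \cite{griffeath1983basic} gives exponential decay of $\mathbb P(R^h_t\ge(\alpha+\epsilon)t)$. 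It is obtained by a Chernoff/Kingman-type argument: one shows $\mathbb E e^{\theta R^h_t}\le e^{t\phi(\theta)}$ with $\phi$ subadditive in $t$ and $\phi(\theta)/\theta\downarrow\alpha$ as $\theta\downarrow0$. This uses that $R^h$ is dominated by a rate-$\lambda$ Poisson process, which supplies exponential moments.

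\emph{Lower tail.} We need $\mathbb P(R^h_t<(\alpha-\epsilon)t)\le e^{-ct}$. This is the harder direction and is Theorem~4 of \cite{durrett1983supercritical}. I would reproduce it with a block/restart argument. Divide $[0,t]$ into $K$ pieces of length $t/K$. Use the key fact that on the event $\{(x,s)\rightsquigarrow\infty\}$, the rightmost particle of the Heaviside process at later times dominates the rightmost particle of the process started from the single site $x$. Combining this with \eqref{speedCP} for the surviving single-site process, each piece advances by at least $(\alpha-\epsilon/2)t/K$ with probability close to~1, once a surviving seed is found near the current rightmost point. Finding such a seed costs only a geometric number of trials, each consuming $O(1)$ time and space, by \eqref{eq_tau_ext_A}--\eqref{eq_small_cluster}. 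Independence across pieces then comes from disjoint regions of $\mathcal H$, and a standard large-deviation bound for sums of independent bounded-below increments yields the exponential estimate.

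\emph{Gluing.} Combining the two tails and adjusting $c$ handles small $t$: for $t$ bounded, the inequality holds with $c$ small, since the probability is positive. The main obstacle is the lower tail, specifically controlling the lost time and space when restarting after a failed seed. This is exactly where \eqref{eq_small_cluster} is essential.
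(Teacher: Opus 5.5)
Your reduction (upper tail from Equation~33 of \cite{griffeath1983basic}, lower tail from Theorem~4 of \cite{durrett1983supercritical}) is exactly the paper's route. The paper gives no argument beyond these two citations.

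The genuine problem is your gluing step. Take $\epsilon<\alpha$ and $0<t<1/(\alpha+\epsilon)$. Then $[(\alpha-\epsilon)t,(\alpha+\epsilon)t]\subset(0,1)$ contains no integer. Since $R^h_t$ is integer-valued, $\mathbb P(|R^h_t/t-\alpha|\le\epsilon)=0$, whereas $1-e^{-ct}>0$ for every $c>0$. So ``the probability is positive'' is false, and no choice of $c$ rescues the two-sided bound: as written, the displayed inequality fails for all sufficiently small $t>0$. What the cited large-deviation results, and your two tail estimates, actually give is the bound for $t\ge t_0$, or equivalently $1-Ce^{-ct}$ for all $t>0$. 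That is the form to state.

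Your positivity argument does work for each one-sided event separately. This matters because Corollary~\ref{CorRightmostWellBehaved} only uses the lower tail. For $t\le 1/(\alpha-\epsilon)$,
$\mathbb P(R^h_t\ge(\alpha-\epsilon)t)\ge\mathbb P(R^h_t\ge 1)\ge(1-e^{-\lambda t})e^{-2t}\ge\lambda t\,e^{-(\lambda+2)t}$,
which beats $1-e^{-ct}\le ct$ on a bounded range of $t$ once $c$ is small. On intervals bounded away from $0$, an explicit event (a chain of arrows to the right and no deaths) gives a uniform positive lower bound. Hence $\mathbb P(R^h_t\ge(\alpha-\epsilon)t)\ge 1-e^{-ct}$ does hold for all $t>0$, but the two-sided statement does not.

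Your sketches of the two tails should be presented as heuristics, since as written they are not proofs.
\begin{itemize}
\item \emph{Upper tail.} The moment inequality points the wrong way. Subadditivity of $t\mapsto\log\mathbb E e^{\theta R^h_t}$ gives $\mathbb E e^{\theta R^h_t}\ge e^{t\phi(\theta)}$ for the limit $\phi$. The Chernoff argument should instead block at a fixed time $m$ and use $\mathbb E e^{\theta R^h_{km}}\le(\mathbb E e^{\theta R^h_m})^k$.
\item \emph{Lower tail, number of blocks.} A fixed number $K$ of pieces, each failing with probability $o(1)$, cannot produce $e^{-ct}$. You need $K\asymp t$ blocks of fixed length.
\item \emph{Lower tail, restart step.} You also need exponential tails for the backward displacement of $R^h$ within each block, and your restart step does not provide them. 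When a seed's cluster dies, its small space-time extent says nothing about where the Heaviside front is at that moment, because you have no control of $\zeta^h$ to the left of the old front.
\end{itemize}
Controlling that retreat is the real content of Theorem~4 of \cite{durrett1983supercritical}, so relying on the citation, as the paper does, is the right call.
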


\begin{corollary} \label{CorRightmostWellBehaved} Assume that~$\lambda > \lambda_c$. Let~$\zeta'$ be a configuration in~$\{0,1\}^{\mathbb Z}$ drawn from~$\nu_{\lambda}$, and assume we are given a graphical construction~$\mathcal H$ of the contact process with rate~$\lambda$, independently of~$\zeta'$. Then, there exist~$c,C>0$ such that the following holds  for all~$L > 0$ and~$t > 0$:
    \begin{equation}\label{eq_for_inside_the_prob}
        \mathbb P\left( \exists x \in [-L,0]:\; \zeta'(x)=1 \text{ and } (x,0) \overset{\mathcal H}{\rightsquigarrow} [-L+(\alpha-\epsilon) t,\infty) \times \{t\}  \right)\geq 1-Ce^{-cL}-e^{-ct}.
    \end{equation}
\end{corollary}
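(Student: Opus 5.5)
Because $\nu_\lambda$ is an invariant measure, the plan is to realise $\zeta'$ from an independent graphical construction on the negative time axis, glue it to $\mathcal H$, and then compare the process started from $\zeta'$ with the Heaviside process of Proposition~\ref{RightmostWellBehaved}.

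Concretely, take an auxiliary graphical construction $\mathcal H'$ on times $(-\infty,0]$, independent of $\mathcal H$, and concatenate the two; the result is a graphical construction on all of $\mathbb R$. By the duality characterisation recalled above and stationarity, we may realise $\zeta'$ as $\zeta'(x)=1$ iff $(x,0)$ lies on an infinite infection path coming from the far past. Equivalently, fix a large time $s>0$ and set $\zeta'(x)=1$ iff $(-\infty,\infty)\times\{-s\}\rightsquigarrow(x,0)$, letting $s\to\infty$. Time reversal gives the same law.

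Sticking with the forward picture, the event in \eqref{eq_for_inside_the_prob} follows from two sub-events. First, define $\zeta'$ on $[-L,0]$ via $\mathcal H'$. By the bound \eqref{eq_tau_ext_A}, with probability at least $1-Ce^{-cL}$ there is some $x\in[-L,-L/2]$ with $\zeta'(x)=1$; this uses the standard fact that under $\nu_\lambda$ the probability that an interval of length $L/2$ is empty decays exponentially, which follows from \eqref{eq_tau_ext_A} by duality. Second, we need the rightmost particle of the process started at time $0$ from $\zeta'\cap(-\infty,0]$ to reach $[-L+(\alpha-\epsilon)t,\infty)$ by time $t$.

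The key tool is the standard coupling identity for the contact process: on the event that $\zeta_t^{\{x\}}\neq\underline 0$, the rightmost particle of the process from $\{x\}$ coincides with the rightmost particle of the process from $\mathds 1_{(-\infty,x]}$. The identity is obtained by path crossing in one dimension. Applying it with the Heaviside process shifted to the site $x$ and using Proposition~\ref{RightmostWellBehaved}, with probability $1-e^{-ct}$ the rightmost particle from $(-\infty,x]$ is at least $x+(\alpha-\epsilon/2)t\ge -L+(\alpha-\epsilon)t$. It therefore remains to ensure that some occupied $x\in[-L,0]$ survives for time $t$ under $\mathcal H$.

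Here I will use that $\zeta'$ is invariant: the set $\{x\in[-L,0]:\zeta'(x)=1\}$ is with high probability of size at least $cL$. Given this, \eqref{eq_tau_ext_A} implies that the process started from that set survives forever with probability at least $1-Ce^{-cL}$. Surviving forever from a set $A$ means some $x\in A$ has an infinite path. For that $x$, the process from $\{x\}$ is nonempty at time $t$, so the coupling identity applies. It must be applied to this particular random $x$; to avoid conditioning issues, I will instead apply it to the rightmost particle of the process from $A$, using monotonicity $\zeta^A_t\subseteq\zeta^{(-\infty,0]}_t$ together with the path-crossing argument.

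The main obstacle is the density estimate: that under $\nu_\lambda$, the number of occupied sites in $[-L,0]$ is at least $cL$ with probability $1-Ce^{-cL}$. I would prove this by partitioning $[-L,0]$ into blocks of fixed size $K$. The events that each block contains a site with $(x,0)\rightsquigarrow$ far, up to time $K'$, are independent for well-separated blocks because of finite range of the construction within time $K'$. A Chernoff bound then gives linearly many blocks with such sites. Finally, \eqref{eq_small_cluster} upgrades "survives to time $K'$" to "survives forever" at an exponentially small cost per block, with $K'$ chosen large.
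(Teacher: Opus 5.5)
Your overall plan is the paper's: a linear lower bound on $|\zeta'\cap[-L,0]|$, then \eqref{eq_tau_ext_A} (using independence of $\zeta'$ and $\mathcal H$) to find $x\in\zeta'\cap[-L,0]$ with $(x,0)\rightsquigarrow\infty$, then Proposition~\ref{RightmostWellBehaved} plus path crossing.

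\textbf{The crossing step.} Your fix for the random $x$ points the wrong way. The inclusion $\zeta^A_t\subseteq\zeta^{(-\infty,0]}_t$ only bounds the rightmost particle of $\zeta^A_t$ from above. Crossing with a path started at some $y\le 0$ also fails when $y>x$. The comparison must be with the fixed half-line $(-\infty,-L]$. For every $x\in[-L,0]$ that survives to time $t$,
\[
R^{\{x\}}_t=R^{(-\infty,x]}_t\ge R^{(-\infty,-L]}_t ,
\]
and $\{R^{(-\infty,-L]}_t\ge -L+(\alpha-\epsilon)t\}$ is a single event of probability at least $1-e^{-ct}$ by translation invariance. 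This removes the random-$x$ issue entirely, and it is exactly how the paper intersects its two events.

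\textbf{The density estimate (the genuine gap).} This is the step you flag as the main obstacle, and your block argument does not establish it.
\begin{itemize}
\item A smaller point: $(x,0)\rightsquigarrow\mathbb Z\times\{K'\}$ is not a finite-range event, since infection can travel arbitrarily far in time $K'$. Truncating to boxes fixes this.
\item The real problem is the upgrade. \eqref{eq_small_cluster} gives $\mathbb P(K'<\tau_x<\infty)\le Ce^{-cK'}$ per block. But the failure events are dependent across blocks, because survival forever is not a local event. The per-block bound therefore only controls the expected number of failing blocks. Markov's inequality then gives a failure probability of order $e^{-cK'}$, which does not decay in $L$. So the $Ce^{-cL}$ term cannot be obtained this way.
\end{itemize}
The paper simply quotes the Durrett--Schonmann large deviations theorem for the density of $\nu_\lambda$.

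An elementary replacement is already within your reach. The duality fact you used for intervals holds for every finite $B$, uniformly in $B$:
\[
\nu_\lambda(\zeta\cap B=\varnothing)=\mathbb P(\tau^{\mathrm{ext}}_B<\infty)\le Ce^{-c|B|}.
\]
If $|\zeta'\cap[-L,0]|<\delta L$, then some $B\subseteq[-L,0]$ with $|B|\ge(1-\delta)L$ is entirely empty. There are at most $e^{(H(\delta)+o(1))L}$ such sets, where $H$ is the binary entropy function. A union bound over them gives a bound $C'e^{-c'L}$ as soon as $\delta$ is small enough that $H(\delta)<c(1-\delta)$.
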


\begin{proof}
Let~$\rho:= \mathbb{P}((0,0) \rightsquigarrow \infty)$, which is positive since~$\lambda > \lambda_c$. Theorem~1 in Durrett and Schonmann~\cite{durrett1988large} (a large deviations principle for the density of~$\nu_\lambda$) implies that there exist~$c,C > 0$ such that
\[\mathbb{P}(|\zeta' \cap [-L,0]| > \tfrac{\rho}{2}L) > 1- Ce^{-cL}\]
for all~$L > 0$. Hence,
\begin{align}
\label{eq_for_inside_the_prob1}&\mathbb{P}( \exists x \in \zeta' \cap [-L,0]:\; (x,0) \overset{\mathcal H}{\rightsquigarrow} \infty) \\
\nonumber&\ge 1- Ce^{-cL} - \mathbb{P}(|\zeta' \cap [-L,0]| > \tfrac{\rho}{2}L,\; \tau^{\mathrm{ext}}_{\zeta' \cap [-L,0]} < \infty) \stackrel{\eqref{eq_tau_ext_A}}{\ge} 1-Ce^{-cL}-e^{-c\rho L/2}.
\end{align}
Next, Proposition~\ref{RightmostWellBehaved} gives
\begin{equation}\label{eq_for_inside_the_prob2}
\mathbb{P}(\exists y,z \in \mathbb Z: y \le -L, \; z \ge -L+(\alpha - \epsilon)t,\; (y,0) \overset{\mathcal H}{\rightsquigarrow} (z,t) ) > 1-e^{-ct}
\end{equation}
for all~$t > 0$.
To conclude, we observe that the intersection of the events inside the probabilities in~\eqref{eq_for_inside_the_prob1} and~\eqref{eq_for_inside_the_prob2} is contained in the event inside the probability in~\eqref{eq_for_inside_the_prob}\qedhere.
\end{proof}

At last, we introduce the notion of an infection path that is random, in the sense that it depends on the realization of the process, yet can be identified through a procedure that preserves the available information inside a certain region. This framework was originally introduced in~\cite{den2014scaling}, and we adapt several key concepts from their work. Most importantly, we state Lemma~\ref{lem_RPRIP_property} from their work (without reproof) as the key technical tool for our analysis. To formalize this idea, we begin with the following definition.


\begin{definition} \label{def_LeftRightPlus}
Let~$t \ge 0$, and let~$\pi:[0,t] \to \mathbb Z$ be a c\`adl\`ag function. Let~$\mathcal{H}_{[0,t]}$ be the graphical construction obtained from~$\mathcal H$ by deleting all recovery marks and transmission arrows outside the interval~$[0,t]$. Define:

\begin{itemize}
    \item $\mathrm{Left}_\pi(\mathcal H)$ the graphical construction obtained from~$\mathcal H_{[0,t]}$ by keeping recovery marks of~$\mathcal H_{[0,t]}$ inside the space-time set~$\{(x,s) \in \mathbb Z \times [0,t]: x \le \max\{\pi(s-),\pi(s)\}\}$, and transmission arrows of~$\mathcal H_{[0,t]}$ with both start- and end-points in the same space-time set (all other recovery marks and transmission arrows are deleted);
    \item ~$\mathrm{Right}^+_\pi(\mathcal H)$ the graphical construction obtained from~$\mathcal H_{[0,t]}$ by keeping all recovery marks and transmission arrows of~$\mathcal H_{[0,t]}$ inside~$\mathbb Z \times [0,t]$ that are \emph{not} included in~$\mathrm{Left}_\pi(\mathcal H)$ (and deleting all other ones).
\end{itemize}

\end{definition}

With regards to this definition, note that if~$\pi$ is itself an infection path of~$\mathcal H$, then the transmission arrows that it traverses are included in~$\mathrm{Left}_\pi(\mathcal H)$, but not in~$\mathrm{Right}_\pi^+(\mathcal H)$ (which is why we include the~`$+$' in the notation).

\begin{definition}
    Let~$t \ge 0$, and let~$\pi:[0,t] \to \mathbb Z$ be a c\`adl\`ag function. Define:

\begin{itemize}
    \item $\mathrm{Right}_\pi(\mathcal H)$ the graphical construction obtained from~$\mathcal H_{[0,t]}$ by keeping recovery marks of~$\mathcal H_{[0,t]}$ inside the space-time set~$\{(x,s) \in \mathbb Z \times [0,t]: x \ge \max\{\pi(s-),\pi(s)\}\}$, and transmission arrows of~$\mathcal H_{[0,t]}$ with both start- and end-points in the same space-time set (all other recovery marks and transmission arrows are deleted);
    \item ~$\mathrm{Left}^+_\pi(\mathcal H)$ the graphical construction obtained from~$\mathcal H_{[0,t]}$ by keeping all recovery marks and transmission arrows of~$\mathcal H_{[0,t]}$ inside~$\mathbb Z \times [0,t]$ that are \emph{not} included in~$\mathrm{Left}_\pi(\mathcal H)$ (and deleting all other ones).
\end{itemize}
\end{definition}


We will encounter random objects of the form~$\Gamma:[0,T]\to \mathbb Z$, where~$T$ is itself random, so we now introduce a measurability structure for such objects. The space where such objects take values is
\begin{align*}
    &\mathcal D([0,*],\mathbb Z):=\bigcup_{t \ge 0} \{\gamma:[0,t]\to \mathbb Z,\; \gamma \text{ c\`adl\`ag}\}.
\end{align*}

We endow this space with the following metric: the distance between~$\gamma_1:[0,t_1] \to \mathbb Z$ and~$\gamma_2:[0,t_2]\to \mathbb Z$ is set to be~$\max \left\{|t_1-t_2|,\; \mathrm{dist}_{\mathrm{Sk}}(\bar{\gamma}_1,\bar{\gamma}_2), \right\}$ where~$\bar{\gamma}_i(t):=\gamma_i(\min\{t,t_i\})$, with~$t \ge 0$ and~$i \in \{1,2\}$, and~$\mathrm{dist}_{\mathrm{Sk}}$ is the Skhorohod metric on c\`adl\`ag functions~$\gamma:[0,\infty) \to \mathbb Z$. This is then a Polish space, which we endow with the Borel~$\sigma$-algebra. We will abuse the terminology and still refer to this as the Skhorohod~$\sigma$-algebra.

\begin{definition}[Right-preserving random infection path] \label{def_RPRIP}
    Let~$\mathcal H$ be a graphical construction for the contact process. We say that a random element~$\Pi$ of~$\mathcal D([0,*],\mathbb Z)$ is a \emph{right-preserving random infection path (RPRIP) with respect to~$\mathcal H$} if the following conditions are satisfied: 
    \begin{itemize}
        \item $\Pi$ is an infection path in~$\mathcal H$;
        \item for all deterministic paths~$\pi \in \mathcal D([0,*],\mathbb Z)$, the event
        \[
        \{\text{the domain of $\Pi$ is contained in the domain of~$\pi$ and~$\Pi(s) \le \pi(s)$ for all $s$}\}
        \]
        is measurable with respect to the~$\sigma$-algebra generated by~$\mathrm{Left}_\pi(\mathcal H)$.
    \end{itemize}
\end{definition}

\begin{example} \label{example_Rightmost}
    Fix~$t \ge 0$. Define~$\Pi$ as the leftmost infection path started from~$\mathbb N_0$ and reaching time~$t$. That is,~$\Pi$ is the almost surely unique infection path that satisfies~$\Pi(s) \le \gamma(s)$ for every~$s \in [0,t]$ and every infection path~$\gamma:[0,t] \to \mathbb Z$ with~$\gamma(0)\ge 0$ (to check that the leftmost infection path indeed exists, define a total order on infection paths started in~$\mathbb N_0 \times \{0\}$, by saying that~$\gamma$ is smaller than~$\gamma'$ when~$\gamma(\bar{s})<\gamma'(\bar{s})$, where~$\bar{s}$ is the fist time when they are different; then, let~$\Pi$ be the minimal path for this order). Then,~$\Pi$ is an RPRIP. In fact, all RPRIP's we will encounter are essentially variants of this example.
\end{example}

\begin{definition}[Left-preserving random infection path] \label{def_LPRIP}
    Let~$\mathcal H$ be a graphical construction for the contact process. We say that a random element~$\Pi$ of~$\mathcal D([0,*],\mathbb Z)$ is a \emph{left-preserving random infection path (LPRIP) with respect to~$\mathcal H$} if the following conditions are satisfied: 
    \begin{itemize}
        \item $\Pi$ is an infection path in~$\mathcal H$;
        \item for all deterministic paths~$\pi \in \mathcal D([0,*],\mathbb Z)$, the event
        \[
        \{\text{the domain of $\Pi$ is contained in the domain of~$\pi$ and~$\Pi(s) \ge \pi(s)$ for all $s$}\}
        \]
        is measurable with respect to the~$\sigma$-algebra generated by~$\mathrm{Right}_\pi(\mathcal H)$.
    \end{itemize}
\end{definition}

\begin{lemma}[\cite{den2014scaling}] \label{lem_RPRIP_property}
    Let~$\mathcal  H$ be a graphical construction for the contact process and let~$\Pi$ be an RPRIP with respect to~$\mathcal H$. On the same probability space where~$\mathcal H$ is defined, let~$\mathcal H'$ be an independent graphical construction. Then, it follows that:
\begin{equation*}
    \mathrm{Law}\left(\Pi, \mathrm{Left}_\Pi(\mathcal H), \mathrm{Right}_\Pi^+(\mathcal H)\right) = \mathrm{Law}\left(\Pi, \mathrm{Left}_\Pi(\mathcal H), \mathrm{Right}_\Pi^+(\mathcal H')\right)
\end{equation*}
\end{lemma}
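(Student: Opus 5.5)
The plan is to reduce the statement to the case of deterministic paths, using the RPRIP measurability property to decompose on the value of $\Pi$. Fix bounded measurable test functions $f,g,h$ on the three components. It suffices to show
\[
\mathbb E\big[f(\Pi)\,g(\mathrm{Left}_\Pi(\mathcal H))\,h(\mathrm{Right}^+_\Pi(\mathcal H))\big]=\mathbb E\big[f(\Pi)\,g(\mathrm{Left}_\Pi(\mathcal H))\,h(\mathrm{Right}^+_\Pi(\mathcal H'))\big].
\]
For a fixed deterministic $\pi$, the point process $\mathcal H_{[0,t]}$ splits into the two disjoint parts $\mathrm{Left}_\pi(\mathcal H)$ and $\mathrm{Right}^+_\pi(\mathcal H)$. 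These are restrictions of Poisson processes to disjoint space-time sets, so they are independent. Consequently $(\mathrm{Left}_\pi(\mathcal H),\mathrm{Right}^+_\pi(\mathcal H))$ has the same law as $(\mathrm{Left}_\pi(\mathcal H),\mathrm{Right}^+_\pi(\mathcal H'))$.

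The core step is a discretisation. I would first approximate $\Pi$ by random paths $\Pi^{(n)}$ taking countably many values. For example, I would round the terminal time up to the grid $2^{-n}\mathbb Z$ and replace $\Pi$ by a dominating step path $\pi$ from a countable family $\mathcal P_n$. I would not decompose on the exact event $\{\Pi=\pi\}$, which is not obviously $\mathrm{Left}_\pi$-measurable. Instead I would use the events in the definition of RPRIP, $E_\pi=\{\mathrm{dom}\,\Pi\subseteq\mathrm{dom}\,\pi,\ \Pi\le\pi\}$, which are $\sigma(\mathrm{Left}_\pi(\mathcal H))$-measurable. Ordering $\mathcal P_n$ suitably, I set $F_\pi=E_\pi\setminus\bigcup_{\pi'\prec\pi}E_{\pi'}$, with $\pi'$ ranging over paths below $\pi$. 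Each $\pi'\le\pi$ satisfies $\mathrm{Left}_{\pi'}\subseteq\mathrm{Left}_\pi$, so $F_\pi$ is still $\mathrm{Left}_\pi$-measurable.

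On $F_\pi$, the pair $(\Pi,\mathrm{Left}_\Pi(\mathcal H))$ is a functional of $\mathrm{Left}_\pi(\mathcal H)$, since $\Pi\le\pi$ and $\Pi$ is determined by the marks left of $\pi$. The term $h(\mathrm{Right}^+_\Pi(\mathcal H))$ is not literally a functional of $\mathrm{Right}^+_\pi$. Here I would use the Markov/strong-Markov-type structure of the regions: $\mathrm{Right}^+_\Pi(\mathcal H)$ equals $\mathrm{Right}^+_\pi(\mathcal H)$ plus the marks in the region strictly between $\Pi$ and $\pi$. Those marks are part of $\mathrm{Left}_\pi$ but not of $\mathrm{Left}_\Pi$.

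So I would prove a refined claim. Conditioned on $\mathrm{Left}_\Pi(\mathcal H)$ and $\Pi$, the marks in the gap region are still Poisson and independent. The natural way to do this is to iterate the argument, applying the property at each level of the dyadic refinement so the gap shrinks as $n\to\infty$. Summing over $\pi\in\mathcal P_n$ with the independence of $\mathrm{Left}_\pi$ and $\mathrm{Right}^+_\pi$ then gives the identity for $\Pi^{(n)}$-approximations. Letting $n\to\infty$, the gap region has vanishing intensity on bounded windows, and $h$ may be taken to depend on finitely many marks in a bounded window (local test functions generate the $\sigma$-algebra). The error is therefore bounded by the probability of a mark in the gap, which tends to $0$, and the lemma follows.

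The main obstacle is making the gap-region argument rigorous. The difficulty is that the measurability is given only for the events $\{\Pi\le\pi\}$ and not for $\{\Pi=\pi\}$. I would handle this by the "first grid path above $\Pi$" decomposition together with the monotonicity $\pi'\le\pi\Rightarrow\mathrm{Left}_{\pi'}\subseteq\mathrm{Left}_\pi$. The delicate points are the càdlàg jump-time conventions via $\max\{\pi(s-),\pi(s)\}$ and the fact that arrows traversed by $\Pi$ belong to $\mathrm{Left}_\Pi$.
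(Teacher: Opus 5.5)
The paper does not prove this lemma; it quotes it from \cite{den2014scaling} without proof, so there is no in-paper argument to compare with. Your overall route is the standard strong-Markov argument for stopping sets, and it works:
\begin{itemize}
\item approximate the region left of $\Pi$ from above by the minimal dominating dyadic step path $\Pi^{(n)}$;
\item decompose on its countably many values via $F_\pi=E_\pi\setminus\bigcup_{\pi'<\pi}E_{\pi'}$ (the $F_\pi$ partition the space, since the pointwise minimum of two dominating grid paths, on the smaller domain, again dominates $\Pi$);
\item use independence of $\mathrm{Left}_\pi(\mathcal H)$ from the remaining marks for deterministic $\pi$;
\item let $n\to\infty$.
\end{itemize}

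However, drop the ``refined claim''. That the gap marks are fresh given $(\Pi,\mathrm{Left}_\Pi(\mathcal H))$ is a special case of the lemma itself, and no finite iteration removes the gap, so as written that step is circular. It is also unnecessary. Write $\Phi:=f(\Pi)\,g(\mathrm{Left}_\Pi(\mathcal H))$. For a path $\rho$, let $\mathcal H^{\rho}$ (resp.\ $(\mathcal H')^{\rho}$) be the set of \emph{all} marks of $\mathcal H$ (resp.\ $\mathcal H'$) lying outside the region of $\mathrm{Left}_\rho$, with no truncation at the endpoint $t_\rho$. Then
\[
\mathbb E\big[\Phi\,h(\mathcal H^{\Pi^{(n)}})\big]=\sum_{\pi\in\mathcal P_n}\mathbb E\big[\Phi\,\mathds{1}_{F_\pi}\big]\,\mathbb E\big[h(\mathcal H^{\pi})\big]=\mathbb E\big[\Phi\,h((\mathcal H')^{\Pi^{(n)}})\big].
\]
This uses only that $\Phi\mathds{1}_{F_\pi}$ is $\sigma(\mathrm{Left}_\pi(\mathcal H))$-measurable and that $\mathcal H'$ is independent of $\mathcal H$. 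Using the untruncated complement matters. $\mathrm{Right}^+_{\Pi^{(n)}}$ contains marks in $(t_\Pi,t_{\Pi^{(n)}}]$ that are not in $\mathrm{Right}^+_\Pi$, so your decomposition ``$\mathrm{Right}^+_\Pi=\mathrm{Right}^+_\pi$ plus gap marks'' is not quite right. By contrast, $\mathrm{Right}^+_\Pi(\cdot)$ is just the restriction of $(\cdot)^{\Pi}$ to $[0,t_\Pi]$, a function of $\Pi$ and $(\cdot)^{\Pi}$.

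Two justifications need tightening.

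\textbf{The limit.} ``Vanishing intensity of the gap'' is not a valid bound for $\mathcal H$-marks, because the gap depends on $\mathcal H$. Use instead that, for the minimal grid paths, $\max\{\Pi^{(n)}(s-),\Pi^{(n)}(s)\}\downarrow\max\{\Pi(s-),\Pi(s)\}$ for every $s$ and $t_{\Pi^{(n)}}\downarrow t_\Pi$. So the regions decrease to that of $\Pi$; this is exactly where the $\max\{\pi(s-),\pi(s)\}$ convention is used. There are a.s.\ finitely many marks in the window of a local $h$, so $h(\mathcal H^{\Pi^{(n)}})\to h(\mathcal H^{\Pi})$ almost surely, and likewise for $\mathcal H'$. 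Dominated convergence then gives the lemma.

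\textbf{Measurability on $F_\pi$.} The claim that on $F_\pi$ the pair $(\Pi,\mathrm{Left}_\Pi(\mathcal H))$ is a function of $\mathrm{Left}_\pi(\mathcal H)$ cannot come from a generating-class argument. The events $\{\Pi\le\rho\}$ need not generate $\sigma(\Pi)$; for instance, they do not detect whether $\Pi$ jumps exactly at its terminal time. Instead, apply the RPRIP property with $\rho$ equal to a realised path:
\begin{itemize}
\item Let $\omega\in F_\pi$ and let $\omega'$ have the same marks in $\mathrm{Left}_\pi$. Then $\omega'\in F_\pi$ too.
\item Set $\rho:=\Pi(\omega)\le\pi$. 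The event $\{\Pi\le\rho\}\in\sigma(\mathrm{Left}_\rho(\mathcal H))\subseteq\sigma(\mathrm{Left}_\pi(\mathcal H))$ contains $\omega$, hence also $\omega'$, so $\Pi(\omega')\le\Pi(\omega)$.
\item By symmetry $\Pi(\omega')=\Pi(\omega)$.
\item On $F_\pi$ one has $\mathrm{Left}_\Pi(\mathcal H)=\mathrm{Left}_\Pi(\mathrm{Left}_\pi(\mathcal H))$.
\end{itemize}
Measurability then follows by a routine sections argument on the canonical space. With these repairs your proof is complete.
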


For when we treat the contact-and-barrier process, Definitions~\ref{def_LeftRightPlus} and~\ref{def_RPRIP} alongside with Lemma~\ref{lem_RPRIP_property} will suffice for our studies. However, for when we consider the multitype contact process, we will also require the following.

\section{Lemmas for renewal-type processes} \label{section_lemmasRenewal}

\paragraph{} In this section, we state two lemmas concerning renewal processes. The first, Lemma~\ref{lem_renewal}, provides a mechanism for extracting an i.i.d.\ sequence from a sequence of stopping times. Although abstract in formulation, it encapsulates the renewal-time strategy introduced in \cite{kuczek1989central} for the classical contact process. To clarify its role, we include an example immediately after the statement illustrating how it will be applied. The second result, Lemma~\ref{lem_CLT_RenewalProcess}, allows us to derive a central limit theorem from this embedded i.i.d. structure, provided these renewal times occur fast enough.

\begin{lemma}\label{lem_renewal}
	Let~$(\Omega,\mathcal F, \mathbb P)$ be a probability space with a filtration~$(\mathcal F_n)_{n \in \mathbb N_0}$. Let~$(\kappa_n)_{n \in \mathbb N_0}$ be a sequence of stopping times with respect to~$(\mathcal F_n)_n$, and~$(Y_n)_{n \in \mathbb N_0}$ be a stochastic process adapted to~$(\mathcal F_n)_n$ taking values on a measurable space~$(E,\mathcal E)$. Assume that~$(\kappa_n)_n$ and~$(Y_n)_n$ satisfy the following properties:
	\begin{itemize}
		\item[$\mathrm{(i)}$] $\hat \kappa_n := \kappa_n - n \ge 0$ for each~$n$;
		\item[$\mathrm{(ii)}$] if~$m < n$ and~$\kappa_m \ge n$, then~$\kappa_m \ge \kappa_n$;
		\item[$\mathrm{(iii)}$] for every bounded and measurable function~$g$ and every~$n \ge 1$, we have
			\begin{equation} \label{eq_property_iii}
				\begin{split}
					&\mathbb E[g(Y_n,\hat \kappa_n, Y_{n+1}, \hat \kappa_{n+1},\ldots )) \cdot \mathds{1}\{\kappa_n = \infty\} \mid \mathcal F_{n-1}] \\[.1cm]&\hspace{3cm} = \mathbb E[g(Y_0,\hat \kappa_0, Y_1,\hat \kappa_1,\ldots) \cdot \mathds{1}\{\kappa_0 = \infty\}] \text{ a.s.;}
				\end{split}
			\end{equation}
			\item[$\mathrm{(iv)}$] $\mathbb P(\kappa_0 = \infty) > 0$.
	\end{itemize}
	Then, almost surely there are infinitely many values of~$n \in \mathbb N_0$ such that~$\kappa_n = \infty$. Moreover, letting~$N_0 < N_1 < \ldots$ denote these values of~$n$ in increasing order, we have that the random sequences
	\begin{align*}
		(N_1-N_0,Y_{N_0},\ldots, Y_{N_1-1}),\; (N_2-N_1, Y_{N_1},\ldots, Y_{N_2-1}), \ldots
	\end{align*}
	are independent and identically distributed, all with the law of~$(N_1,Y_0,\ldots,Y_{N_1-1})$ conditioned on~$\{\kappa_0=\infty\}$.
\end{lemma}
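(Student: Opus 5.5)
The plan is to build the sequence $N_0<N_1<\cdots$ by successive "trials", in the spirit of Kuczek's argument. Set $\tau_0:=0$. Given a deterministic $m$, on the event $\{\kappa_m<\infty\}$ define the next trial index as $\kappa_m+1$; by (i) this exceeds $m$. Define recursively $T_0:=0$ and $T_{j+1}:=\kappa_{T_j}+1$ whenever $\kappa_{T_j}<\infty$, stopping at the first $j$ with $\kappa_{T_j}=\infty$. That index is $N_0$.

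\textbf{Step 1: the trial indices are stopping times, and $N_0=\min\{n:\kappa_n=\infty\}$.} The $T_j$ are stopping times because $\{T_{j+1}=n\}=\bigcup_{m<n}\{T_j=m,\kappa_m=n-1\}\in\mathcal F_{n-1}$.

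To see that the first successful trial is the first $n$ with $\kappa_n=\infty$, I use (ii). Suppose $T_j=m$, $\kappa_m<\infty$, and $m<n\le\kappa_m$. Then $\kappa_m\ge n$, so (ii) gives $\kappa_n\le\kappa_m<\infty$. Hence no index skipped between trials has $\kappa_n=\infty$. Moreover, indices $n<m$ fail as well, by induction along the trials. So the trials hit exactly the indices with $\kappa=\infty$ in increasing order.

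\textbf{Step 2: each trial succeeds with probability $p:=\mathbb P(\kappa_0=\infty)$, independently of the past.} Take $g\equiv1$ in (iii) at the stopping time $T_j$. This requires a stopping-time version of (iii): decompose over $\{T_j=n\}\in\mathcal F_{n-1}$ and apply (iii) at each deterministic $n$, which gives
\[
\mathbb P(\kappa_{T_j}=\infty\mid\mathcal F_{T_j-1})=p.
\]
Thus the number of failed trials before success is dominated by a geometric variable, and $N_0<\infty$ a.s. by (iv).

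\textbf{Step 3: the law after $N_0$ and regeneration.} For general $g$, decompose over $\{N_0=n\}=\{T_j=n \text{ for some } j\}\cap\{\kappa_n=\infty\}$. The first event is in $\mathcal F_{n-1}$, so (iii) gives
\[
\mathbb E[g(Y_{N_0},\hat\kappa_{N_0},\dots)\,;\,A\cap\{N_0=n\}]=\mathbb P(A\cap\{T_\cdot=n\})\,\mathbb E[g(Y_0,\hat\kappa_0,\dots)\mathds 1\{\kappa_0=\infty\}]
\]
for $A\in\mathcal F_{n-1}$. So the shifted sequence $(Y_{N_0+k},\hat\kappa_{N_0+k})_{k\ge0}$ is independent of $\mathcal F_{N_0-1}$ and has the law of $(Y_k,\hat\kappa_k)_k$ conditioned on $\{\kappa_0=\infty\}$.

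The key observation is that $N_1-N_0$ is a measurable function of this shifted sequence. It is the first $k\ge1$ with $\kappa_{N_0+k}=\infty$, i.e.\ $\hat\kappa_{N_0+k}=\infty$, and by Step 1 this $k$ can be found from the $\hat\kappa$'s alone. Hence the block $(N_1-N_0,Y_{N_0},\dots,Y_{N_1-1})$ has the law of $(N_1,Y_0,\dots,Y_{N_1-1})$ conditioned on $\{\kappa_0=\infty\}$.

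Iterating gives the i.i.d. structure of the blocks. The finiteness of each $N_{k+1}-N_k$ comes from applying Step 2 after $N_k$, which yields infinitely many indices with $\kappa_n=\infty$. For independence, one notes that the block ending at $N_k$ is measurable with respect to $\mathcal F_{N_k-1}$ joined with the variables up to $N_k-1$.

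\textbf{Main obstacle.} I expect the hard part to be the measurability bookkeeping. Adaptedness gives that $Y_n$ is $\mathcal F_n$-measurable, but not necessarily $\mathcal F_{n-1}$-measurable, so the earlier block $Y_{N_{k-1}},\dots,Y_{N_k-1}$ must be shown to lie in $\mathcal F_{N_k-1}$. It does, since $Y_m$ for $m\le N_k-1$ is $\mathcal F_{N_k-1}$-measurable on $\{N_k=n\}$.

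The more delicate point is that the event $\{N_k=n\}$ itself involves $\kappa_n=\infty$, which is not in $\mathcal F_{n-1}$. The fix is to separate $\{n\text{ is a trial after }N_{k-1}\}\in\mathcal F_{n-1}$ from $\{\kappa_n=\infty\}$, and to absorb the indicator $\mathds 1\{\kappa_n=\infty\}$ into $g$. This is exactly the form in which (iii) is stated.
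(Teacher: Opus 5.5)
Your treatment of the first block is correct and parallels the paper. Trials from a deterministic start, with $g\equiv 1$ in (iii), give geometric failure counts (cf.\ Claim~\ref{claim_infinitelyManyInfinity}). Writing $\{N_0=n\}=\{n \text{ is a trial from } 0\}\cap\{\kappa_n=\infty\}$, with the first event in $\mathcal F_{n-1}$, lets (iii) identify the law of the post-$N_0$ sequence.

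The gap is in the iteration, exactly where you flag it, and the fix you propose rests on a false claim. The event $\{n \text{ is a trial after } N_{k-1}\}$ is in general \emph{not} in $\mathcal F_{n-1}$, because it encodes the events $\{\kappa_{N_j}=\infty\}$, $j<k$, which time $n-1$ does not decide. Already $\{1\text{ is a trial after } N_0\}=\{N_0=0\}=\{\kappa_0=\infty\}$. Take $S$ a simple random walk with up-probability $q\in(\tfrac12,1)$, $\mathcal F_n=\sigma(S_1,\dots,S_{n+1})$, $\kappa_n=\inf\{k\ge n: S_{k+1}=S_n-1\}$, and $Y_n$ trivial. Then (i)--(iv) all hold, yet $\{\kappa_0=\infty\}\notin\mathcal F_0$.

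For the same reason, the earlier blocks, whose lengths record which $\kappa_m$ are infinite, are not $\mathcal F_{n-1}$-measurable on $\{N_k=n\}$. If instead ``the variables up to $N_k-1$'' is meant to include $\kappa_0,\dots,\kappa_{n-1}$, then (iii) alone says nothing about independence from that larger $\sigma$-algebra.

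The same example shows that ``applying Step~2 after $N_k$'' fails as written. Trials restarted at the random index $N_k+1$ violate $\{T'_j=n\}\in\mathcal F_{n-1}$. The first such trial succeeds with probability $\mathbb P(\kappa_1=\infty\mid\kappa_0=\infty)=q\neq p=(2q-1)/q$. This part is easy to repair: run Step~2 from every deterministic $m$, which gives $\mathbb P(\exists n\ge m:\kappa_n=\infty)=1$.

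The missing idea is the consequence of (ii) you use in Step~1, read in the other direction. If $m<n$ and $\kappa_n=\infty$, then $\kappa_m\ge n$ forces $\kappa_m\ge\kappa_n=\infty$. Hence, on $\{\kappa_n=\infty\}$:
\begin{itemize}
\item $\{\kappa_m=\infty\}$ coincides with $\{\kappa_m\ge n\}$;
\item $\{\kappa_m<\infty\}$ coincides with $\{\kappa_m\le n-1\}$.
\end{itemize}
Both of these lie in $\mathcal F_{n-1}$, since $\kappa_m$ is a stopping time. This gives Claim~\ref{cl_cons_ii}: $\{N_0=n_0,\dots,N_{k-1}=n_{k-1},N_k=n\}$ is an $\mathcal F_{n-1}$-event intersected with $\{\kappa_n=\infty\}$.

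Write $\Psi_j:=(N_j-N_{j-1},Y_{N_{j-1}},\dots,Y_{N_j-1})$. Then for bounded measurable $h$, $h(\Psi_1,\dots,\Psi_k)\,\mathds{1}\{N_k=n\}$ is measurable with respect to $\sigma(\mathcal F_{n-1},\{\kappa_n=\infty\})$. Combine (iii) with the independence of $\{\kappa_n=\infty\}$ from $\mathcal F_{n-1}$ (take $g\equiv 1$). This shows that, conditionally on that $\sigma$-algebra and on $\{\kappa_n=\infty\}$, $(Y_{n+j},\hat\kappa_{n+j})_{j\ge0}$ has the law of $(Y_j,\hat\kappa_j)_j$ under $\mathbb P(\cdot\mid\kappa_0=\infty)$; this is Claim~\ref{cl_alt_iii}. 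Since $\Psi_{k+1}$ is a fixed function of that shifted sequence on $\{N_k=n\}$, summing over $n$ gives independence of $\Psi_{k+1}$ from $(\Psi_1,\dots,\Psi_k)$ and closes your induction.
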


\begin{figure}[H]
\begin{center}
\setlength\fboxsep{0.1cm}
\setlength\fboxrule{0.01cm}
\fbox{\includegraphics[width=0.7\textwidth]{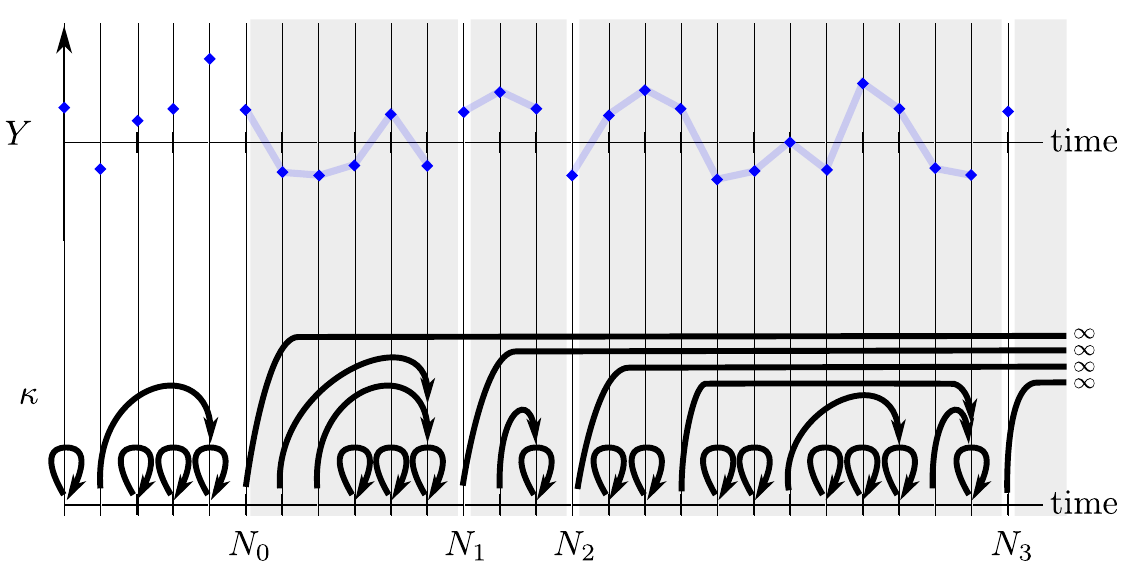}}
\end{center}
	\caption{Schematic representation of the process~$(Y_n)_n$, the stopping times~$(\kappa_n)_n$, and the random times~$(N_k)_k$ from Lemma~\ref{lem_renewal}. In the graph representing~$(\kappa_n)_n$, an arrow from~$m$ to~$n$ indicates that~$\kappa_m=n$.}
\end{figure}

\begin{example}
    Let $(\Omega,\mathcal F,\mathbb P)$ be a probability space on which is defined a one-dimensional classical contact process~$\zeta_t^h$ with parameter~$\lambda>\lambda_c$ started from the configuration $\mathds{1}_{(-\infty,0]}$. For each $t\ge0$, let $\mathcal F_t$ be the natural filtration generated by the process up to time $t$, and recall $R_t^h=\sup\{x\in\mathbb Z:\zeta_t^{h}(x)=1\}$. For each $n\in\mathbb N$, set:
\begin{equation}
\kappa_n=\inf\big\{ s\ge n :
(R_n^h,n)\not\rightsquigarrow (y,s)\ \text{for all } y\in\mathbb Z \big\}.
\end{equation}

In words, $\kappa_n$ is the first time at which all descendants of the rightmost particle at time $n$ have died out. By definition, $\kappa_n\ge n$, and therefore $\hat{\kappa}_n\ge0$, which verifies~(i). Moreover, since $\lambda>\lambda_c$, the process started from a single occupied site survives with positive probability, implying $\mathbb P(\kappa_0=\infty)>0$, which establishes~(iv).

Verification of (ii) requires a brief argument. If $\kappa_m=\infty$, the claim is immediate. Suppose instead that $\kappa_m$ is finite but larger than~$n$. Then, there exists $y\in\mathbb Z$ such that $(R_m^h,m)\rightsquigarrow (y,n)$. Since sites can only give birth to nearest-neighbours, a crossing-paths argument implies $(R_m^h,m)\rightsquigarrow (R_n^h,n)$. If there existed $z\in\mathbb Z$ with $(R_n^h,n)\rightsquigarrow (z,\kappa_m)$, we would obtain $(R_m^h,m)\rightsquigarrow (z,\kappa_m)$ by a concatenation of infection paths, contradicting the definition of $\kappa_m$. Hence $\kappa_n\le \kappa_m$.

Let also $Y_n=(R_s^h)_{n\le s<n+1}$ denote the process recording the trajectory of the rightmost particle during the time interval $[n,n+1)$. The verification of (iii) is more involved and relies on the main idea introduced in \cite{kuczek1989central}. Informally, once the rightmost particle survives indefinitely, all future rightmost particles are its descendants by a crossing-paths argument. Hence, this particle effectively governs the future evolution of the interface. Consequently, the process observed after such a time behaves, in distribution, as a fresh copy of the original process conditioned on the rightmost particle at the initial time also having this property.

\end{example}

\begin{lemma}\label{lem_CLT_RenewalProcess}
    Consider an increasing sequence of non-negative random variables~$\left(\tau^n\right)_{n\in \mathbb N_0}$ defined under some probability measure~$\mathbf{P}$ and assume that~$\tau_0$ is finite almost surely. Associated to this sequence, consider a sequence of real-valued stochastic processes~$(Z^0_t)_{0\leq t<\tau_0}$ and~$(Z^n_t)_{\tau^{n-1}\leq t<\tau^n}$ for~$n\geq 1$. For a given~$t\in[0,\infty)$, let~$X(t)=Z^0_t$ for~$t\in[0,\tau_0)$ and ~$X(t) = Z^{n}_t$ where $n$ is such that~$t\in[\tau^{n}, \tau^{n+1})$ for~$n\geq 1$.
    
    Assume that the following is true:
    \begin{enumerate}
        \item $(Z^0_t)_{0\leq t <\tau^0}$ is independent of~$\left(\tau^n,(Z^n_t)_{\tau^{n-1}\leq t<\tau^n}\right)_{n\geq 1}$
        \item The increments~$\left((\tau^{n}-\tau^{n-1}),(Z^{n}_t)_{\tau^{n-1}\leq t< \tau^{n}}\right)_{n\geq 1}$ are i.i.d.
        \item For all~$n\geq 2$, it follows that:
        \begin{equation*}
            \mathbf P\left(\max\{\tau^{n}-\tau^{n-1},\sup\{|Z^{n}_t - Z^{n}_{\tau_n}|\,:\,\tau^{n-1}\leq t< \tau^{n}\}\}>x\right)\leq Ce^{-cx^p}
        \end{equation*}
        for some choice of constants~$c,C,p>0$
    \end{enumerate}
    
    Then, there exists real constants~$\mu$ and~$\sigma>0$ such that~$\frac{X(t)-\mu t}{\sigma \sqrt{t}}\xrightarrow[t \to \infty]{\mathrm{(dist)}} \mathcal{N}(0,1)$ as~$t\rightarrow\infty$. 
\end{lemma}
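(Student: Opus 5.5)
We prove Lemma~\ref{lem_CLT_RenewalProcess} with the classical renewal-reward argument. It reduces the continuous-time statement to a CLT for i.i.d.\ sums, plus a random-index (Anscombe-type) argument.

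\textbf{Step 1: set up the i.i.d.\ increments.} For $n\ge 1$ put $T_n:=\tau^n-\tau^{n-1}$ and $\Delta_n:=Z^n_{\tau^n-}-Z^{n}_{\tau^{n-1}}$. We take $X$ to be continuous in the sense that $Z^{n+1}_{\tau^n}=Z^n_{\tau^n-}$ up to an increment controlled by hypothesis~3; otherwise we work directly with the values $X(\tau^n)$. By hypothesis~2, the pairs $(T_n,\Delta_n)_{n\ge1}$ are i.i.d. By hypothesis~3 they have stretched-exponential tails, so all moments are finite. In particular $m:=\mathbf E[T_1]\in(0,\infty)$, since $T_1>0$ by strict increase. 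Define
\[
\mu:=\frac{\mathbf E[\Delta_1]}{m},\qquad \xi_n:=\Delta_n-\mu T_n .
\]
The $\xi_n$ are centred i.i.d.\ with finite variance $v:=\mathrm{Var}(\xi_1)$, and we set $\sigma^2:=v/m$. Then $X(\tau^n)-\mu\tau^n = X(\tau^0)-\mu\tau^0+\sum_{k\le n}\xi_k$, where the first term is an a.s.\ finite random variable independent of the rest by hypothesis~1.

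\textbf{Step 2: pass to a random index.} Let $N(t):=\max\{n:\tau^n\le t\}$. By the strong law of large numbers, $N(t)/t\to 1/m$ almost surely. Write
\[
X(t)-\mu t=\bigl(X(\tau^{N(t)})-\mu\tau^{N(t)}\bigr)+\bigl(X(t)-X(\tau^{N(t)})\bigr)-\mu\bigl(t-\tau^{N(t)}\bigr).
\]
The last two terms are bounded by $(1+|\mu|)\,M_{N(t)+1}$, where $M_n:=\max\{T_n,\sup_t|Z^n_t-Z^n_{\tau^{n-1}}|\}$. By hypothesis~3 and a union bound, $\max_{n\le 2t/m}M_n=O((\log t)^{1/p})$ with probability tending to one, so these terms are $o(\sqrt t)$. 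The initial term $X(\tau^0)-\mu\tau^0$ is tight, so dividing it by $\sqrt t$ sends it to zero.

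\textbf{Step 3: apply Anscombe's theorem.} The random sum $\sum_{k\le N(t)}\xi_k$ satisfies the CLT with normalisation $\sqrt{v\,t/m}$: Donsker's invariance principle combined with $N(t)/t\to1/m$ a.s.\ gives Anscombe's random-index CLT. By Slutsky, this yields the claimed convergence with the above $\mu$ and $\sigma$.

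\textbf{Main obstacle: showing $\sigma>0$, i.e.\ that $\xi_1$ is not a.s.\ constant.} If $v=0$ then $\Delta_1=\mu T_1$ a.s., and the statement as written, which requires $\sigma>0$, would fail. So nondegeneracy has to be used or verified. My plan is to check in the applications that $(T_1,\Delta_1)$ does not lie on a line through the origin: for example, $\Delta_1$ takes integer values while $T_1$ has a continuous component, or two events of positive probability produce different ratios $\Delta_1/T_1$. I would then record this as an implicit nondegeneracy assumption of the lemma, because it does not follow from hypotheses~1--3 alone.
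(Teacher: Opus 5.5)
Your proof is correct, and it takes a genuinely different route from the paper's.

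\textbf{How the paper argues, and where it slips.} The paper fixes the deterministic index $n_0=\lfloor t/\mu_1\rfloor$. It applies the ordinary CLT to the \emph{uncentred} block increments $X(\tau^n)-X(\tau^{n-1})$ for $n\le n_0$, and discards $X(\tau^{N(t)})-X(\tau^{n_0})$ using Kolmogorov's inequality. That last step overlooks the drift. The discarded difference is $\mu_2\,(N(t)-n_0)+O_{\mathbf P}(t^{1/4})$. By the renewal CLT, $N(t)-n_0$ is of order $\sqrt t$ whenever $\mathrm{Var}(T_1)>0$, so the difference is not $o(\sqrt t)$ unless $\mu_2=0$. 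Accordingly, the variance produced by the paper's term (3), namely $\mathrm{Var}(\Delta_1)/\mu_1$, is wrong in general. For example, take $T_1$ uniform on $\{1,3\}$ and $\Delta_1\equiv 2$. The paper's formula gives variance $0$, whereas $X(t)-t\approx 2N(t)-t$ has variance asymptotic to $t/2$. That matches your $\mathrm{Var}(\Delta_1-\mu T_1)/\mathbf E[T_1]=1/2$.

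\textbf{What your route buys.} Your centring $\xi_n=\Delta_n-\mu T_n$ is exactly what makes the random index harmless. Anscombe's theorem then concludes; its standard proof is the paper's Kolmogorov-inequality step, applied to \emph{centred} summands. The paper's decomposition is marginally more elementary, but as written it is valid only when $\mu=0$ or $T_1$ is deterministic. Yours gives the correct $\mu$ and $\sigma$ in general.

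\textbf{A small point on block boundaries.} Jumps of $X$ at the renewal times are not controlled by hypothesis~3, so "up to an increment controlled by hypothesis~3" does not hold. Your fallback is the right reading: set $\Delta_n:=X(\tau^n)-X(\tau^{n-1})$ and interpret hypothesis~2 as making $(T_n,\Delta_n)$ i.i.d. This is also how the paper uses it.

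\textbf{On $\sigma>0$.} Your caveat is correct, and the paper's proof does not address it. Hypotheses~1--3 hold for $\tau^n=n+1$ and $X\equiv 0$, where no $\sigma>0$ works, so the lemma as stated is false. A nondegeneracy condition such as $\mathbf P(\Delta_1\neq \mu T_1)>0$ must be added to the lemma and verified in the applications, as you propose.
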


\section{Contact-and-barrier process} \label{sec_CBP}

\paragraph{}This section contains the most novel contribution of this work, which is to construct the interface process through what we call the \emph{patchwork construction}. This is a step-by-step procedure in which the process is built up to a certain stopping time, after which we sew together successive pieces to obtain the full interface process for all times. The main advantage of this approach is that it leads to the definition of an observable called \emph{depth}, which depends on this construction. This, in turn, enables us to identify an embedded i.i.d. sequence of times at which the increments of the interface position are themselves i.i.d.

This section is organized as follows. In Section~\ref{GraphicalConstructionCBP}, we introduce the contact-and-barrier process via an augmented graphical construction. Section~\ref{sec_Subadditivity} proves Proposition~\ref{prop_real_speed_barrier} and establishes regularity properties of the barrier’s asymptotic speed. Section~\ref{section_PatchworkCBP} contains the core argument, where we develop the patchwork construction and identify an embedded i.i.d.\ structure in time and space. These ingredients culminate in the proofs of Theorems~\ref{thmCBP_Speed}, \ref{ThmCBP_Tightness}, and \ref{ThmCBP_CLT_Speed}, presented in Section~\ref{sec_proofsThmsCBP}.

\subsection{Augmented graphical construction} \label{GraphicalConstructionCBP}

\paragraph{} In this section, we describe a Poisson construction for the contact-and-barrier process. For the classical contact process, one defines the notion of infection path (which depends on the Poisson processes in the graphical construction, but \emph{not} on the initial configuration), and then uses these infection paths in conjunction with the initial configuration to define the process, by means of the prescription
\[\zeta_t(x) = \mathds{1}\{\exists y \in \mathbb Z:\; \zeta_0(y)=1,\; (y,0) \rightsquigarrow (x,t) \},\quad x \in \mathbb Z,\; t \ge 0.\]

The construction we are about to give for the contact-and-barrier process will rely on the same family~$\mathcal H$ as the one for the classical process, and in addition, on an \emph{flight plan}~$\mathcal I$, describing the (attempted) jumps of the barrier (Definition~\ref{def_BfreeInfPath} below). We will use the expression \emph{augmented graphical construction} to refer to the pair~$(\mathcal H, \mathcal I)$. Infection paths of~$\mathcal H$  will not always be effective in carrying the infection, because they may overlap with the barrier. With this in mind, we will introduce a notion of \emph{barrier-free infection path}, which is an infection path that carries the infection by avoiding the barrier. It would then be tempting to mimic the above formula by writing
\begin{equation}\label{eq_posteriori}\begin{split}
&\beta_t(x)=1 \text{ if and only if }\\&\exists y \in \mathbb Z:\; \beta_0(y)=1 \text{ and $\exists$ a barrier-free infection path from $(y,0)$ to $(x,t)$}.
\end{split}
\end{equation}

While this formula will turn out to be correct \textit{a posteriori}, it cannot be used to \emph{construct} the process. The issue is one of circularity: in order to know whether a path is barrier-free, we need to ask whether it ever overlaps with the barrier, which in turn requires the process to already be constructed. To circumvent this issue, we construct the process in time steps, delimited by the times of attempted jumps of the barrier.

We start by defining the {flight plan} for the barrier. This is a collection~$\mathcal I = (J_{\leftarrow},(J_{\rightarrow},\mathfrak{M}))$, where:
\begin{itemize}
    \item $J_\leftarrow$ and~$J_\rightarrow$ are independent Poisson point process on~$[0,\infty)$ with intensities~$r_\leftarrow$ and~$r_{\rightarrow}:=\max\{r^0_\rightarrow, r^1_\rightarrow\}$, respectively;
    \item $\mathfrak{M}=(\mathfrak{M}(t):t \in J_\rightarrow)$ are random marks assigned to arrivals of~$J_\rightarrow$, where marks are independent and uniformly distributed on~$[0,1]$.
\end{itemize}

Their effect will be:
\begin{itemize}
	\item for each~$t \in J_{\leftarrow}$, the barrier jumps to the left at time~$t$; 
	\item for each~$t \in J_{\rightarrow}$ such that the site at the right of the barrier is empty at time~$t-$, the barrier jumps to the right at time~$t$ if~$\mathfrak{M}(t) \le r^0_\rightarrow/r_\rightarrow$; 
	\item for each~$t \in J_{\rightarrow}$ such that the site at the right of the barrier is occupied at time~$t-$, the barrier jumps to the right if~$\mathfrak{M}(t) \le r^1_\rightarrow/r_\rightarrow$ (overwriting the particle that was occupying that site).
\end{itemize}

Consider also~$\mathcal{H}$ a graphical construction for the contact process with rate~$\lambda$ on~$\mathbb{Z}\times[0,\infty)$. We will now construct the process for a given initial configuration~$\beta_0\in \mathcal{B}$. Let~$\mathcal{J} = J_{\leftarrow}\cup J_{\rightarrow}$ be the potential jump times for the barrier. Let~$\sigma_0:=0$ and inductively, define~$\sigma_{k+1}:=\inf\{t> \sigma_k\,:\,t\in\mathcal{J}\}$ 

Assume that the process is built until time~$\sigma_k$ for some~$k\in\mathbb{N}_0$. 
We define the process on~$(\sigma_k,\sigma_{k+1})$ by letting the barrier stand still and letting the contact process evolve on the right of it, so that we set, for each~$s \in [\sigma_k,\sigma_{k+1})$:
\begin{equation*}
    \beta_s(x) = 
    \begin{cases}
        -1 & \text{if } x = B_{\sigma_k} \\
        0 & \text{if } x < B_{\sigma_k} \\
        1 & \text{if } x > B_{\sigma_k} \text{ and there exists an infection path } \gamma: [\sigma_k, s] \rightarrow \mathbb{Z} \text{ s.t.} \\
          & \qquad \beta_{\sigma_k}(\gamma(\sigma_k)) = 1, \ \gamma(s) = x, \text{ and } \gamma(u) > B_{\sigma_k} \text{ for all } u \in [\sigma_k, s] \\
        0 & \text{otherwise}
    \end{cases}
\end{equation*}

To define the process at time~$\sigma_k$, we separate it into three possibilities:

\begin{itemize}
    \item If~$\sigma_k\in J_{\leftarrow}$, then the barrier jumps to the left, that is, we let:
    \begin{eqnarray}
        \beta_{\sigma_k}(x) = \begin{cases}
            -1 &\text{ if } x=B_{\sigma_k-}-1; \\
            0 &\text{ if }x=B_{\sigma_k-} \text{ or }x\leq B_{\sigma_k-}-2; \\
            \beta_{\sigma_k-}(x) &\text{ if }x\geq B_{\sigma_k-}+1.
        \end{cases}
    \end{eqnarray}
    \item If either~$[\sigma_k\in J_{\rightarrow},\;\beta_{\sigma_k-}(B_{\sigma_k-}+1)=0$,\;~$\mathfrak{M}(\sigma_k) \le r^0_\rightarrow/r_\rightarrow$] or~[$\sigma_k\in J_{\rightarrow},\;\beta_{\sigma_k-}(B_{\sigma_k-}+1)=1,\;\mathfrak{M}(\sigma_k) \le r^1_\rightarrow/r_\rightarrow$], then the barrier jumps to the right, that is, we let:
    \begin{eqnarray}
        \beta_{\sigma_k}(x) = \begin{cases}
            0 &\text{ if }x\leq B_{\sigma_k-}; \\
            -1 &\text{ if }x=B_{\sigma_k-}+1; \\
            \beta_{\sigma_k-}(x) &\text{ if }x\geq B_{\sigma_k-}+2.
        \end{cases}
    \end{eqnarray}
\end{itemize}

This constructs the process for the time interval~$[0,\sigma_{k+1}]$, so recursively, it makes it well-defined for all times.

\begin{definition}[Barrier-free infection path]\label{def_BfreeInfPath}
Let~$\beta_0 \in \mathcal B$ and assume that the contact-and-barrier process~$(\beta_t)_{t \ge 0}$ starts from~$\beta_0$ and is obtained from the augmented graphical construction~$(\mathcal H, \mathcal I)$ introduced above. 
A \emph{barrier-free infection path} for this process is an infection path~$\gamma:[s,t] \to \mathbb Z$ for~$\mathcal H$ such that~$\gamma(r) \neq B_r$ for all~$r \in [s,t]$. 
\end{definition}

Since~$\beta_0\in\mathcal B$, one could replace the condition~$\gamma(r)\neq B_r$ for all~$r\in[s,t]$ in Definition~\ref{def_BfreeInfPath} by the condition~$B_r <\gamma(r)$ for all~$r\in[s,t]$. With this observation, it is easy to see the following: if~$\gamma:[s,t]\rightarrow\mathbb{Z}$ is a barrier-free infection path and~$\gamma':[s,t]\rightarrow\mathbb{Z}$ is an infection path completely contained on~$\mathrm{Right}(\gamma)$ (i.e., such that~$\gamma(u)\leq \gamma'(u)$) for all~$u\in [s,t]$), then~$\gamma'$ is also a barrier-free infection path. 

For the rest of this subsection, we assume that the contact-and-barrier process is obtained from an augmented graphical construction~$(\mathcal H, \mathcal I)$.

\begin{lemma}\label{OccupiediffActivePath}
    For every~$s<t$ and every~$x \in \mathbb Z$, we have~$\beta_t(x)=1$ if and only if there exists~$y \in \mathbb Z$ such that~$\beta_s(y)=1$ and there exists a barrier-free infection path from~$(y,s)$ to~$(x,t)$. In particular,~\eqref{eq_posteriori} holds.
\end{lemma}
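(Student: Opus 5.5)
We will prove the statement by induction over the barrier's attempted jump times $\sigma_0<\sigma_1<\ldots$, which almost surely form a locally finite set. It is enough to handle the case where $[s,t]$ meets only one inter-jump interval, possibly with an endpoint at a jump time. The general case then follows by concatenation. The ``only if'' direction chains barrier-free paths: if $\beta_t(x)=1$, go back to an occupied site at the previous jump time and repeat. The ``if'' direction cuts a barrier-free path at the jump times. Taking $s=0$ then gives~\eqref{eq_posteriori}.

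\textbf{Step 1 (inside one interval).} Let $\sigma_k\le s<t<\sigma_{k+1}$. On this interval the barrier sits at $b:=B_{\sigma_k}$, and by construction $\beta_t(x)=1$ iff $x>b$ and some infection path $\gamma:[\sigma_k,t]\to\mathbb Z$ stays in $(b,\infty)$, starts at an occupied site at time $\sigma_k$ and ends at $x$. On $[\sigma_k,t]$, being barrier-free means exactly staying strictly to the right of $b$. So for $s=\sigma_k$ the claim is immediate.

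For general $s$ in the interval:
\begin{itemize}
\item \emph{Only if.} Restrict the path $\gamma$ to $[s,t]$. Its restriction to $[\sigma_k,s]$ shows $\beta_s(\gamma(s))=1$, so $y:=\gamma(s)$ works.
\item \emph{If.} Take an occupied $(y,s)$ and a barrier-free path from it to $(x,t)$. Since $\beta_s(y)=1$, there is a path in $(b,\infty)$ from an occupied site at time $\sigma_k$ to $(y,s)$. Concatenating the two gives the required path on $[\sigma_k,t]$.
\end{itemize}
Infection paths are càdlàg, and concatenation at time $s$ is legitimate as long as $s$ is not a death mark for $y$.

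\textbf{Step 2 (crossing a jump time $\sigma_k$).} We must check that
\[
\beta_{\sigma_k}(x)=1 \iff \beta_{\sigma_k-}(x)=1 \text{ and } x\neq B_{\sigma_k}.
\]
Since $x\neq B_{\sigma_k}$ is the same as $x>B_{\sigma_k}$, this is what a barrier-free path of zero duration at time $\sigma_k$ requires. It follows from the three cases of the definition:
\begin{itemize}
\item for a left jump, all sites $\ge B_{\sigma_k-}+1$ are copied;
\item for a right jump, the site $B_{\sigma_k-}+1$ is overwritten by the barrier and the rest is copied;
\item for a suppressed jump, nothing changes.
\end{itemize}
Almost surely no death mark or arrow falls at $\sigma_k$, since the Poisson processes are independent.

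Combining, for $\sigma_{k-1}\le s<\sigma_k<t<\sigma_{k+1}$, a barrier-free path from $(y,s)$ to $(x,t)$ gives the following chain. The path reaches $(\gamma(\sigma_k-),\sigma_k-)$, so $\beta_{\sigma_k-}(\gamma(\sigma_k-))=1$ by Step 1 with left limits. Barrier-freeness at $\sigma_k$ gives $\gamma(\sigma_k)=\gamma(\sigma_k-)>B_{\sigma_k}$, and Step 2 then gives $\beta_{\sigma_k}(\gamma(\sigma_k))=1$. Step 1 on $[\sigma_k,t]$ then yields $\beta_t(x)=1$. The converse runs backward, using Step 1 and Step 2 to build a barrier-free path piece by piece.

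\textbf{Main obstacle.} The main obstacle is careful bookkeeping of left limits and edge cases. The process on $[\sigma_k,\sigma_{k+1})$ is defined as a right-continuous object, so $\beta_{\sigma_{k+1}-}$ has to be identified as the left limit of the interval formula. This requires that, almost surely, the finitely many relevant marks do not coincide with the $\sigma_k$. We also need that only finitely many jump times lie in $[s,t]$, so the induction terminates; this is standard because $\mathcal J$ is Poisson.
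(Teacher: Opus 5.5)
Your proposal is correct and takes the same route as the paper: induction over the attempted jump times $\sigma_k$, using the interval-wise definition of the process and concatenating barrier-free paths across each $\sigma_k$. Your Step~2, the check that $\beta_{\sigma_k}(x)=1 \iff \beta_{\sigma_k-}(x)=1$ and $x\neq B_{\sigma_k}$, together with the left-limit bookkeeping, spells out the jump-time details that the paper's one-line induction leaves implicit.
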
 

\begin{proof}
This is readily checked by induction: we assume that~\eqref{eq_posteriori} has been proved for all~$s,t$ with~$s < t \le \sigma_k$ and all~$x$, and using concatenation of barrier-free paths from time~$0$ to~$\sigma_k$ with barrier-free paths from~$\sigma_k$ to~$t \in (\sigma_k,\sigma_{k+1}]$, we obtain the induction step.
\end{proof}

The following definition introduces a partial order on the space of configurations for the contact-and-barrier processes. In the subsequent lemma, we demonstrate that this order is preserved under the dynamics.

\begin{definition}
    Let~$\beta, \beta'\in\mathcal B$, and let~$x$ and~$x'$ be the locations of the barrier in~$\beta$ and~$\beta'$, respectively. We say that~$\beta \lesssim\beta '$ if~$\{y: \beta(y)=1\} \subseteq \{y: \beta'(y)=1\}$ and~$x \ge x'$. 
\end{definition}

\begin{lemma}\label{prop_coupling}
Assume that~$r_{\rightarrow}^1\leq r_{\rightarrow}^0$. Let~$\beta, \beta' \in \mathcal B$ with~$\beta \lesssim \beta'$. Let~$(\beta_t)_{t\geq 0}$ and~$(\beta_t')_{t\geq 0}$ be contact-and-barrier processes constructed using the same augmented graphical construction started from~$\beta$ and~$\beta'$, respectively. Then,~$\beta_t\lesssim \beta_t'$ for all~$t\geq 0$.
\end{lemma}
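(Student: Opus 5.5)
We argue by induction over the successive event times of the common augmented graphical construction, in the same way the process itself was built. Let $\mathcal J=J_\leftarrow\cup J_\rightarrow$ with ordered points $\sigma_1<\sigma_2<\cdots$, and set $\sigma_0=0$. Suppose $\beta_{\sigma_k}\lesssim\beta'_{\sigma_k}$, meaning $B_{\sigma_k}\ge B'_{\sigma_k}$ and the occupied set of $\beta_{\sigma_k}$ is contained in that of $\beta'_{\sigma_k}$. We show the order persists on $[\sigma_k,\sigma_{k+1})$ and then at $\sigma_{k+1}$. The case $k=0$ is the hypothesis.

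\emph{Between jumps.} On $[\sigma_k,\sigma_{k+1})$ both barriers are frozen, so $B_s\ge B'_s$ holds trivially. Take $x$ with $\beta_s(x)=1$. By the explicit construction there is an infection path $\gamma:[\sigma_k,s]\to\mathbb Z$ with $\beta_{\sigma_k}(\gamma(\sigma_k))=1$, $\gamma(s)=x$, and $\gamma(u)>B_{\sigma_k}\ge B'_{\sigma_k}$ throughout. Then $\beta'_{\sigma_k}(\gamma(\sigma_k))=1$, and $\gamma$ also stays strictly to the right of the primed barrier. Hence $\beta'_s(x)=1$, and the order holds on the whole interval, including at $\sigma_{k+1}-$.

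\emph{At a jump time $t=\sigma_{k+1}$.} Write $b=B_{t-}\ge b'=B'_{t-}$. We split into cases.

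\textbf{Case $t\in J_\leftarrow$.} Both barriers move one step left, so their order is preserved. Particles at sites $\ge b+1$ (respectively $\ge b'+1$) are unchanged, and every site $\le b$ is empty in $\beta_t$. Inclusion of the occupied sets therefore persists.

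\textbf{Case $t\in J_\rightarrow$ and only the primed barrier jumps.} Then $b'$ increases to $b'+1$. If $b>b'$ we still have $b\ge b'+1$. The only particle removed in $\beta'$ sits at $b'+1\le b$, and $\beta_t$ is empty there, so inclusion holds.

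\textbf{Case $t\in J_\rightarrow$, unprimed barrier jumps, primed barrier stays.} Then $B_t=b+1>b'$. The unprimed process only loses particles while the primed one loses none, so inclusion holds.

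\textbf{Case both barriers jump.} If $b>b'$, then $b\ge b'+1$ gives $b+1\ge b'+1$. The primed process deletes only a particle at $b'+1\le b+1$, and $\beta_t$ is empty at every site $\le b+1$. If $b=b'$, both barriers move to the same site and both processes delete the particle there, so inclusion persists.

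\textbf{The key case: $b=b'$, unprimed barrier stays, primed barrier jumps.} This is the only case that could break the order, and it is where the hypothesis $r^1_\rightarrow\le r^0_\rightarrow$ enters. Both processes read the same mark $\mathfrak M(t)$. The jump rule is monotone in the occupation of the target site $b+1$, since the jump threshold is $r^1_\rightarrow/r_\rightarrow$ if that site is occupied and $r^0_\rightarrow/r_\rightarrow\ge r^1_\rightarrow/r_\rightarrow$ if it is empty. Because $\beta_{t-}(b+1)\le\beta'_{t-}(b+1)$, the unprimed barrier's threshold is at least the primed one's. So whenever the primed barrier jumps, the unprimed one jumps too, and this case cannot occur.

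Since the jump times $\sigma_k$ accumulate only at infinity almost surely, the induction covers all $t\ge0$. The only genuine obstacle is the key case above. Everything else is bookkeeping, with the strict inequality $b>b'$ absorbing asymmetric moves.
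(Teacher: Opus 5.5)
Your proof is correct and follows essentially the same route as the paper's: induction over the attempted jump times, comparison of infection paths on $[\sigma_k,\sigma_{k+1})$, and the observation that the only order-breaking scenario ($b=b'$ with a right-jump instruction) is ruled out. This works because both processes read the same mark $\mathfrak M(t)$, and $r^1_\rightarrow\le r^0_\rightarrow$ makes the jump rule monotone in the occupation of site $b+1$; your explicit check of the occupied sets at jump times is, if anything, more careful than the paper's brief appeal to a limit $t\nearrow\sigma_{k+1}$ (the omitted case where neither barrier moves is trivial).
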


\begin{proof}
We denote by~$(B_t)_{t \ge 0}$ and~$(B_t')_{t \ge 0}$ the processes describing the positions of the barriers in~$(\beta_t)$ and~$(\beta_t')$, respectively.

We argue by induction on~$k$ to show that the~$\lesssim$ relation is maintained up to time~$\sigma_k$. For~$k=0$, this is given by the assumption. Assume that we have proved it for~$k$. To prove that~$\beta_t \lesssim \beta_t'$ for all~$t \in (\sigma_k,\sigma_{k+1})$, we observe that there is no motion of the barriers in this time interval (so~$B_t \ge B_t'$), and we obtain the inclusion~$\{x:\beta_t(x)=1\} \subseteq \{x:\beta_t'(x)=1\}$ by considering infection paths of~$\mathcal H$ restricted to~$[\sigma_k,\sigma_{k+1})$. 

To check that~$B_{\sigma_{k+1}} \ge B_{\sigma_{k+1}}'$, the only case that needs a moment's thought is when
\[B_{\sigma_k}=B_{\sigma_k}',\quad \beta_{\sigma_{k+1}-}(B_{\sigma_k}+1) = 0,\quad \beta_{\sigma_{k+1}-}'(B_{\sigma_k}+1) = 1,\]
and the jump instruction at time~$\sigma_{k+1}$ is to the right. In these circumstances, recalling that~$r_\rightarrow = \max\{r^0_\rightarrow,r^1_\rightarrow\} = r^0_\rightarrow$, we have that:
\begin{itemize}
    \item if~$\mathfrak{M}(\sigma_{k+1}) \le r^1_\rightarrow/r^0_\rightarrow$, then both barriers jump to the right at time~$\sigma_{k+1}$;
    \item otherwise, the barrier of~$(\beta_t)$ jumps to the right at time~$\sigma_{k+1}$, while the barrier of~$(\beta_t')$ does not move.
\end{itemize}

Having established~$B_{\sigma_{k+1}} \ge B_{\sigma_{k+1}}'$, we obtain~$\{x:\beta_{\sigma_{k+1}}(x)=1\} \subseteq \{x:\beta'_{\sigma_{k+1}}(x)=1\}$ by taking a limit as~$t \nearrow \sigma_{k+1}$.
\end{proof}

We will also require an additional auxiliary result concerning the evolution of two contact-and-barrier processes, started from possibly different initial configurations but agreeing on a region ahead of the barrier.

\begin{lemma} \label{lem_SameInterface}
Let~$(\beta_t^1)_{t\geq 0}$ and~$(\beta_t^2)_{t\geq 0}$ be two contact-and-barrier processes started from initial configurations~$\beta_1, \beta_2 \in \mathcal B$, constructed using the same augmented graphical construction~$(\mathcal H, \mathcal I)$. Let~$B_t^1$ and~$B_t^2$ denote the respective positions of the barriers at time~$t$. Suppose there exist integers~$b < x$ such that:
\begin{itemize}
    \item $\beta_1$ and~$\beta_2$ coincide on the interval $[b, x]$, i.e.,~$\beta_1 \mathds{1}_{[b, x]} = \beta_2 \mathds{1}_{[b, x]}$;
    \item $\beta_1(b) = -1$, i.e., there is a barrier at site~$b$;
    \item $\beta_1(x) = 1$, i.e., site~$x$ is occupied by a particle.
\end{itemize}

Fix any~$t \geq 0$, and for each~$s \in [0, t]$, define
\begin{align*}
r_s^x := \sup\left\{ y \in \mathbb{Z} :\ 
\begin{array}{l}
\text{either for the process started from~$\beta_1$ or from~$\beta_2$, there exists} \\
\text{a barrier-free infection path in that process from } (x, 0) \text{ to } (y, s)
\end{array}
\right\}.
\end{align*}
with the convention that the supremum of the empty set is~$-\infty$. If~$r_t^x \in \mathbb{Z}$, then for all~$s \in [0, t]$, we have
\begin{equation} \label{eq_propertyInterfaceDifferentConfig}
    B_s^1 = B_s^2 \quad \text{and} \quad \beta_s^1(y) = \beta_s^2(y) \quad \text{for all } y \in \{B_s^1, \dots, r_s^x\}.
\end{equation}
\end{lemma}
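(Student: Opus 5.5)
We argue by induction along the barrier jump-attempt times $\sigma_0=0<\sigma_1<\sigma_2<\dots$ of the common flight plan $\mathcal I$, exactly as in the constructions of Lemmas~\ref{OccupiediffActivePath} and~\ref{prop_coupling}. The key preliminary observation is monotonicity of $s\mapsto r^x_s$ in the following sense. Since $r^x_t\in\mathbb Z$, there is a barrier-free infection path (in one of the two processes) from $(x,0)$ to $(r^x_t,t)$. Its restriction to $[0,s]$ shows $r^x_s\in\mathbb Z$ and $r^x_s>B^i_s$ for every $s\le t$ (in the process $i$ where this path is barrier-free). The inductive claim is: for each $k$, \eqref{eq_propertyInterfaceDifferentConfig} holds for all $s\in[0,t]\cap[0,\sigma_k]$. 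For $k=0$ it is the hypothesis. Note $B_0^1=B_0^2=b$, because $\beta_2(b)=\beta_1(b)=-1$, and $r_0^x\ge x$ with agreement on $[b,x]$. A small point to check first is that agreement on $\{b,\dots,r^x_0\}$ holds with $r^x_0=x$.

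For the step on $(\sigma_k,\sigma_{k+1})$, the barriers do not move, so $B^1_s=B^2_s=:B$. By Lemma~\ref{OccupiediffActivePath} applied from time $\sigma_k$, $\beta^i_s(y)=1$ iff some $z$ with $\beta^i_{\sigma_k}(z)=1$ has a path from $(z,\sigma_k)$ to $(y,s)$ staying strictly right of $B$. For $y\le r^x_s$ we must show that such a path can be chosen with $z\le r^x_{\sigma_k}$, where the configurations agree. This is the crossing-paths argument, and I expect it to be the main obstacle.

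Take a witness path $\pi$ from $(x,0)$ to $(r^x_s,s)$, barrier-free in some process $j$, and suppose $\gamma$ is a path in process $i$ from $(z,\sigma_k)$ to $(y,s)$ with $z>r^x_{\sigma_k}\ge\pi(\sigma_k)$ and $y\le r^x_s=\pi(s)$. By nearest-neighbour continuity, $\gamma$ meets $\pi$ at some time $u$. Concatenating $\pi|_{[0,u]}$ with $\gamma|_{[u,s]}$ gives an infection path from $(x,0)$ to $(y,s)$. This path is barrier-free in process $j$: on $[u,s]$ it lies right of the common barrier $B$, and on $[0,u]$ it follows $\pi$. Since $(x,0)$ is occupied in both processes, we get $\beta^j_s(y)=1$. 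Symmetrically, one must produce occupancy in process $i$ as well. Here I would argue that the two configurations agree on $\{B,\dots,r^x_{\sigma_k}\}$ at time $\sigma_k$ and that $\pi(\sigma_k)$ lies in this range. Hence $(\pi(\sigma_k),\sigma_k)$ is occupied in both processes, and the concatenation started from there is barrier-free in both. So occupancy at $y$ in either process implies occupancy in both. The delicate bookkeeping is that $r^x$ is defined through paths in either process, and one needs the induction hypothesis to transfer barrier-freeness of $\pi|_{[0,\sigma_k]}$ between processes. This holds because $B^1=B^2$ on $[0,\sigma_k]$ by induction, so barrier-free in one process is the same as barrier-free in the other.

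At time $\sigma_{k+1}$, the jump instruction is common. A left jump or a rejected move is identical for both processes. For a right attempt, the decision depends only on $\beta^i_{\sigma_{k+1}-}(B+1)$. This value is obtained as the limit $s\nearrow\sigma_{k+1}$ of the previous step, provided $B+1\le r^x_{\sigma_{k+1}-}$, which holds since $r^x>B$ strictly. Hence the barriers coincide at $\sigma_{k+1}$. Agreement on $\{B_{\sigma_{k+1}},\dots,r^x_{\sigma_{k+1}}\}$ then follows from the update rules, which only erase or copy sites, together with the fact that $r^x_{\sigma_{k+1}}\le r^x_{\sigma_{k+1}-}$ (paths cannot jump at a barrier time a.s.). Since $t<\sigma_K$ for some finite $K$ almost surely, the induction covers $[0,t]$.
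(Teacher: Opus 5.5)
Your proposal is correct and is essentially the argument the paper has in mind: the paper only sketches a proof by looking at the first time at which \eqref{eq_propertyInterfaceDifferentConfig} fails, and your forward induction over the jump-attempt times $\sigma_k$ is that same argument run forwards. Your two key ingredients are exactly what makes the omitted ``straightforward argument'' work: first, $r^x_s > B_s$ for all $s\le t$ via the witness path for $r^x_t$, so right-jump decisions read the agreeing site $B+1$; second, rerouting crossing paths through $(\pi(\sigma_k),\sigma_k)$, which lies in the agreement window (the extra rerouting from $(x,0)$ is superfluous, and $r^x_s<\infty$ uses a.s.\ finite propagation speed).
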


One may argue by contradiction, letting $P\in[0,t]$ be the first time at which~\eqref{eq_propertyInterfaceDifferentConfig} fails. The conclusion then follows by a straightforward argument, which we omit.

\subsection{Subadditivity} \label{sec_Subadditivity}

\paragraph{}The end goal of this subsection is to prove Proposition~\ref{prop_real_speed_barrier}. Under the assumption that~$r^0_\rightarrow \ge r^1_\rightarrow$, we will be able to use the subadditive ergodic theorem to prove that the barrier has an asymptotic speed. 

Recall the Heaviside configuration where the barrier is placed at the origin and all sites to the right of it are declared to be occupied. The following result guarantees that the barrier has a speed for the process started from that configuration:

\begin{proof}[Proof of Proposition~\ref{prop_real_speed_barrier}]
    Consider a contact-and-barrier process~$(\beta_t)_{t\geq 0}$ started from the Heaviside configuration and constructed using an augmented graphical construction~$(\mathcal H,\mathcal I)$. 
    
    Set~$X_{0,t}:=-B_t$ for~$t \ge 0$. For~$s \ge 0$, define the auxiliary process~$(\Tilde{\beta}^s_u)_{u\geq s}$ as follows: let~$\Tilde{\beta}^s_s$ be defined by modifying~$\beta_s$, so that all the sites to the right of the barrier are artificially changed to state~1, and then let the process evolve for times~$u \ge s$ as a contact-and-barrier process, using the same augmented graphical construction as before (restricted to the time interval~$[s,\infty)$). Set~$X_{s,t}:=B_s-\Tilde{B}^s_t$ for all~$t \ge s$.
    
Restricting to integer times, it is straightforward to verify that $\{X_{m,n}: m\le n\}$ satisfies assumptions~(a)–(e) of the Subadditive Ergodic Theorem (Theorem~2.6 in \cite{liggett1985interacting}). To make the transition from convergence along integer times to convergence along real times, we can proceed as in the proof of Theorem~2.19 in~\cite{liggett1985interacting}, using the Borel-Cantelli lemma and the bound~$\mathbb P(\max_{n\leq t \leq n+1}|B_{t}-B_n|>\epsilon n)\leq \mathbb P(Z > \epsilon n)$, where~$Z \sim \mathrm{Poisson}(r_\leftarrow + r_\rightarrow)$. 
\end{proof}

\begin{lemma}\label{lem_bar_Behaves_Well}
Assume that~$r^1_\rightarrow\leq r^0_\rightarrow$, and let~$\mathbf{B}$ be the constant given in Proposition~\ref{prop_real_speed_barrier}. Assume that the contact-and-barrier process~$(\beta_t)_{t\geq 0}$ is started from an arbitrary configuration~$\beta_0 \in \mathcal B$, and let~$(B_t)_{t \ge 0}$ denote the barrier process. 
Then, for all~$\epsilon>0$, there exist~$c,C>0$ such that
    \begin{equation}\label{eq_BarrierRealSpeedWell}
        \mathbb P\left(B_t \geq \left(\mathbf{B}-\epsilon\right)t\right) \geq 1-Ce^{-ct } \quad \text{for all }t > 0.
    \end{equation}
\end{lemma}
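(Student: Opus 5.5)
First I reduce to the Heaviside configuration, using Lemma~\ref{prop_coupling}. Let $\beta_0\in\mathcal B$ have its barrier at $x_0$. The paper does not allow $x_0$ to be a free parameter; since $B_t/t$ involves no shift, I read the statement as having the barrier at the origin (otherwise replace $B_t$ by $B_t-x_0$). I then compare $\beta_0$ with the Heaviside configuration $\beta^h$ at the same barrier position. Clearly $\beta_0\lesssim\beta^h$. Since $r^1_\rightarrow\le r^0_\rightarrow$, Lemma~\ref{prop_coupling} gives $B_t\ge B^h_t$ for all $t$ under the common augmented graphical construction. It therefore suffices to prove \eqref{eq_BarrierRealSpeedWell} for the process started from Heaviside. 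That is, I need an exponential lower large-deviation bound for $B^h_t$.

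Second, I use the subadditive structure from the proof of Proposition~\ref{prop_real_speed_barrier}. There $X_{s,t}=B_s-\tilde B^s_t$, and monotonicity gives
\[
-B_{t}\le \sum_{j=0}^{k-1} X_{jT,(j+1)T}\quad\text{for } t=kT,
\]
with $B_t=-X_{0,t}$. By Lemma~\ref{prop_coupling}, the restarted processes dominate the true one from below in barrier position, so $B_{(j+1)T}\ge B_{jT}-X_{jT,(j+1)T}$. The variables $X_{jT,(j+1)T}$, $j\ge0$, are i.i.d.: each is the barrier displacement of a fresh Heaviside process run on the disjoint time slab $[jT,(j+1)T)$, shifted in space, and by translation invariance of $(\mathcal H,\mathcal I)$ the slabs are independent. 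Each variable is dominated by a $\mathrm{Poisson}((r_\leftarrow+r_\rightarrow)T)$ number of jumps, so it has exponential moments.

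Third, I choose $T$ large enough that $\mathbb E[X_{0,T}]/T\le -\mathbf B+\epsilon/3$. This is possible by the $L^1$ convergence in Proposition~\ref{prop_real_speed_barrier}; in fact $\mathbb E X_{0,T}/T\downarrow -\mathbf B$ by subadditivity. Cramér's theorem for the i.i.d.\ sum, with exponential moments available, then gives
\[
\mathbb P\Big(\sum_{j<k}X_{jT,(j+1)T}>(-\mathbf B+2\epsilon/3)kT\Big)\le e^{-ck}.
\]
To pass to general $t\in[kT,(k+1)T)$, I bound $B_t-B_{kT}$ from below by minus the number of left-jump attempts in $[kT,t]$. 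This is a $\mathrm{Poisson}(r_\leftarrow T)$ variable, which exceeds $\epsilon t/3$ only with exponentially small probability. For bounded $t$ the claim follows by enlarging $C$.

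The main obstacle is justifying the subadditive inequality and the independence carefully. The key point is that restarting with all sites to the right of the barrier occupied at time $jT$ yields a configuration $\gtrsim$ the true one, so its barrier lies weakly to the left. This is exactly Lemma~\ref{prop_coupling} applied on $[jT,\infty)$, together with a concatenation argument. Once that is in place, the rest is a standard Chernoff estimate.
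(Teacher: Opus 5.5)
Your proof is correct and follows the paper's argument essentially step for step: reduce to the Heaviside start via Lemma~\ref{prop_coupling}, then bound the increments $B_{(j+1)T}-B_{jT}$ from below by the i.i.d.\ displacements of Heaviside processes restarted at times $jT$. The block length is chosen via the $L^1$ convergence, followed by a Chernoff bound and a Poisson comparison for intermediate times. (One aside is inaccurate but unused: subadditivity gives $\mathbb E X_{0,T}/T \to \inf_T \mathbb E X_{0,T}/T = -\mathbf B$, not monotone decrease.)
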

\begin{proof}
By monotonicity (using Lemma~\ref{prop_coupling}), it suffices to prove the statement under the assumption that the contact-and-barrier process starts from the Heaviside configuration.
Since~$B_t/t \xrightarrow{t \to \infty} \mathbf B$ in~$L^1$, we can choose~$r$ large enough that~$\mathbb E[B_{r}] \ge (\mathbf B- \tfrac{\epsilon}{4}) r$.

For each~$s \ge 0$, we consider the same auxiliary process~$(\Tilde{\beta}_u^s)_{u \ge s}$ that was introduced in the proof of Proposition~\ref{prop_real_speed_barrier}. For each~$u \ge s$, let~$\Tilde{B}^s_u$ denote the position of the barrier in~$\Tilde{\beta}^s_u$. Noting that~$\Tilde{B}^{kr}_{(k+1)r} \le B_{(k+1)r}$ for all~$k \in \mathbb N_0$, we bound
\begin{align*}
    \mathbb P(B_{rm} \le (\mathbf B - \tfrac{\epsilon}{2})rm) &= \mathbb P\left(\sum_{k=1}^m (B_{kr}-B_{(k-1)r}) > (\mathbf B - \tfrac{\epsilon}{2})rm \right) \\
    &\le \mathbb P\left(\sum_{k=1}^m (\Tilde{B}^{(k-1)r}_{kr}-B_{(k-1)r}) \le (\mathbf B - \tfrac{\epsilon}{2})rm \right).
\end{align*}
The random variables~$(\Tilde{B}^{kr}_{(k+1)r}-B_{kr})_{k\in \mathbb N_0}$ are independent and identically distributed, all with expectation equal to~$\mathbb E[B_r] > (\mathbf B - \tfrac{\epsilon}{4})r$.  They also have finite exponential moments, since they are bounded by the number of attempted jumps of the barrier in a time interval of length~$r$. Hence, a large deviations bound yields
\[\mathbb P(B_{rm} \le (\mathbf B - \tfrac{\epsilon}{2})rm)  \le Ce^{-cm}\]
for some~$c,C > 0$ and all~$m \in \mathbb N$.

Next, if~$t \ge 0$, we can bound
\begin{align*}
    \mathbb P(B_t \le (\mathbf B - \epsilon)t) &\le \mathbb P(B_{r \lfloor t/r \rfloor} \le (\mathbf B - \tfrac{\epsilon}{2})r \lfloor t/r\rfloor)\\ &\quad + \mathbb P(B_t - B_{r\lfloor t/r \rfloor} > (\mathbf B - \epsilon)t - (\mathbf B - \tfrac{\epsilon}{2}) r \lfloor t/r\rfloor)\\
    &\le Ce^{-c\lfloor t/r\rfloor} + \mathbb P(B_t - B_{r\lfloor t/r \rfloor} > (\mathbf B - \epsilon)t - (\mathbf B - \tfrac{\epsilon}{2}) r \lfloor t/r\rfloor)
\end{align*}
Now, if~$t$ is large enough, we have~$|(\mathbf B - \epsilon)t - (\mathbf B - \tfrac{\epsilon}{2}) r \lfloor t/r\rfloor| \ge \tfrac{\epsilon}{4}t$, so the probability on the right-hand side above can be bounded by another factor of the form~$Ce^{-ct}$, again by comparing the barrier displacement with a Poisson random variable. The desired bounds now follow from adjusting the constants~$C,c$, so that small values of~$t$ are also covered.
\end{proof}

\subsection{Patchwork construction of interface process} \label{section_PatchworkCBP}

\paragraph{}This subsection is the core of this work, as it contains all the elements necessary for the patchwork construction and the auxiliary results concerning them. Given its high level of complexity, we begin with a brief overview to provide a clearer picture of what will follow.

In Section~\ref{subsec_trailInter}, we introduce a method for constructing initial configurations in $\mathcal{C}$ for the multitype contact process. This method relies on the concept of trails, which are marks on $\mathbb{Z} \times (-\infty,0]$ that give rise to particles. Once the initial setting is established, Section~\ref{patchworkElements} defines the main objects needed for the patchwork construction: a stopping time $T$, a special random infection path $\Gamma$, the observable called depth $D$, and the interface process $(I_t)_{0 \leq t \leq T}$. We state some of their key properties, which are then proved in Section~\ref{sss_proofs}.

These objects enable the construction of a single patch of the patchwork, defining the interface up to a certain special stopping time. Extending this to all times requires sewing these patches together, which is done in Section~\ref{subsub_sewingCBP}. We then establish the renewal structure in Section~\ref{subsec_renewalsCBP}, making use of the previously defined depth observable. Finally, Section~\ref{ss_coupled} is devoted to the proof of an important technical lemma.

\subsubsection{Trails and interface measures} \label{subsec_trailInter}

\paragraph{} We begin by defining a \emph{trail}, a subset of the space $\mathbb{Z} \times (-\infty, 0]$, interpreted as a discrete set of marked points (or ``footprints'') on this lattice-halfplane.

\begin{definition}[Trails] \label{def_trail}
Let~$\mathscr A$ be the collection of all sets~$A \subseteq (-\infty,0]$ such that~$\mathds{1}_A$ is c\`adl\`ag. Define
\[
\mathcal R:= \left\{\begin{array}{l}
\mathrm g \in \mathrm P\left(\mathbb Z \times (-\infty, 0]\right)\,:\, \mathrm g \text{ is of the form }\mathrm g = \bigcup_{x \in \mathbb Z} (\{x\} \times A_x), \\ \text{with } A_x \in \mathscr A \text{ for all }x,  \text{ and moreover, } (1,0) \in \mathrm g
\end{array}\right\}.
\]
\end{definition}
We endow~$\mathscr A$ with the Skhorohod~$\sigma$-algebra, and then~$\mathcal R$ with the infinite product~$\sigma$-algebra.

\begin{definition}
    For a c\`adl\`ag function~$\gamma:(-\infty,t]\rightarrow\mathbb Z$ with~$t > 0$, let~$\mathrm{g}\sqcup\gamma$  be the subset of~$\mathbb Z\times (-\infty,0]$ given by:

\begin{equation*}
    \mathrm{g}\sqcup\gamma = \{(x-\gamma(t)+1,s-t)\,:\,(x,s)\in\mathrm{g}\cup\overline{\mathrm{Graph}(\gamma)}\}
\end{equation*}
where~$\mathrm{Graph}(\gamma) = \{(\gamma(s),s)\,:\,s\in(-\infty,t]\}$. 
\end{definition}

In words,~$\mathrm{g}\sqcup \gamma$ is the subset of~$\mathbb Z \times (-\infty, 0]$ obtained by appending the closure of the graph of~$\gamma$ to the trail~$\mathrm{g}$ and translating this set so that the point~$(\gamma(t),t)$ now occupies the position of~$(1,0)$. In that way, it is clear that~$\mathrm{g}\sqcup\gamma\in\mathcal{R}$. 

Given~$\mathrm g \in \mathcal R$ and a graphical construction~$\mathcal H$  of the contact process on~$\mathbb Z \times (-\infty, 0]$, define~$\beta \in \{-1,0,1\}^\mathbb{Z}$ by setting
\begin{equation}\label{eq_def_mu_g}
\beta(x)=\begin{cases}
0&\text{if } x < 0;\\
-1&\text{if } x =0;\\
\mathds{1}_{(\{-\infty\}\cup \mathrm g)\rightsquigarrow (x,0)}&\text{if } x > 0.
\end{cases}
\end{equation}
where for~$A \subseteq \mathbb Z \times \mathbb R$ and~$(x,t) \in \mathbb Z \times \mathbb R$, we will write~$\{-\infty\} \cup A \rightsquigarrow (x,t)$ to denote the event that either~$-\infty \rightsquigarrow (x,t)$ or~$A \rightsquigarrow (x,t)$.

In words,~$x > 0$ is set to state 1 either if there is an infection path from~$-\infty$ ending at~$(x,0)$, or if there is some~$(y,t) \in \mathrm g$ so that~$(y,t)$ is connected to~$(x,0)$ by an infection path. If neither of these things happen, then~$x$ is set to state 0. Note that, since~$(1,0) \in \mathrm g$ and~$(1,0) \rightsquigarrow (1,0)$, we have~$\beta(1)=1$.

\begin{figure}[ht]
    \centering
    \begin{subfigure}[b]{0.48\textwidth}
        \centering
        \includegraphics[width=1.05\linewidth]{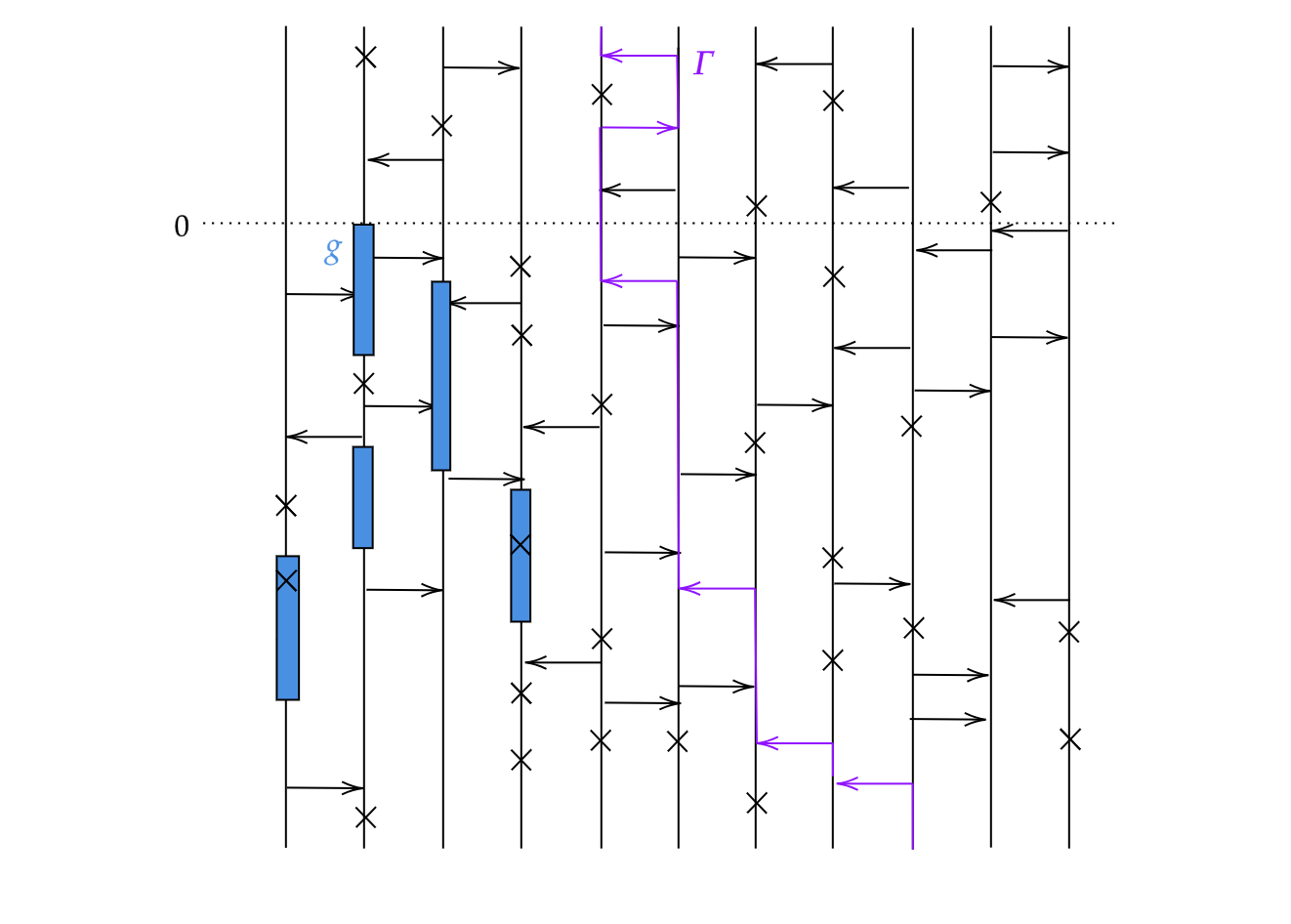}
        \caption{Example of a trail~$\mathrm{g}$ (in blue) and an infection path~$\Gamma$ (in purple), with the trail drawn on top of the graphical construction in which~$\Gamma$ appears as an infection path.}
        \label{fig:first}
    \end{subfigure}
    \hspace{0.012\textwidth}
    \begin{subfigure}[b]{0.48\textwidth}
        \centering
        \includegraphics[width=1.05\linewidth]{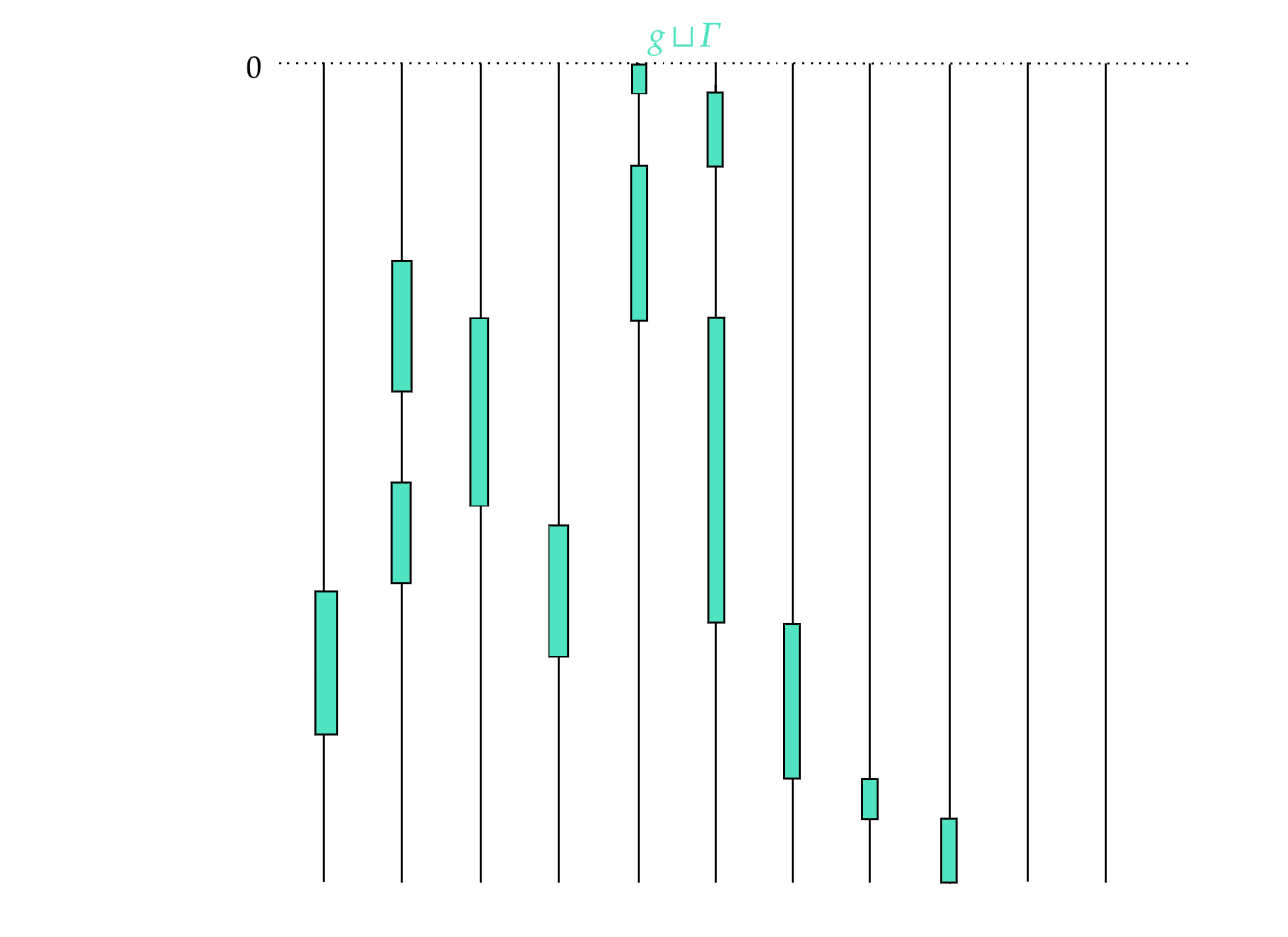}
        \caption{Illustration of the trail~$\mathrm{g}\sqcup\Gamma$ (in turquoise). The graphical construction is discarded at this stage, and a recentralisation from Figure~\ref{fig:first} is required to obtain~$\mathrm{g}\sqcup\Gamma$. }
        \label{fig:second}
    \end{subfigure}
    \caption{Figure~\ref{fig:first} illustrates both the trail~$\mathrm{g}$ and the infection path~$\Gamma$ over the graphical construction, while Figure~\ref{fig:second} shows only the trail~$\mathrm{g}\sqcup\Gamma$. We emphasize that a trail itself does not depend on any graphical construction, as it is simply a subset of~$\mathbb{Z}\times(-\infty,0]$.}
    \label{fig:two_figs}
\end{figure}

\begin{definition}
Given~$\mathrm g\in \mathcal R$, we let~$\mu_{\mathrm g}$ denote the law of the configuration~$\beta$ obtained from~$\mathrm g$ as in~\eqref{eq_def_mu_g}.   
\end{definition}

\begin{definition}[Law~$P^0_{\beta}$ of interface process]\label{def_P0} We let~$P^0_{\beta}$ be the distribution of the interface process~$(I_t)_{t \ge 0}$ for the contact-and-barrier process~$(\beta_t)_{t \ge 0}$ started from~$\beta_0=\beta$.
\end{definition}

\begin{definition}[Law~$P_{\mathrm g}$ of interface process induced by a trail] \label{def_P} We let~$P_{\mathrm g}$ be the distribution of the interface process~$(I_t)_{t \ge 0}$ for the contact-and-barrier process~$(\beta_t)_{t \ge 0}$ started from a random configuration~$\beta_0$ with distribution~$\mu_{\mathrm g}$, so that
\begin{equation}
    \label{eq_P0_and_P}
    P_{\rmg} = \int P^0_{\beta}\; \mu_{\rmg}(\mathrm{d}\beta).
\end{equation}
\end{definition}

\subsubsection{Patchwork elements} \label{patchworkElements}

\paragraph{} Our goal is to give a construction of~$P_{\mathrm g}$ by ``sewing'' together pieces of trajectory, in a patchwork scheme. Fix~$\mathrm g \in \mathcal R$. We work under a probability measure~$\mathbb P$ under which we have defined a graphical construction~$(\mathcal H, \mathcal I)$ of the contact-and-barrier process; we take~$\mathcal H$ defined for both negative and positive times. We assume~$\beta_0 \sim \mu_{\mathrm g}$ is obtained from~$\mathcal H$ and~$\mathrm g$ as in~\eqref{eq_def_mu_g}. We let~$(\beta_t)_{t\ge 0}$ be the contact-and-barrier process started from~$\beta_0$ and constructed from~$(\mathcal H, \mathcal I)$.

We will define a few of central concepts associated with this process. We will highlight the most important of these below: the \emph{adjacency time}~$T$ (Definition~\ref{defT}), the \emph{depth}~$D$ (Definition~\ref{DefDepth}), and the \emph{special infection path}~$\Gamma$ (Definition~\ref{GammaT}). We emphasize that all these objects (as well as some of the other auxiliary ones we will define along the way) will depend on the trail~$\mathrm g$ we have fixed, even though we do not incorporate this in the notation.

In order to concentrate many important definitions close together, we postpone the proofs of all statements that appear in this subsection to the next subsection.

We let
\begin{equation}\label{eq_def_of_mathcal_S}
\mathcal S:= \{x \in \mathbb N: \; -\infty \rightsquigarrow (x,0)\} \subseteq \{x \in \mathbb N: \; \beta_0(x) = 1\},\end{equation}
where the inclusion follows from~\eqref{eq_def_mu_g}. Also, for~$x\in\mathcal S$, let
\begin{equation}\label{eq_def_sigma_x}
\mathcal T_x := \inf\left\{\begin{array}{l} t \ge 1:\; \text{in $(\beta_t)_{t \ge 0}$ there is a barrier-free}\\ \text{infection path from $(x,0)$ to~$(B_t+1,t)$} \end{array}\right\}
\end{equation}
In words,~$\mathcal T_x$ is the first time~$t \ge 1$ at which the site to the right of the barrier is occupied by a particle which descends, by a barrier-free infection path, from the particle at~$x$ at time~0. We then let
\begin{equation}\label{eq_def_bold_X}
\mathbf X := \inf\{x \in \mathcal S:\; \mathcal T_x < \infty\}. 
\end{equation}
We will soon prove that~$\mathbf X < \infty$ almost surely. Using an argument involving the crossing of paths, it is easy to see that 
\begin{equation}\label{eq_minimality_sigma_x}\mathcal T_{\mathbf{X}} = \min\{ \mathcal T_x: \; x \in \mathcal S\}.\end{equation}
\begin{definition}[Adjacency time $T$]\label{defT} Define~$T := \mathcal{T}_{\mathbf X}$.
\end{definition}
Clearly,~$T$ is a stopping time with respect to the filtration generated by the augmented graphical construction. Regarding~$\mathbf X$ and~$T$, we state the following result:


\begin{lemma} \label{TBehavesWell}
    Let~$\alpha$ denote the speed of the contact process with rate~$\lambda$, as in~\eqref{speedCP}.
Assume that either of conditions~\eqref{eq_A1} or~\eqref{eq_A2} is satisfied. 
    Then, there exist constants $c,C > 0$ (independent of the trail~$\rmg$) such that:
    \begin{equation*}
        \mathbb P( \max\{\mathbf X, T \} > s)\leq Ce^{-cs}\quad\text{ for all }s > 0.
    \end{equation*}
\end{lemma}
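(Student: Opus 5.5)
We will prove the bound by showing that, on an event of probability at least $1-Ce^{-cs}$, there is a site $x\in\mathcal S\cap[1,\epsilon s]$ for which $\mathcal T_x\le s$. Since $\mathbf X\le x$ for any such $x$, and $T=\mathcal T_{\mathbf X}=\min_{y\in\mathcal S}\mathcal T_y\le s$ by~\eqref{eq_minimality_sigma_x}, this gives the claim for $s$ large; small $s$ are absorbed by adjusting $C$. The mechanism is a speed comparison. The left edge of a contact process moves left at speed $\alpha$. Under either assumption, the barrier cannot move left at speed $\alpha$, because its asymptotic velocity is strictly larger than $-\alpha$. So some infection path issued from $\mathcal S$ must run into the barrier, and just before that moment it sits at $B+1$ while still being barrier-free.

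\emph{Step 1 (lower bound on the barrier, uniform in $\mathrm g$).} Fix $v>-\alpha$ and $\epsilon>0$ small. We aim to show $\mathbb P(B_t\ge (v-\epsilon)t\ \ \forall t\in[\epsilon' s,s])\ge 1-Ce^{-cs}$, and to control $\sup_{t\le 1}|B_t|$ by a Poisson variable.
\begin{itemize}
\item Under~\eqref{eq_A1}, take $v=\min\{r^0_\rightarrow,r^1_\rightarrow\}-r_\leftarrow$. Every attempted right jump with mark $\mathfrak M\le\min\{r^0_\rightarrow,r^1_\rightarrow\}/r_\rightarrow$ succeeds whatever the configuration. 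Hence $B_t$ dominates a continuous-time random walk with right rate $\min\{r^0_\rightarrow,r^1_\rightarrow\}$ and left rate $r_\leftarrow$, built from $\mathcal I$ alone. Standard large deviations for this walk, plus a union bound over integer times and a Poisson bound for the fluctuation within each unit interval, give the estimate.
\item Under~\eqref{eq_A2}, take $v=\mathbf B$. Lemma~\ref{lem_bar_Behaves_Well} holds for an arbitrary initial configuration in $\mathcal B$, in particular for one drawn from $\mu_{\mathrm g}$. The same union bound then gives the estimate.
\end{itemize}
In both cases the bound involves only $\mathcal I$ (or uses monotonicity), so the constants do not depend on $\mathrm g$.

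\emph{Step 2 (a long leftward infection path from $\mathcal S$).} By time-reversal duality, the set $\{x:-\infty\rightsquigarrow(x,0)\}$ has law $\nu_\lambda$. It is measurable with respect to $\mathcal H$ restricted to negative times, so it is independent of $\mathcal H$ on $[0,\infty)$ and of $\mathcal I$. We apply Corollary~\ref{CorRightmostWellBehaved} in its spatially reflected form (legitimate by reflection symmetry of $\nu_\lambda$ and of $\mathcal H$), shifted by $L=\epsilon s$. With probability at least $1-Ce^{-c\epsilon s}-e^{-cs}$, this gives a point $x\in\mathcal S\cap[1,L]$ and an infection path $\gamma:[0,s]\to\mathbb Z$ of $\mathcal H$ with $\gamma(0)=x$ and $\gamma(s)\le L-(\alpha-\epsilon)s$.

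We need $\gamma$ to lie strictly right of the barrier during $[0,1]$. To arrange this, we restrict to $x\in[L/2,L]$ by applying the corollary to the window $[L/2,L]$. We also intersect with two events of probability at least $1-Ce^{-cs}$, controlled by Poisson variables: that the barrier moves fewer than $L/4$ steps in $[0,1]$, and that $\gamma$ moves fewer than $L/4$ steps in $[0,1]$ (handled via the number of arrows near $x$). On this event, $\gamma(u)>B_u$ for $u\in[0,1]$.

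\emph{Step 3 (crossing argument; the main obstacle).} Choose $\epsilon$ so small that $\epsilon-(\alpha-\epsilon)<v-\epsilon$. Then $\gamma(s)<B_s$, so $\rho:=\inf\{u\ge 1:\gamma(u)\le B_u\}\le s$. On $[0,\rho)$ the path $\gamma$ is barrier-free.

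The delicate point is to show that $\gamma(\rho-)=B_{\rho-}+1$. Both processes are nearest-neighbour càdlàg, and the barrier's jump times are a.s. disjoint from arrow times of $\mathcal H$, so the first contact happens in one of two ways:
\begin{itemize}
\item the barrier jumps right onto the site $\gamma(\rho-)$;
\item $\gamma$ uses an arrow landing on the barrier site (such an arrow is a genuine arrow of $\mathcal H$, even though no birth takes place there).
\end{itemize}
In both cases $\gamma(\rho-)=B_{\rho-}+1$. Now $\gamma$ is barrier-free on $[0,\rho)$, and there are no death marks at $\gamma(\rho-)$ on a small interval before $\rho$. So for $t$ slightly below $\rho$ with $t\ge 1$ there is a barrier-free path from $(x,0)$ to $(B_t+1,t)$; if $\rho=1$, take $t=1$ instead, using right-continuity. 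Hence $\mathcal T_x\le s$ and $\mathbf X\le x\le\epsilon s$.

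I expect this last step to be where most of the care goes. It requires checking that the left-limit/simultaneous-jump bookkeeping matches Definition~\ref{def_BfreeInfPath}, and that the restriction $t\ge1$ in~\eqref{eq_def_sigma_x} is respected. Everything else is a union of exponentially rare bad events with constants independent of $\mathrm g$.
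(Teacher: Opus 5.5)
Your proposal is correct and follows essentially the same route as the paper. The shared ingredients are a lower bound on the barrier at the final time (from Lemma~\ref{lem_bar_Behaves_Well} under \eqref{eq_A2}, or from random-walk domination built out of $\mathcal I$ under \eqref{eq_A1}); a leftward-travelling infection path from $\mathcal S\cap[L/2,L]$ given by the reflected Corollary~\ref{CorRightmostWellBehaved}; Poisson control of the barrier and of leftward spread on $[0,1]$; and a first-contact crossing argument giving $\mathcal T_x\le s$. The only differences are that the paper parametrizes by the spatial scale $a$ rather than by $s$ and takes directly the first time $\gamma$ neighbours the barrier, and that your uniform-in-$t\in[\epsilon' s,s]$ barrier bound in Step~1 is superfluous, since Step~3 only uses $B_s$.
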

The proof is given in Section~\ref{sss_proofs}.

\begin{definition}[Depth]\label{DefDepth}
    Define the \emph{depth} as the random variable
    \[
    D:=\sup\{t \ge 0:\; \mathbb Z \times \{-t\} \rightsquigarrow (\{1,\ldots,\mathbf X\}\backslash \mathcal S) \times \{0\}\}, 
    \]
    with~$D = - \infty$ in case~$\{1,\ldots, \mathbf X\}\backslash \mathcal S = \varnothing$.
\end{definition}

\begin{lemma}   \label{DepthBehavesWell} 
    There exists~$C > 0$ (independent of the trail~$\mathrm g$) such that
    \begin{equation*}
        \mathbb P\left(D>d\right)\leq Ce^{-\sqrt{d}}\quad \text{for all }d > 0.
    \end{equation*} 
\end{lemma}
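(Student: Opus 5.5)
The plan is to split the event $\{D>d\}$ according to the size of $\mathbf X$, and then control each remaining site by a finite-cluster estimate for the contact process. Fix $d>0$ and a threshold $L=L(d)$ to be chosen. We write
\[
\mathbb P(D>d)\le \mathbb P(\mathbf X> L)+\mathbb P\bigl(\exists x\in\{1,\ldots,L\}\setminus\mathcal S:\ \mathbb Z\times\{-d\}\rightsquigarrow (x,0)\bigr).
\]
This uses the fact that if some $t>d$ has $\mathbb Z\times\{-t\}\rightsquigarrow(x,0)$, then following the path gives $\mathbb Z\times\{-d\}\rightsquigarrow(x,0)$. By Lemma~\ref{TBehavesWell}, the first term is at most $Ce^{-cL}$, uniformly in the trail $\mathrm g$. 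The depth $D$ is defined purely in terms of $\mathcal H$ on negative times, so it does not involve $\mathrm g$ except through $\mathbf X$; this is why the constants will be independent of $\mathrm g$.

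For the second term we use a union bound over $x\in\{1,\dots,L\}$ and reverse time. By self-duality of the graphical construction (reflecting time, the Poisson processes keep their law), the event
\[
\{x\notin\mathcal S,\ \mathbb Z\times\{-d\}\rightsquigarrow(x,0)\}=\{(x,0)\not\rightsquigarrow -\infty,\ (x,0)\text{ reached from time }-d\}
\]
has the same probability as the event that the dual contact process started from the single site $x$ survives up to time $d$ but dies out eventually. By \eqref{eq_small_cluster} this is at most $\mathbb P(d<\tau^{\mathrm{ext}}_{\{x\}}<\infty)\le Ce^{-cd}$. The identification needs a short justification: "$-\infty\rightsquigarrow(x,0)$" means there are infection paths arriving at $(x,0)$ from arbitrarily negative times, which by compactness/monotonicity is exactly survival of the time-reversed process from $\{x\}$. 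Summing over $x$ gives at most $LCe^{-cd}$.

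Choosing $L=d$ yields
\[
\mathbb P(D>d)\le Ce^{-cd}+Cde^{-cd},
\]
which is much stronger than $Ce^{-\sqrt d}$, after adjusting constants so that small $d$ are covered and absorbing the factor $d$. The main obstacle I expect is that $\mathbf X$ and the set $\mathcal S$ are not independent of the negative-time part of $\mathcal H$. However, the argument above avoids this, since the union bound is over the deterministic range $\{1,\dots,L\}$ and only needs the tail of $\mathbf X$ from Lemma~\ref{TBehavesWell}. That lemma requires \eqref{eq_A1} or \eqref{eq_A2}. If one wants the present statement without those hypotheses, a cruder bound on $\mathbf X$ would be needed; $\mathbf X$ is at most the first site in $\mathcal S$ beyond some random point, and one would handle it with a weaker stretched-exponential tail. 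This is plausibly the source of the $\sqrt d$ in the statement, and I would fall back on it, for instance by taking $L=\sqrt d$ with a stretched-exponential estimate on $\mathbf X$, if the exponential tail of Lemma~\ref{TBehavesWell} is not available in the intended generality.
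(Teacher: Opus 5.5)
Your proposal is correct and is essentially the paper's proof: the paper also bounds $\mathbb P(D>d)\le \mathbb P(\mathbf X>d)+d\,e^{-cd}$, using a union bound over the deterministic range $x\in\{1,\ldots,d\}$, duality with \eqref{eq_small_cluster} applied to $\sigma'_x=\sup\{t:\mathbb Z\times\{-t\}\rightsquigarrow(x,0)\}$, and Lemma~\ref{TBehavesWell} (so it implicitly assumes \eqref{eq_A1} or \eqref{eq_A2}, as you observed). The $\sqrt d$ in the statement is only a loose way of absorbing the constants, so your fallback with a stretched-exponential tail on $\mathbf X$ is not needed.
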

The proof is again postponed to Section~\ref{sss_proofs}. At last, we also define the following random infection path:

\begin{definition}[The special infection path~$\Gamma$]\label{GammaT}
Let~$\Gamma:(-\infty,T]$ be the infection path defined as follows:
\begin{itemize}
    \item in~$(-\infty,0]$,~$\Gamma$ is the leftmost infection path from~$-\infty$ to~$(\mathbf X,0)$;
    \item in~$[0,T]$,~$\Gamma$ is the leftmost barrier-free infection path from~$(\mathbf X,0)$ to~$(B_T+1,T)$.
\end{itemize}
\end{definition}

An illustration of the patchwork elements defined until this point is done in Figure~\ref{fig_patchworkElemets}. The next proposition allow us to somehow re-sample the configuration at moment~$T$ given the information generated by the objects we have defined.

\begin{figure}[H]
    \centering
    \includegraphics[width=0.7  \linewidth]{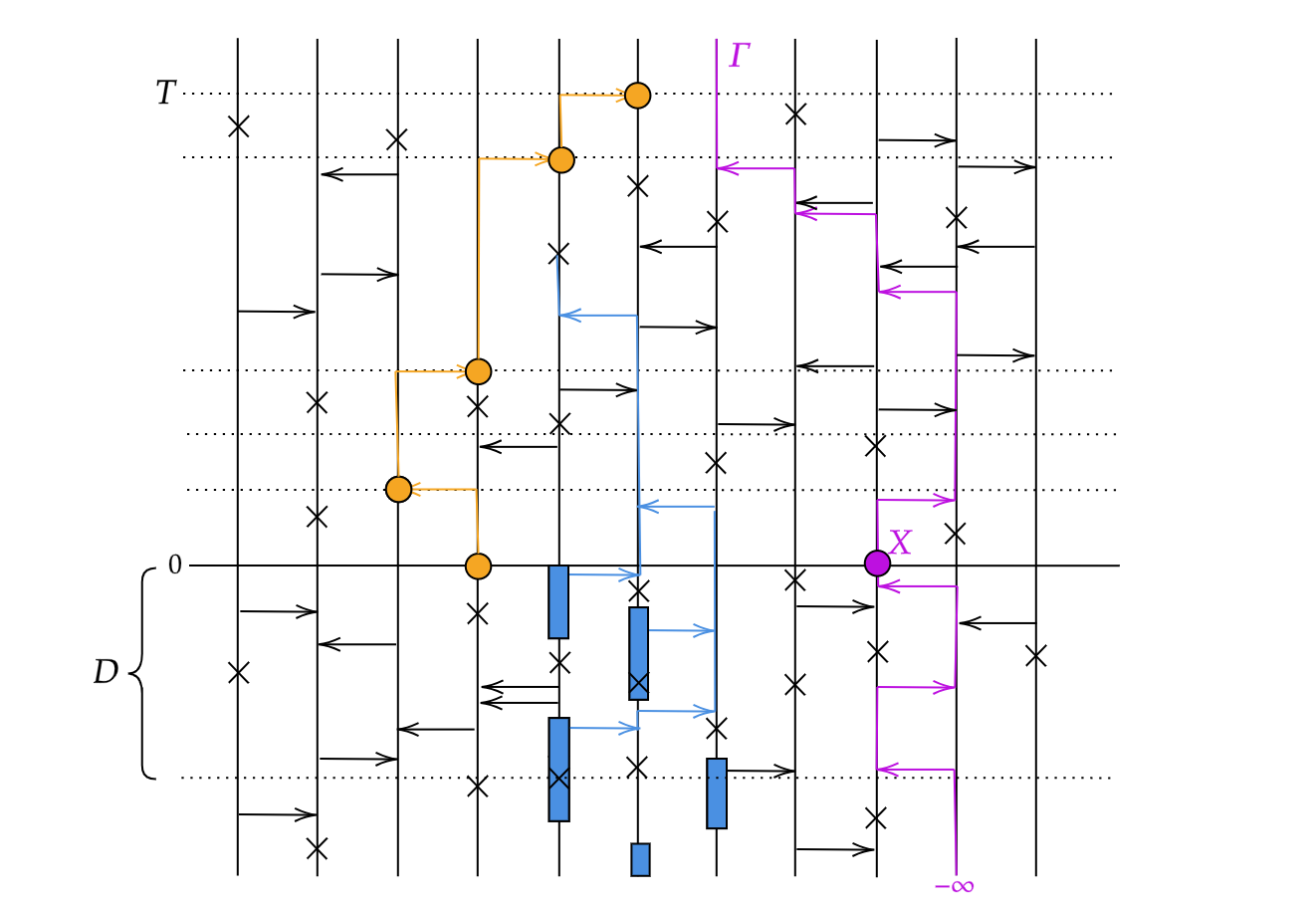}
    \caption{Illustration of the patchwork elements $T$, $D$, and $\Gamma$. The barrier and its trajectory are shown in orange, the trail $\mathrm{g}$ in blue, and the special infection path $\Gamma$ in purple. The random variable $X$ is shown in purple as well.
}
    \label{fig_patchworkElemets}
\end{figure}


\begin{proposition}\label{lem_first_law}
	The law of~$\{\beta_T(x-B_T): x \in \mathbb Z\}$ conditioned on $\sigma\left(\mathcal I, T, \Gamma, \mathrm{Left}_\Gamma(\mathcal H)\right)$ 
     is~$\mu_{\mathrm g\sqcup\Gamma}$, almost surely.
\end{proposition}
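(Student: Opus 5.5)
The plan is to identify the configuration at time~$T$, seen from the barrier, as a deterministic functional of the trail $\mathrm g\cup\overline{\mathrm{Graph}(\Gamma)}$ and of the marks of $\mathcal H$ lying strictly to the right of~$\Gamma$. We then show that $\Gamma$ is a right-preserving random infection path in the sense of Definition~\ref{def_RPRIP}, so that Lemma~\ref{lem_RPRIP_property} lets us replace those marks by an independent copy.

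\textbf{Step 1 (pathwise identity).} We claim that, almost surely, for every $x>B_T$,
\begin{equation*}
\beta_T(x)=1 \iff \{-\infty\}\cup \mathrm g\cup \overline{\mathrm{Graph}(\Gamma)} \rightsquigarrow (x,T) \text{ by a path } \gamma \text{ with } \gamma(s)\ge \Gamma(s)\ \forall s,
\end{equation*}
where, after leaving the graph of $\Gamma$, such a path uses only marks of $\mathrm{Right}^+_\Gamma(\mathcal H)$.

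For ``$\Leftarrow$'', first note that $\Gamma$ is barrier-free on $[0,T]$. By the remark after Definition~\ref{def_BfreeInfPath}, any path to the right of $\Gamma$ on $[0,T]$ is therefore barrier-free. Any path reaching time~$0$ from $\{-\infty\}\cup\mathrm g$ produces an occupied site by~\eqref{eq_def_mu_g}, and we conclude with Lemma~\ref{OccupiediffActivePath}.

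For ``$\Rightarrow$'', Lemma~\ref{OccupiediffActivePath} gives a barrier-free path from an occupied site at time~$0$ to $(x,T)$, preceded by a path from $\{-\infty\}\cup\mathrm g$. Since $x\ge \Gamma(T)=B_T+1$, this concatenated path either stays weakly right of $\Gamma$, or it crosses $\Gamma$. In the second case we switch onto $\Gamma$ at the last crossing time (nearest-neighbour crossing of paths). Either way we obtain a path of the required form. The minimality of $\mathbf X$ and $T$ in~\eqref{eq_minimality_sigma_x} is what guarantees that no occupied site to the left of $\mathbf X$ can influence the right side without crossing~$\Gamma$.

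Translating by $(-\Gamma(T)+1,-T)$, the right-hand side becomes exactly the prescription~\eqref{eq_def_mu_g} for the trail $\mathrm g\sqcup\Gamma$, restricted to paths to the right of the shifted $\Gamma$. A second crossing argument shows that, in a graphical construction, restricting to paths to the right of $\Gamma$ does not change the indicator for sites to the right of $\Gamma(T)$, because $\Gamma$ itself is in the seed set. Also, the site $B_T+1$ corresponds to $(1,0)\in\mathrm g\sqcup\Gamma$ and the barrier corresponds to~$0$, consistently with~\eqref{eq_def_mu_g}.

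\textbf{Step 2 ($\Gamma$ is an RPRIP).} After shifting time so that the domain is $[-\infty',T]$ (or a truncation to $[-M,T]$ followed by a limit), we check that for each deterministic $\pi$ the event $\{\mathrm{dom}\,\Gamma\subseteq\mathrm{dom}\,\pi,\ \Gamma\le\pi\}$ is measurable with respect to $\mathrm{Left}_\pi(\mathcal H)$ together with $\mathcal I$. The key observation is that on $[0,T]$ we have $B_s+1\le\Gamma(s)$. So, on this event, the site adjacent to the barrier lies in the left region, and the barrier trajectory, $\mathbf X$, $T$ and the leftmost paths (as in Example~\ref{example_Rightmost}) can all be read off from marks weakly left of $\pi$. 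Here we would use Lemma~\ref{lem_SameInterface} to argue that modifying marks to the right of $\pi$ leaves these objects unchanged. Since $\mathcal I$ is independent of $\mathcal H$, we may condition on it throughout. Lemma~\ref{lem_RPRIP_property} then says that, conditionally on $(\mathcal I,T,\Gamma,\mathrm{Left}_\Gamma(\mathcal H))$, the marks $\mathrm{Right}^+_\Gamma(\mathcal H)$ are distributed as those of an independent graphical construction $\mathcal H'$.

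\textbf{Step 3 (conclusion).} Combining Steps~1 and~2, the conditional law of $\{\beta_T(x-B_T)\}$ is the law of the configuration built from the fixed trail $\mathrm g\sqcup\Gamma$ using a fresh, translation-invariant construction. By the translation invariance of $\mathcal H'$ and the crossing argument of Step~1, this is exactly $\mu_{\mathrm g\sqcup\Gamma}$.

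\textbf{Main obstacle.} I expect the main obstacle to be Step~2. The RPRIP property has to be verified for a path with infinite past, a random horizon $T$, a random starting site $\mathbf X$, and a barrier whose motion depends on the particle configuration. Reading the barrier dynamics off $\mathrm{Left}_\pi(\mathcal H)$ requires care with the ``$\max\{\pi(s-),\pi(s)\}$'' convention at jump times. If this step resists, my first attempt would be to handle the infinite past by truncating at time $-M$ and letting $M\to\infty$.
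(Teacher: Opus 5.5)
Your proposal is correct and follows essentially the paper's route. The paper proceeds in the same three stages:
\begin{itemize}
\item the pathwise identity expressing $\beta_T$ to the right of the barrier through connections from $\{-\infty\}\cup\mathrm{Graph}(\Gamma)\cup\mathrm g$ in $\mathrm{Right}^+_\Gamma(\mathcal H)$;
\item the resampling of $\mathrm{Right}^+_\Gamma(\mathcal H)$ by an independent $\mathcal H'$ through Lemma~\ref{lem_RPRIP_property}, which the paper carries out exactly by your fallback, namely the RPRIPs $\Gamma^{(n)}$ obtained by replacing $-\infty$ with $\mathbb Z\times\{-n\}$, followed by an almost sure limit $n\to\infty$ (Lemma~\ref{lem_change_H_patchwork});
\item the final crossing and translation-invariance step identifying $\mu_{\mathrm g\sqcup\Gamma}$.
\end{itemize}
Two minor points: the crossing in Step~1 is purely topological and does not need the minimality of $\mathbf X$ and $T$ (that minimality is what is really used when checking that $\Gamma$ is right-preserving); and conditioning on the independent flight plan $\mathcal I$ is a clean substitute for the paper's footnoted extension of Lemma~\ref{lem_RPRIP_property} to include $\mathcal I$.
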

The proof is again given in Section~\ref{sss_proofs}.

\begin{definition}[Law~$\mathrm{Q_g}$]
We let~$\mathrm{Q_g}$ be the law of the 4-tuple

\begin{equation} \label{XiDef}
    \Xi := (T,\Gamma,D,(I_t)_{0 \le t \le T})
\end{equation}
for the contact-and-barrier process~$(\beta_t)_{t \ge 0}$ started from~$\beta_0 \sim \mu_{\mathrm g}$.
 \end{definition}
 \begin{definition} \label{def_Iprime_CBP}
     For~$T$ as in Definition~\ref{defT}, we let:
        \begin{equation}\label{eq_def_Itprime}
            I_t':=I_{T+t}-I_T, \quad t \ge 0.
    \end{equation}
\end{definition}

\begin{corollary} \label{FirstPatchworkInterface}
   Conditionally on~$\Xi$, the law of~$(I_t')_{t \ge 0}$ is~$P_{\mathrm g_\sqcup\Gamma}$.
\end{corollary}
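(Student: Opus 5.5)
The plan is to derive Corollary~\ref{FirstPatchworkInterface} from Proposition~\ref{lem_first_law} together with the strong Markov property of the contact-and-barrier process at the stopping time~$T$.

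First, identify a $\sigma$-algebra $\mathcal G$ with respect to which $\Xi$ is measurable and which is contained in $\sigma(\mathcal I, T, \Gamma, \mathrm{Left}_\Gamma(\mathcal H))$ enlarged only by information already available at time $T$. Concretely, I would show that $D$ is a function of $\Gamma$ and $\mathrm{Left}_\Gamma(\mathcal H)$ restricted to negative times. The sites in $\{1,\ldots,\mathbf X\}\setminus\mathcal S$ lie to the left of $\Gamma(0)=\mathbf X$, and by a crossing-paths argument, any infection path ending there and started at $\mathbb Z\times\{-t\}$ can be taken to stay weakly left of $\Gamma$. I would also show that $(I_t)_{0\le t\le T}$ is a function of $\mathcal I$, $\Gamma$ and $\mathrm{Left}_\Gamma(\mathcal H)$ on $[0,T]$. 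Up to time $T$, the interface $(B_t,\ell_t)$ is determined by the barrier moves and by the leftmost occupied site. By minimality of $T$ in~\eqref{eq_minimality_sigma_x} and by the leftmost choice of $\Gamma$, the relevant region lies weakly left of $\Gamma$. Any occupied site strictly between $B_t$ and $\Gamma(t)$ must descend from sites left of $\Gamma$, since paths crossing $\Gamma$ could be rerouted. So $\ell_t$ and the barrier's jump decisions, which only query the site $B_t+1\le\Gamma(t)$, are read off $\mathrm{Left}_\Gamma(\mathcal H)$.

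Second, with $\Xi$ measurable with respect to $\mathcal G:=\sigma(\mathcal I|_{[0,T]}, T,\Gamma,\mathrm{Left}_\Gamma(\mathcal H))$, apply the strong Markov property at $T$. The future evolution $(\beta_{T+t})_{t\ge0}$ is driven by $\mathcal H$ and $\mathcal I$ on $(T,\infty)$. These are independent of $\mathcal F_T$ and hence of $\mathcal G$ and of $\beta_T$. Moreover, $I'_t$ is the interface of the process started from $\beta_T$ recentred at $B_T$, by translation invariance of the graphical construction. Hence, conditionally on $\mathcal G$, $(I'_t)$ has law $\int P^0_\beta\,\nu(\mathrm d\beta)$, where $\nu$ is the conditional law of the recentred configuration $\beta_T(\cdot+B_T)$ given $\mathcal G$. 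Proposition~\ref{lem_first_law} gives $\nu=\mu_{\rmg\sqcup\Gamma}$. Note that the index shift $x-B_T$ versus $x+B_T$ is just the paper's recentring convention. By~\eqref{eq_P0_and_P}, the conditional law is therefore $P_{\rmg\sqcup\Gamma}$. Since this depends on $\mathcal G$ only through $\Gamma$, which is a function of $\Xi$, the tower property yields the same conditional law given $\Xi$.

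The main obstacle is the first step. One must verify carefully that $(I_t)_{0\le t\le T}$ and $D$ are measurable with respect to $\mathrm{Left}_\Gamma(\mathcal H)$ together with $\mathcal I$. This matters because Proposition~\ref{lem_first_law} conditions only on that $\sigma$-algebra, not on all of $\mathcal F_T$. The delicate point is the barrier jump decisions at times when $B_t+1=\Gamma(t)$ or when $\Gamma$ jumps. The definition of $\mathrm{Left}_\Gamma$ uses $\max\{\Gamma(s-),\Gamma(s)\}$ and includes the arrows traversed by $\Gamma$, and I expect this to be exactly what makes the occupancy of $B_t+1$ decidable.
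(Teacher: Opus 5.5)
Your proposal is correct and follows essentially the same route as the paper: the strong Markov property at $T$, then Proposition~\ref{lem_first_law} and the tower property. The bridge in both is that $\Xi$ is measurable with respect to $\sigma(\mathcal I,T,\Gamma,\mathrm{Left}_\Gamma(\mathcal H))$; the paper simply asserts this, and your rerouting argument (paths that cross $\Gamma$ can be replaced by paths staying weakly to its left) actually justifies it. One step you should state explicitly: you condition on $\mathcal I|_{[0,T]}$ rather than on the full flight plan used in Proposition~\ref{lem_first_law}, so you pass to your smaller $\sigma$-algebra by the tower property, which is harmless because $\mu_{\mathrm g\sqcup\Gamma}$ depends only on $\Gamma$.
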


The proof of this corollary will also be done in Section~\ref{sss_proofs}.

\begin{definition}
    Let~$Q_{\mathrm g}'$ denote the law of the pair~$(\Xi,(I_t')_{t \ge 0})$ for the contact-and-barrier process~$(\beta_t)_{t \ge 0}$ started from~$\beta_0 \sim \mu_{\mathsf g}$.
\end{definition}

\begin{lemma} \label{onePatch}
    If $(\Xi,(I_t')_{t\ge 0}) = ((T,\Gamma,D,(I_t)_{0 \le t \le T}),(I_t')_{t \ge 0}) \sim Q_{\mathrm{g}}'$, then it follows that
    \[\mathrm{Sew}((I_t)_{0 \le t \le T}, (I_t')_{t \ge 0}) \sim P_{\mathrm{g}}.\]  
\end{lemma}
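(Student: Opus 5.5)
The plan is to reduce the statement to Corollary~\ref{FirstPatchworkInterface} together with the definition of the sewing map. I will take $\mathrm{Sew}((I_t)_{0\le t\le T},(I'_t)_{t\ge0})$ to be the process equal to $I_t$ for $t\in[0,T]$ and to $I_T+I'_{t-T}$ for $t\ge T$. This is the only sensible reading given \eqref{eq_def_Itprime}: the second piece is translated so that it starts at the endpoint $I_T$ of the first piece, with $T$ read off as the time horizon of the first argument.

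Let $(\beta_t)_{t\ge0}$ be the contact-and-barrier process started from $\beta_0\sim\mu_{\mathrm g}$ and built from $(\mathcal H,\mathcal I)$. Let $(I_t)_{t\ge0}$ be its full interface process and define $I'$ as in Definition~\ref{def_Iprime_CBP}.

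\textbf{Step 1 (deterministic identity).} By definition of $I'$, $I_{T+t}=I_T+I'_t$ for all $t\ge0$. Hence $\mathrm{Sew}((I_t)_{0\le t\le T},(I'_t)_{t\ge0})=(I_t)_{t\ge0}$ pathwise. Lemma~\ref{TBehavesWell} gives $T<\infty$ almost surely, so the sewing is defined on a full-measure event.

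\textbf{Step 2 (laws).} By Definition~\ref{def_P}, $(I_t)_{t\ge0}$ has law $P_{\mathrm g}$. The pair $(\Xi,(I'_t)_{t\ge0})$ built from this process has law $Q'_{\mathrm g}$ by definition. Since $\mathrm{Sew}$ is a measurable function of $(\Xi,I')$ (it uses only the $T$ and $(I_t)_{0\le t\le T}$ components of $\Xi$), the law of $\mathrm{Sew}(\cdot,\cdot)$ under $Q'_{\mathrm g}$ is a pushforward of $Q'_{\mathrm g}$. It is therefore the law of the sewn process in this realization, which by Step 1 is $P_{\mathrm g}$. This conclusion depends only on the law of the pair, not on the particular probability space, so it holds for any $(\Xi,I')\sim Q'_{\mathrm g}$.

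\textbf{Main obstacle.} The only delicate point is measurability of $\mathrm{Sew}$ as a map from $\mathcal D([0,*],\mathbb Z^2)\times\mathcal D([0,\infty),\mathbb Z^2)$ with the Skorokhod-type $\sigma$-algebras into path space. I would check this by verifying that each coordinate evaluation $t\mapsto \mathrm{Sew}(\cdot)(t)$ is measurable. The map $(\gamma,\gamma')\mapsto\gamma(\min\{t,T\})+\mathds 1\{t\ge T\}\,\gamma'(t-T)$ is jointly measurable, since $T$ is a continuous functional of the first argument in the metric above and evaluation maps are measurable. Joint measurability of all coordinate evaluations then suffices because the cylinder $\sigma$-algebra generates the Borel $\sigma$-algebra on càdlàg space.

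Corollary~\ref{FirstPatchworkInterface} is not strictly needed for this identity. It becomes essential later, when the lemma is iterated to sew infinitely many patches, so I would note that here.
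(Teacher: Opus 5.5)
Your proposal is correct and is essentially the paper's argument: the paper simply observes that the lemma follows from the definitions because $\Xi$ and $(I'_t)_{t\ge0}$ are built from the same graphical construction, and you make explicit the pathwise identity $\mathrm{Sew}((I_t)_{0\le t\le T},(I'_t)_{t\ge 0})=(I_t)_{t\ge0}$ together with the pushforward step. Your reading of $\mathrm{Sew}$ agrees with the paper's later definition, since the term $\gamma^2(0)=I'_0$ that it subtracts is zero here.
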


\begin{proof}
    This follows readily from the definition of these objects as we have used the same graphical construction to build them. 
\end{proof}


\subsubsection*{Heuristics of patchwork elements}

\paragraph{} We will give a brief intuition behind the definitions and results stated in Section~\ref{section_PatchworkCBP}, starting by understanding an initial configuration drawn from~$\mu_{\mathrm g}$. When we start from~$\beta_0\sim\mu_{\mathrm{g}}$, we can think of the particles at the initial time as classified into two types: the ones that are present in~$\beta_0$ \textit{only} because there exists an infection path going backwards in time starting from it that reaches the trail~$\mathrm{g}$, but they have no infinite infection path starting from it; and the ones that do possess an infinite infection path starting from it, and we call those particles special particles. They are gathered then in the set~$\mathcal S$ as defined by~\eqref{eq_def_of_mathcal_S}. Note that the collection of special particles is distributed precisely according to~$\nu_{\lambda}$ restricted to~$\mathbb N_0$, whereas the collection of particles in~$\beta_0$ dominates~$\nu_\lambda$. 

In that perspective, we have that~$T$ is the first moment after one unit of time where the leftmost particle and the barrier are neighbours and the leftmost particle is a descendant of an special particles. The special path~$\Gamma$ is the leftmost one realising this encounter in between the barrier and the leftmost particle which is a descendant of a special particle. 

Next, we defined depth, an observable of the contact-and-barrier process that is associated with the stopping time~$T$ and with the special path~$\Gamma$; this observable will serve as an auxiliary tool in order for us to find a sequence of times such that the sequence of increments of the interface process in between those times are i.i.d.

Roughly, the heuristics behind the definition of depth is the following. At moment~$T$, we have that now the descendants of the special particles are stochastically larger than~$\nu_\lambda$, so we can again repeat the same procedure reclassifying which particles are special and wait for an adjacency moment with a particle that now does not need the advantage of neither ~$\mathrm{g}$ nor~$\text{Graph}(\gamma)$ to exist.

The observable depth will help us to find adjacency moments where the barrier has never seen a particle that was not a descendant of a special particle related to that moment. This will give us an i.i.d. structure roughly because those will be moments where we can restart the process from the barrier followed by a configuration drawn from~$\nu_\lambda$ as the interface process would never feel the difference since it would never notice the presence of particles that were not special. 

\subsubsection{Proof of properties of patchwork elements}
\label{sss_proofs}

\begin{proof}[Proof of Lemma~\ref{TBehavesWell}]
We first prove the lemma under the assumption~\eqref{eq_A2}. Using this assumption, we have~$\epsilon :=\tfrac12(\mathbf B + \alpha) > 0$. Let~$a > 4$, and let~$t := \frac{2a}{\mathbf B + \alpha - 2\epsilon}$, so that~$2a-(\alpha - \epsilon)t = (\mathbf B-\epsilon)t$. We now define three good events. First,
\[G_1:= \{B_t > (\mathbf B - \epsilon)t\}.\]

Second,
\[G_2:= \{ (\mathcal S \cap [a,2a]) \times \{0\} {\rightsquigarrow} (-\infty, (\mathbf B - \epsilon) t] \times \{t\}  \};\]
we emphasize that the notation~`${\rightsquigarrow}$' employed in this event pertains to infection paths in~$\mathcal H$, regardless of the behaviour of the barrier. Third,
\[G_3 := \left\{ \max_{s \in [0,1]} |B_s| < a/4\right\} \cap \left\{\min\{x \in \mathbb Z:\; \exists s \le 1\text{ s.t. }\; [a,\infty) \times \{0\} \rightsquigarrow (x,s)\} > 3a/4\right\}.\]

We claim that~$G_1 \cap G_2 \cap G_3 \subseteq \{\mathbf X < 2a,\; T \le t\}$. Indeed, if~$G_2$ occurs, then there is~$x \in \mathcal S \cap [a,2a]$ and an infection path~$\gamma:[0,t] \to \mathbb Z$ with~$\gamma(0) = x$ and~$\gamma(t) < (\mathbf B-\epsilon)t$. If~$G_1$ also occurs, then we can consider the first time~$t' < t$ when this infection path is neighbouring the barrier, that is,~$\gamma(t') = B_{t'} + 1$. If~$G_3$ also occurs, using~$a > 4$, we have~$B_s + 1< \gamma(s)$ for all~$s \in [0,1]$, so~$t' \ge 1$. Recalling the definition of~$\sigma_x$ in~\eqref{eq_def_sigma_x}, this shows that~$\sigma_x \le t' < t$. Using the definitions of~$\mathbf X$ and~$T$, as well as~\eqref{eq_minimality_sigma_x}, we have  now proved the desired inclusion of events.

We now turn to giving lower bounds for the probabilities of the good events. Lemma~\ref{lem_bar_Behaves_Well} implies that~$\mathbb P(G_1) > 1- e^{-ct}$ for some~$c > 0$. The choice of~$t$ and Corollary~\ref{CorRightmostWellBehaved} imply that
\begin{align*}\mathbb P(G_2) \ge \mathbb P((\mathcal S \cap [a,2a]) \times \{0\} \rightsquigarrow (-\infty, 2a-(\alpha-\epsilon)t] \times \{t\} )> 1 - Ce^{-ca}-e^{-ct}
\end{align*}
for some~$c,C > 0$. Finally, elementary bounds using Poisson random variables give~$\mathbb P(G_3) > 1- e^{-ca}$ for some~$c > 0$. Changing the constants, we have thus proved that
\[\mathbb P(\mathbf X < 2a,\; T \le 2a/(\mathbf B + \alpha -2\epsilon)) > 1 - Ce^{-ca}- Ce^{-ct}.\]

By adjusting the constants if needed, we obtain the desired bound.

The proof under assumption~\eqref{eq_A1} is the same, except that~$\mathbf B$ should be replaced everywhere by~$\min\{r^0_\rightarrow,r^1_\rightarrow\}- r_\leftarrow$, and the bound on the probability of~$G_1$ is then given by a standard large deviations bound for a random walk.\qedhere
\end{proof}

\begin{proof}[Proof of Lemma~\ref{DepthBehavesWell}]
    For each~$x \in \mathbb Z$, define
    \[\sigma_x':=\sup\{t\geq 0\,:\,\mathbb Z \times \{-t\} \stackrel{\mathcal H}{\rightsquigarrow} (x,0)\}.\]
    By duality and~\eqref{eq_small_cluster}, we have~$\mathbb P(t<\sigma_x'<\infty) < e^{-ct}$ for some~$c > 0$, all~$x$ and all~$t > 0$.
    
    The event that the depth is larger than~$d$ is equal to the event that there exist~$t > d$ and~$x \in \{1,\ldots, \mathbf X\}$ such that~$t<\sigma'_x<\infty$. Using a union bound and Lemma~\ref{TBehavesWell}, we have
    \[\mathbb P(D>d) \le \mathbb P(\mathbf X > d) + d \cdot e^{-cd} < Ce^{-\sqrt{d}}\]
    for all~$d > 0$ and some large constant~$C>0$, completing the proof.
\end{proof}

For the proof of Proposition~\ref{lem_first_law}, we will need a few auxiliary definitions and statements. We continue working on the same probability space as in the previous subsection, where an augmented graphical construction~$(\mathcal H, \mathcal I)$ is defined (with~$\mathcal H$ being defined for all times in~$\mathbb R$). 

Several of the objects we have defined in the previous subsection depend on the graphical construction~$\mathcal H$ for times down to~$-\infty$. We will now define ``truncated'' versions of these objects, where the dependence on negative times is forced to stop at some point. 
Recall the construction of~$\beta_0$ in~\eqref{eq_def_mu_g}.
We now define, for each~$n \in \mathbb N$,
\[
\beta^{(n)}_0(x) = \begin{cases}
    0&\text{if } x < 0;\\
    -1&\text{if } x = 0;\\
    \mathds{1}_{((\mathbb Z \times \{-n\}) \cup \mathrm g) \rightsquigarrow (x,0)}&\text{if } x > 0.
\end{cases}
\]
In analogy with~\eqref{eq_def_of_mathcal_S}, define
\[\mathcal S^{(n)}:= \{x \in \mathbb N:\;\mathbb Z \times \{-n\}  \rightsquigarrow (x,0)\}.\]
The following is readily seen.
\begin{claim}\label{cl_Nk}
    Almost surely, for all $k \in \mathbb N$ there exists $N(k)$ such that for all $n \ge N(k)$ we have
    \[\beta_0^{(n)} \cdot \mathds{1}_{[1,k]} = \beta_0 \cdot \mathds{1}_{[1,k]} \qquad \text{and} \qquad \mathcal S^{(n)} \cap [1,k] = \mathcal S \cap [1,k].\]
\end{claim}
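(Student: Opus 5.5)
The plan is to prove both identities site by site on the finite window $[1,k]$, using monotonicity in $n$ together with the finiteness of backward clusters. Since the window has only $k$ sites, it suffices to prove, for each fixed $x\in[1,k]$, that almost surely there is $N_x$ such that $\beta^{(n)}_0(x)=\beta_0(x)$ and $\mathds 1\{x\in\mathcal S^{(n)}\}=\mathds 1\{x\in\mathcal S\}$ for all $n\ge N_x$. We then set $N(k):=\max_{x\le k}N_x$ and take a countable intersection over $k\in\mathbb N$.

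Fix $x\ge1$. The event $\{\mathbb Z\times\{-n\}\rightsquigarrow(x,0)\}$ is non-increasing in $n$: any infection path from time $-(n+1)$ to $(x,0)$, restricted to $[-n,0]$, is a path from $\mathbb Z\times\{-n\}$. Moreover, $-\infty\rightsquigarrow(x,0)$ is, by definition, the event that $\mathbb Z\times\{-n\}\rightsquigarrow(x,0)$ for every $n$. Indeed, if one prefers to define it via an infinite backward path, the two notions agree because a backward path from $(x,0)$ traverses only finitely many arrows in any bounded time window, so a compactness/K\"onig argument yields an infinite path from the nested family of finite ones.

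There are now two cases.
\begin{itemize}
\item If $-\infty\rightsquigarrow(x,0)$, then $x\in\mathcal S^{(n)}$ for all $n$ and $x\in\mathcal S$. Also $\beta^{(n)}_0(x)=1=\beta_0(x)$, since $\mathbb Z\times\{-n\}\rightsquigarrow(x,0)$ holds.
\item Otherwise, $\sigma'_x:=\sup\{t\ge 0:\mathbb Z\times\{-t\}\rightsquigarrow(x,0)\}$ is finite, so for $n>\sigma'_x$ we have $x\notin\mathcal S^{(n)}$ and $x\notin\mathcal S$. In this case $\beta^{(n)}_0(x)=\mathds 1\{\mathrm g\rightsquigarrow(x,0)\}=\beta_0(x)$, because the contribution from $\mathbb Z\times\{-n\}$ and the one from $-\infty$ both vanish.
\end{itemize}

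The only point needing care is the identification of $-\infty\rightsquigarrow(x,0)$ with the intersection over $n$ of the events $\mathbb Z\times\{-n\}\rightsquigarrow(x,0)$. The compactness step relies on local finiteness of $\mathcal H$ on $[-n,0]$ near the relevant sites. On a finite time window a backward path could a priori wander to infinity, but almost surely only finitely many arrows affect $(x,0)$ within time $n$, by comparison with a Poisson/branching bound. This gives the almost-sure statement.
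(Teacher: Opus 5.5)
Your proposal is correct and matches the argument the paper leaves implicit when it calls the claim ``readily seen''. That argument has three parts: $\{\mathbb Z\times\{-n\}\rightsquigarrow(x,0)\}$ is monotone in $n$; each of the finitely many $x\in[1,k]$ falls into the dichotomy $\sigma'_x=\infty$ versus $\sigma'_x<\infty$; and the $\mathrm g$-contribution is identical in $\beta^{(n)}_0$ and $\beta_0$. Your K\"onig/local-finiteness step identifying $\{\sigma'_x=\infty\}$ with $\{-\infty\rightsquigarrow(x,0)\}$ is the same identification the paper uses tacitly in the proof of Lemma~\ref{DepthBehavesWell}, and it is worth having made explicit.
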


We now let~$(\beta^{(n)}_t)_{t \ge 0}$ denote the contact-and-barrier process started from~$\beta^{(n)}_0$ at time~$0$, evolving according to~$(\mathcal H, \mathcal I)$. We let~$(B^{(n)}_t)_{t \ge 0}$ denote the associated barrier trajectory.

In analogy with~\eqref{eq_def_sigma_x}, we define
\begin{equation}\label{eq_def_sigma_x_trunc}
\mathcal T^{(n)}_x := \inf\left\{\begin{array}{l} t \ge 1:\; \text{in $(\beta^{(n)}_t)_{t \ge 0}$ there is a barrier-free}\\ \text{infection path from $(x,0)$ to~$(B_t^{(n)}+1,t)$} \end{array}\right\},\quad x \in \mathcal S^{(n)}.
\end{equation}

We emphasize that~$\mathcal T^{(n)}_x$ could differ from~$\mathcal T_x$, since the trajectory of the barrier and the set of barrier-free infection paths depend not only on~$(\mathcal H, \mathcal I)$, but also on the configuration at time zero. However, we make the following observation, whose proof is elementary and thus omitted:
\begin{claim}\label{cl_Nk2}
    Assume that~$x \in \mathcal{S}$ and~$n \ge N(x)$, where~$N(x)$ is as in Claim~\ref{cl_Nk}. Then,~$\mathcal{T}^{(n)}_x = \mathcal{T}_x$.
\end{claim}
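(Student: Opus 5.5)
The plan is to apply Lemma~\ref{lem_SameInterface} to the two processes $(\beta_t)_{t\ge0}$ and $(\beta^{(n)}_t)_{t\ge0}$. Both are built from the same augmented graphical construction $(\mathcal H,\mathcal I)$, both have the barrier at the origin at time $0$, and for $n\ge N(x)$ they agree on $[0,x]$ by Claim~\ref{cl_Nk}. The target conclusion is that, up to any time $t$ at which some descendant of $(x,0)$ is still alive, the two barriers coincide and the two configurations agree between the barrier and the rightmost barrier-free descendant of $x$. Since $\mathcal T_x$ and $\mathcal T^{(n)}_x$ only depend on whether a barrier-free descendant of $(x,0)$ sits at the site immediately right of the barrier, these quantities should then coincide.

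Concretely, fix $x\in\mathcal S$ and $n\ge N(x)$. Then $\beta_0(x)=\beta^{(n)}_0(x)=1$, since $x\in\mathcal S$ implies $\beta_0(x)=1$. We apply Lemma~\ref{lem_SameInterface} with $b=0$ and $\beta_1=\beta_0$, $\beta_2=\beta_0^{(n)}$. Because $x\in\mathcal S$, there is an infinite infection path from $(x,0)$ in $\mathcal H$.

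The one point needing care is that such a path need not be barrier-free, so $r^x_t$ could be $-\infty$ for large $t$. To handle this, take any finite $t$ for which $r^x_t\in\mathbb Z$. On $[0,t]$, Lemma~\ref{lem_SameInterface} gives $B_s=B^{(n)}_s$, and also $\beta_s=\beta^{(n)}_s$ on $\{B_s,\dots,r^x_s\}$. Barrier-freeness of a path from $(x,0)$ depends only on the barrier trajectory, so on $[0,t]$ the sets of barrier-free infection paths from $(x,0)$ are identical for the two processes. Hence, for $s\le t$, the statement ``there is a barrier-free path from $(x,0)$ to $(B_s+1,s)$'' holds for one process if and only if it holds for the other.

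It remains to cover all times, and we split into two cases.

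\textbf{Case $\mathcal T_x<\infty$.} Take $t=\mathcal T_x$. For the process $\beta$ we have $r^x_t\ge B_t+1$, so $r^x_t\in\mathbb Z$. The infimum in \eqref{eq_def_sigma_x_trunc} is then taken over the same set of times $s\le t$ for both processes. This gives $\mathcal T^{(n)}_x=\mathcal T_x$, using that the infimum is attained by right-continuity.

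\textbf{Case $\mathcal T_x=\infty$.} Suppose, for contradiction, that $\mathcal T^{(n)}_x<\infty$. Running the same argument with the roles of $\beta$ and $\beta^{(n)}$ exchanged, at $t=\mathcal T^{(n)}_x$, gives $\mathcal T_x\le t$, which is a contradiction.

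In both cases the definition of $r^x_s$ is symmetric in the two processes, since it uses ``either process''. So $r^x_t\in\mathbb Z$ as soon as either process has a barrier-free descendant of $x$ at time $t$. This symmetry is the step that makes the argument work in both directions.
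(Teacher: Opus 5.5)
Your proof is correct and follows the paper's intended route. The paper omits this proof as elementary, but in the proof of Lemma~\ref{lem_depth_and_influence} it deduces $\mathcal T_{\mathrm g,x}=\mathcal T_{\mathrm g',x}$ from agreement of the initial configurations on $\{1,\ldots,x\}$ via Lemma~\ref{lem_SameInterface}, which is your argument; your two cases, using that the infimum is attained and that $r^x_t$ ranges over either process, supply the details correctly.

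One harmless slip: $x\in\mathcal S$ means $-\infty\rightsquigarrow(x,0)$, an infection path arriving at $(x,0)$ from time $-\infty$, not an infinite path leaving $(x,0)$. Nothing in your argument uses that remark.
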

Similarly to~\eqref{eq_def_bold_X} and Definition~\ref{defT}, let
\[\mathbf{X}^{(n)}:=\inf\{x \in \mathcal{S}^{(n)}:\; \mathcal{T}^{(n)}_x < \infty\}, \qquad T^{(n)}:= \mathcal{T}^{(n)}_{\mathbf{X}^{(n)}}.\]

By combining the previous two claims with an extra argument involving the crossing of paths, we have:
\begin{claim}\label{cl_Nk3}
    Almost surely, if~$n \ge N(\mathbf X)$ we have
    \[\mathbf X^{(n)} = \mathbf X,\qquad T^{(n)} = T,\qquad B^{(n)}_t = B_t \text{ for all } t \in [0,T].\]
    
Moreover, the leftmost barrier-free infection path in~$(\beta^{(n)}_t)$ from~$(\mathbf X^{(n)},0)$ to~$(B^{(n)}_T+1,T)$ is equal to the leftmost barrier-free infection path in~$(\beta_t)$ from~$(\mathbf X,0)$ to~$(B_T+1,T)$.
\end{claim}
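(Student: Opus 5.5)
The plan is to show that, once $n \ge N(\mathbf X)$, the two processes $(\beta_t)$ and $(\beta^{(n)}_t)$ agree on everything that matters up to time $T$. Concretely, we aim to show that on $[0,T]$ they have the same barrier and the same configuration on the window from the barrier up to the relevant infection paths. The tool is Lemma~\ref{lem_SameInterface}, combined with Claims~\ref{cl_Nk} and~\ref{cl_Nk2} and a crossing-paths argument.

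\textbf{Step 1: agreement up to $\mathcal T_{\mathbf X}$.} Fix $n \ge N(\mathbf X)$. By Claim~\ref{cl_Nk}, $\beta_0$ and $\beta^{(n)}_0$ coincide on $[0,\mathbf X]$: both have the barrier at $0$, and they agree on $[1,\mathbf X]$. Moreover $\beta_0(\mathbf X)=1$, since $\mathbf X\in\mathcal S$. We apply Lemma~\ref{lem_SameInterface} with $b=0$, $x=\mathbf X$ and horizon $t=T=\mathcal T_{\mathbf X}$. Its hypothesis $r^x_T\in\mathbb Z$ holds, because there is a barrier-free path from $(\mathbf X,0)$ to $(B_T+1,T)$ in $(\beta_t)$, and $r^x_T<\infty$ since paths move by finitely many jumps. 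The lemma then gives $B_s=B^{(n)}_s$ for $s\in[0,T]$, and equality of the two configurations between the barrier and $r^{\mathbf X}_s$. In particular, a path started at $(\mathbf X,0)$ that stays to the left of $r^{\mathbf X}$ is barrier-free in one process if and only if it is barrier-free in the other, since the barriers coincide. So the leftmost barrier-free path from $(\mathbf X,0)$ to $(B_T+1,T)$ is the same in both processes, which gives the final assertion once we know $\mathbf X^{(n)}=\mathbf X$ and $T^{(n)}=T$. Also $\mathbf X\in\mathcal S^{(n)}$ by Claim~\ref{cl_Nk}, and $\mathcal T^{(n)}_{\mathbf X}=T$ by Claim~\ref{cl_Nk2}.

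\textbf{Step 2: identifying $\mathbf X^{(n)}$ and $T^{(n)}$.} By Claim~\ref{cl_Nk}, $\mathcal S^{(n)}\cap[1,\mathbf X]=\mathcal S\cap[1,\mathbf X]$. Take $y\in\mathcal S$ with $y<\mathbf X$. Then $\mathcal T_y=\infty$ by the definition of $\mathbf X$, and Claim~\ref{cl_Nk2} applies to $y$ (as $N(y)\le N(\mathbf X)$ after choosing $N$ monotone in $k$), giving $\mathcal T^{(n)}_y=\infty$. Hence $\mathbf X^{(n)}=\mathbf X$ and $T^{(n)}=\mathcal T^{(n)}_{\mathbf X}=T$.

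\textbf{Main obstacle.} The delicate point is the $\mathbf X^{(n)}$ part, and specifically the minimality of $T^{(n)}$ among all $\mathcal T^{(n)}_z$. This is where the "extra crossing-paths argument" enters. We need the truncated analogue of \eqref{eq_minimality_sigma_x}, namely $T^{(n)}=\min_{z\in\mathcal S^{(n)}}\mathcal T^{(n)}_z$, where points $z>\mathbf X$ of $\mathcal S^{(n)}$ may not lie in $\mathcal S$. We would prove this directly from path crossing. Suppose a barrier-free path from some $z>\mathbf X$ first becomes adjacent to the barrier at a time $t<T$. Consider also the infection path from $(\mathbf X,0)$ that survives up to time $T$. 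Before time $t$ it must either stay strictly to the left of the $z$-path, which forces it to hit the barrier-adjacent site no later than $t$, or cross the $z$-path, in which case concatenation produces a barrier-free path from $\mathbf X$ adjacent to the barrier at time $t$. Either way we would get $\mathcal T_{\mathbf X}\le t$, a contradiction. The same argument in the truncated process uses Step 1, which guarantees that on $[0,T]$ the barrier-free paths left of $r^{\mathbf X}$ are identical in the two processes. Care is needed with the requirement $t\ge 1$ and with paths that touch the barrier, but these only affect boundary cases.
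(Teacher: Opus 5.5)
Your argument is correct and follows the paper's route: Claims~\ref{cl_Nk} and~\ref{cl_Nk2}, together with the crossing-paths mechanism, which you package through Lemma~\ref{lem_SameInterface} to get agreement of the barriers on $[0,T]$ and hence of the leftmost barrier-free paths. The ``main obstacle'' is not actually needed: $T^{(n)}$ is defined as $\mathcal T^{(n)}_{\mathbf X^{(n)}}$, not as a minimum over $\mathcal S^{(n)}$, so your Step~2 already gives $\mathbf X^{(n)}=\mathbf X$ and $T^{(n)}=T$ directly.
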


Finally, in analogy with Definition~\ref{GammaT}, we let~$\Gamma^{(n)}:[-n,T^{(n)}] \to \mathbb Z$ be the infection path defined as follows:
\begin{itemize}
    \item in~$[-n,0]$,~$\Gamma^{(n)}$ is the leftmost infection path from~$\mathbb Z \times \{-n\}$ to~$(\mathbf X^{(n)},0)$;
    \item in~$[0,T^{(n)}]$,~$\Gamma^{(n)}$ is the leftmost barrier-free infection path (with respect to~$(\beta^{(n)}_t)$) from~$(\mathbf X^{(n)},0)$ to~$(B^{(n)}_{T^{(n)}}+1,T^{(n)})$.
\end{itemize}

We then have:
\begin{claim}\label{clNk4}
    Almost surely, for every~$s < 0$, there exists~$N' \ge N(\mathbf X)$ such that for all~$n \ge N'$ we have~$\Gamma^{(n)}\vert_{[s,T]} = \Gamma\vert_{[s,T]}$.
\end{claim}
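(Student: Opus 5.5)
The two segments of $\Gamma^{(n)}$ can be handled separately. On $[0,T]$, Claim~\ref{cl_Nk3} already gives what we need. For $n \ge N(\mathbf X)$ we have $\mathbf X^{(n)}=\mathbf X$, $T^{(n)}=T$ and $B^{(n)}_t=B_t$ on $[0,T]$. The leftmost barrier-free infection path from $(\mathbf X^{(n)},0)$ to $(B^{(n)}_{T^{(n)}}+1,T^{(n)})$ in $(\beta^{(n)}_t)$ coincides with the corresponding path in $(\beta_t)$. Hence $\Gamma^{(n)}\vert_{[0,T]}=\Gamma\vert_{[0,T]}$ for all such $n$, and it remains to treat a fixed $s<0$ on the interval $[s,0]$.

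For the negative-time segment, let $\gamma^{(n)}$ be the leftmost infection path from $\mathbb Z\times\{-n\}$ to $(\mathbf X,0)$, and let $\gamma$ be the leftmost infection path from $-\infty$ to $(\mathbf X,0)$. These exist because $\mathbf X\in\mathcal S$, and $\mathbf X$ does not depend on $n$ once $n\ge N(\mathbf X)$.

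\emph{Monotonicity.} Any infinite backward path from $(\mathbf X,0)$, restricted to $[-n,0]$, is a path from level $-n$. Hence $\gamma^{(n)}\le\gamma$ pointwise on $[-n,0]$. The set of paths from level $-(n+1)$ to $(\mathbf X,0)$, restricted to $[-n,0]$, is contained in the set of paths from level $-n$. Therefore $\gamma^{(n)}\le\gamma^{(n+1)}$ on $[-n,0]$, and for each fixed time the values $\gamma^{(n)}(u)$ are nondecreasing in $n$ and bounded above by $\gamma(u)$.

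\emph{Stabilization.} On the compact window $[s,0]$ the graphical construction has finitely many marks in any bounded spatial box almost surely. Every path involved is bounded above by $\gamma$, which makes finitely many jumps on $[s,0]$. The paths $\gamma^{(n)}$ are also bounded below uniformly in $n$ on $[s,0]$, because each reaches $(\mathbf X,0)$ and, backwards in time from $0$, can only make jumps along arrows whose number in a bounded region is finite. I expect the lower bound to be the main obstacle. I would try to obtain it from the fact that $\gamma^{(n)}\ge\gamma^{(1)}$ on $[-1,0]$ and, more generally, $\gamma^{(n)}\ge\gamma^{(m)}$ on $[-m,0]$ for $m\le n$: taking $m$ with $-m\le s$ gives the uniform lower bound $\gamma^{(m)}$ on $[s,0]$. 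Given this, only finitely many candidate paths lie in the resulting finite region of the window $[s,0]$, and a monotone sequence in a finite set is eventually constant. So $\gamma^{(n)}\vert_{[s,0]}$ stabilizes to some path $\tilde\gamma$.

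\emph{Identification of the limit.} It remains to show $\tilde\gamma=\gamma\vert_{[s,0]}$. Because of the monotonicity and the finite-region argument, the eventual restrictions to $[s',0]$ are consistent across $s'$. A diagonal argument over $s'=-1,-2,\ldots$ then produces an infinite backward path $\hat\gamma$ from $(\mathbf X,0)$ with $\hat\gamma\le\gamma$. It is indeed infinite, since each $\gamma^{(n)}$ reaches level $-n$ and the restrictions agree. Minimality of $\gamma$ among such paths gives $\hat\gamma\ge\gamma$, so $\hat\gamma=\gamma$, and in particular $\tilde\gamma=\gamma\vert_{[s,0]}$.

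Finally, take $N'\ge N(\mathbf X)$ beyond the stabilization index for the window $[s,0]$. For all $n\ge N'$ both segments agree, and $\Gamma^{(n)}\vert_{[s,T]}=\Gamma\vert_{[s,T]}$.
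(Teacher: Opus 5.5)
Your proof is correct and takes the same route as the paper: the $[0,T]$ portion comes directly from Claim~\ref{cl_Nk3}, and the $[s,0]$ portion comes from the truncated leftmost paths converging to the leftmost path from $-\infty$. The paper only asserts that convergence, while your argument justifies it validly: monotonicity in $n$ sandwiches $\gamma^{(n)}$ between $\gamma^{(m)}$ and $\gamma$ on $[s,0]$, the sequence is then eventually constant within a finite box, and minimality of $\gamma$ identifies the limit.
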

\begin{proof}
    It follows from Claim~\ref{cl_Nk3} that if~$n \ge N(\mathbf X)$, then~$T^{(n)} = T$ and~$\Gamma^{(n)}\vert_{[0,T]}=\Gamma\vert_{[0,T]}$. Fix~$s < 0$. Almost surely, when~$n$ is large enough, the leftmost infection path from~$\mathbb Z \times \{-n\}$ to~$(\mathbf X,0)$ and the leftmost infection path from~$-\infty$ to~$(\mathbf X,0)$ agree on~$[s,0]$. This concludes the proof.
\end{proof}

\begin{lemma}
    \label{lem_change_H_patchwork}
On the same probability space where the augmented graphical construction~$(\mathcal H, \mathcal I)$ is defined, let~$\mathcal H'$ be another graphical construction of the contact process with parameter~$\lambda$, defined for all times in~$\mathbb R$, and independent of~$(\mathcal H, \mathcal I)$. Then, almost surely,
\[
\mathrm{Law}(\mathrm{Right}_\Gamma^+(\mathcal H) \mid \mathcal{I}, T, \Gamma, \mathrm{Left}_\Gamma(\mathcal H)) = \mathrm{Law}(\mathrm{Right}_\Gamma^+(\mathcal H') \mid \mathcal{I}, T, \Gamma, \mathrm{Left}_\Gamma(\mathcal H)).
\]
\end{lemma}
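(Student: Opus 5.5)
The plan is to reduce the statement to Lemma~\ref{lem_RPRIP_property}, applied to the truncated paths $\Gamma^{(n)}$, and then let $n\to\infty$ using Claims~\ref{cl_Nk3} and~\ref{clNk4}. Throughout we condition on $\mathcal I$. Since $\mathcal I$ is independent of $\mathcal H$ and of $\mathcal H'$, it suffices to prove the identity for $\mathcal I$ fixed (a deterministic flight plan). The remaining randomness is then only in $\mathcal H$, and $\mathrm{g}$ is deterministic. After shifting time by $n$, the path $\Gamma^{(n)}:[-n,T^{(n)}]\to\mathbb Z$ becomes an element of $\mathcal D([0,*],\mathbb Z)$. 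So the first step is to show that, for fixed $\mathcal I$ and $n$, $\Gamma^{(n)}$ is an RPRIP with respect to $\mathcal H$ restricted to times $\ge -n$.

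The first requirement, that $\Gamma^{(n)}$ is an infection path, holds by construction. For the measurability requirement, fix a deterministic c\`adl\`ag $\pi$. We must show that the event $\{\mathrm{dom}(\Gamma^{(n)})\subseteq \mathrm{dom}(\pi),\ \Gamma^{(n)}\le \pi\}$ is determined by $\mathrm{Left}_\pi(\mathcal H)$. The idea is that of Example~\ref{example_Rightmost}: leftmost paths are discovered by exploring from the left. The construction of $\Gamma^{(n)}$ involves four ingredients: the set $\mathcal S^{(n)}$ and the point $\mathbf X^{(n)}$; the barrier trajectory, which depends only on $\mathcal I$ and on the occupation of the site $B_t+1$, hence only on sites to the left of the leftmost relevant particle; the time $T^{(n)}$; and the leftmost paths. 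On the event in question, we verify by induction over the jump times $\sigma_k$ that each of these ingredients coincides with the corresponding object computed from $\mathrm{Left}_\pi(\mathcal H)$ alone. The key ingredient is a crossing-paths argument. Any infection path that stays weakly left of $\pi$ uses only marks from $\mathrm{Left}_\pi(\mathcal H)$. Moreover, whether a leftmost path lies below $\pi$ can be checked by running the same leftmost-path algorithm inside $\mathrm{Left}_\pi(\mathcal H)$: it succeeds there if and only if it succeeds in $\mathcal H$ with the path staying left of $\pi$. Since $\beta^{(n)}_0$ uses only finitely many time units of $\mathcal H$ below $0$ and $\mathrm g$ is deterministic, the relevant occupation variables left of $\pi$ are $\mathrm{Left}_\pi$-measurable.

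This measurability check is the step I expect to be the main obstacle. The trouble is that $\mathbf X^{(n)}$ is defined through $\mathcal T^{(n)}_x<\infty$, which is a condition about the whole future. My first approach is to restrict to the event where the domain of $\Gamma^{(n)}$ lies inside that of $\pi$. On that event, the fact that $\mathbf X^{(n)}$ is minimal means that every $x<\mathbf X^{(n)}$ in $\mathcal S^{(n)}$ fails to reach the barrier by time $T^{(n)}$. By crossing paths, the descendants of such an $x$ remain left of $\Gamma^{(n)}$, and hence left of $\pi$, so they are visible in $\mathrm{Left}_\pi(\mathcal H)$. The infinite-time aspect is avoided because a particle that reaches the barrier after $T^{(n)}$ is irrelevant, by~\eqref{eq_minimality_sigma_x}.

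Granting that $\Gamma^{(n)}$ is an RPRIP, Lemma~\ref{lem_RPRIP_property} gives the equality in law
\[\bigl(\Gamma^{(n)},\mathrm{Left}_{\Gamma^{(n)}}(\mathcal H),\mathrm{Right}^+_{\Gamma^{(n)}}(\mathcal H)\bigr) \overset{d}{=} \bigl(\Gamma^{(n)},\mathrm{Left}_{\Gamma^{(n)}}(\mathcal H),\mathrm{Right}^+_{\Gamma^{(n)}}(\mathcal H')\bigr),\]
with $T^{(n)}$ a function of $\Gamma^{(n)}$. Now restrict all objects to a window $[s,\infty)$ and to bounded test functions of finitely many local marks. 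By Claims~\ref{cl_Nk3} and~\ref{clNk4}, almost surely for all large $n$ we have $(T^{(n)},\Gamma^{(n)}|_{[s,T]})=(T,\Gamma|_{[s,T]})$, so the corresponding $\mathrm{Left}$ and $\mathrm{Right}^+$ pieces restricted to $[s,T]$ also agree. Dominated convergence then passes the joint-law identity to the limit. Letting $s\to-\infty$, these windows generate the full $\sigma$-algebras, and the joint identity with $\mathcal I$ included is equivalent to the stated equality of conditional laws.
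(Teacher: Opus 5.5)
Your proposal is correct and follows essentially the same route as the paper. The paper asserts that $\Gamma^{(n)}$ is an RPRIP, applies Lemma~\ref{lem_RPRIP_property} to the vectors that include $\mathcal I$ and $T^{(n)}$, and passes to the limit $n\to\infty$ using the almost sure convergence from Claims~\ref{cl_Nk3} and~\ref{clNk4}. Your two refinements only fill in details the paper leaves implicit: you condition on the independent flight plan $\mathcal I$ instead of invoking, as the paper does in a footnote, a slightly stronger version of Lemma~\ref{lem_RPRIP_property}, and you sketch the crossing-paths verification of the RPRIP property, where the key point is that by~\eqref{eq_minimality_sigma_x} both $T^{(n)}$ and $\mathbf X^{(n)}$ are determined by the construction up to time $T^{(n)}$.
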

\begin{proof}
Since~$\Gamma^{(n)}$ is a right-preserving random infection path, we have that the distributions of
\[(\mathcal I, T^{(n)}, \Gamma^{(n)},\mathrm{Left}_{\Gamma^{(n)}}(\mathcal H),\mathrm{Right}_{\Gamma^{(n)}}^+(\mathcal H)) \qquad \text{and} \qquad (\mathcal I, T^{(n)}, \Gamma^{(n)},\mathrm{Left}_{\Gamma^{(n)}}(\mathcal H),\mathrm{Right}_{\Gamma^{(n)}}^+(\mathcal H'))\]
are the same, by {Lemma~\ref{lem_RPRIP_property}}.\footnote{In fact, this follows from a slightly stronger version of {Lemma~\ref{lem_RPRIP_property}}, as here we  include the flight plan~$\mathcal I$ in the random vectors. A quick inspection shows that the proof of {Lemma~\ref{lem_RPRIP_property}} allows for this. Also note that the inclusion of~$T^{(n)}$ makes no difference, as~$T^{(n)}$ is measurable with respect to~$\sigma(\Gamma^{(n)})$, since~$T^{(n)}$ is the endpoint of the domain of~$\Gamma^{(n)}$.}

As~$n \to \infty$, these converge to
\[(\mathcal I, T, \Gamma,\mathrm{Left}_{\Gamma}(\mathcal H),\mathrm{Right}_{\Gamma}^+(\mathcal H)) \qquad \text{and} \qquad (\mathcal I, T, \Gamma,\mathrm{Left}_{\Gamma}(\mathcal H),\mathrm{Right}_{\Gamma}^+(\mathcal H'))\]
respectively, almost surely\footnote{We endow the space of c\`adl\`ag paths with the Skhorohod topology, and the space of graphical constructions with the infinite product of the Skhorohod topology for Poisson processes over~$\mathbb R$.}. This implies that the two limiting random vectors have the same distribution, which readily gives the statement of the lemma.
\end{proof}

\begin{proof}[Proof of Proposition~\ref{lem_first_law}] 
	Throughout this proof, we fix a realization of~$T$,~$\Gamma$,~$\mathrm{Left}_\Gamma(\mathcal H)$, and~$\mathcal I$, so these are all treated as deterministic.

	Fix~$x \in \mathbb Z$ with~$x > B_T$. We claim that
	\begin{equation}\label{eq_claim_key}
		\beta_T(x)=1 \qquad \text{if and only if} \qquad (\{-\infty\}  \cup \mathrm{Graph}(\Gamma)\cup \mathrm{g}) \, \rightsquigarrow \, (x,T) \text{ in } \mathrm{Right}_\Gamma^+(\mathcal H)
	\end{equation}
    (the latter condition meaning that in the infection path in question, all jumps are due to elements of~$\mathrm{Right}_\Gamma^+(\mathcal H)$). 
    
	To prove this, first assume that~$\beta_T(x)=1$. This implies that there exists~$y \in \mathbb N$ such that~$\beta_0(y)=1$  and~$(y,0) {\rightsquigarrow} (x,t)$ with a (barrier-free) infection path~$\gamma'$. Having~$\beta_0(y)=1$ is the same as saying that there is an infection path~$\gamma''$ from~$\{-\infty\} \cup \mathrm g$ to~$(y,0)$, by the construction of~$\beta_0$ in~\eqref{eq_def_mu_g}. By concatenating~$\gamma''$ with~$\gamma'$, we obtain an infection path~$\gamma$ from~$\{-\infty\} \cup \mathrm g$ to~$(x,t)$. By deleting the portion of~$\gamma$ below its (chronologically) last intersection with~$\mathrm{Graph}(\Gamma)$, we obtain a path from~$\{-\infty\} \cup \mathrm{Graph}(\Gamma) \cup \mathrm g$ to~$(x,t)$ that only uses reproduction marks of~$\mathrm{Right}_\Gamma^+(\mathcal H)$.

	For the converse, assume that~$(\{-\infty\}  \cup \mathrm{Graph}(\Gamma)\cup \mathrm g) \rightsquigarrow (x,T)$  in $\mathrm{Right}_\Gamma^+(\mathcal H)$. Let~$\gamma$ denote the infection path that achieves this connection. We now distinguish between two cases. First, assume that~$\gamma$ starts at a time~$t_0 > 0$. Since~$\mathrm g \subset \mathbb Z \times (-\infty,0]$, we must then have~$(\gamma(t_0),t_0) \in \mathrm{Graph}(\Gamma)$.  Since~$\beta_t(\Gamma(t))=1$ for every~$t \in [0,T]$, we obtain~$\beta_{t_0}(\gamma(t_0))=1$. Any infection path that lies on the right of a barrier-free infection path is also barrier-free, so we obtain that~$\gamma$ is barrier-free and hence~$\beta_T(\gamma(T))=1$.

	The second case is when~$\gamma$ starts at~$t_0 \le 0$. Then there are further (non-mutually exclusive) sub-cases:
	\begin{itemize}
		\item if~$\gamma$  either starts at~$\mathrm g$ or comes from~$-\infty$, then we have~$\beta_0(\gamma(0))=1$ by the construction of~$\beta_0$ in~\eqref{eq_def_mu_g}. Again using the fact that~$\gamma\vert_{[0,T]}$ stays on the right of~$\Gamma\vert_{[0,T]}$ and~$\Gamma$ is barrier-free, we obtain that~$\gamma\vert_{[0,T]}$ is barrier-free, so~$\beta_T(\gamma(T))=1$;
		\item if~$(\gamma(t_0),t_0) \in \mathrm{Graph}(\Gamma)$, we then see that~$-\infty \rightsquigarrow (\gamma(0),0)$, by concatenating~$\Gamma$ on~$(-\infty,t_0]$ with~$\gamma$ on~$[t_0,0]$. This gives~$\beta_0(\gamma(0))=1$, and we conclude that~$\beta_T(\gamma(T))=1$ as in the previous item.
	\end{itemize}
    This concludes the proof of~\eqref{eq_claim_key}, that is, we now have
        \[
(\beta_T(x))_{x > B_T} =\left(\mathds{1}\{ (\{-\infty\}  \cup \mathrm{Graph}(\Gamma)\cup \mathrm{g}) \, \rightsquigarrow \, (x,T) \text{ in } \mathrm{Right}_\Gamma^+(\mathcal H) \}\right)_{x > B_T}.
    \]

    Now let~$\mathcal H'$ be a graphical construction of the contact process (for both negative and positive times) independent of~$(\mathcal H, \mathcal I)$. By Lemma~\ref{lem_change_H_patchwork}, we have
    \[
    \mathrm{Law}(\mathrm{Right}_\Gamma^+(\mathcal H)\mid \mathcal{I}, T,\Gamma, \mathrm{Left}_\Gamma(\mathcal H)) = \mathrm{Law}(\mathrm{Right}_\Gamma^+(\mathcal H')\mid \mathcal{I}, T,\Gamma, \mathrm{Left}_\Gamma(\mathcal H)).
    \]
    
    In particular, almost surely, the law of
    \[
 \left(\mathds{1}\{ (\{-\infty\}  \cup \mathrm{Graph}(\Gamma)\cup \mathrm{g}) \, \rightsquigarrow \, (x,T) \text{ in } \mathrm{Right}_\Gamma^+(\mathcal H) \}\right)_{x > B_T}
    \]
    conditionally on~$\mathcal{I}, T,\Gamma, \mathrm{Left}_\Gamma(\mathcal H)$ is equal to the law of
\[
\left(\mathds{1}\{ (\{-\infty\}  \cup \mathrm{Graph}(\Gamma)\cup \mathrm{g}) \, \rightsquigarrow \, (x,T) \text{ in } \mathrm{Right}_\Gamma^+(\mathcal H') \}\right)_{x > B_T}
\]
    conditionally on~$\mathcal{I}, T,\Gamma, \mathrm{Left}_\Gamma(\mathcal H)$.
Now, for each~$x > B_T$, we have
\begin{align*}&(\{-\infty\}  \cup \mathrm{Graph}(\Gamma)\cup \mathrm{g}) \, \rightsquigarrow \, (x,T) \text{ in } \mathrm{Right}_\Gamma^+(\mathcal H') \\
&\hspace{2cm} \text{if and only if} \qquad  (\{-\infty\}  \cup \mathrm{Graph}(\Gamma)\cup \mathrm{g}) \, \stackrel{\mathcal H'}{\rightsquigarrow} \, (x,T)
\end{align*}
(the ``only if'' part is obvious, and the ``if'' part comes from ignoring the portion of the infection path up to the last intersection with~$\mathrm{Graph}(\Gamma)$). 

We have thus proved that 
\begin{align*}
    &\mathrm{Law}\left(\left(\beta_T(B_T+x)\right)_{x > 0} \mid \mathcal I, T, \Gamma, \mathrm{Left}_\Gamma(\mathcal H) \right)\\
    &\quad =\mathrm{Law}\left((\mathds 1 \{ (-\infty \cup \mathrm{Graph}(\Gamma) \cup \mathrm{g}) \stackrel{\mathcal H'}{\rightsquigarrow} (B_T+x,T) \})_{x > 0} \mid \mathcal I, T, \Gamma, \mathrm{Left}_\Gamma(\mathcal H)\right).
\end{align*}
Since the indicator functions on the right-hand side can be obtained as functions of~$\mathcal H'$ and~$\Gamma$ only, the right-hand side equals
\[\mathrm{Law}\left((\mathds 1 \{ (-\infty \cup \mathrm{Graph}(\Gamma) \cup \mathrm{g}) \stackrel{\mathcal H'}{\rightsquigarrow} (B_T+x,T) \})_{x > 0} \mid \Gamma\right).\]

By comparison with~\eqref{eq_def_mu_g}, this is equal to the law of~$(\beta(x))_{x > 0}$ when~$\beta \sim \mu_{\mathrm{g} \sqcup \Gamma}$. This concludes the proof.
\end{proof}

\begin{proof}[Proof of Corollary~\ref{FirstPatchworkInterface}]
    We need to prove that, for every bounded and measurable function~$f$, we have
    \begin{equation*}
        \mathbb E[f((I_t')_{t \ge 0}) \mid \Xi] = \int f \;\mathrm{d} P_{\rmg \sqcup \Gamma}.
    \end{equation*}
    
    Fix the function~$f$, and for each configuration~$\beta$ for the contact-and-barrier process, define
    \[
    F(\beta):=\int f \; \mathrm{d}P^0_\beta,
    \]
    where~$P^0_\beta$ is given in Definition~\ref{def_P0}.

    Let~$\mathcal F_{(-\infty, T]}$ denote the~$\sigma$-algebra generated by both the graphical construction restricted to the time interval~$(-\infty,T]$ and the random walk flight plan restricted to the time interval~$[0,T]$. By the strong Markov property, we have
    \[ \mathbb E[f((I_t')_{t \ge 0}) \mid \mathcal F_{(-\infty, T]}] = F(\beta_T).\]
    Using the fact that~$\Xi$ is measurable with respect to~$\mathcal F_{(-\infty, T]}$, we then have
    \[\mathbb E[f((I_t')_{t \ge 0}) \mid \Xi] = \mathbb E[ \mathbb E[f((I_t')_{t \ge 0}) \mid \mathcal F_{(-\infty, T]}] \mid \Xi] = \mathbb E[F(\beta_T) \mid \Xi].\]
    
    Next, let~$\mathcal G$ be the~$\sigma$-algebra generated by~$\mathcal I$,~$T$,~$\Gamma$, and~$\mathrm{Left}_\Gamma(\mathcal H)$ as in the statement of Proposition~\ref{lem_first_law}. That lemma says that the law of~$\beta_T$ conditionally on~$\mathcal G$ is~$\mu_{\rmg \sqcup \Gamma}$; hence,
    \[ \mathbb E[F(\beta_T) \mid \mathcal G] = \int F(\beta) \; \mu_{\rmg \sqcup \Gamma}(\mathrm d \beta) = \int \left( \int f \; \mathrm{d}P^0_\beta\right)\; \mu_{\rmg \sqcup \Gamma}(\mathrm d \beta) \stackrel{\eqref{eq_P0_and_P}}{=} \int f \; \mathrm{d} P_{\rmg \sqcup \Gamma}.\]
    
    Since~$\Xi$ is measurable with respect to~$\mathcal G$, we have
    \[\mathbb E[F(\beta_T) \mid \Xi] = \mathbb E[\mathbb E[ F(\beta_T) \mid \mathcal G] \mid \Xi] = \mathbb{E} \left[\left. \int f \;\mathrm d P_{\rmg \sqcup \Gamma}  \;\right| \;\Xi\right] =  \int f \mathrm d P_{\rmg \sqcup \Gamma}.
    \qedhere
    \] 
\end{proof}

\subsubsection{Sewing the patchwork} \label{subsub_sewingCBP}

\paragraph{} In the previous sections, all objects and results were formulated through the graphical construction of the process. We now adopt a more abstract viewpoint, working directly with the probability laws of the observables introduced earlier and treating them as given random objects, independent of their explicit construction. We proceed by piecing together the interface via the following function:

\begin{definition}[Sewing of paths]
Letting~$(\gamma^1(s): 0 \le s \le t^1)$ and~$(\gamma^2(s): 0 \le s \le t^2)$ be trajectories on some vector space, we define~$\mathrm{Sew}(\gamma^1,\gamma^2)$ as the trajectory~$(\gamma(s): 0 \le s \le t^1+t^2)$ given by
\[
\gamma(s) = \begin{cases}
    \gamma^1(s)&\text{if } s \le t^1;\\[.2cm]
    \gamma^1({t^1}) + \left(\gamma^2({s-t^1}) - \gamma^2({0})\right) &\text{if } t^1 < s \le t^1+t^2.
\end{cases}
\]

We extend this notion in the obvious way to sequences of more than two paths -- i.e.,
\[\mathrm{Sew}(\gamma^1,\ldots, \gamma^n):=\mathrm{Sew}(\mathrm{Sew}(\gamma^1,\ldots, \gamma^{n-1}),\gamma^n)\] -- and also to sequences of infinitely many paths. In the case of a sequence of finitely many paths, it is also allowed that the last path in the sequence is defined for~$0 \le t < \infty$ (in which case the output of the sewing is also a path defined for all times).
\end{definition}

To improve clarity, we will adopt the following. Given random objects~$X$ and~$Y$, we write~$\mathrm{Law}(X)$ for the law of~$X$, and~$\mathrm{Law}(X \mid Y)$ for the law of~$X$ conditionally on~$Y$. At last, we state and prove the following proposition. 

\begin{proposition}[Patchwork construction of~$P_{\mathrm{g}}$] \label{prop_patchwork}
Fix~$\mathrm g \in \mathcal R$. Let~$(\Xi^n)_{n \ge 0}$ be a sequence with~$\Xi^n=(T^n,\Gamma^n,D^n,(I^n_t)_{t \le T^n})$, and distribution specified inductively as follows:
\begin{align}
&\mathrm{Law}(\Xi^0) = Q_{\rmg};\\[.1cm]
&\mathrm{Law}(\Xi^{n+1} \mid \Xi^0,\ldots, \Xi^n) = Q_{\rmg \sqcup \Gamma^0 \sqcup \cdots \sqcup \Gamma^n} \; \text{ for all }n \in \mathbb N_0. \label{eq_nome_xi_n}
\end{align}
Then,~$\mathrm{Sew}((I^0_t)_{t \le T^0}, (I^1_t)_{t \le T^1},\ldots)$ has law~$P_{\mathrm g}$.
\end{proposition}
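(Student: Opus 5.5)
The plan is to iterate Lemma~\ref{onePatch} together with Corollary~\ref{FirstPatchworkInterface}, and then pass to the limit using that the total time $\sum_n T^n$ diverges almost surely. Let $(I^{\infty}_t)_{t\ge0}$ denote a process with law $P_{\rmg}$. The claim to prove is that for each $n$ there is a coupling $(\Xi^0,\dots,\Xi^n,(J^n_t)_{t\ge0})$ in which:
\begin{itemize}
\item $(\Xi^0,\dots,\Xi^n)$ has the law prescribed in the statement;
\item conditionally on $(\Xi^0,\dots,\Xi^n)$, the process $(J^n_t)_{t\ge 0}$ has law $P_{\rmg\sqcup\Gamma^0\sqcup\cdots\sqcup\Gamma^n}$;
\item $\mathrm{Sew}((I^0_t)_{t\le T^0},\dots,(I^n_t)_{t\le T^n},(J^n_t)_{t\ge0})$ has law $P_{\rmg}$.
\end{itemize}

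The base case $n=0$ is exactly Corollary~\ref{FirstPatchworkInterface} combined with Lemma~\ref{onePatch}. The corollary gives the conditional law of $I'$ given $\Xi$, namely $P_{\rmg\sqcup\Gamma}$, and the lemma gives that the sewing has law $P_{\rmg}$.

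For the induction step, apply the $n=0$ statement with the trail $\rmg_n:=\rmg\sqcup\Gamma^0\sqcup\cdots\sqcup\Gamma^n$ in place of $\rmg$. Since $\rmg_n\in\mathcal R$ is a fixed trail once we condition, this gives the following: a sample of $P_{\rmg_n}$ can be realised as $\mathrm{Sew}((I^{n+1}_t)_{t\le T^{n+1}},J^{n+1})$, where $\Xi^{n+1}\sim Q_{\rmg_n}$ and, given $\Xi^{n+1}$, $J^{n+1}\sim P_{\rmg_n\sqcup\Gamma^{n+1}}$. Because everything is stated conditionally on $(\Xi^0,\dots,\Xi^n)$, we replace $J^n$ by this sewn version without changing the joint law. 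We must also check associativity of $\mathrm{Sew}$, which holds because each piece is recentred via $\gamma^2(s-t^1)-\gamma^2(0)$. The induction hypothesis then transfers.

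A measurability point arises here, and I expect it to be a minor obstacle. One needs the maps $\rmg\mapsto Q_{\rmg}$ and $\rmg\mapsto P_{\rmg}$ to be probability kernels, so that the conditional specification \eqref{eq_nome_xi_n} makes sense and the disintegration is legitimate. This follows since everything is a measurable function of $(\rmg,\mathcal H,\mathcal I)$.

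The main step is the passage $n\to\infty$. Let $S_n=T^0+\dots+T^n$. On $[0,S_n]$ the infinite sewing $\mathrm{Sew}((I^k)_{k\ge0})$ coincides with the finite sewing used in the $n$-th coupling. Hence for any $t$, and any bounded measurable $f$ depending on the path on $[0,t]$, the expectations of $f$ under the infinite sewing and under $P_{\rmg}$ differ by at most $2\|f\|_\infty\,\mathbb P(S_n<t)$.

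It therefore suffices to show $S_n\to\infty$ almost surely. By the definition of $T$ via \eqref{eq_def_sigma_x}, every $T^k\ge 1$ deterministically, so $S_n\ge n+1$. Letting $n\to\infty$ gives equality of the finite-dimensional laws on every $[0,t]$. This identifies the law of the infinite sewing as $P_{\rmg}$, since laws of càdlàg processes are determined by their restrictions to compact time intervals.

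If $T\ge1$ were not available, I would instead use Lemma~\ref{TBehavesWell}, whose bounds are uniform in the trail, to rule out accumulation. In fact the lower bound $T\ge1$ makes this trivial, and that is why the definition of $\mathcal T_x$ imposes $t\ge1$.
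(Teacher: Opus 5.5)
Your proposal is correct and follows essentially the same route as the paper: an induction showing that sewing the first $n+1$ patches onto a tail with conditional law $P_{\mathrm g\sqcup\Gamma^0\sqcup\cdots\sqcup\Gamma^n}$ yields $P_{\mathrm g}$ (via Corollary~\ref{FirstPatchworkInterface} and Lemma~\ref{onePatch}), followed by passing to the limit using $T^k\ge 1$. The only cosmetic difference is that the paper builds all the tails $\hat I^{n}$ on a single augmented probability space, whereas you build a separate coupling for each $n$ and compare the laws on $[0,t]$.
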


\begin{proof}
In an augmented probability space, we construct the sequence~$(\Xi^0,\Xi^1,\ldots)$ together with an extra sequence~$((\hat I^1_t)_{t \ge 0}, (\hat I^2_t)_{t \ge 0}, \ldots)$, where each~$(\hat I^n_t)_{t \ge 0}$ is a random c\`adl\`ag function taking values in~$\{(a,b) \in \mathbb Z^2: a < b\}$. For this argument, we abbreviate~$I^i=(I^i_t)_{t \le T^i}$ and~$\hat{I}^i = (\hat I_t^i)_{t \ge 0}$.

We  specify the joint distribution of these two sequences inductively as follows:

\begin{itemize}
    \item $\mathrm{Law}(\Xi^0) = Q_{\mathrm g}$;
    \item $\Xi^{n+1}$ and~$\hat I^{n+1}$ are independent conditionally on $(\Xi^0, \hat I^1,\ldots, \Xi^{n}, \hat I^n)$, with
    \begin{align*}
    &\mathrm{Law}(\hat I^{n+1} \mid \Xi^0, \hat I^1,\ldots \Xi^n, \hat I^n) = P_{\mathrm g \sqcup \Gamma^0 \sqcup \cdots \sqcup \Gamma^n}, \\
    &\mathrm{Law}(\Xi^{n+1} \mid \Xi^0, \hat I^1,\ldots \Xi^n, \hat I^n) = Q_{\mathrm g \sqcup \Gamma^0 \sqcup \cdots \sqcup \Gamma^n}.
    \end{align*}

For the sake of clarity, we include a diagram representing the dependence structure of the first few random elements in this sequence:
\begin{figure}[H]
\begin{center}
\setlength\fboxsep{0.1cm}
\setlength\fboxrule{0.0cm}
\fbox{\includegraphics[width=0.4\textwidth]{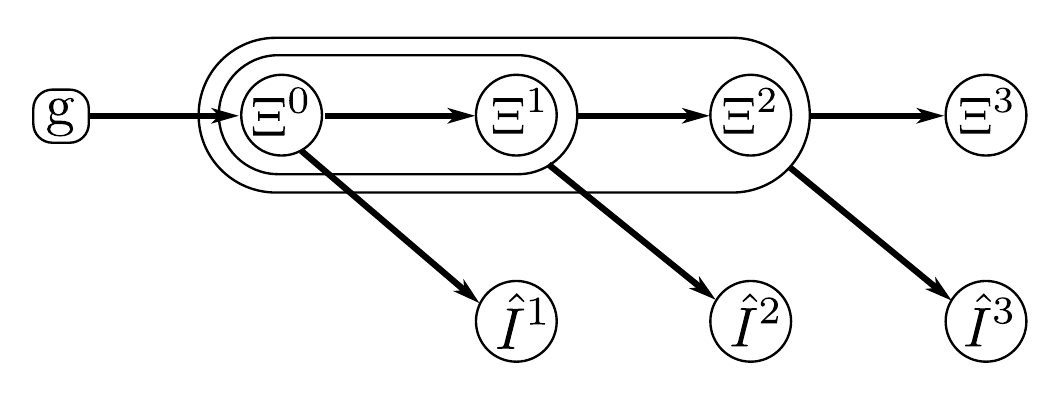}}
\end{center}
\end{figure}\vspace{-.7cm}

We claim that
\begin{equation}\label{eq_inductive_sew}
    \mathrm{Law}( \mathrm{Sew} (I^0, I^1, \ldots, I^n, \hat I^{n+1})) = P_{\mathrm g} \quad \text{for all } n \in \mathbb N.
\end{equation}
We prove this by induction. The case~$n=1$ follows from noting that the distribution of the pair~$(\Xi^0, \hat I^1)$ is~$Q_{\mathrm g}'$, and then using Lemma~\ref{onePatch}. For the induction step, note that
\[\mathrm{Law}((\Xi^{n+1}, \hat I^{n+2}) \mid \Xi^1,\ldots, \Xi^n) = Q_{\mathrm g \sqcup \Gamma^1 \sqcup \cdots \sqcup \Gamma^n}',\]
so again by Lemma~\ref{onePatch}, 
\[
\mathrm{Law}(\mathrm{Sew}(I^{n+1},\hat I^{n+2} )\mid \Xi^1, \ldots, \Xi^n) = P_{\mathrm g \sqcup \Gamma^0 \sqcup \cdots \sqcup \Gamma^n} = \mathrm{Law}(\hat I^{n+1} \mid \Xi^1,\ldots, \Xi^n).
\]

This then gives
\[
\mathrm{Law}(\mathrm{Sew}(I^1,\ldots, I^n, I^{n+1}, \hat I^{n+2})) = \mathrm{Law}(\mathrm{Sew}(I^1,\ldots, I^n, \hat I^{n+1})) = P_{\mathrm g},
\]
the last equality being the induction hypothesis. This completes the proof of~\eqref{eq_inductive_sew}.

To conclude, note that the trajectories given by
\[
\mathscr{I}^n:=\mathrm{Sew}(I^1,\ldots, I^n, \hat I^{n+1}) \quad \text{and} \quad \mathscr{I}^\infty := \mathrm{Sew}(I^1,I^2,\ldots)
\]
agree on the interval~$[0,T^1+\cdots + T^n] \supseteq [0,n]$ since~$T_i\geq 1$ for all~$i\in\mathbb N_0$. In particular,~$\mathscr I^n$ converges to~$\mathscr I^\infty$ almost surely, hence also in distribution. This gives~$\mathrm{Law}(\mathscr I^\infty) = P_{\mathrm g}$ as required. \qedhere
\end{itemize}
\end{proof}

\begin{definition}
    We call a sequence~$(\Xi^n)_{n \ge 0}$ with law as prescribed in the statement of Proposition~\ref{prop_patchwork}, corresponding to~$\mathrm g \in \mathcal R$, a \emph{patchwork sequence with initial trail $\mathrm g$}.
\end{definition}

\subsubsection{Renewals of the patchwork construction} \label{subsec_renewalsCBP}

\paragraph{}This section constructs a sequence of stopping~$(\kappa_n)_{n\in\mathbb N_0}$ satisfying the conditions stated in Lemma~\ref{lem_renewal}. Using the depth concept from Definition~\ref{DefDepth}, we first establish these key times and then develop their fundamental properties through a sequence of lemmas.

\begin{definition} \label{def_kappa_n}
    Let~$\mathrm g \in \mathcal R$ and let~$(\Xi^n)_{n \ge 0}$ be a patchwork sequence with initial trail~$\mathrm g$. We define the stopping times
\begin{equation}\label{eq_def_kappa_n}
        \kappa_n := \inf\{n' \ge n:\; D^{n'} \ge T^n + \cdots + T^{n'-1}\},\quad n \in \mathbb N_0,
\end{equation}
    where we interpret~$T^n + \cdots + T^{n'-1}$ as zero if~$n'=n$.
\end{definition}

Figure~\ref{fig:lemRenewalConnection} illustrates the first few elements of the sequence~$\kappa_n$ for a realisation of the contact-and-barrier process.

\begin{figure}[h]
\centering
\includegraphics[width=0.8\linewidth]{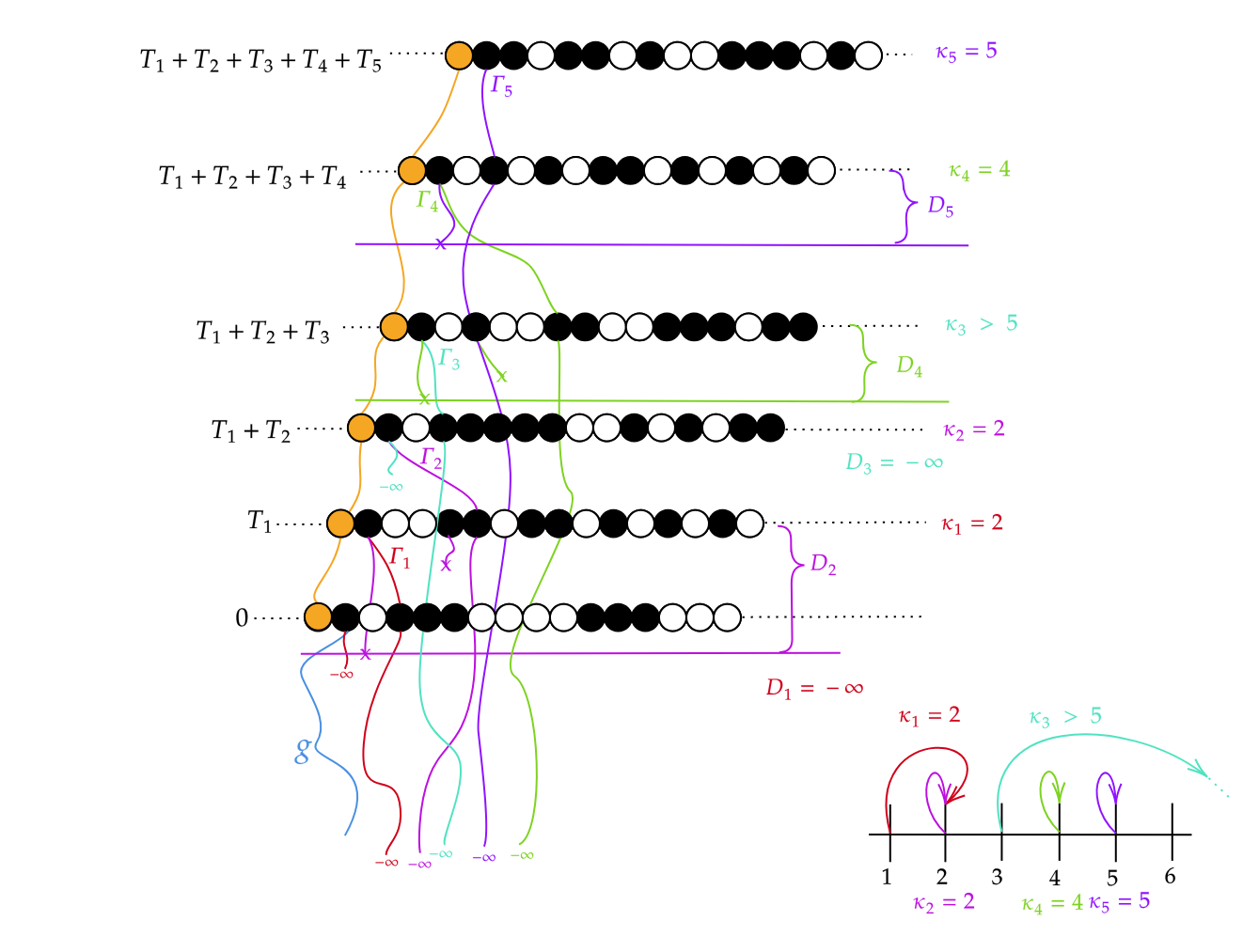}
    \caption{Illustration of the times $\kappa_n$ for a realization of the contact-and-barrier process obtained via the patchwork construction. The barrier and its trajectory are shown in orange; occupied sites are shown in black and empty sites in white. The figure illustrates the connection between Definition~\ref{def_kappa_n} and Lemma~\ref{lem_renewal}.}
    \label{fig:lemRenewalConnection}
\end{figure}

\begin{lemma} \label{kappa_m_kappa_n}
    If~$m < n$ and~$\kappa_m \ge n$, then~$\kappa_m \ge \kappa_n$.
\end{lemma}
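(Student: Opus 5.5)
The argument is purely combinatorial and uses only Definition~\ref{def_kappa_n}, i.e.\ $\kappa_n$ is the first $n'\ge n$ with $D^{n'} \ge T^n+\cdots+T^{n'-1}$. We also use that every $T^i$ is nonnegative; in fact $T^i\ge 1$ by Definition~\ref{defT}. No probabilistic input is needed.

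Fix $m<n$ with $\kappa_m\ge n$. If $\kappa_m=\infty$ there is nothing to prove, so assume $n\le \kappa_m<\infty$ and set $n':=\kappa_m$. By definition of $\kappa_m$ we have
\[
D^{n'} \ge T^m+\cdots+T^{n'-1}.
\]
Since $m<n\le n'$, the sum on the right contains the terms $T^n,\ldots,T^{n'-1}$ together with the extra terms $T^m,\ldots,T^{n-1}$. These extra terms are nonnegative, so
\[
T^m+\cdots+T^{n'-1}\ \ge\ T^n+\cdots+T^{n'-1}.
\]
When $n'=n$ the right-hand side is the empty sum, which equals zero, and the inequality still holds.

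Hence $D^{n'}\ge T^n+\cdots+T^{n'-1}$ with $n'\ge n$. So $n'$ belongs to the set whose infimum defines $\kappa_n$, and therefore $\kappa_n\le n'=\kappa_m$.

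The only point needing care is the edge case $\kappa_m=n$ (empty sum, handled by the convention in Definition~\ref{def_kappa_n}) and the case $\kappa_m=\infty$.
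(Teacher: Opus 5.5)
Your proof is correct and rests on the same observation as the paper's: since every $T^i\ge 0$, the sum $T^m+\cdots+T^{\ell-1}$ dominates $T^n+\cdots+T^{\ell-1}$ for every $\ell\ge n$. The paper shows that the criterion fails for the $m$-sums at every $\ell<\kappa_n$; you instead show that the criterion holding at $\ell=\kappa_m$ transfers to the $n$-sums, which is just the contrapositive of the same step.
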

\begin{proof}
The statement is trivial if~$\kappa_n = n$, so we assume that~$\kappa_n > n$. We observe that~$\kappa_m \ge n$ is equivalent to
    \begin{equation}
        \label{eq_first_kappa_n}
    D^{\ell} < T^{m} + \cdots T^{\ell -1} \text{ for all } \ell \in \{m,\ldots, n-1\}.
    \end{equation}
We also have
\begin{equation}
    D^{\ell} < T^{n} + \cdots T^{\ell -1} \text{ for all } \ell \in \{n,\ldots, \kappa_n-1\},
    \end{equation}
    which implies that
    \begin{equation}
        \label{eq_sec_kappa_n}
        D^{\ell} < T^m + \cdots T^{\ell -1} \text{ for all } \ell \in \{n,\ldots, \kappa_n-1\}.
    \end{equation}
Then,~\eqref{eq_first_kappa_n} and~\eqref{eq_sec_kappa_n} taken together are equivalent to~$\kappa_m \ge \kappa_n$.
\end{proof}

\begin{lemma} \label{lem_kappaPoswithPosProb}
    Let~$(\Xi^n)_{n\geq 0}$ be a patchwork sequence with initial trail~$\mathrm g$ which is defined under some probability measure~$\mathbb P$. Then,~$\mathbb P(\kappa_0=\infty)>0$. 
\end{lemma}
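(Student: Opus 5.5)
We must show $\mathbb P(\kappa_0=\infty)>0$. By Definition~\ref{def_kappa_n}, $\kappa_0 = \infty$ means
\[
D^{n} < T^0 + \cdots + T^{n-1} \quad \text{for all } n \ge 0,
\]
which for $n=0$ reads $D^0<0$, i.e.\ $D^0=-\infty$. The plan is to lower bound this probability using two facts: Lemma~\ref{DepthBehavesWell}, whose bound $C e^{-\sqrt d}$ is uniform in the trail, and the inequality $T^i \ge 1$, which holds by Definition~\ref{defT} since $\mathcal T_x \ge 1$. Since $T^0+\cdots+T^{n-1}\ge n$, it suffices to find a positive-probability event on which $D^0=-\infty$ and $D^n<n$ for every $n\ge 1$.

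First, fix a large $n_0$ and apply a union bound over $n\ge n_0$. Conditionally on $\Xi^0,\ldots,\Xi^{n-1}$, the variable $\Xi^n$ has law $Q_{\mathrm g\sqcup\Gamma^0\sqcup\cdots\sqcup\Gamma^{n-1}}$ by \eqref{eq_nome_xi_n}, and Lemma~\ref{DepthBehavesWell} is uniform in the trail. Hence $\mathbb P(D^n\ge n)\le Ce^{-\sqrt n}$ for every $n$, and we can choose $n_0$ with
\[
\sum_{n\ge n_0} Ce^{-\sqrt n}<\tfrac12 .
\]

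Second, we handle the finitely many indices $n<n_0$. The aim is $\mathbb P(D^n=-\infty\ \forall n<n_0)\ge \delta>0$ for some $\delta$ that does not depend on the trail. The event $D^n=-\infty$ means $\{1,\ldots,\mathbf X^n\}\subseteq\mathcal S^n$, i.e.\ every site between the barrier and $\mathbf X^n$ is reached from $-\infty$. One way to obtain a uniform lower bound is as follows. Take $\mathbf X^n=1$, ask that site $1$ have an infinite infection path backwards in time (i.e.\ $1\in\mathcal S$), and require that this path hits the barrier neighbourhood at some time $t\in[1,2]$. This event has positive probability: having $1\in\mathcal S$ is an event of positive probability under $\nu_\lambda$ that depends only on $\mathcal H$ at negative times. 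A local modification of the flight plan and of $\mathcal H$ on $[0,2]$ near the origin then keeps the barrier still while the particle at $1$ survives, and a forward survival event ensures its descendants persist.

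The trail affects only sites that are not in $\mathcal S$, so the bound is uniform in the trail. Conditioning successively on the earlier patches and using the uniform bound each time gives $\delta^{n_0}$ for this step. This is the step I expect to be the main obstacle. If the uniform local bound proves delicate, an alternative is to use instead $\mathbb P(D^n<n)\ge 1-Ce^{-\sqrt n}$ for $n\ge1$, together with $\mathbb P(D^0=-\infty)>0$.

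Finally, we combine the two steps. Conditioning on the first $n_0$ patches and applying the union bound of the first step to the remaining indices yields
\[
\mathbb P(\kappa_0=\infty)\ \ge\ \delta^{n_0}-\tfrac12\,\delta^{n_0}\ >\ 0 .
\]
Here the factor $\tfrac12\delta^{n_0}$ comes from the fact that the conditional tail bound for $D^n$ holds given any past, by the uniformity in the trail, and so holds on the conditioning event.
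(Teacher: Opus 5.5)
Your argument is correct and is essentially the paper's proof. The paper likewise obtains a trail-uniform positive probability (its $p$, your $\delta$) that a patch has $D=-\infty$, from the event that $1\in\mathcal S$, there are no barrier jump attempts and no death mark at site $1$ during $[0,1]$, which forces $\mathbf X=1$ and $T=1$. It iterates this over the first $n_0$ patches, then handles $n\ge n_0$ by a union bound using $T^i\ge 1$ and the trail-uniform depth tail, applied conditionally on that good event.

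Two small remarks. Your forward survival event is unnecessary, since only $\mathcal T_1=1<\infty$ is needed. Your fallback alternative would not work as stated: nothing guarantees $\sum_{n\ge 1}Ce^{-\sqrt n}<1$, which is exactly why the first $n_0$ patches must be treated separately.
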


\begin{proof}
     Let~$\mathcal G$ be the space of the 4-tuples~$\Xi$ as in~\eqref{XiDef} such that~$T=1$ and~$D=-\infty$. Let~$n\in \mathbb N$ be fixed (it will be made large enough later in the proof) and let~$G(n)$ be the good event defined by~$G(n):=\left\{\Xi^0\in \mathcal G,\dots,\Xi^{n}\in\mathcal G\right\}$. Note that~$\mathbb P (G(n))>0$:

    \begin{align*}
        \mathbb P\left(\Xi^0\in \mathcal G,\dots,\Xi^{n}\in\mathcal G\right) &= \mathbb E\left[\mathbb E\left[\mathds{1}_{\{\Xi^0\in \mathcal G,\dots,\Xi^{n}\in\mathcal G\}}\mid \Xi^0,\dots,\Xi^{n-1}\right]\right] \\
        &= \mathbb E\left[\mathds{1}_{\{\Xi^1\in\mathcal G,\dots,\Xi^{n-1}\in\mathcal G\}}\mathbb P\left(\Xi^n\in \mathcal G\,\mid\,\Xi^1,\dots,\Xi^{n-1}\right)\right] \\
        &= \mathbb E\left[\mathds{1}_{\{\Xi^1\in\mathcal G,\dots,\Xi^{n-1}\in\mathcal G\}}\mathrm{Q}_{\mathrm g\sqcup \Gamma^0\sqcup \dots \sqcup \Gamma^{n-1}}(T=1\cap D=\infty)\right]\\
        &=p\mathbb P\left(\{\Xi^1\in\mathcal G,\dots,\Xi^{n-1}\in\mathcal G\}\right) = \dots = p^{n+1}
    \end{align*}
    for some~$p>0$ where the last equality follows by repeating this procedure iteratively. 
    
    To see that this number~$p$ is positive, just note that the event~$\{T=1\}\cap\{D=-\infty\}$ could be implied by the following conditions: there exists~$\gamma:(-\infty,0]\rightarrow \mathbb Z$ infection path with~$\gamma(0)=1$,~$D^1\cap[0,1]=\emptyset$ and~$J_{\rightarrow}\cap [0,1]=\emptyset$. 

    We note that~$ \mathbb P \left(\kappa_0=\infty\right) \geq  \mathbb P \left(\kappa_0=\infty\,\mid\,G(n)\right)\mathbb P\left(G(n)\right)$ and therefore we will be done once we show that~$\mathbb P\left(\kappa_0<\infty\,\mid\,G(n)\right)<1$ instead. We have the following inclusion of events:
    \[
\{\kappa_0<\infty\}\cap G(n)
\subset \left\{\bigcup_{m>n}\{D^m>T^0+\dots+T^{m-1}\}\right\}\cap G(n)
\subset \left\{\bigcup_{m>n}\{D^m>m-1\}\right\}\cap G(n),
\]
and therefore~$\mathbb P\left(\kappa_0<\infty\,\mid\,G(n)\right)\leq \mathbb P \left(\bigcup_{m>n}\{D^m>m-1\}\,\mid\,G(n)\right)$. We can bound this last term by:
    \begin{equation} \label{summation_kappaInfty}
        \sum_{m>n}\frac{\mathbb P(\{D^m>m-1\}\cap G(n))}{\mathbb P(G(n))}
    \end{equation}
    and to control this term we work on the numerator in the following way. Let~$m>n$ be given. We have:

    \begin{align*}
        \mathbb P\left(\{D^m>m-1\}\cap G(n)\right) &= \mathbb E\left[\mathbb E\left[\mathds{1}_{\{D^m>m-1\}}\mathds{1}_{\{G(n)\}}\,\mid\,\Xi^0,\dots,\Xi^{m-1}\right]\right] \\
        &=\mathbb E\left[\mathds{1}_{\{G(n)\}}\mathbb E\left[\mathds{1}_{\{D^m>m-1\}}\,\mid\,\Xi^0,\dots,\Xi^{m-1}\right]\right] \\
        &=\mathbb E\left[\mathds{1}_{\{G(n)\}}\mathrm{Q}_{\mathrm{g}\sqcup\Gamma^0\dots\sqcup\Gamma^{m-1}}\left(D^m>m-1\right)\right] \\
        &\leq Ce^{-c(m-1)^p}\mathbb P(G(n)),
    \end{align*}
    where the last inequality follows from Lemma~\ref{DepthBehavesWell}. Therefore, the sum in~\eqref{summation_kappaInfty} can be bounded by~$\sum_{m>n}Ce^{-c(m-1)^p}$ which can be made smaller than~$1$ by choosing~$n$ sufficiently large. 
\end{proof}

\begin{lemma}\label{lem_coupled_patchworks}
    Let~$(\Xi^n)_{n \ge 0}$ be a patchwork sequence with initial trail~$\mathrm g$, defined under a probability measure~$\mathbb P$. Then, for every bounded and measurable function~$f$, the value of
    \[\mathbb E[f(\Xi^0,\Xi^1,\ldots) \cdot \mathds{1}\{\kappa_0 = \infty\}]\]
    does not depend on the initial trail~$\mathrm g$.
\end{lemma}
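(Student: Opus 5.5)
The plan is to couple, on one probability space, the patchwork sequences for two arbitrary initial trails $\mathrm g$ and $\mathrm g'$, using a single augmented graphical construction per patch. On the event $\{\kappa_0=\infty\}$ the two sequences will be shown to coincide, and this event will also coincide for the two trails. Concretely, I would build both sequences patch by patch from one underlying family of independent augmented graphical constructions $(\mathcal H^n,\mathcal I^n)_{n\ge0}$, with $\mathcal H^n$ defined on all of $\mathbb Z\times\mathbb R$. Patch $n$ for trail $\mathrm g$ is generated from $(\mathcal H^n,\mathcal I^n)$ and the current trail $\mathrm g\sqcup\Gamma^0\sqcup\cdots\sqcup\Gamma^{n-1}$, exactly as in Section~\ref{patchworkElements}; patch $n$ for $\mathrm g'$ uses the same $(\mathcal H^n,\mathcal I^n)$ and its own trail. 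Since $\mathrm{Law}(\Xi^n\mid \Xi^0,\dots,\Xi^{n-1})=Q_{\text{current trail}}$ and $\mathcal H^n$ is independent of the past, each marginal is a patchwork sequence with the correct law. The lemma then follows once I show that, on $\{\kappa_0=\infty\}$ computed for $\mathrm g$, we have $\Xi^n=\Xi'^n$ for all $n$ and $\kappa'_0=\infty$, and symmetrically with the roles of $\mathrm g$ and $\mathrm g'$ exchanged.

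The key deterministic step concerns a single patch. Take trails $\mathrm h,\mathrm h'$ that agree on the strip $\mathbb Z\times[-d,0]$ and the same $\mathcal H$. Suppose the depth computed with $\mathrm h$ satisfies $D<d$, meaning that no site in $\{1,\dots,\mathbf X\}\setminus\mathcal S$ is reached from $\mathbb Z\times\{-d\}$. Then every occupied non-special site in $\{1,\dots,\mathbf X\}$ at time $0$ owes its occupation to infection paths started from the trail inside the strip, where the two trails agree. Hence $\beta_0$ and $\beta_0'$ coincide on $[0,\mathbf X]$, and $\mathcal S$ does not depend on the trail at all. I would then apply Lemma~\ref{lem_SameInterface} with $b=0$ and $x=\mathbf X$. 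Using crossing paths and the minimality property \eqref{eq_minimality_sigma_x}, this gives that the barrier trajectories agree up to time $T$ and that both configurations agree between the barrier and the descendants of $\mathbf X$. It follows that $\mathbf X'=\mathbf X$, $T'=T$, $\Gamma'=\Gamma$, $(I'_t)_{t\le T}=(I_t)_{t\le T}$, and $D'=D$, since $D$ depends only on $\mathcal H$, $\mathcal S$ and $\mathbf X$. This is the step I expect to be the main obstacle. The difficulty is to check carefully that sites to the right of $\mathbf X$, where the two configurations may differ, cannot change $B$ or the leftmost particle before time $T$. This needs the argument that the leftmost particle is always a descendant of sites in $[1,\mathbf X]$ up to time $T$, combined with the definition of $\mathbf X$ as the leftmost special site whose descendants reach the barrier.

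Finally, I would run an induction along the patches. On $\{\kappa_0=\infty\}$ we have $D^n<T^0+\cdots+T^{n-1}$ for all $n\ge1$, and $D^0=-\infty$ when $n=0$; the latter needs care because $D^0<0$ means all of $\{1,\dots,\mathbf X\}$ is special. Meanwhile the trails at stage $n$ are $\mathrm g\sqcup\Gamma^0\sqcup\cdots\sqcup\Gamma^{n-1}$ and $\mathrm g'\sqcup\Gamma^0\sqcup\cdots\sqcup\Gamma^{n-1}$, assuming all earlier patches agree. After recentring, these two trails agree on the strip of depth $T^0+\cdots+T^{n-1}$, since that strip is covered by the appended graphs of the $\Gamma$'s and by the translates of the common portions. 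The single-patch step therefore gives $\Xi^n=\Xi'^n$, and the induction closes. It also shows that $\kappa'_0=\infty$, because the depths and times coincide. By symmetry, $f(\Xi^\cdot)\mathds 1\{\kappa_0=\infty\}=f(\Xi'^\cdot)\mathds 1\{\kappa'_0=\infty\}$ almost surely, and taking expectations yields the lemma.
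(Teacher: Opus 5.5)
Your proposal is correct and follows essentially the same route as the paper. Both arguments couple the two patchwork sequences through a common augmented graphical construction at each step (the paper's kernel $Q_{\mathrm g,\mathrm g'}$). Both use a single-patch ``depth and influence'' statement (Lemma~\ref{lem_depth_and_influence}), derived from Lemma~\ref{lem_SameInterface} and crossing paths. Both then induct along the patches, using that the stage-$n$ trails agree on a strip of depth $T^0+\cdots+T^{n-1}$. The only difference is cosmetic: you argue pathwise on $\{\kappa_0=\infty\}$ and invoke symmetry, whereas the paper proves $\mathbb P(A_n\cup B_n)=1$ for every $n$, which additionally records that $\dot\kappa_0=\ddot\kappa_0$ when these are finite.
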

The proof of this lemma is somewhat technical and lengthy and therefore we postpone it to Section~\ref{ss_coupled}.

\begin{corollary}\label{cor_sameAsInitial}
    Let~$(\Xi^n)_{n \ge 0}$ be a patchwork sequence with initial trail~$\mathrm g$, defined under a probability measure~$\mathbb P$. Then, for every bounded and measurable function~$f$ and every~$n \ge 1$,
\begin{align*}
        &\mathbb E[f(\Xi^{n},\Xi^{n+1},\ldots)\cdot \mathds{1}\{\kappa_{n}=\infty\} \mid \Xi^0,\ldots, \Xi^{n-1}] \\[.1cm]&\hspace{4cm}= \mathbb E[f(\Xi^0,\Xi^1,\ldots) \mid \kappa_0=\infty] \text{ a.s.}
\end{align*}
\end{corollary}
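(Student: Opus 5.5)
The plan is to reduce the statement to Lemma~\ref{lem_coupled_patchworks} through a Markov-type property of patchwork sequences: after conditioning on the first $n$ patches, the remaining patches form a fresh patchwork sequence whose initial trail is obtained by appending the special paths already seen. Two points need checking. First, the conditional law of the shifted sequence $(\Xi^n,\Xi^{n+1},\ldots)$ given $\Xi^0,\ldots,\Xi^{n-1}$ must be identified. Second, $\kappa_n$ must be expressed as the functional $\kappa_0$ of that shifted sequence.

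\textbf{Step 1 (shift property).} Fix $n\ge 1$ and write $\mathrm g' := \mathrm g\sqcup\Gamma^0\sqcup\cdots\sqcup\Gamma^{n-1}$. This is a $\sigma(\Xi^0,\ldots,\Xi^{n-1})$-measurable element of $\mathcal R$. By~\eqref{eq_nome_xi_n}, $\mathrm{Law}(\Xi^n\mid \Xi^0,\ldots,\Xi^{n-1})=Q_{\mathrm g'}$. For $k\ge 0$,
\[\mathrm{Law}(\Xi^{n+k+1}\mid \Xi^0,\ldots,\Xi^{n+k}) = Q_{\mathrm g'\sqcup\Gamma^n\sqcup\cdots\sqcup\Gamma^{n+k}},\]
using the associativity $(\mathrm g\sqcup\Gamma^0\sqcup\cdots\sqcup\Gamma^{n-1})\sqcup\Gamma^n\sqcup\cdots = \mathrm g\sqcup\Gamma^0\sqcup\cdots$, which holds because the bracketing of iterated $\sqcup$ is left to right. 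An induction on the number of coordinates, using the tower property on cylinder functions $f(\Xi^n,\ldots,\Xi^{n+k})$ and then a monotone class argument, gives the following. A regular conditional law of $(\Xi^n,\Xi^{n+1},\ldots)$ given $\Xi^0,\ldots,\Xi^{n-1}$ is the law of a patchwork sequence with initial trail $\mathrm g'$. One technical point deserves a remark: the map $\mathrm g\mapsto Q_{\mathrm g}$ must be a measurable kernel, which is implicit in~\eqref{eq_nome_xi_n}.

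\textbf{Step 2 ($\kappa_n$ as a shifted $\kappa_0$).} By~\eqref{eq_def_kappa_n}, the event $\{\kappa_n=\infty\}$ is the event that $D^{n'}<T^n+\cdots+T^{n'-1}$ for all $n'\ge n$. This is exactly $\{\kappa_0=\infty\}$ evaluated on the shifted sequence $(\Xi^{n},\Xi^{n+1},\ldots)$. Hence $f(\Xi^n,\ldots)\mathds 1\{\kappa_n=\infty\}=h(\Xi^n,\Xi^{n+1},\ldots)$ for a fixed bounded measurable $h$, with $h$ not depending on $n$.

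\textbf{Step 3 (conclusion).} Combining Steps 1 and 2, the conditional expectation on the left-hand side equals $\Phi(\mathrm g')$ almost surely, where $\Phi(\mathrm h):=\mathbb E_{\mathrm h}[f(\Xi^0,\Xi^1,\ldots)\mathds 1\{\kappa_0=\infty\}]$ is computed under a patchwork sequence with initial trail $\mathrm h$. By Lemma~\ref{lem_coupled_patchworks}, $\Phi$ is constant in $\mathrm h$. It therefore equals the unconditional quantity $\mathbb E[f(\Xi^0,\Xi^1,\ldots)\mathds 1\{\kappa_0=\infty\}]$ for the original trail $\mathrm g$, which is what property~(iii) of Lemma~\ref{lem_renewal} requires. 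If one wants the right-hand side written as a conditional expectation given $\{\kappa_0=\infty\}$, this differs only by the constant factor $\mathbb P(\kappa_0=\infty)>0$ from Lemma~\ref{lem_kappaPoswithPosProb}. That constant is itself trail-independent by Lemma~\ref{lem_coupled_patchworks} with $f\equiv1$. In the form consistent with~\eqref{eq_property_iii}, the indicator $\mathds 1\{\kappa_0=\infty\}$ appears inside the expectation on the right-hand side.

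The substantive content is entirely in Lemma~\ref{lem_coupled_patchworks}, so here the main obstacle is only bookkeeping in Step 1. The delicate part is making sure that conditioning on $\Xi^0,\ldots,\Xi^{n-1}$, rather than on a larger $\sigma$-algebra, suffices to identify the future as a patchwork sequence. I would handle this by verifying the finite-dimensional conditional laws by induction before passing to the infinite sequence.
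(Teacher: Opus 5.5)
Your proposal is correct and is essentially the paper's proof. The paper likewise uses~\eqref{eq_nome_xi_n} to identify the conditional law of $(\Xi^n,\Xi^{n+1},\ldots)$ given $\Xi^0,\ldots,\Xi^{n-1}$ as that of a patchwork sequence with initial trail $\mathrm g\sqcup\Gamma^0\sqcup\cdots\sqcup\Gamma^{n-1}$, reads $\kappa_n$ as $\kappa_0$ of the shifted sequence, and invokes Lemma~\ref{lem_coupled_patchworks} to replace that random trail by $\mathrm g$; you additionally make the shift and kernel-measurability bookkeeping explicit.

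Your remark on the right-hand side is also correct. Both arguments produce $\mathbb E[f(\Xi^0,\Xi^1,\ldots)\cdot\mathds{1}\{\kappa_0=\infty\}]$, which is the form required by~\eqref{eq_property_iii}. The displayed conditional expectation given $\{\kappa_0=\infty\}$ would be off by the factor $\mathbb P(\kappa_0=\infty)$, as taking $f\equiv 1$ shows.
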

\begin{proof}
    Write~$\mathbb E_{\rmg}$ for the expectation operator corresponding to a probability measure~$\mathbb P_{\rmg}$ under which a patchwork construction with initial trail~$\rmg$ is defined. Then, by~\eqref{eq_nome_xi_n} in the definition of the law of the patchwork construction,
    \begin{align*}
         &\mathbb E_{\rmg}[f(\Xi^{n},\Xi^{n+1},\ldots)\cdot \mathds{1}\{\kappa_{n}=\infty\} \mid \Xi^0,\ldots, \Xi^{n-1}] \\[.1cm]
         &= \mathbb E_{\rmg \sqcup \Gamma^0 \sqcup \cdots \sqcup \Gamma^{n-1}} [f(\Xi^0,\Xi^1,\ldots) \cdot \mathds{1}\{\kappa_0 = \infty\}].
    \end{align*}
    
    By Lemma~\ref{lem_coupled_patchworks}, the right-hand is unchanged if we replace the random trail~$\mathrm g \sqcup \Gamma^0 \sqcup \cdots \sqcup \Gamma^{n-1}$ by any other trail; in particular, we can replace it by~$\rmg$. This completes the proof.
\end{proof}

At last, we observe that we are in the conditions of Lemma~\ref{lem_renewal}. Indeed,~$\kappa_n\geq n$ by definition so that condition~$(i)$ is satisfied. Because of Lemmas~\ref{kappa_m_kappa_n} and~\ref{lem_kappaPoswithPosProb}, we are also in condition~$(ii)$  and~$(iv)$ of Lemma~\ref{lem_renewal}. Moreover, Corollary~\ref{cor_sameAsInitial} guarantees that we are under condition~$(iii)$ of~\ref{lem_renewal}. It remains the work of controlling the interface process in between those renewals, and we do so in the next lemma. 

\begin{lemma} \label{time_and_space_betweenRenewals}
    Let~$(\Xi^n)_{n\in \mathbb N_0}$ be a patchwork construction with some initial trail~$\mathrm g$ defined under some law~$\mathbb P$. Let~$N_0<N_1<\dots$ denote the indexes~$n\in\mathbb N$ for which one has~$\kappa_n=\infty$. Let~$m\in \mathbb N_0$ be arbitrary but fixed. 
    Then, there exist constants~$c,C,p>0$ independent of the trail~$\mathrm g$ such that the following is true for all~$t\geq 0$:
    \begin{equation}\label{timeBetweenRenBehavesWell}
        \mathbb P\left(\sum_{n=N_m}^{N_{m+1}-1}T^n >t\right) \leq Ce^{-ct^p}
    \end{equation}
    
    Moreover, if we let~$\tau_m:=\sum_{n=0}^{N_m-1}T^n$, then it follows that there exist constants~$c,C,p>0$ (again independent of the trail~$\mathrm g$) such that the following is true for all~$x\geq 0$:

    \begin{equation} \label{spaceBetweenRenBehavesWell}
        \mathbb P \left(\sup\left\{|i_t - i_{\tau_m}|\,:\,t\in[\tau_m,\tau_{m+1}]\right\}>x\right) \leq Ce^{-cx^p}
    \end{equation}
\end{lemma}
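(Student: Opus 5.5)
We first reduce~\eqref{timeBetweenRenBehavesWell} to two ingredients: a uniform stretched-exponential tail for each patch length~$T^n$, and a stretched-exponential tail for the number of patches~$N_{m+1}-N_m$ between consecutive renewals. The first ingredient is immediate. By~\eqref{eq_nome_xi_n}, conditionally on~$\Xi^0,\ldots,\Xi^{n-1}$ the patch~$\Xi^n$ has law~$Q_{\mathrm g\sqcup\Gamma^0\sqcup\cdots\sqcup\Gamma^{n-1}}$. Lemma~\ref{TBehavesWell} and Lemma~\ref{DepthBehavesWell} hold uniformly in the trail, so
\[\mathbb P(T^n>s\mid \Xi^0,\ldots,\Xi^{n-1})\le Ce^{-cs}, \qquad \mathbb P(D^n>d\mid \Xi^0,\ldots,\Xi^{n-1})\le Ce^{-\sqrt d}\]
almost surely, for all~$n$.

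For the number of patches, we use Lemma~\ref{lem_renewal}: the blocks are i.i.d. with the law of~$N_1$ under~$\mathbb P(\cdot\mid\kappa_0=\infty)$. It therefore suffices to bound the tail of~$N_1$, and also of~$N_0$ for the case~$m=0$, whose law is not the conditioned one but is handled by the same argument. We control the search for a renewal by a restart scheme, in the spirit of Kuczek. Starting from some index~$n$, either~$\kappa_n=\infty$, or~$\kappa_n<\infty$ and we restart at~$\kappa_n$. By Corollary~\ref{cor_sameAsInitial} and Lemma~\ref{lem_kappaPoswithPosProb}, each attempt succeeds with conditional probability at least~$\rho:=\mathbb P(\kappa_0=\infty)$, which does not depend on the trail by Lemma~\ref{lem_coupled_patchworks}. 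Hence the number of failed attempts is dominated by a geometric random variable.

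The key step, and the main obstacle, is the tail of the length~$\kappa_n-n$ of a failed attempt. On~$\{\kappa_n=n'\}$ we have~$D^{n'}\ge T^n+\cdots+T^{n'-1}\ge n'-n$, because~$T^i\ge 1$. A union bound with the uniform depth tail then gives
\[\mathbb P(k\le \kappa_n-n<\infty\mid\mathcal F_{n-1})\le \sum_{j\ge k}Ce^{-\sqrt j}\le C'e^{-c\sqrt k}.\]
Summing a geometric number of such stretched-exponential variables yields~$\mathbb P(N_{m+1}-N_m>k)\le Ce^{-ck^{p}}$ for some~$p>0$. This follows by splitting on whether there are more than~$\sqrt k$ attempts, or whether some attempt is longer than~$\sqrt k$. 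A little care is needed because conditioning on the attempts having failed biases the law; we avoid this by working with the unconditioned bounds through unions over indices. Finally,
\[\mathbb P\Big(\sum_{n=N_m}^{N_{m+1}-1}T^n>t\Big)\le \mathbb P(N_{m+1}-N_m>t^{1/2})+\sum_{n\le \tau\text{-range}}\mathbb P(T^n>t^{1/2}).\]
To make the second term finite, we index over the first~$Ct$ patches after~$N_m$, using~$N_m\le$ polynomial in~$t$ except on a small event, and the uniform conditional tail of~$T^n$. This gives~\eqref{timeBetweenRenBehavesWell}.

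For~\eqref{spaceBetweenRenBehavesWell}, we bound spatial displacements by temporal ones. Within each patch, both the barrier~$B$ and the leftmost particle~$\ell$ move at most by jumps driven by Poisson clocks. The barrier jumps are dominated by a Poisson process of rate~$r_\leftarrow+r_\rightarrow$. The leftmost particle~$\ell_t$ can only increase while staying to the right of~$B$, and it can jump far to the right only if a whole block of particles dies; we control this via the tightness-type bound~$\ell_t-B_t\le$ the gap at the renewal plus the excursion. Concretely, we would bound~$\sup_{t\in[\tau_m,\tau_{m+1}]}|i_t-i_{\tau_m}|$ by the number of barrier jumps in the interval plus~$\max_t(\ell_t-B_t)$. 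Here~$\ell_t\le \Gamma^n$-position at the patch ends, where~$\ell=B+1$, and inside a patch~$\ell_t-B_t\le|\Gamma^n(t)-B_t|+1$. Since~$\Gamma^n$ is an infection path, its displacement over a time interval of length~$T^n$ is dominated by a Poisson variable of mean~$2\lambda T^n$. Combining this with~\eqref{timeBetweenRenBehavesWell} and a large-deviation bound for Poisson variables gives the stretched-exponential tail in~$x$, uniformly in~$\mathrm g$.
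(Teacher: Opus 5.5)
Your proposal is correct in substance, but it takes a different route from the paper at both key steps.

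\textbf{What the paper does.} It reduces to the first block on $\{N_0=0\}=\{\kappa_0=\infty\}$ and divides by $\mathbb P(N_0=0)$. It then splits on $\{N_1\le t^{1/2}\}$ and union-bounds the $T^n$. The complement is bounded by showing $\mathbb P(\bigcap_{i\le k}\{\kappa_i<\infty\})\le q^{k}$, by pulling $\prod_{i<k}\mathds 1\{\kappa_i<\infty\}$ outside a conditional expectation given $\Xi^0,\ldots,\Xi^{k-1}$ and iterating. For the spatial bound it compares, on $\{\tau_1\le x^{1/2}\}$, with a Poisson variable of mean $(r_\leftarrow+r_\rightarrow)x^{1/2}$.

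\textbf{What you do instead.} You use the restart scheme from the proof of Claim~\ref{claim_infinitelyManyInfinity}. The number of attempts is geometric, since each succeeds with conditional probability exactly $\rho=\mathbb P(\kappa_0=\infty)$, which is trail-independent. The length of a failed attempt is controlled through $D^{\kappa_n}\ge\kappa_n-n$ and the uniform depth tail. For the spatial bound you control the gap $\ell_t-B_t$ inside each patch by $\Gamma^n(t)-B_t$.

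\textbf{Comparison.} Your route is longer and only yields a stretched-exponential tail for $N_1$, but that is all the lemma needs. In exchange it is sound precisely where the paper is thin.
\begin{itemize}
\item The events $\{\kappa_i<\infty\}$, $i<k$, depend on all later depths, so they are not $\sigma(\Xi^0,\ldots,\Xi^{k-1})$-measurable. The paper's iteration is therefore not justified as written. You only condition at restart indices $S_j$ with $\{S_j=n\}\in\sigma(\Xi^0,\ldots,\Xi^{n-1})$, which is legitimate.
\item Counting jump \emph{times} does not control $\ell$, which can jump arbitrarily far to the right when the leftmost particle dies. Your bound $\ell_t\le\Gamma^n(t)$, which uses that $\Gamma^n$ is barrier-free, does control it.
\end{itemize}

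\textbf{Points to tidy.}
\begin{itemize}
\item Restart at $\kappa_n+1$ rather than $\kappa_n$. If $\kappa_n=n$ you would loop, and every index in $[n,\kappa_n]$ has finite $\kappa$ anyway.
\item The last step of the time bound does not need a polynomial bound on $N_m$, which would also make the constants depend on $m$. By Lemma~\ref{lem_renewal}, the block sum has the law of $\sum_{n<N_1}T^n$ under $\mathbb P(\cdot\mid\kappa_0=\infty)$. Drop the conditioning at the cost of a factor $\rho^{-1}$; then $N_1$ is replaced by the first $n\ge 1$ with $\kappa_n=\infty$, which is your restart scheme started at $1$. Finally union-bound over $n<t^{1/2}$.
\item Since $\Gamma^n$ is selected using the arrows and $T^n$ is random, ``displacement dominated by Poisson$(2\lambda T^n)$'' needs justification. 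Work on $\{\mathbf X^n\le a,\,T^n\le s\}$ with $a,s$ deterministic. There $\sup_{u\le T^n}\Gamma^n(u)$ is at most $a$ plus the advance of the right edge of the death-free reachable set from $(-\infty,a]\times\{0\}$ by time $s$, which is a Poisson$(\lambda s)$ variable. The complement is handled by the exponential tails of $\mathbf X^n$ and $T^n$ from Lemma~\ref{TBehavesWell}, uniformly in the trail.
\end{itemize}
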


\begin{proof}
     We start by observing that it would be enough to show that~\eqref{timeBetweenRenBehavesWell}  and~\eqref{spaceBetweenRenBehavesWell} hold for~$m=1$ conditioning on~$N_0=0$ because of Lemma~\ref{lem_renewal}. 

    Let~$t\in[0,\infty)$ and let~$E_1:=\left\{\sum_{n=1}^{N_1}T^n>t\right\}$. Since~$\mathbb P(E_1\mid\{N_0=0\})=\frac{\mathbb P(E_1\cap\{N_0=0\})}{\mathbb P(N_0=0)}$, the proof of~\eqref{timeBetweenRenBehavesWell} follows as long as we have the desired sub exponential bound for~$\mathbb P(E_1\cap\{N_0=0\})$. To study this term, let~$G_1$ be the good event~$G_1=\{N_1\leq t^{1/2}\}$. It follows that:

    \begin{equation*}
    \mathbb P\left(E_1 \cap \{N_0=0\}\right) \leq \underbrace{\mathbb P\left(E_1 \cap \{N_0=0\} \cap G_1\right)}_{(1)} + \underbrace{\mathbb P(G_1^c)}_{(2)}
\end{equation*}

    The term in~$(1)$ is bounded by~$\mathbb P\left(\bigcup_{n=1}^{\lceil t^{1/2}\rceil}\{T^n>t^{1/2}\}\right)\leq \lceil t^{1/2}\rceil Ce^{-c(t^{1/2})^p}$ due to Lemma~\ref{TBehavesWell}. For~$(2)$, note that~$G_1^c\subset\left\{\bigcap_{i=1}^{\lfloor t^{1/2}\rfloor}\{\kappa_i<\infty\}\right\}$. We then have:

\[
\mathbb P\!\left(\bigcap_{i=1}^{\lfloor t^{1/2}\rfloor}\{\kappa_i<\infty\}\right)
= \mathbb E\!\left[\prod_{i=1}^{\lfloor t^{1/2}\rfloor-1}\mathds{1}_{\{\kappa_i<\infty\}}
\,\mathbb E\!\left[\mathds{1}_{\{\kappa_{\lfloor t^{1/2}\rfloor}<\infty\}}
\mid \Xi^0,\dots,\Xi^{\lfloor t^{1/2}\rfloor-1}\right]\right]
\le q^{\lfloor t^{1/2}\rfloor}.
\]
for some~$q\in(0,1)$ where we have obtained the last term by repeating the previous argument iteratively. 

   Let~$x\geq 0$ and let~$E_2:=\left\{\sup\{|i_t-i_0|\,:\,t\in[0,\tau_1]\}>x\right\}$. Again it is enough to show that the term~$\mathbb P \left(E_2\cap \{N_0=0\}\right)$ has the desired subexponential bound. To control this term, let~$G_2$ be the good event defined by~$G_2:=\{\tau_1 \leq x^{1/2}\}$. Then we have:

   \begin{equation} \label{eq_boundTimeDisplacementRenewals_Term3}
    \mathbb P\left(E_2 \cap \{N_0=0\}\right) \leq \underbrace{\mathbb P\left(E_2 \cap \{N_0=0\} \cap G_2\right)}_{(3)} + \underbrace{\mathbb P(G_2^c)}_{(4)}
\end{equation}

    Because of~\eqref{timeBetweenRenBehavesWell}, we have that~$(4)$ has the desired bound. To check that the same is true for~$(3)$, simply note that this term is bounded by the probability of a Poisson random variable with parameter~$(r_{\leftarrow} + r_{\rightarrow})x^{1/2}$ to be bigger than~$x$. 
\end{proof}

\subsubsection{Coupled patchworks: proof of Lemma~\ref{lem_coupled_patchworks}}\label{ss_coupled}

\paragraph{} We now return momentarily to the setup of Section~\ref{patchworkElements}, where we worked on a probability space with measure~$\mathbb P$ under which an augmented graphical construction~$(\mathcal H, \mathcal I)$ was defined (with~$\mathcal H$ for both negative and positive times). Rather than fixing a single trail~$\mathrm g$, as we did in that section, we now fix two trails~$\mathrm g,\mathrm g'$. Applying formula~\eqref{eq_def_mu_g} to~$\mathrm g$ and~$\mathrm{g}'$, respectively, we define random configurations~$\beta_0^\mathrm{g} \sim \mu_{\mathrm g}$ and~$\beta_0^{\mathrm{g}'} \sim \mu_{\mathrm g'}$. We then use these as starting configurations for the contact-and-barrier processes~$(\beta^\mathrm{g}_t)_{t \ge 0}$ and~$(\beta^{\mathrm{g}'}_t)_{t \ge 0}$, respectively, both governed by~$(\mathcal H,\mathcal I)$. Then, following definitions of Section~\ref{patchworkElements}, we define the two quadruples
\begin{equation}\label{eq_coupled_quadruples}
\Xi_{\rmg} = (T_{\rmg}, \Gamma_{\rmg},I_{\rmg},D_{\rmg}) \qquad \text{and}\qquad \Xi_{\rmg'} = (T_{\rmg'}, \Gamma_{\rmg'},I_{\rmg'},D_{\rmg'}).
\end{equation}

We can now state:
\begin{lemma}[Depth and influence]\label{lem_depth_and_influence}
    For all~$\rmg,\rmg' \in \mathcal R$, defining~$\Xi_\mathrm{g}$ and~$\Xi_{\mathrm g'}$ coupled as above, we have
\begin{equation}
    \label{eq_cases_Q_0}
    \mathbb P(\{D_\rmg = D_{\rmg'} = -\infty,\; \Xi_\rmg = \Xi_{\rmg'} \} \cup \{D_{\rmg} \ge 0, \; D_{\rmg'} \ge 0\}) = 1.    
\end{equation}

    Moreover, if~$\rmg \cap (\mathbb Z \times [-t,0]) = \rmg' \cap (\mathbb Z \times [-t,0]) $ for some~$t \ge 0$, then
    \begin{equation}
    \label{eq_cases_Q}
\mathbb P(\{D_\rmg = D_{\rmg'} < t,\; \Xi_\rmg = \Xi_{\rmg'} \} \cup \{D_{\rmg} \ge t, \; D_{\rmg'} \ge t\}) = 1.    
\end{equation}
\end{lemma}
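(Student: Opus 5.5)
The plan is to compare the two coupled processes through the set $\mathcal S$ and the non-special particles, which are exactly what the depth sees. The first observation is that $\mathcal S$ in \eqref{eq_def_of_mathcal_S} depends only on $\mathcal H$ and not on the trail, so $\mathcal S_\rmg=\mathcal S_{\rmg'}=:\mathcal S$. Next, for a fixed trail, I would show that the whole quadruple $\Xi_\rmg$ is determined by $\mathcal H$, $\mathcal I$ and the restriction $\beta^\rmg_0\mathds 1_{[1,\mathbf X_\rmg]}$. The tool for this is Lemma~\ref{lem_SameInterface} with $b=0$ and $x=\mathbf X_\rmg$: we have $\beta_0^\rmg(\mathbf X_\rmg)=1$, and the descendants of $\mathbf X_\rmg$ reach the site next to the barrier at time $T_\rmg$, so $r^x_s$ is finite on $[0,T_\rmg]$. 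Hence the barrier trajectory and the configuration between the barrier and $r^{\mathbf X}_s$ agree with those of any process whose initial configuration coincides with $\beta_0^\rmg$ on $[0,\mathbf X_\rmg]$. In particular $\ell_s$, and therefore $(I_s)_{s\le T}$, agree. To transfer $T$, $\mathbf X$ and $\Gamma$ as well, I would use a crossing-paths argument: the leftmost barrier-free path and the minimality \eqref{eq_minimality_sigma_x} only involve the region to the left of the descendants of $\mathbf X$.

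\textbf{Step 1: the key claim.} Suppose that $\beta^\rmg_0$ and $\beta^{\rmg'}_0$ agree on $\{1,\dots,\mathbf X_\rmg\}$. Then $\mathbf X_{\rmg'}=\mathbf X_\rmg$ and $\Xi_{\rmg'}=\Xi_\rmg$. To prove this, take $x\in\mathcal S$ with $x\le\mathbf X_\rmg$. By Lemma~\ref{lem_SameInterface} (applied up to any finite time, started at $x$), the quantity $\mathcal T_x$ is the same for both processes. Hence the infimum defining $\mathbf X$ is attained at the same point, and $T$, $\Gamma|_{[0,T]}$ and $I$ agree. The negative-time part of $\Gamma$ depends only on $\mathcal H$. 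Finally, $D$ depends only on $\mathcal H$ and $\{1,\dots,\mathbf X\}\setminus\mathcal S$, so $D$ agrees too.

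\textbf{Step 2: proof of \eqref{eq_cases_Q}.} Fix $t$ and assume $D_\rmg<t$. Then every $y\in\{1,\dots,\mathbf X_\rmg\}\setminus\mathcal S$ has no infection path from $\mathbb Z\times\{-t\}$. So whether $\beta_0(y)=1$ is decided by paths from $\rmg\cap(\mathbb Z\times[-t,0])$ that stay within times $[-t,0]$. Such a path could not start earlier, since it would then pass through time $-t$. This set is common to $\rmg$ and $\rmg'$. For $y\in\mathcal S$ both configurations equal $1$. Therefore $\beta^\rmg_0=\beta^{\rmg'}_0$ on $[1,\mathbf X_\rmg]$, and Step 1 gives $\Xi_\rmg=\Xi_{\rmg'}$, in particular $D_{\rmg'}=D_\rmg<t$. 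By symmetry, $D_{\rmg'}<t$ forces the same conclusion. So the only remaining possibility is $D_\rmg\ge t$ and $D_{\rmg'}\ge t$, which gives \eqref{eq_cases_Q}.

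\textbf{Step 3: proof of \eqref{eq_cases_Q_0}.} This is the case $t=0$, with one subtlety. If $D_\rmg=-\infty$, then $\{1,\dots,\mathbf X_\rmg\}\subseteq\mathcal S$, so both configurations equal $1$ on $[1,\mathbf X_\rmg]$. For $D_\rmg\ge0$ with $\{1,\dots,\mathbf X\}\setminus\mathcal S$ nonempty, the supremum in Definition~\ref{DefDepth} is at least $0$, provided the site is occupied via $(y,0)\rightsquigarrow(y,0)$. Here I must take care: the depth is defined through $\mathbb Z\times\{-t\}$ for $t\ge0$, so any nonempty set gives $D\ge0$. Hence the dichotomy is $D=-\infty$ (complement empty) versus $D\ge0$, and Step 1 applies in the first case. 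The hypothesis on trails is then not needed, since $(1,0)$ lies in every trail and $\beta_0(1)=1$ always.

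\textbf{Main obstacle.} The delicate point is Step 1. I need to verify carefully that $T$, $\mathbf X$, $\Gamma$ and the minimality relation do not depend on particles to the right of $\mathbf X$. This requires the crossing-paths argument together with Lemma~\ref{lem_SameInterface}, which applies only up to times at which $r^{x}_s$ is finite. To handle it, I would apply Lemma~\ref{lem_SameInterface} for each $x\in\mathcal S\cap[1,\mathbf X]$ on the horizon $[0,T]$. I would then show that for $x<\mathbf X$ with $\mathcal T_x=\infty$ in one process, the same holds in the other, arguing by contradiction at the first time at which the two differ.
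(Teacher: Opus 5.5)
Your proposal is correct and follows essentially the same route as the paper. On $\{D_{\rmg}<t\}$, the non-special sites in $\{1,\ldots,\mathbf X_{\rmg}\}$ are decided by the trail inside $\mathbb Z\times[-t,0]$, so $\beta^{\rmg}_0$ and $\beta^{\rmg'}_0$ agree there; Lemma~\ref{lem_SameInterface} applied to each $x\in\mathcal S\cap\{1,\ldots,\mathbf X_{\rmg}\}$ then gives $\mathcal T_{\rmg,x}=\mathcal T_{\rmg',x}$, hence equality of $\mathbf X$, $T$, $\Gamma$, $I$ and $D$, and symmetry finishes. Your separate treatment of \eqref{eq_cases_Q_0}, where $D_{\rmg}=-\infty$ forces $\{1,\ldots,\mathbf X_{\rmg}\}\subseteq\mathcal S$ so no hypothesis on the trails is needed, just spells out the case the paper dismisses as ``very similar''.
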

\begin{proof}
The proofs of both statements are very similar, so we only prove the second one~\eqref{eq_cases_Q}. The ideas in this proof are very similar to those in the claims preceding Lemma~\ref{lem_change_H_patchwork}, so we only sketch the main steps.

Fix~$t > 0$ and~$\mathrm{g}, \mathrm{g}'$ with~$\rmg \cap (\mathbb Z \times [-t,0]) = \rmg' \cap (\mathbb Z \times [-t,0]) $.
Recalling~\eqref{eq_def_of_mathcal_S},~\eqref{eq_def_sigma_x}, and~\eqref{eq_def_bold_X}, here we write
\[\mathcal S_{\mathrm{g}},\; \mathcal S_{\mathrm g'},\quad \mathcal T_{\mathrm{g},x},\; \mathcal T_{\mathrm{g}',x},\quad \mathbf{X}_{\mathrm g},\; \mathbf{X}_{\mathrm{g}'}\]
to distinguish between the objects defined using the two trails.

Assume that~$D_{\mathrm g} < t$. Then, using the construction of~$\beta^{\mathrm g}_0$ and~$\beta^{\mathrm g'}_0$ with~\eqref{eq_def_mu_g} and the definition of~$\mathcal S_{\mathrm g}$ and~$\mathcal S_{\mathrm{g}'}$, we have
\[\beta_0^{\mathrm g} \cdot \mathds{1}_{\{1,\ldots,\mathbf{X}_{\mathrm g}\}} = \beta_0^{\mathrm g'}\cdot \mathds{1}_{\{1,\ldots,\mathbf{ X}_{\mathrm g}\}} \qquad \text{and}\qquad \mathcal S_\mathrm{g} \cap \{1,\ldots, \mathbf{X}_{\mathrm g}\} =  \mathcal S_\mathrm{g'} \cap \{1,\ldots, \mathbf{X}_{\mathrm g}\}. \]

Moreover, for each~$x \in \{1,\ldots, \mathbf{X}_{\mathrm g}\}$, the fact that~$\beta_0^{\mathrm g} \cdot \mathds{1}_{\{1,\ldots, x\}}=\beta_0^{\mathrm g'} \cdot \mathds{1}_{\{1,\ldots, x\}}$ implies that~$\mathcal T_{\mathrm g,x} = \mathcal T_{\mathrm{g}',x}$ because of Lemma~\ref{lem_SameInterface}, and, in case this quantity is finite, the leftmost barrier-free infection path in~$(\beta^\mathrm{g}_t)$ from~$(x,0)$ to~$(B^\mathrm{g}_{\mathcal T_{\mathrm g,x}}+1,\mathcal T_{\mathrm g,x})$ is equal to the leftmost barrier-free infection path in~$(\beta^{\mathrm{g}'}_t)$ from~$(x,0)$ to~$(B^{\mathrm{g}'}_{\mathcal T_{\mathrm g,x}}+1,\mathcal T_{\mathrm g,x})$. It follows from these considerations that
\[
\mathbf X_{\mathrm g'} = \min\{x \in \mathcal S_{\mathrm g'}: \; \mathcal{T}_{\mathrm g',x} < \infty\} = \min\{x \in \mathcal S_{\mathrm g}: \; \mathcal{T}_{\mathrm g,x} < \infty\} = \mathbf X_{\mathrm g},
\]
and then also that~$T_{\mathrm g} = T_{\mathrm g'}$, that~$D_{\mathrm g'} = D_{\mathrm g}$, and that the negative and positive portions of~$\Gamma_{\mathrm g}$ and~$\Gamma_{\mathrm{g}'}$ agree. 

We have thus proved that~$\{D_\mathrm g < t\} \subseteq \{D_{\mathrm g}=D_{\mathrm g'} < t,\; \Xi_{\mathrm g} = \Xi_{\mathrm g'} \}$. By symmetry we then also have~$\{D_{\mathrm g'} < t\} \subseteq \{D_{\mathrm g}=D_{\mathrm g'} < t,\; \Xi_{\mathrm g} = \Xi_{\mathrm g'} \}$, which then gives~\eqref{eq_cases_Q}.
\end{proof}

\begin{definition}\label{def_depth_and_influence}
We denote by~$Q_{\mathrm g,\mathrm{g}'}$ the law of the coupled quadruples~$(\Xi_\mathrm{g},\Xi_{\mathrm{g}'})$ defined in~\eqref{eq_coupled_quadruples}.
\end{definition}

We are now ready to prove Lemma \ref{lem_coupled_patchworks}. 

\begin{proof}[Proof of Lemma~\ref{lem_coupled_patchworks}]
    We start with a definition. Given~$\mathrm{h}, \mathrm{h}' \in \mathcal R$ and~$t \ge 0$, define
\begin{equation}\label{eq_def_var_phi}
    \varphi(\mathrm{h}, \mathrm{h}', t):= Q_{\mathrm h, \mathrm h'}(\{D_{\mathrm h} = D_{\mathrm h'} < t,\; \Xi_{\mathrm h} = \Xi_{\mathrm h'}\} \cup \{D_{\mathrm h} \ge t, \; D_{\mathrm h'} \ge t\}).
\end{equation}
Note that Lemma~\ref{lem_depth_and_influence} says that
\begin{equation}\label{eq_reform_depth_and_influence}
    \mathrm h \cap [-t,0] = \mathrm h' \cap [-t,0] \quad \Longrightarrow \quad \varphi (\mathrm h, \mathrm h', t) = 1.
\end{equation}

In what follows, for any~$\rmg \in \mathcal R$, we let~$\mathbb P_{\mathrm g}$ denote a probability measure under which a patchwork sequence~$(\Xi^n)_{n \ge 0}$ corresponding to~$\mathrm g$ is defined. We denote by~$\mathbb E_{\mathrm g}$ the associated expectation operator.

Fix a bounded and measurable function~$f$; we want to prove that, for all~$\rmg,\rmg' \in \mathcal R$,
\begin{equation}\label{eq_want_with_indicator}
    \mathbb E_{\rmg}[f(\Xi^0,\Xi^1,\ldots) \cdot \mathds{1}\{\kappa_0 = \infty\}]=\mathbb E_{\rmg'}[f(\Xi^0,\Xi^1,\ldots) \cdot \mathds{1}\{\kappa_0 = \infty\}].
\end{equation}

We now construct a coupling of patchwork sequences. Given~$\mathrm g, \mathrm g' \in \mathcal R$, we let~$\mathbb P_{\mathrm g, \mathrm g'}$ denote a probability measure under which a random sequence
\[(\dot{\Xi}^n,\ddot{\Xi}^n)_{n \ge 0}=((\dot T^n, \dot \Gamma^n, \dot I^n, \dot D^n),(\ddot T^n, \ddot \Gamma^n, \ddot I^n, \ddot D^n))_{n \ge 0}\]
is defined, with law as follows:
\begin{itemize}
    \item $(\dot{\Xi}^0,\ddot{\Xi}^0) \sim Q_{\mathrm g, \mathrm g'}$;
    \item the law of~$(\dot{\Xi}^{n+1},\ddot{\Xi}^{n+1})$ conditionally on~$(\dot{\Xi}^0,\ddot{\Xi}^0,\ldots, \dot{\Xi}^n,\ddot{\Xi}^n)$ is
    \[Q_{\mathrm g \sqcup \dot{\Gamma}^1 \sqcup \cdots \sqcup \dot{\Gamma}^n,\;\mathrm g \sqcup \ddot{\Gamma}^1 \sqcup \cdots \sqcup \ddot{\Gamma}^n}.\]
    \end{itemize}
    We denote by~$\mathbb E_{\mathrm g, \mathrm g'}$ the associated expectation operator.
    Clearly, the marginal sequences~$(\dot{\Xi}^n)_{n \ge 0}$ and~$(\ddot{\Xi}^n)_{n \ge 0}$ are patchwork sequences corresponding to~$\mathrm g$ and~$\mathrm g'$, respectively.

    We also define stopping times~$\dot \kappa_n$ and~$\ddot \kappa_n$ associated to the sequences~$(\dot{\Xi}^n)_{n \ge 0}$ and~$(\ddot{\Xi}^n)_{n \ge 0}$, respectively, as in~\eqref{eq_def_kappa_n}, that is:
    \begin{align*}
            &\dot\kappa_n := \inf\{n' \ge n:\; \dot D^{n'} \ge \dot T^n + \cdots + \dot T^{n'-1}\},\\[.2cm]
            &\ddot\kappa_n := \inf\{n' \ge n:\; \ddot D^{n'} \ge \ddot T^n + \cdots + \ddot T^{n'-1}\}.
    \end{align*}

    We now claim that
\begin{equation}\label{eq_equality_with_dots}
    \mathbb P_{\rmg, \rmg'}(\{\dot \kappa_0 = \ddot \kappa_0 = \infty,\; \dot \Xi^n = \ddot \Xi^n \;\forall n\} \cup \{\dot \kappa_0 = \ddot \kappa_0 < \infty \})=1.
\end{equation}
Note that~\eqref{eq_equality_with_dots} readily gives~\eqref{eq_want_with_indicator}, as follows:
\begin{align*}
    &\mathbb E_{\rmg}[f(\Xi^0,\Xi^1,\ldots) \cdot \mathds{1}\{\kappa_0 = \infty\}] = \mathbb E_{\rmg,\rmg'}[f(\dot \Xi^0,\dot \Xi^1,\ldots) \cdot \mathds{1}\{\dot \kappa_0 = \infty\}] \\[.2cm]&\stackrel{\eqref{eq_equality_with_dots}}{=} \mathbb E_{\rmg,\rmg'}[f(\ddot \Xi^0,\ddot \Xi^1,\ldots) \cdot \mathds{1}\{\ddot \kappa_0 = \infty\}] = \mathbb E_{\rmg'}[f(\Xi^0,\Xi^1,\ldots) \cdot \mathds{1}\{\kappa_0 = \infty\}].
\end{align*}

It remains to prove~\eqref{eq_equality_with_dots}. It is easy to see, using an approximation argument, that~\eqref{eq_equality_with_dots} follows from proving that, for every~$n \in \mathbb N_0$,
\begin{equation}
    \label{eq_equality_with_dots0}
    \mathbb P_{\rmg, \rmg'}(A_n \cup B_n) = 1,
\end{equation}
where we abbreviate
\begin{align*}&A_n:=\{\dot \kappa_0 = \ddot \kappa_0 > n,\; (\dot \Xi^0,\ldots,\dot \Xi^n) = (\ddot \Xi^0,\ldots, \ddot \Xi^n)\},\\ &B_n:= \{\dot \kappa_0 = \ddot \kappa_0 \le n \} .\end{align*}

We prove~\eqref{eq_equality_with_dots0} by induction on~$n$. The case~$n=0$ readily follows from~\eqref{eq_cases_Q_0}. Now assume the statement has been proved for~$n$.  Let~$\mathscr F_n$ be the~$\sigma$-algebra generated by~$(\dot \Xi_0,\ddot{\Xi}_0,\ldots, \dot{\Xi}_n,\ddot{\Xi}_n)$. Since~$B_{n} \subseteq B_{n+1}$, we have that
\begin{equation}\label{eq_justify_AB1}
    \text{on } B_n,\; \mathbb P_{\rmg,\rmg'}( B_{n+1} \mid \mathscr F_n) = 1, \text{ so } \mathbb P_{\rmg,\rmg'}(A_{n+1} \cup B_{n+1} \mid \mathscr F_n) = 1. 
\end{equation}
Next, recalling the function~$\varphi$ in~\eqref{eq_def_var_phi}, by the definition of the law of the coupling, we have that
\begin{equation}\label{eq_justify_AB2}
\begin{split}
    \text{on }A_n,\; &\mathbb P_{\rmg,\rmg'}(A_{n+1} \cup B_{n+1} \mid \mathscr F_n) \\
    &= \varphi(\rmg \sqcup \dot \Gamma^0 \sqcup \cdots \sqcup \dot \Gamma^n,\;\rmg' \sqcup \ddot \Gamma^0 \sqcup \cdots \sqcup \ddot \Gamma^n,\; \dot T^0+\cdots + \dot T^n) = 1
    \end{split}
\end{equation}
by~\eqref{eq_reform_depth_and_influence}, since on~$A_n$ we have~$(\dot T^0, \dot \Gamma^0,\ldots, \dot T^n,\dot \Gamma^n) = (\ddot T^0, \ddot \Gamma^0,\ldots, \ddot T^n,\ddot \Gamma^n)$, and so
\begin{align*}
    &(\rmg \sqcup \dot \Gamma^0 \sqcup \cdots \sqcup \dot \Gamma^n) \cap (\mathbb Z \times [-(\dot T^0+ \cdots \dot T^n),0])\\
    &= (\rmg \sqcup \ddot \Gamma^0 \sqcup \cdots \sqcup \ddot \Gamma^n) \cap (\mathbb Z \times [-(\dot T^0+ \cdots \dot T^n),0]).
\end{align*}

Now, we write
\begin{align*}
    &\mathbb P_{\rmg, \rmg'}(A_{n+1} \cup B_{n+1}) \\[.1cm]
    &= \mathbb E_{\rmg, \rmg'}[\mathbb P_{\rmg,\rmg'}(A_{n+1} \cup B_{n+1} \mid \mathscr F_n)]\\[.1cm]
    &\ge \mathbb E_{\rmg, \rmg'}[\mathbb P_{\rmg,\rmg'}(A_{n+1} \cup B_{n+1} \mid \mathscr F_n) \cdot \mathds{1}_{A_n}] +  \mathbb E_{\rmg, \rmg'}[\mathbb P_{\rmg,\rmg'}(A_{n+1} \cup B_{n+1} \mid \mathscr F_n) \cdot \mathds{1}_{B_n}] \\[.1cm]
    &\stackrel{\eqref{eq_justify_AB1},\eqref{eq_justify_AB2}}{\ge} \mathbb P_{\rmg, \rmg'}(A_n) + \mathbb P_{\rmg, \rmg'}(B_n) = 1,
\end{align*}
where the last equality follows from the induction hypothesis. 
\end{proof}

\subsection{Proof of Theorems~\ref{thmCBP_Speed},~\ref{ThmCBP_Tightness} and \ref{ThmCBP_CLT_Speed}} \label{sec_proofsThmsCBP}

\paragraph{} In this section, we prove the main results regarding the contact-and-barrier process. To do so, we fix the following. Let~$\mathbb P$ be a probability measure under which is defined a patchwork sequence~$(\Xi^n)_{n\in\mathbb N_0}$ for the contact-and-barrier process started from the Heaviside configuration. Note that we can obtain the Heaviside configuration by considering a configuration drawn with~$\mu_{\mathrm g_{\mathrm h}}$ as in~\eqref{eq_def_mu_g} where the trail~$\mathrm g_{\mathrm h}$ given by
\[
\mathrm{g}_{\mathrm h} = \bigcup_{x\in \mathbb N}\left\{\{x\}\times \{0\}\right\}
\]

Let~$(\kappa_n)_{n\in\mathbb N_0}$ be as in Definition~\ref{def_kappa_n}, and let~$N_0, N_1,\dots$ be the increasing sequence of indexes~$n\in\mathbb N_0$ for which we have~$\kappa_n=\infty$. Using the same notation as in Lemma~\ref{lem_renewal}, let~$Y_n:=\Xi^n$. Therefore, we can conclude that the random sequences
\begin{equation} \label{seq_iid_CBP}
    \left(N_1-N_0,\Xi^{N_0},\dots, \Xi^{N_1-1}\right),\,\left(N_2-N_1,\Xi^{N_1},\dots, \Xi^{N_2-1}\right),\,\dots
\end{equation}
are i.i.d. all distributed with the same law as~$(N_1,\Xi^0,\dots,\Xi^{N_1-1})$ conditioned on~$\{\kappa_0=0\}$.

For~$n \in \mathbb{N}_0$, let
\begin{equation} \label{eq_taun_CBP}
    \tau_n := \sum_{k=0}^{N_n-1} T^k
\end{equation}
be the time until the $n$-th renewal, with~$\tau_0 = 0$ whenever~$N_0 = 0$.  
Given~$t \in [0,\infty)$, define
\begin{equation} \label{eq_Nt_CBP}
    N(t) := \sup\{m \in \mathbb{N}_0 : \tau_m \leq t\}
\end{equation}
as the index of the last renewal before~$t$, taking~$N(t) = -\infty$ if no such renewal occurs.  
We then set
\begin{equation} \label{eq_sigmat_CBP}
    \sigma(t) := \tau_{N(t)}
\end{equation}
the time of the last renewal before~$t$, with~$\sigma(t) = 0$ if~$N(t) = -\infty$. At last, consider the conditional law
\begin{equation} \label{eq_conditionalLaw_CBP}
    \Bar{\mathbb P} \left(\cdot\right) = \mathbb P\left(\cdot\mid\{\kappa_0=\infty\}\right)
\end{equation}

\begin{lemma} \label{lem_timeSinceLastRenewal_CBP}
    On the above conditions, there exist constants~$c,C,p>0$ such that for all~$t\in[0,\infty)$ it follows that:
    \begin{equation}\label{eq_BehaviourLastRenewal_CBP}
        \Bar{\mathbb P}\left(\max\left\{(t-\sigma(t)),\sup\{|i_s-i_{\sigma(t)}|\,:\,s\in[\sigma(t),t]\}\right\}>L\right)\leq Ce^{-cL^p} \quad \forall L\geq 0
    \end{equation}
\end{lemma}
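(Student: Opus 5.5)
The plan is to reduce the statement to the tail bounds of Lemma~\ref{time_and_space_betweenRenewals} through a renewal decomposition that is uniform in~$t$. Under~$\Bar{\mathbb P}$ we have~$N_0=0$, hence~$\tau_0=0$ and~$N(t)\ge 0$ for every~$t\ge0$. For~$m\ge 0$ write
\[\Delta_{m+1}:=\tau_{m+1}-\tau_m,\qquad W_{m+1}:=\sup\{|i_s-i_{\tau_m}|:\; s\in[\tau_m,\tau_{m+1}]\}.\]
By Lemma~\ref{lem_renewal}, applied as in~\eqref{seq_iid_CBP}, the blocks~$(N_{m+1}-N_m,\Xi^{N_m},\ldots,\Xi^{N_{m+1}-1})$ are i.i.d. under~$\Bar{\mathbb P}$. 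The pair~$(\Delta_{m+1},W_{m+1})$ is a function of the~$(m+1)$-th block only: the interface increments on~$[\tau_m,\tau_{m+1}]$ are read off from~$I^{N_m},\ldots,I^{N_{m+1}-1}$ through~$\mathrm{Sew}$. Consequently the pairs~$(\Delta_{m+1},W_{m+1})_{m\ge0}$ are i.i.d., and~$(\Delta_{m+1},W_{m+1})$ is independent of~$\tau_m=\Delta_1+\cdots+\Delta_m$.

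\textbf{Step 1 (tails of one increment).} Lemma~\ref{time_and_space_betweenRenewals}, whose proof already works conditionally on~$\{N_0=0\}=\{\kappa_0=\infty\}$, gives constants~$c,C,p>0$ with
\[\Bar{\mathbb P}(\max\{\Delta_1,W_1\}>u)\le Ce^{-cu^p}\quad\text{for all } u\ge0.\]
Write~$h(u):=\Bar{\mathbb P}(\max\{\Delta_1,W_1\}>u)$.

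\textbf{Step 2 (decomposition on the renewal interval containing~$t$).} If~$N(t)=m$, then~$\tau_m\le t<\tau_{m+1}$. This gives~$t-\sigma(t)\le \Delta_{m+1}$ and, since~$[\sigma(t),t]\subseteq[\tau_m,\tau_{m+1}]$, also~$\sup_{s\in[\sigma(t),t]}|i_s-i_{\sigma(t)}|\le W_{m+1}$. Moreover~$\Delta_{m+1}>t-\tau_m$. Hence the event in~\eqref{eq_BehaviourLastRenewal_CBP} is contained in
\[\bigcup_{m\ge0}\big\{\tau_m\le t,\; \max\{\Delta_{m+1},W_{m+1}\}>\max\{L,\,t-\tau_m\}\big\}.\]
Using the independence noted above, the union bound gives
\[\Bar{\mathbb P}(\cdots)\le \sum_{m\ge0}\Bar{\mathbb E}\big[h(\max\{L,t-\tau_m\})\,\mathds 1\{\tau_m\le t\}\big].\]

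\textbf{Step 3 (uniformity in~$t$).} Group the indices~$m$ according to~$k:=\lfloor t-\tau_m\rfloor\in\mathbb N_0$. Each~$T^n\ge1$, so~$\Delta_{m+1}\ge 1$ and at most two values~$\tau_m$ lie in any interval of length~$1$; this bound is deterministic. Since~$h$ is non-increasing, the sum above is therefore at most
\[2\sum_{k\ge0}h(\max\{L,k\})\le 2(L+1)Ce^{-cL^p}+2\sum_{k>L}Ce^{-ck^p}.\]
Both terms are bounded by~$C'e^{-c'L^p}$ for suitable~$c',C'>0$, and this bound does not depend on~$t$, which proves the lemma.

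\textbf{Main obstacle.} The only delicate point is the independence of~$(\Delta_{m+1},W_{m+1})$ from~$\tau_m$ under~$\Bar{\mathbb P}$. One must check that~$W_{m+1}$, defined via the sewn interface, is really a measurable function of the~$(m+1)$-th i.i.d. block alone. This holds because~$\mathrm{Sew}$ only adds increments, so~$i_s-i_{\tau_m}$ for~$s\in[\tau_m,\tau_{m+1}]$ is determined by~$I^{N_m},\ldots,I^{N_{m+1}-1}$. The resulting law is the same for all~$m$, so the~$m=0$ bound from Step~1 applies to every block.
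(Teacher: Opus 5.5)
Your proof is correct and follows essentially the same route as the paper. Both decompose on the value of $N(t)$, use under $\Bar{\mathbb P}$ the independence of $\tau_m$ from the next inter-renewal block, take the minimum of the time tail and the $L$-tail, and group renewal times into unit intervals using $T^n\ge 1$. The only differences are cosmetic: you take the spatial tail directly from Lemma~\ref{time_and_space_betweenRenewals} instead of redoing the Poisson-domination bound, and your count of at most two renewal times per unit interval is slightly more careful than the paper's bound of one.
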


\begin{proof}
    The proof we give here follows very closely the lines of proof of Lemma 2.5 in~\cite{Valesin2010Multitype}. For~$k\in\mathbb N_0$, define~$M_k := \sup\{|i_s-i_{\tau_k}|\,:\,s\in[\tau_k,\tau_{k+1}]\}$ and~$\varphi(k):=\max\{(\tau_{k+1}-\tau_k),M_k\}$. In that way, by decomposing on the set of possible values of~$N(t)$ and using a union bound, we can rewrite the left-hand side of~\eqref{eq_BehaviourLastRenewal_CBP} as
\begin{align}
\sum_{k=0}^{\infty}\Bar{\mathbb P}\!\left(\tau_{k}\le t,\,\tau_{k+1}>t,\,\varphi(k)>L\right)
&= \sum_{k=0}^{\infty}\int_0^t
\Bar{\mathbb P}\!\left(\{\tau_{k+1}-\tau_k \ge t-u\}\cap \{\varphi(k)>L\}\right)
\Bar{\mathbb P}\!\left(\tau_k\in \mathrm du\right) \nonumber \\
&= \sum_{k=0}^{\infty}\int_0^t
\Bar{\mathbb P}\!\left(\{\tau_{1} \ge t-u\}\cap \{\max\{\tau_1,M(0)\}>L\}\right)
\Bar{\mathbb P}\!\left(\tau_k\in \mathrm du\right). 
\label{eq:last_line}
\end{align}

where in the second equality we have used the conclusion of Lemma~\ref{lem_renewal} to argue that the time increments in between renewals and the interface displacement between renewals are i.i.d. At last, note that~$\Bar{\mathbb P}\left(\max \{\tau_1,M(0)\}>L\right)$ is bounded by:
\begin{align*}
     \underbrace{\Bar{\mathbb P }\left(\left\{\max \{\tau_1,M(0)\}>L\right\}\cap \left\{\tau_1<\frac{L}{2(r_{\rightarrow}+r_{\leftarrow} + \lambda)}\right\}\right)}_{(1)} 
    \quad + \underbrace{\Bar{\mathbb P} \left(\tau_1\geq\frac{L}{2(r_{\rightarrow}+r_{\leftarrow} + \lambda)}\right)}_{(2)}
\end{align*}

The term~$(2)$ can be bounded by~$Ce^{-cL^P}$ because of Lemma~\ref{time_and_space_betweenRenewals}. To bound~$(1)$, we proceed as follows. Let~$I_0 := \left[0,T^0\right]$ and for~$k\in\{1,\dots,N_1-1\}$ let~$I_k:=\left[\sum_{j=0}^{k-1}T^j,\sum_{j=0}^kT^j\right]$, so that the time intervals~$I_k$ for~$k\in\{0,\dots,N_1-1\}$ form a partition of~$[0,\tau_1]$, i.e.,~$[0,\tau_1] = \dot{\cup}_{k=0}^{N_1-1}I_k$. 

For each $k\in\{0,\dots,N_1-1\}$, the number of jumps by the barrier or the leftmost particle is bounded by an independent Poisson variable with parameter $(r_{\rightarrow}+r_{\leftarrow}+\lambda)|I_k|$. Hence the total number of jumps in $[0,\tau_1]$ is stochastically dominated by a Poisson$(L/2)$ variable, so the term in~(1) is bounded by $\mathbb P(\mathrm{Poisson}(L/2)>L)\le Ce^{-cL}$. This bounds~\eqref{eq:last_line} by:

\begin{align}
    &\sum_{k=0}^{\infty}\sum_{i=1}^{\lceil t\rceil }\int_{i-1}^i \left[\Bar{\mathbb P}\left(\tau_1 \geq t-u\right) \wedge\Bar{\mathbb P}\left(\max\{\tau_1,M(0)\}>L\right)\right]\Bar{\mathbb P }\left(\tau_k\in \mathrm du\right) \nonumber \\
   & \leq \sum_{i=1}^{\lceil t\rceil }\left(Ce^{-c(t-i)^p}\wedge Ce^{-cL^p}\right)\sum_{k=0}^{\infty} \Bar{\mathbb P }\left(\tau_k\in [i-1,i]\right) \label{eq:last_eq_finalbound}
\end{align}
and note that~$\sum_{k=0}^\infty \Bar{\mathbb{P}}\left(\tau_k\in[i-1,i]\right) =\Bar{\mathbb{E}}\left[|\{n\in\mathbb N_0\,:\,\tau_n\in[i-1,i]\}|\right]\leq 1$ since~$\tau_{n+1}-\tau_n\geq 1$ for all~$n\in\mathbb N_0$ by definition. Thus, we can bound~\eqref{eq:last_eq_finalbound} by
\[
C\sum_{i=1}^\infty e^{-ci^p}\wedge e^{-cL^p}\leq C\lceil L \rceil e^{-cL^p} + C\sum_{i=\lceil L \rceil +1}^\infty e^{-ci^p}\leq De^{-dL^p} \qedhere
\]
\end{proof}

\begin{proof}[Proof of Theorem~\ref{thmCBP_Speed}]
    Consider~$\Bar{\mathbb E}$ the expectation operator associated to the probability measure~$\Bar{\mathbb P}$ as in~\eqref{eq_conditionalLaw_CBP}. By letting~$\mu_T:=\Bar{\mathbb E}[\tau_1]$ and~$\mu_S:=\Bar{\mathbb E}[B_{\tau_1}]$, we will show that~\eqref{real_speed_barrier} holds for~$\mathbf B=\mu_S/\mu_T$. First we note that~$B_{\tau_n}/\tau_n\rightarrow \mathbf B$ as~$n\rightarrow \infty$ almost surely. 
    
    Indeed, note that we can write~$\tau_n =\tau_0+ \sum_{k=1}^n(\tau_k-\tau_{k-1})$ and since~$\tau_0$ is finite almost surely, by the strong law of large numbers we have that~$\tau_n/n\rightarrow \mu_T$ as~$n\rightarrow \infty$ since the increments~$\tau_k-\tau_{k-1}$ are i.i.d. because of Lemma~\ref{lem_renewal}. Since~$r_{\tau_0}$ is by consequence also finite, a similar argument gives that~$B_{\tau_n}/n\rightarrow \mu_S$ as~$n\rightarrow \infty$ almost surely. Since~$\tau_n\geq n$ by definition, we have that~$\tau_n/n$ is bounded away from zero and thus~$(B_{\tau_n}/n)/(\tau_n/n)\rightarrow \mathbf B$ as~$n\rightarrow \infty$. 

    We now transfer this convergence along integer times, i.e., we claim that~$B_n/n\rightarrow \mathbf B$ as~$n\rightarrow \infty$ a.s. To do so, we will prove that for any~$\epsilon>0$, we have that there exist constants~$c,C,p>0$ independent of~$n$ for which one has:
    \begin{equation}\label{eq_speedIntegerTimes}
        \mathbb P \left(\left|\frac{B_n}{n} - \frac{B_{\sigma(n)}}{\sigma(n)}\right|>\epsilon\right) \leq Ce^{-cn^p}
    \end{equation}

    Since~$B_{\sigma(n)}/\sigma(n)\rightarrow\mathbf B$ as~$n\rightarrow \infty$ as it is a subsequence of a convergent sequence,~\eqref{eq_speedIntegerTimes} with the Borel-Cantelli lemma give us the desired claim. First, we note that we can bound the left-hand side of~\eqref{eq_speedIntegerTimes} by:

\begin{equation*}
    \underbrace{\mathbb P\left(\left|\frac{B_n}{n}-\frac{B_n}{\sigma(n)}\right|>\frac{\epsilon}{2}\right)}_{(1)}
    + 
    \underbrace{\mathbb P \left(\left|\frac{B_n}{\sigma(n)} - \frac{B_{\sigma(n)}}{\sigma(n)}\right|>\frac{\epsilon}{2}\right)}_{(2)}
\end{equation*}

We first bound the term~$(2)$. The event inside that probability implies that~$|B_n - B_{\sigma(n)}|>\frac{\epsilon}{2}\sigma(n)$, and therefore we can bound~$(2)$ by the following expression:
\begin{equation*}
    \mathbb P \left(\sigma(n) <\frac{n}{2}\right) + \mathbb P \left(|B_n- B_{\sigma(n)}|> \frac{\epsilon}{4}n\right)
\end{equation*}
and both terms are bounded by~$Ce^{-cn^p}$ due to Lemma~\ref{time_and_space_betweenRenewals}. At last, we bound~$(1)$. The event inside that probability implies that~$|B_n(n-\sigma(n))|>\frac{\epsilon}{2}n\sigma(n)$, and therefore we can bound it by the following sum:
\begin{equation}
    \underbrace{\mathbb P \left(\sigma(n)<\frac{n}{2}\right)}_{(\mathrm a)} + \underbrace{\mathbb P \left(|B_n|(n-\sigma_n)>\frac{\epsilon}{4}n^2\right)}_{(\mathrm b)}
\end{equation}

The term~$(a)$ is bounded by~$Ce^{-cn^p}$ again due to Lemma~\ref{time_and_space_betweenRenewals}. At last, for~$(b)$, we can bound it by:

\begin{equation}
\underbrace{\mathbb{P}\left( \left\{ |B_n| (n-\sigma(n)) > \frac{\epsilon}{4} n^2 \right\} \cap \left\{|B_n|\leq 2(r_{\rightarrow}+r_{\leftarrow})n\right\} \right) }_{(\mathrm b_1)} + \underbrace{\mathbb P\left(|B_n|> 2(r_{\rightarrow}+r_{\leftarrow})n\right)}_{(\mathrm b_2)}
\end{equation}

The term in~$(\mathrm b_1)$ is bounded by~$\mathbb P \left(n-\sigma(n)>\frac{\epsilon}{8(r_{\rightarrow} + r_{\leftarrow})}n\right)$, which is bounded by~$Ce^{-cn^p}$ once more due to Lemma~\ref{time_and_space_betweenRenewals}. The term in~$(\mathrm b_1)$ is bounded by~$Ce^{-cn}$ since the random variable~$|B_n|$ is bounded by the total number of arrivals in~$\mathcal I$ before time~$n$ and this number is bounded by a random variable with Poisson distribution with parameter~$(r_{\rightarrow}+r_{\leftarrow})n$.  

Finally, to conclude that almost surely~$B_t/t\rightarrow \mathbf B$ as~$t\rightarrow \infty$ we proceed as in the proof of Theorem 2.19 of~\cite{liggett1985interacting} noting that:
    \[
\mathbb P\!\left(\max_{n\le t\le n+1}(B_t-B_n)\ge \epsilon n\right)
\le Ce^{-cn^p},
\qquad
\mathbb P\!\left(\max_{n-1\le t\le n}(B_n-B_t)\ge \epsilon n\right)
\le Ce^{-cn^p}.
\]
    since both terms are bounded by the probability of a Poisson random variable with parameter~$\lambda$ to be larger than~$\epsilon n$. Because once more of the Borel-Cantelli lemma, we are done. 

    It remains to show that the same convergence also holds in~$L^1$. We claim that it is enough to show that the family~$\{B_n/n\,:\,n\in\mathbb N\}$ is uniformly integrable. Indeed, if that was the case, we would have that~$B_n/n\rightarrow \mathbf B$ as~$n\rightarrow \infty$ in~$L^1$ from the almost sure convergence we have just shown. Then, we could conclude also that the same convergence would hold for~$B_t/t$. To do that, we would have to show that for any~$\epsilon>0$ there exists~$t_0\in[0,\infty)$ large enough so that
    \begin{equation} \label{eq_target_L1}
        \mathbb E \left[\frac{B_t}{t}-\mathbf B\right]<\epsilon \quad \text{ for all }t\geq t_0
    \end{equation}
    
    But we have that~\eqref{eq_target_L1} is bounded above by:
    \begin{equation}
    \underbrace{\mathbb E \left[\left|\frac{B_t}{t}-\frac{B_{\lfloor t\rfloor}}{t}\right|\right]}_{(\mathrm c_1)} + \underbrace{\mathbb E \left[\left|\frac{B_{\lfloor t\rfloor}}{t} - \frac{B_{\lfloor t \rfloor}}{t}\right|\right]}_{(\mathrm c_2)}
         + \underbrace{\mathbb E\left[\frac{B_{\lfloor t \rfloor}}{t}- \mathbf B\right]}_{(\mathrm c_3)}
    \end{equation}
    and we prove that each of those terms can be made smaller than~$\epsilon /3 $ by choosing~$t$ sufficiently large. Indeed, the term in~$(\mathrm c_1)$ is bounded by~$\frac{1}{t}\mathbb E[B_t-B_{\lfloor t \rfloor}]\leq \frac{r_{\rightarrow}+r_{\leftarrow}}{t}$; the term in~$(\mathrm c_2)$ is bounded by~$\frac{\mathbb E[B_{\lfloor t\rfloor}]}{t\lfloor t \rfloor}\leq \frac{(r_{\rightarrow}+r_{\leftarrow})t}{t\lfloor t \rfloor}$; finally, the term in~$(\mathrm c_3)$ can be made smaller than~$\frac{\epsilon}{3}$ choosing~$t$ sufficiently large because~$B_n/n\rightarrow \mathbf B$ in~$L_1$ as~$n\rightarrow \infty$. 

    To prove that the family is uniformly integrable, we must show that for any~$\epsilon>0$ there exists~$L\in[0,\infty)$ such that
    \[
    \sup_{n\in\mathbb N}\mathbb E\left[\frac{|B_n|}{n}\mathds{1}_{\{|B_n|/n>L\}}\right]<\epsilon.
    \]
    
    Fix~$n\in\mathbb N$. Note that, if we let~$Y_n$ denote the random variable counting the number of arrivals in~$\mathcal I$ before time~$n$, we have that~$|B_n|\leq Y_n$, and thus: 
   \begin{align*}
\mathbb E\!\left[\frac{|B_n|}{n}\mathds{1}_{\{|B_n|/n>L\}}\right]
&\le \left(\frac{\mathbb E[(Y_n)^2]}{n^2}\right)^{1/2}
\mathbb P(Y_n>Ln)^{1/2}  \\
&\le \left(\frac{\tilde c n^2}{n^2}\right)^{1/2}
\left(\frac{(r_{\rightarrow}+r_{\leftarrow})n}{Ln}\right)^{1/2}.
\end{align*}
where the inequalities follow from Hölder’s inequality, the comparison with $Y_n$, and Markov’s inequality. In particular, the right-hand side can be made arbitrarily small by choosing $L$ sufficiently large (depending on $r_{\rightarrow}+r_{\leftarrow}$).
\end{proof}

\begin{proof}[Proof of Theorem~\ref{ThmCBP_Tightness}]
   Fix $t\ge0$ and $\epsilon>0$. It suffices to establish the result for $t\ge t_0$ with $t_0$ large. Indeed, for $t\le t_0$, the quantity $\ell_t-B_t$ is stochastically dominated by a Poisson random variable with parameter $(r_{\leftarrow}+r_{\rightarrow}+\lambda)t_0$, so $\mathbb P(\ell_t-B_t>L_1)<\epsilon/2$ for $L_1$ large enough. Choosing $L=\max\{L_1,L_2\}$, where $L_2$ controls the case $t\ge t_0$, yields the claim.
   
   Let~$\sigma(t)=\sup_{m\in\mathbb N}\{\tau_m\,:\,\tau_m\leq t\}$ be the time of the last renewal before~$t$. Because of Lemma~\ref{lem_timeSinceLastRenewal_CBP}, for any~$s\geq 0$ we have that~$\mathbb P\left(t-\sigma(t)>s\right)\leq Ce^{-cs^p}$ and let~$s$ be large enough so that this probability is smaller than~$\frac{\epsilon}{2}$ for all~$t\geq t_0$, so that~$t_0\geq s$. Let~$R:=r_{\leftarrow}+r_{\rightarrow}^1+\lambda$ and~$K=K(s)$ be large enough so that the probability of a Poisson random variable with parameter~$Rs$ being larger than~$KRs$ is smaller than~$\frac{\epsilon}{2}$. Let~$L_2=KRs$. For any~$t\geq t_0$:
 \begin{equation} \label{Target_Tightness}
       \mathbb P \left(\ell_t-B_t<L_2\right) \leq \mathbb P \left(\ell_t-B_t<L_2 \cap \{t-\sigma(t)\leq s\}\right) + \mathbb P \left(t-\sigma(t) > s\right)
   \end{equation}
   and both terms on the right hand side of~\eqref{Target_Tightness} are bounded by~$\frac{\epsilon}{2}$ by the choice of~$s$.
\end{proof}

\begin{proof}[Proof of Theorem~\ref{ThmCBP_CLT_Speed}]
    Let~$(Z^0_t)_{0\leq t< \tau_0}$  for~$t\in[0,\tau_0)$ and let~$(Z^n_t)_{\tau^{n-1}\leq t<\tau^n}$ for~$n\in\mathbb N$ be given by~$Z^n_t = i_t$ for~$n\in\mathbb N_0$. Because of Lemma~\ref{lem_renewal}, we are under conditions~$(1)$ and~$(2)$ of Lemma~\ref{lem_CLT_RenewalProcess}. Condition~$(3)$ of Lemma~\ref{lem_CLT_RenewalProcess} is satisfied because of Lemma~\ref{time_and_space_betweenRenewals}. Thus, the conclusion of Lemma~\ref{lem_CLT_RenewalProcess} follows. 
\end{proof} 
\section{Multitype Contact Process} \label{sec_MCP}

\paragraph{} The structure of this section largely mirrors that of Section~\ref{sec_CBP}. Given its complexity, we begin with a brief overview. In Section~\ref{sec_GCMCP}, we introduce the multitype contact process using a graphical construction. Section~\ref{section_patchworkMCP} then develops the patchwork construction for the interface process. This section follows a layout similar to Section~\ref{section_PatchworkCBP}, where we developed the patchwork construction for the contact-and-barrier process, but incorporates additional challenges. For instance, we must now account for a \emph{pair of trails} for the initial configuration instead of a single trail, and also for two special infection paths instead of one, along with several other technical modifications. Finally, Section~\ref{sec_proofMCP} presents the proofs of the main theorems for the multitype contact process.

\subsection{Graphical construction} \label{sec_GCMCP}

\paragraph{}In order to construct the multitype contact process, we will consider a graphical construction~$\mathcal H_{\mathrm{pair}}$ given by a pair~$\mathcal H_{\mathrm{pair}} =( \mathcal H_1, \mathcal H_2)$ where~$\mathcal H_1 = \left((D^x_1)_{x \in \mathbb{Z}},\ (D^{x,y}_1)_{x,y \in \mathbb{Z},\, |x - y| = 1}\right)$ and~$\mathcal H_2=\left((D_2^x)_{x \in \mathbb{Z}},\ (D_2^{x,y})_{x,y \in \mathbb{Z},\, |x - y| = 1}\right)$ are graphical constructions on~$\mathbb Z\times [0,\infty)$ for contact processes with parameter~$\lambda_1$ and~$\lambda_2$, respectively. Given an initial configuration~$\xi_0$, we will construct the multitype contact process~$(\xi_t)_{t\geq 0}$ started from~$\xi_0$ as a limiting process of multitype contact processes started from the initial configuration restricted to a box. 

To do so, fix~$n\in\mathbb N$ and consider~$\xi_0^n=\xi_0\mathds1_{[-n,n]}$ the restriction of~$\xi_0$ to the box~$[-n,n]$. For~$k\in\mathbb N_0$, let~$\mathcal J^k =\{t\in D_{1}^x\cup D_2^x\cup D_1^{x,y}\cup D^{x,y}_2\,:\,x,y\in [-n-k,n+k]\}$ be the collection of arrivals of the graphical construction~$\mathcal H'$ on sites in~$[-n-k, n+k]$. 

We will define a strictly increasing sequence of stopping times~$(\sigma_k)_{k\in\mathbb N_0}$. Let~$\sigma_0=0$ and define~$\sigma_k=\inf\{t>\sigma_{k-1}\,:\,t\in\mathcal J^k\}$ for~$k\in\mathbb N$. Let~$\xi_s^n=\xi_0^n$ for all~$s\in[0,\sigma_1)$. Suppose that the process is defined on~$[0,\sigma_k)$ for some~$k\in\mathbb N$. We will define the process in the time interval~$[\sigma_k,\sigma_{k+1})$, which then will make the process well-defined for all positive times~$t$ since the sequence of stopping times is strictly increasing. Given~$i\in\{0,1,2\}$,~$x\in\mathbb Z$ and~$\xi\in\{0,1,2\}^{\mathbb Z}$, let~$\xi^{i\rightarrow x}$ be the configuration obtained from~$\xi$ by assigning state~$i$ to the site~$x$ and leaving everything else unchanged, i.e., 
\begin{equation*}
    \xi^{i\rightarrow x}(y)=
    \begin{cases}
        i &\quad \text{if }y=x\\
        \xi(y) &\quad\text{if }y\neq x
    \end{cases}
\end{equation*}

We define~$\xi_{\sigma_k}^n$ by considering the following possibilities:

\begin{itemize}
    \item if~$\sigma_k\in D_1^x$ and~$\xi_{\sigma_k-}^n(x)=1$, then~$\xi_{\sigma_k}^n = \xi_{\sigma_{k}-}^{0\rightarrow x}$ 
    \item if~$\sigma_k\in D_2^x$ and~$\xi_{\sigma_k-}^n(x)=2$, then~$\xi_{\sigma_k}^n = \xi_{\sigma_{k}-}^{0\rightarrow x}$ 
    \item if~$\sigma_k\in D_1^{x,y}$,~$\xi_{\sigma_k-}^n(x)=1$ and~$\xi_{\sigma_k-}^n(y)=0$, then~$\xi_{\sigma_k}^n = \xi_{\sigma_{k}-}^{1\rightarrow y}$
    \item if~$\sigma_k\in D_2^{x,y}$,~$\xi_{\sigma_k-}^n(x)=2$ and~$\xi_{\sigma_k-}^n(y)=0$, then~$\xi_{\sigma_k}^n = \xi_{\sigma_{k}-}^{2\rightarrow y}$
\end{itemize}

At last, let~$\xi_s^n=\xi_{\sigma_k}^n$ for~$s\in[\sigma_k,\sigma_{k+1})$. Therefore, for a given graphical construction~$\mathcal H_{\mathrm{pair}}=(\mathcal H_1,\mathcal H_2)$ and a given initial configuration~$\xi_0$, we can consider a sequence~$(\xi_t^n)_{t\geq 0}$ of multitype processes with initial configuration~$\xi^n_0$ all constructed using~$\mathcal H_{\mathrm{pair}}$. Regarding those processes, we have the following result:

\begin{lemma} \label{lem_TruncatedLimit}
    Let~$x\in\mathbb Z$ and~$t\in[0,\infty)$ be given. Then,~$\lim_{n\rightarrow \infty}\xi_t^n(x)$ exists.
\end{lemma}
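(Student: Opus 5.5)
I will prove that $\xi^n_t(x)$ is eventually constant in $n$, which gives the limit. The argument is a finite-speed-of-propagation and locality argument on the Harris system. First I fix $t$ and $x$ and consider the combined arrivals of $\mathcal H_1$ and $\mathcal H_2$ on the time window $[0,t]$. For each $k\in\mathbb N$, consider the event that some pair of neighbouring sites $\{y,y+1\}$ with $y\ge k$ (respectively $y+1\le -k$) carries no transmission arrow of either type in $[0,t]$. I will call such a pair a \emph{blocking edge}.

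The key step is to show that blocking edges exist almost surely on both sides. For a given $y$, the probability that no arrow of $D_1^{y,y+1},D_1^{y+1,y},D_2^{y,y+1},D_2^{y+1,y}$ falls in $[0,t]$ is $e^{-2(\lambda_1+\lambda_2)t}>0$. These events are independent over disjoint edges, so by Borel--Cantelli there are almost surely infinitely many blocking edges to the right of $x$ and to the left of $x$. Let $a<x<b$ be such that $\{a-1,a\}$ and $\{b,b+1\}$ are blocking edges.

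Next I argue that, restricted to $[0,t]$, the evolution of $\xi^n$ on the finite interval $[a,b]$ depends only on $\xi^n_0\mathds 1_{[a,b]}$ and on the finitely many Poisson marks with both endpoints in $[a,b]$. I check this by inspecting the update rules. Each transition at time $\sigma_k$ changes a single site. A death at $z$ depends only on the state of $z$. A birth from $z$ to $w$ depends only on the states of $z$ and $w$, and since no arrow crosses a blocking edge during $[0,t]$, every relevant birth has both endpoints in $[a,b]$. An induction over the finitely many marks in $[a,b]\times[0,t]$, taken in chronological order, then shows that $\xi^n_s\mathds 1_{[a,b]}$ is a deterministic function of $\xi^n_0\mathds 1_{[a,b]}$ and these marks.

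I must also check that the recursive definition via the times $\sigma_k$ actually processes every mark in $[a,b]\times[0,t]$. Since $\mathcal J^k$ grows with $k$, a mark at a site in $[-n-k,n+k]$ is eventually eligible. I expect this bookkeeping with $\mathcal J^k$ to be the main technical nuisance, though not a real obstacle. If needed, I will replace it by the observation that, for fixed $n$, the process is a.s. determined by applying all marks in time order within the finite region influenced by $[-n,n]$. This is itself justified by the same blocking-edge argument applied to $[-n,n]$.

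Finally, for $n\ge\max\{|a|,|b|\}$ we have $\xi^n_0\mathds 1_{[a,b]}=\xi_0\mathds 1_{[a,b]}$. Hence $\xi^n_t(x)$ is the same value for all such $n$, and the limit exists almost surely.
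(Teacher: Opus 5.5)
Your proposal is correct, and it is the standard Harris blocking-edge (finite-cluster) argument that the paper has in mind when it omits the proof as ``standard''. The one point to tighten is the bookkeeping. Rather than saying marks become ``eventually eligible'', note that after $k-1$ steps of the recursion the support of $\xi^n$ lies in $[-n-k+1,n+k-1]$. Hence any mark outside $\mathcal J^k$ occurring in $(\sigma_{k-1},\sigma_k)$ cannot change the configuration, so $\xi^n$ obeys the usual update rule at every mark time, which is exactly what your locality induction on $[a,b]$ needs.
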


The proof of Lemma~\ref{lem_TruncatedLimit} is standard and omitted. In view of this lemma, we define the multitype contact process $(\xi_t)_{t\ge0}$ by $\xi_t(x)=\lim_{n\to\infty}\xi_t^n(x)$. Throughout, we assume that $(\xi_t)_{t\ge0}$ is constructed from an initial configuration $\xi_0$ via the graphical representation $\mathcal H_{\mathrm{pair}}=(\mathcal H_1,\mathcal H_2)$.

\begin{definition}[Active infection paths]
    Let~$i\in\{1,2\}$. We say that~$\gamma:I\rightarrow \mathbb Z$ is an active infection path (in~$\mathcal H_i$, in case we want to highlight its type) if the following holds:
    \begin{itemize}
        \item ~$\gamma$ is an infection path in the graphical construction~$\mathcal H_i$ in the sense of Definition~\ref{def_InfectionPath}
        \item ~$\xi_t(\gamma(t))= \xi_0(\gamma(0))=i$ for all~$t\in I$.
    \end{itemize} 
    
    For~$s,t\in[0,\infty)$ with~$s\leq t$ and~$x,y\in\mathbb Z$, we write~$(y,s)\overset{\mathcal{H}_i,\mathrm{ active}}{\rightsquigarrow}(x,t)$ to denote the existence of an active infection path connecting~$(y,s)$ to~$(x,t)$. 
\end{definition}

\begin{lemma}\label{lem_Typei_iff_activei}
    Let~$x\in\mathbb Z$,~$t\in[0,\infty)$ and ~$i\in\{1,2\}$ be given. Then~$\xi_t(x)=i$ if and only if~$(y,0)\overset{\mathcal H_i,\text{active}}{\rightsquigarrow} (x,t)$ for some~$y\in\mathbb Z$. 
\end{lemma}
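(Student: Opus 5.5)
The plan is to prove the statement first for the truncated processes $(\xi^n_t)$ and then pass to the limit $n\to\infty$, as in the definition of $\xi_t$. For fixed $n$, only finitely many arrivals of $\mathcal H_{\mathrm{pair}}$ are relevant on any bounded space-time region. I would argue by induction over the successive update times $\sigma_k$, exactly as in Lemma~\ref{OccupiediffActivePath}, with the analogous notion of active path for $(\xi^n_t)$. The statement I want is: for all $s\le t$, we have $\xi^n_t(x)=i$ if and only if there is $y$ and an infection path $\gamma$ in $\mathcal H_i$ from $(y,s)$ to $(x,t)$ with $\xi^n_u(\gamma(u))=i$ for all $u\in[s,t]$.

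\emph{Forward direction.} Suppose $\xi^n_t(x)=i$. Trace the ancestry backwards. On an interval between updates the state at $x$ is constant. At the update time where $x$ last acquired type $i$, either $x$ already had type $i$ before (no change), or there was an arrow $D_i^{z,x}$ from a site $z$ with $\xi^n(z)=i$. Follow that arrow to $z$ and continue. Between updates, the type-$i$ occupation at the current site was not removed, so there is no mark of $D_i$ at that site and time. Hence the traced path avoids recovery marks, uses only $\mathcal H_i$-arrows, and stays in type $i$. Since the box dynamics have finitely many relevant marks, the tracing terminates at time $0$, and this gives an active path.

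\emph{Backward direction.} Suppose $\gamma$ is an active path. Then $\xi_t(\gamma(t))=i$ holds by definition, and the endpoint has type $i$.

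\emph{Passage to the limit.} For the full process, Lemma~\ref{lem_TruncatedLimit} gives, for fixed $(x,t)$, an $n_0$ such that $\xi^n_u(z)=\xi_u(z)$ for all $n\ge n_0$ on a finite space-time window containing all relevant paths. I would justify this with the standard finite-speed-of-propagation estimate: a.s.\ only finitely many sites influence $(x,t)$. Then an active path for $\xi^{n}$ inside this window is an active path for $\xi$, and vice versa.

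The main obstacle is the forward direction's tracing in the full infinite-volume process. One must ensure the backward exploration stays in a finite window, so that it does not run off to infinity through infinitely many arrows. I would handle this by the usual argument that the set of sites reachable backward from $(x,t)$ within time $t$ is a.s.\ finite, with Poisson domination. This is also what underlies Lemma~\ref{lem_TruncatedLimit}.
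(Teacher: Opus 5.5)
Your proposal is correct and follows the same route as the paper. The paper's entire proof is that the claim is established for the truncated processes $(\xi^n_t)$ and then carried to the limit. You supply the details it omits: the backward ancestral tracing in the finite-particle dynamics, and the finite backward-influence-cone argument. The cone argument is what makes $\xi^n$ and $\xi$ agree simultaneously at every point of the path, which the pointwise convergence of Lemma~\ref{lem_TruncatedLimit} alone would not give.
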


\begin{proof}
    The result follows by establishing the claim for the truncated processes.
\end{proof}
Regarding active infection paths, we make the following important remark.  

\begin{remark}
    Consider~$(\xi_t)_{t\geq 0}$ a multitype contact process started from some~$\xi_0\in\mathcal C$ and let~$i\in\{1,2\}$. The following is true:
    \begin{itemize}
        \item If~$\gamma_1:I\rightarrow\mathbb Z$ is an active infection path in~$\mathcal H_1$ and~$\gamma_1':I\rightarrow\mathbb Z$ is an infection path in~$\mathcal H_1$ with~$\gamma_1(t)\leq \gamma_1'(t)$ for all~$t\in I$, then~$\gamma_1'$ is also active. 
        \item If~$\gamma_2:I\rightarrow\mathbb Z$ is an active infection path in~$\mathcal H_2$ and~$\gamma_2':I\rightarrow\mathbb Z$ is an infection path in~$\mathcal H_2$ with~$\gamma_2(t)\geq \gamma_2'(t)$ for all~$t\in I$, then~$\gamma_2'$ is also active. 
    \end{itemize}
\end{remark}

The graphical construction of the multitype contact process also exhibits the following property, which will prove useful later.

\begin{lemma} \label{lem_sameInterfaceMCP}
    Let~$\xi_1,\xi_2\in\{0,1,2\}^{\mathbb Z}$ be two configurations such that:
    \begin{itemize}
        \item ~$\xi_1(x),\xi_2(x)\in\{0,1\}$ if~$x\leq 0$ and~$\xi_1(x),\xi_2(x)\in\{0,2\}$ if~$x\geq 1$;
        \item there exists~$k_1,k_2\in\mathbb N$ such that~$\xi_1\mathds{1}_{[-k_1,k_2]}=\xi_2\mathds{1}_{[-k_1,k_2]}$.
    \end{itemize}
    
    Let~$(\xi^1_t)_{t\geq 0}$ and~$(\xi_t^2)_{t\geq 0}$ be two multitype contact processes started from~$\xi_1$ and~$\xi_2$, respectively, constructed using the same graphical construction~$\mathcal H_{\mathrm{pair}} = (\mathcal H_1,\mathcal H_2)$. Let~$(I_t^1)_{t\geq 0}=(r_t^1,\ell_t^1)_{s\geq 0}$ and~$(I_t^2)_{t\geq 0}=(r_t^2,\ell_t^2)_{s\geq 0}$ be the interface process associated with~$(\xi_t^1)_{t\geq 0}$ and~$(\xi_t^2)_{t\geq 0}$, respectively. Let~$t'\in[0,\infty)$ be given. Suppose that:
    \begin{itemize}
        \item there exists~$x\in[-k_1,0]$ such that~$(x,0)\overset{\mathcal H_1,\mathrm{active}}{\rightsquigarrow}\mathbb Z\times \{t'\}$ for the process~$(\xi^1_t)_{t\geq 0}$;
        \item there exists~$y\in[1,k_2]$ such that~$(y,0)\overset{\mathcal H_2,\mathrm{active}}{\rightsquigarrow}\mathbb Z\times \{t'\}$ also for the process~$(\xi^2_t)_{t\geq 0}$
    \end{itemize}
     Then, it follows that~$(I_s^1)_{0\leq s \leq t'} = (I_s^2)_{0\leq s \leq t'}$ almost surely.  
\end{lemma}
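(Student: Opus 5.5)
We argue by contradiction, as suggested after Lemma~\ref{lem_SameInterface}, working first with the truncated processes $\xi^{1,n}_t,\xi^{2,n}_t$ (started from $\xi_1\mathds 1_{[-n,n]}$, $\xi_2\mathds 1_{[-n,n]}$) and passing to the limit via Lemma~\ref{lem_TruncatedLimit}. For $n$ large, the truncated processes only have finitely many relevant events in $[0,t']$ inside a large box, and on a fixed box the restriction agrees with the limit process for all large $n$, so it suffices to prove the statement for the truncated processes, where the dynamics is a finite sequence of events at the times $\sigma_k$. The intended statement to carry inductively is stronger than equality of the interface: for each $s\in[0,t']$,
\begin{equation*}
\xi^1_s(z)=\xi^2_s(z)\quad\text{for all } z\in\big[L_s,\,R_s\big],
\end{equation*}
where $L_s$ is the leftmost site at time $s$ reached by an $\mathcal H_1$-active path from $(x,0)$ (for $\xi^1$), and $R_s$ is the rightmost site at time $s$ reached by an $\mathcal H_2$-active path from $(y,0)$ (for $\xi^2$). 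By the hypotheses these are finite for all $s\le t'$, and $L_s\le r^1_s$, $R_s\ge \ell^2_s$.

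The key steps, in order, are as follows. First, I will show that in $\xi^1_s$ every site of type~1 to the right of $L_s$ is in fact reached by an active path from $[x,0]$, and symmetrically for type~2 in $\xi^2_s$ to the left of $R_s$; this uses the crossing-paths property together with the Remark on active paths (paths to the right of an active type-1 path are active, paths to the left of an active type-2 path are active). Second, I will let $P$ be the first time the displayed agreement fails, which is one of the event times $\sigma_k$ of the truncated construction. At time $P$ the event is a death or birth arrow at a site $z\in[L_P,R_P]$, or with source adjacent to it. A death treats both processes identically. A birth $D^{u,v}_i$ with $u,v$ both in the window acts identically by the agreement just before $P$. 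The only dangerous case is a birth whose source $u$ lies outside the window, just to the left of $L_{P-}$ or just to the right of $R_{P-}$. I will rule this out as follows. A type-1 birth from $u=L_{P-}-1$ into $L_{P-}$ cannot alter a disagreement, since type~1 to the left of the window is irrelevant: the window boundary $L$ moves with it in both processes. The remaining case, a type-2 birth from $u<L_{P-}$, is impossible because all type-2 particles in $\xi^1$ lie to the right of $r^1\ge L$, which is guaranteed since the configuration stays in $\mathcal C$. The symmetric cases at $R$ are handled the same way. Third, since $r^i_s,\ell^i_s\in[L_s,R_s]$ and type~1 is the leftmost type present on the right portion, equality on the window gives $I^1_s=I^2_s$.

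The main obstacle I expect is making the window $[L_s,R_s]$ well defined simultaneously for both processes. Activity of paths depends on the process, so I must check that $L_s$ computed in $\xi^1$ coincides with that computed in $\xi^2$ up to time $P$. I will include this in the induction hypothesis: before $P$, the agreement on the window implies that the active paths from $(x,0)$ are the same in both processes, because activity of a path lying inside the window only depends on states inside the window. A lemma of this flavour is Lemma~\ref{lem_SameInterface}, and I would mimic its omitted argument there.
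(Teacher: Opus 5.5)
Your proposal is correct and is essentially the paper's argument: a contradiction at the first failure time $P$, except that you strengthen the invariant from equality of interfaces to agreement of $\xi^1_s,\xi^2_s$ on $[L_s,R_s]$, just as the paper does in the contact-and-barrier analogue (Lemma~\ref{lem_SameInterface}), and this makes the omitted local case analysis at $P$ straightforward. One small correction: an inward type-1 birth from $L_{P-}-1$ into $L_{P-}$ changes nothing, because $L_{P-}$ is already of type~1 in both processes; the window actually moves only under the outward birth from $L_{P-}$ into $L_{P-}-1$, which makes that site type~1 in both processes since it lies to the left of $r^2_{P-}$ (and symmetrically at $R_{P-}$).
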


\begin{proof}
    One may argue by contradiction by letting $P:=\inf\{s\in[0,t']:\, I_s^1\neq I_s^2\}$ denote the first time at which the property fails. The remainder of the argument is not particularly instructive and is therefore omitted.
\end{proof}

\begin{remark} \label{rk__sameInterfaceMCP}
    For~$0\leq t\leq t'$, if we let~$\gamma_r^1:[0,t]\rightarrow\mathbb Z$ be any active infection path in~$\mathcal H_1$ for~$(\xi_t^1)_{t\geq 0}$ with~$\gamma_r^1(0)\in[-k_1,0]$ and~$\gamma_r^1(t)=r_t^1$, it follows that~$\gamma_r^1$ is also active for the process~$(\xi_t^2)_{t\geq 0}$. Similarly, if we let~$\gamma_l^1:[0,t']\rightarrow\mathbb Z$ be an active infection path in~$\mathcal H_2$ for~$(\xi_t^2)_{t\geq 0}$ with~$\gamma_l^1(0)\in[1,k_2]$ and~$\gamma_l^1(t)=\ell_t^1$, it follows that~$\gamma_l^1$ is also active for the process~$(\xi_t^2)_{t\geq 0}$.
\end{remark}
\subsection{Patchwork construction of the interface process} \label{section_patchworkMCP}

\paragraph{} The structure of this section is the same as the one from Section~\ref{section_PatchworkCBP}, where in each of the following subsections we replicate the patchwork construction done for the contact-and-barrier process with the required modifications to fit the multitype contact process. 

\subsubsection{Pair of trails and interface measures}\label{sec_pairOfTrails}

\paragraph{}We begin by defining a pair of trails. As in the case of the contact-and-barrier process, these trails will be used to generate occupied sites. The key difference here is that each trail produces only one type of individual, since we must now distinguish between the two types.
\begin{definition}[Pair of trails]
    Let~$\mathscr A'$ be the collection of pairs of sets~$(A_1,A_2)$ where both~$A_1,A_2\in\mathscr A$. 
\[
\mathcal R':= \left\{\begin{array}{l}
(\mathrm g_1,\mathrm g_2)\in \left(\mathrm P(\mathbb Z \times (-\infty,0])\right)^2\,:\mathrm g_i \text{ is of the form }\mathrm g_i = \bigcup_{x \in \mathbb Z} (\{x\} \times A^i_x) \\ \text{ for }i\in\{1,2\}\text{ with } (A^1_x,A^2_x) \in \mathscr A' \text{ for all }x,  \text{ with  }(0,0)\in\mathrm g_1\text{ and } (1,0) \in \mathrm g_2
\end{array}\right\}.
\]
\end{definition}

Similarly to what we have done in Definition~\ref{def_trail}, we endow~$\mathscr A'$ with the product~$\sigma$-algebra and then~$\mathcal R'$ with the infinite product~$\sigma$-algebra.

\begin{definition}
    For two c\`adl\`ag functions~$\gamma_1,\gamma_2:(-\infty,t]\rightarrow \mathbb Z$ with~$t>0$, let~$(\mathrm g_1 \sqcup \gamma_1,\mathrm g_2\sqcup \gamma_2)$ be the pair defined by:
    \begin{align*}
        \mathrm g_1 \sqcup \gamma_1&=\{(x-\gamma_1(t),s-t)\,:\,(x,s)\in\gamma_1\cup \overline{\mathrm{Graph}(\gamma_1)}\} \\
        \mathrm g_2 \sqcup \gamma_2&=\{(x-\gamma_2(t)+1,s-t)\,:\,(x,s)\in\gamma_2\cup \overline{\mathrm{Graph}(\gamma_2)}\}
    \end{align*}
\end{definition}

In words, we describe the pair~$(\mathrm g_1\sqcup \gamma_1,\mathrm g_2\sqcup \gamma_2)$. The first entry~$\mathrm g_1\sqcup \gamma_1$ is the subset of~$\mathbb Z\times (-\infty, 0]$ obtained by appending the closure of the graph of~$\gamma$ to~$\mathrm g_1$ and translating it so that~$(\gamma_1(t),t)$ becomes the origin; the second entry~$\mathrm g_2\sqcup \gamma_2$ can be understood similarly, it is the subset of~$\mathbb Z \times(-\infty,0]$ obtained by appending the closure of the graph~$\gamma_2$ to~$\mathrm g_2$ and translating it so that~$(\gamma_2(t),t)$ is taken to~$(1,0)$. 

For a pair of trails~$(\mathrm{g}_1,\mathrm{g}_2)\in \mathcal R'$, we can construct a configuration~$\xi\in\{0,1,2\}^{\mathbb Z}$ in the following way. Let~$\mathcal H_{\mathrm{pair}}$ be a graphical construction of the multitype contact process on~$\mathbb Z \times (-\infty, 0]$. Let:

\begin{equation}\label{eq_def_mu_g1g2}
\xi(x)=\begin{cases}
\mathds{1}(\{-\infty\}\cup \mathrm g_1\overset{\mathcal H_1}{\rightsquigarrow} (x,0)\})&\text{if } x \leq 0;\\
2\times \mathds{1}(\{-\infty\}\cup \mathrm g_2\overset{\mathcal H_2}{\rightsquigarrow} (x,0)\})&\text{if } x \geq  1.
\end{cases}
\end{equation}

In particular, since for any pair of trails~$(\mathrm g_1,\mathrm g_2)\in\mathcal R'$ we have that~$(0,0)\in\mathrm g_1$ and~$(1,0)\in\mathrm g_2$, it follows that for~$\xi$ as in~\eqref{eq_def_mu_g1g2}, one has~$\xi(0)=1$ and~$\xi(1)=2$.

\begin{definition}
For~$(\mathrm g_1, \mathrm g_2)\in \mathcal R'$, we let~$\mu_{(\mathrm g_1,\mathrm{g}_2)}$ be the law of a configuration~$\xi$ obtained from~$(\mathrm g_1,\mathrm g_2)$ as in \eqref{eq_def_mu_g1g2}. 
\end{definition}

\begin{definition}[Law~$P_{\xi}^0$ of interface process]
    We let~$P^0_{\xi}$ be the distribution of the interface process~$(I_t)_{t \ge 0}$ for the multitype contact process~$(\xi_t)_{t \ge 0}$ started from~$\xi_0=\xi$.
\end{definition}

\begin{definition}[Law~$P_{(\mathrm g_1,\mathrm g_2)}$ of interface process induced by a pair of trails] \label{measuresMultitype}
 We let~$P_{(\mathrm g_1,\mathrm{g}_2)}$ be the distribution of the interface process~$(I_t)_{t \ge 0}$ for the contact-and-barrier process~$(\xi_t)_{t \ge 0}$ started from a random configuration~$\xi_0$ with distribution~$\mu_{(\mathrm g_1,\mathrm{g}_2)}$, so that\[
 P_{(\mathrm g_1,\mathrm g_2)} = \int P_\xi^0\mu_{(\mathrm g_1,\mathrm g_2)}(d\xi)
 \]
\end{definition}

\subsubsection{Patchwork elements} \label{patchworkElementsMCP}

\paragraph{} The goal of this section is to replicate the structure of Section~\ref{patchworkElements} for the multitype contact process. Throughout this section, we fix a pair of trails~$(\mathrm{g}_1,\mathrm{g}_2)\in\mathcal{R}'$. We take a probability measure~$\mathbb P$ under which a graphical construction~$\mathcal H_{\mathrm{pair}} = (\mathcal H_1,\mathcal H_2)$ is defined (for both positive and negative times) and we consider multitype contact process~$(\xi_t)_{t\geq 0}$ started from a random configuration distributed as~$\mu_{(\mathrm g_1, \mathrm g_2)}$ constructed using~$\mathcal H_{\mathrm{pair}}$ as in~\eqref{eq_def_mu_g1g2}. Let:
\begin{align} \label{eq_defS1}
    \mathcal S_1 &:=\{x\in\mathbb Z\,:\,x\leq  0\text{ and }-\infty \overset{\mathcal H_1}{\rightsquigarrow}(x,0)  \} \subseteq \{x\in\mathbb Z\,:\,\xi_0(x)=1\} \\ \label{eq_defS2}
    \mathcal S_2 &:=\{x\in\mathbb Z\,:\,x\geq 1\text{ and }-\infty \overset{\mathcal H_2}{\rightsquigarrow}(x,0)  \} \subseteq \{x\in\mathbb Z\,:\,\xi_0(x)=2\}
\end{align}
where the inclusion follows from~\eqref{eq_def_mu_g1g2}. 

\begin{definition}[Adjacency time $T$]\label{defTMCP}
Define:
\begin{equation}\label{eq_def_TMCP}
    T:=\inf \left\{ 
\begin{array}{l} 
t \geq 1 : \text{ there exist }x_1\in\mathcal S_1 \text{ and }x_2\in\mathcal S_2 \text{ such that } \\
(x_1,0)\overset{\mathcal{H}_1,\mathrm{ active}}{\rightsquigarrow}(\ell_t-1,t) \text{ and } (x_2,0)\overset{\mathcal{H}_2,\mathrm{ active}}{\rightsquigarrow}(r_t+1,t)
\end{array}
\right\}
\end{equation}
\end{definition}

For~$T$ as in Definition~\ref{defTMCP}, we also let
\begin{align} \label{eq_X1}
    \mathbf{X}_1&:=\sup\{x_1\in\mathcal S_1\,:\,(x_1,0)\overset{\mathcal{H}_1,\mathrm{ active}}{\rightsquigarrow} \mathbb Z\times \{T\}\} \\ \label{eq_X2}
    \mathbf X_2&:=\inf\{x_2\in\mathcal S_2\,:\,(x_2,0)\overset{\mathcal{H}_2,\mathrm{ active}}{\rightsquigarrow} \mathbb Z\times \{T\}\}
\end{align}

By a crossing paths argument, it is easy to see that,~$(\mathbf X_1,0)\overset{\mathcal{H}_1,\mathrm{ active}}{\rightsquigarrow} (r_T,T)$, i.e., that~$r_T$ is a descendant of~$\mathbf X_1$, and that~$(\mathbf X_2,0)\overset{\mathcal{H}_2,\mathrm{ active}}{\rightsquigarrow} (\ell_T,T)$, i.e., ~$\ell_T$ is a descendant of~$\mathbf X_2$. Regarding those random variables~$T$,~$\mathbf X_1$ and~$\mathbf X_2$ we have the following result:

\begin{lemma} \label{TBehavesWellMCP}
    There exists constants~$c,C,p>0$ (independent of the initial pair of trails~$(\mathrm g_1,\mathrm g_2)$) such that:

    \begin{equation}\label{target_TBheavesWellMCP}
        \mathbb P\left(\max\{T,-\mathbf X_1,\mathbf X_2\}>s\right)\leq Ce^{-cs} \quad \text{ for all }s\geq 0
    \end{equation}
\end{lemma}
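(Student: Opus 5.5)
We follow the proof of Lemma~\ref{TBehavesWell}: fix $a>4$, choose a time $t$ proportional to $a$, and exhibit good events of probability at least $1-Ce^{-ca}-Ce^{-ct}$ on which $T\le t$, $-\mathbf X_1\le 2a$ and $\mathbf X_2\le 2a$. The new feature is that neither side is a deterministic-speed barrier. Each type now acts as a moving wall for the other, so we cannot rely on a lower bound like Lemma~\ref{lem_bar_Behaves_Well}. We will instead use the fact that, by the crossing-paths property, every type-1 infection path is \emph{blocked} only by type-2 individuals, and these lie to the right of $r_s$.

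\textbf{Step 1: unobstructed paths from both sides.} By Corollary~\ref{CorRightmostWellBehaved} applied to $\mathcal H_1$, and by a reflected version applied to $\mathcal H_2$, with probability at least $1-Ce^{-ca}-e^{-ct}$ there exist:
\begin{itemize}
\item $x_1\in\mathcal S_1\cap[-2a,-a]$ and an $\mathcal H_1$-infection path $\gamma_1$ from $(x_1,0)$ with $\gamma_1(t)\ge -2a+(\alpha_1-\epsilon)t$;
\item $x_2\in\mathcal S_2\cap[a,2a]$ and an $\mathcal H_2$-infection path $\gamma_2$ from $(x_2,0)$ with $\gamma_2(t)\le 2a-(\alpha_2-\epsilon)t$.
\end{itemize}
Here $\alpha_i=\alpha(\lambda_i)>0$. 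We take $t=4a/(\alpha_1+\alpha_2-2\epsilon)$, which forces $\gamma_1(t)>\gamma_2(t)$, so the two paths must cross. A Poisson bound, analogous to $G_3$ in the proof of Lemma~\ref{TBehavesWell}, guarantees that on $[0,1]$ no infection from $[a,\infty)$ or $(-\infty,-a]$ travels farther than $a/4$. Hence $\gamma_1$ and $\gamma_2$ stay separated, and both remain active, up to time $1$.

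\textbf{Step 2: reaching adjacency.} On this event we follow the pair of leftmost/rightmost active descendants. Let $s^*$ be the first time $s\ge 1$ at which $\gamma_1$ or $\gamma_2$ ceases to be active, or at which they become adjacent. Before $s^*$, the remark on active paths gives the following. Every $\mathcal H_1$-path to the right of $\gamma_1$ that is not blocked is active, and symmetrically for type 2. The interface descendants of $\mathcal S_1$ and $\mathcal S_2$ are therefore tracked by $r_s\ge\gamma_1(s)$ and $\ell_s\le\gamma_2(s)$.

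Since the paths must cross by time $t$ but no particle can occupy a site of the other type, one of them is blocked at some moment $s\le t$. At that moment the blocking site is occupied by the other type, and we must show this blocker descends actively from $\mathcal S_2$ (resp.\ $\mathcal S_1$). To do so we replace $\gamma_1,\gamma_2$ by the rightmost active path from $\mathcal S_1\cap[-2a,0]$ and the leftmost active path from $\mathcal S_2\cap[1,2a]$ reaching time $s$. A crossing-paths argument shows these are exactly the paths ending at $r_s$ and $\ell_s$, and they become adjacent before either would have to pass the other. This yields $T\le t$. It also yields $\mathbf X_1\ge -2a$ and $\mathbf X_2\le 2a$, because the sups and infs in \eqref{eq_X1}--\eqref{eq_X2} are attained by descendants surviving to time $T$.

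\textbf{Main obstacle.} The delicate point is Step 2. We must argue that before adjacency the \emph{special} descendants are not killed off: non-special individuals, descended from the trails $\mathrm g_1,\mathrm g_2$, could occupy the space between the fronts and block the special lineages, and we need the first collision to be between two special lineages. We expect to handle this via the monotonicity in the remark on active paths. A path to the right of an active type-1 path stays active unless it hits type 2. Moreover, type-2 individuals left of $\ell_s$ do not exist, and any type-2 individual must itself lie at or to the right of the leftmost type-2 lineage, which the path from $\mathcal S_2$ dominates in the sense that $\ell_s$ is reached from $\mathcal S_2$ as soon as $\mathcal S_2$ has a surviving active descendant at time $s$. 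To make this rigorous we will check that at the blocking time the extremal positions $r_s,\ell_s$ are descendants of $\mathcal S_1$ and $\mathcal S_2$ respectively. This is the analogue of the adjacency argument in Lemma~\ref{TBehavesWell}, and the constants obtained there are uniform in the trails. Finally, we adjust the constants to cover small $s$.
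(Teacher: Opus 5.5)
Your Step~1 is the same as the paper's proof: Corollary~\ref{CorRightmostWellBehaved} applied to each type, $t$ chosen so that the two paths must meet, and a Poisson bound on $[0,1]$. However, the proposal has a genuine gap in Step~2, which is where the difficulty lies. You assert that the leftmost active path from $\mathcal S_2\cap[1,2a]$ reaching time $s$ ends at $\ell_s$, or equivalently that $\ell_s$ is an active descendant of $\mathcal S_2$ as soon as $\mathcal S_2$ has some surviving active descendant at time $s$. This does not follow from crossing paths, and it is false in general.

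Site $1$ is always of type $2$ at time $0$ because $(1,0)\in\mathrm g_2$, yet $1\notin\mathcal S_2$ with positive probability; the same holds for every non-special type-$2$ site in $[1,\min\mathcal S_2)$. Their active $\mathcal H_2$-descendants can lie to the left of every active lineage issued from $\mathcal S_2$. If such a particle is the one that blocks $\gamma_1$, then at that moment $r_s+1=\ell_s$, but $\ell_s$ is not an active descendant of $\mathcal S_2$. So $T$ is not triggered, $\gamma_1$ stops being active, and nothing in your argument produces a later special--special adjacency before time $t$.

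The crossing-paths device (pointwise maximum of two $\mathcal H_2$-paths) only shows that a non-special type-$2$ lineage started to the \emph{right} of an active special lineage, and passing it, becomes special-descended. It says nothing about lineages started to the left of all surviving special ones. The monotonicity of active paths does not help either. The correct statement is that an $\mathcal H_1$-path started at a type-$1$ site and staying to the \emph{left} of an active type-$1$ path is active, and symmetrically to the right for type~$2$. The remark in the paper has the inequalities reversed: a type-$1$ path to the right of an active one can be blocked by type~$2$. So monotonicity only protects lineages on the side away from the interface, never in the gap between the two special fronts, which is exactly where the non-special blockers live.

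What is missing is a quantitative estimate, uniform in $(\mathrm g_1,\mathrm g_2)$, showing that with probability at least $1-Ce^{-ca}$ the first particle to block $\gamma_1$ (resp.\ $\gamma_2$) is an active descendant of $\mathcal S_2$ (resp.\ $\mathcal S_1$). Blocking cannot happen before a time of order $a$, except on an event of probability $e^{-ca}$. The estimate therefore amounts to showing that descendants of non-special sites have been eliminated from, or absorbed at, the fronts by such times. This needs a separate argument, which you do not give; "we expect to handle this via the monotonicity" is not a proof, and the monotonicity you cite points the wrong way.

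For comparison, the paper's proof follows exactly your route. It takes $t'$ to be the first adjacency time of $\gamma_1$ and $\gamma_2$ and concludes $T\le t'$ from the activity of these paths on $[0,1]$ alone. You have correctly located the point that this skips, but your sketch does not close it.

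A smaller point of the same kind: your event on $[0,1]$ controls only infection from $(-\infty,-a]$ and $[a,\infty)$. Activity of $\gamma_1$ on $[0,1]$ also requires that type-$2$ descendants of $[1,a)$ do not reach it, which needs one more Poisson bound.
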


The proof of the above lemma is postponed to Section~\ref{sec_proofElementsMCP}. 

\begin{definition}[Depth]\label{DefDepthMCP} Define the \emph{depth} as the random variable
    \[D:=\sup\{t\geq 0\,:\,\mathbb Z \times \{-t\}\overset{\mathcal H_1\text{ or }\mathcal H_2}{\rightsquigarrow}\left(\{\mathbf X_1,\dots,\mathbf X_2\}\setminus (\mathcal S_1 \cup \mathcal S_2)\right) \times \{0\}\}\]
with~$D=-\infty$ in case~$\{\mathbf X_1,\dots,\mathbf X_2\}\setminus (\mathcal S_1 \cup \mathcal S_2)=\varnothing$
\end{definition}

\begin{lemma} \label{DepthBehavesWellMCP}
    There exists a constant~$C$ (independent of the pair of trail~$(\mathrm g_1,\mathrm g_2)$) such that
    \[
    \mathbb P \left(D > d\right) \leq Ce^{-\sqrt d}\quad \text{ for all }d\geq 0
    \]
\end{lemma}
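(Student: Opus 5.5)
The argument follows the proof of Lemma~\ref{DepthBehavesWell} for the contact-and-barrier process. It combines a tail bound on the backward survival time of a single site, valid in each of the two graphical constructions, with the exponential control on $\mathbf X_1,\mathbf X_2$ from Lemma~\ref{TBehavesWellMCP}. For $i\in\{1,2\}$ and $x\in\mathbb Z$ set
\[
\sigma'_{i,x}:=\sup\{t\ge 0:\ \mathbb Z\times\{-t\}\overset{\mathcal H_i}{\rightsquigarrow}(x,0)\}.
\]
By self-duality, $\sigma'_{i,x}$ has the law of the extinction time of the contact process with parameter $\lambda_i$ started from $\{x\}$. Since $\lambda_1,\lambda_2>\lambda_c$, \eqref{eq_small_cluster} gives $\mathbb P(t<\sigma'_{i,x}<\infty)\le Ce^{-ct}$ uniformly in $x$ and $i$. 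This bound concerns only $\mathcal H_i$ on negative times, so it does not depend on the trails.

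The next step is to identify the event $\{D>d\}$. If $D>d\ge 0$, then some site $x\in\{\mathbf X_1,\dots,\mathbf X_2\}\setminus(\mathcal S_1\cup\mathcal S_2)$ satisfies $\mathbb Z\times\{-t\}\overset{\mathcal H_i}{\rightsquigarrow}(x,0)$ for some $t>d$ and some $i$. We need to rule out $\sigma'_{i,x}=\infty$. For $x\le 0$ with $i=1$, $\sigma'_{1,x}=\infty$ would mean $-\infty\overset{\mathcal H_1}{\rightsquigarrow}(x,0)$, i.e. $x\in\mathcal S_1$, which is excluded. The symmetric statement holds for $x\ge1$ with $i=2$. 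For the cross cases ($x\le 0$ with $i=2$, or $x\ge1$ with $i=1$), $\sigma'_{i,x}=\infty$ is not excluded by the definition of $\mathcal S_1\cup\mathcal S_2$. This is the one point needing care. I would handle it by reading the definition of $D$ as using $\mathcal H_1$ for sites $x\le 0$ and $\mathcal H_2$ for sites $x\ge 1$, matching \eqref{eq_def_mu_g1g2}; this is the only interpretation under which the lemma can hold, since otherwise $D=\infty$ with positive probability. Under this reading, $\{D>d\}$ is contained in the event that $\mathbf X_2-\mathbf X_1\ge\ldots$ covers some $x$ with $d<\sigma'_{i(x),x}<\infty$, where $i(x)=1$ for $x\le0$ and $i(x)=2$ for $x\ge1$.

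Finally, a union bound finishes the proof. Split according to whether $\max\{-\mathbf X_1,\mathbf X_2\}>d$:
\[
\mathbb P(D>d)\le \mathbb P\big(\max\{-\mathbf X_1,\mathbf X_2\}>d\big)+\sum_{x=-\lceil d\rceil}^{\lceil d\rceil}\mathbb P\big(d<\sigma'_{i(x),x}<\infty\big)\le Ce^{-cd}+(2d+3)Ce^{-cd}.
\]
The first term is controlled by Lemma~\ref{TBehavesWellMCP}, the second by the duality bound above. This is at most $Ce^{-\sqrt d}$ after enlarging $C$, uniformly in $(\mathrm g_1,\mathrm g_2)$, since all the constants involved are trail-independent. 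The case $d$ small is absorbed into $C$.
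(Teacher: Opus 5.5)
Your proposal is correct and follows the paper's proof: a tail bound on the backward survival times via self-duality and \eqref{eq_small_cluster}, a union bound over the sites of $\{\mathbf X_1,\dots,\mathbf X_2\}$, and Lemma~\ref{TBehavesWellMCP} to confine $\mathbf X_1,\mathbf X_2$ to $[-d,d]$ off an exponentially small event. Your remark on the cross cases is well taken. The paper's proof uses a single $\sigma'_x$ built from ``$\mathcal H_1$ or $\mathcal H_2$'', and its identification of $\{D>d\}$ silently assumes $\sigma'_x<\infty$ for sites outside $\mathcal S_1\cup\mathcal S_2$. Your site-dependent reading ($\mathcal H_1$ for $x\le 0$, $\mathcal H_2$ for $x\ge 1$) matches \eqref{eq_def_mu_g1g2} and is what the depth-and-influence lemma actually needs, and your linear-exponential estimate indeed delivers the stated $Ce^{-\sqrt d}$ rather than the weaker $Ce^{-c\sqrt d}$ written at the end of the paper's argument.
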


The proof of the above lemma is also postponed to Section~\ref{sec_proofElementsMCP}. 

\begin{definition}[Pair of special infection paths]\label{pairSpecialPaths}
    Let~$\Gamma_1:(-\infty,T]$ be the infection path in~$\mathcal H_1$ and~$\Gamma_2:(-\infty,T]$ be the infection path in~$\mathcal H_2$ defined as follows:
    \begin{itemize}
        \item in~$(-\infty,0]$,~$\Gamma_1$ is the rightmost infection path in~$\mathcal H_1$ from~$-\infty$ to~$(\mathbf X_1,0)$ and~$\Gamma_2$ is the leftmost infection path in~$\mathcal H_2$ from~$-\infty$ to~$(\mathbf X_2,0)$;
        \item in~$[0,T]$,~$\Gamma_1$ is the rightmost active infection path in~$\mathcal H_1$ from~$(\mathbf X_1,0)$ to~$(r_T,T)$ and~$\Gamma_2$ is the leftmost active infection path in~$\mathcal H_2$ from~$(\mathbf X_2,0)$ to~$(\ell_T,T)$. 
    \end{itemize}
\end{definition}

Note that the sites~$\Gamma_1(0)=\mathbf X_1$ and~$\Gamma_2(0)=\mathbf X_2$ would be occupied, respectively, by a particle of type~$1$ and by a particle of type~$2$ regardless of whether we changed the pair of trails~$(\mathrm g_1, \mathrm g_2)$ used to build the initial configuration. Moreover, note that~$\Gamma_1(t)<\Gamma_2(t)$ for all~$t\in[0,T]$ by a crossing paths argument.

\begin{proposition}\label{lemma_lawatTimeTMCP}
    The law of~$\{\xi_T(x - r_T) : x \in \mathbb{Z}\}$ conditioned on the~$\sigma$-algebra generated by~$T$,~$\Gamma_1$,~$\Gamma_2$,~$\mathrm{Left}_{\Gamma_1}(\mathcal H_1)$ and~$\mathrm{Right}_{\Gamma_2}(\mathcal H_2)$ is~$\mu_{(\mathrm g_1\sqcup \Gamma_1, \mathrm g_2\sqcup \Gamma_2)}$. 
\end{proposition}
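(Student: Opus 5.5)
The plan is to mirror the proof of Proposition~\ref{lem_first_law}, now running it on both sides of the interface. Fix a realization of $T$, $\Gamma_1$, $\Gamma_2$, $\mathrm{Left}_{\Gamma_1}(\mathcal H_1)$ and $\mathrm{Right}_{\Gamma_2}(\mathcal H_2)$. The first step is a pathwise characterization of $\xi_T$ in terms of the complementary pieces of the two graphical constructions. For $x\le r_T$ we aim to show
\[
\xi_T(x)=1 \iff (\{-\infty\}\cup\mathrm{Graph}(\Gamma_1)\cup\mathrm g_1)\rightsquigarrow (x,T) \text{ in } \mathrm{Left}^+_{\Gamma_1}(\mathcal H_1),
\]
and for $x\ge \ell_T$ the symmetric statement with type $2$, $\Gamma_2$, $\mathrm g_2$ and $\mathrm{Right}^+_{\Gamma_2}(\mathcal H_2)$. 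Sites strictly between $r_T$ and $\ell_T$ are empty by definition, and at time $T$ we have $\ell_T=r_T+1$ (since $T$ is an adjacency time and $\Gamma_1(T)=r_T$, $\Gamma_2(T)=\ell_T$). So after recentring by $r_T$, the configuration is determined by these two families of indicators, matching \eqref{eq_def_mu_g1g2} with the translations built into $\mathrm g_1\sqcup\Gamma_1$ (endpoint to $0$) and $\mathrm g_2\sqcup\Gamma_2$ (endpoint to $1$).

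The proof of this equivalence follows the argument for \eqref{eq_claim_key}. In the forward direction, Lemma~\ref{lem_Typei_iff_activei} gives an active type-$1$ path from some $(y,0)$ with $\xi_0(y)=1$. Concatenating it with a path from $\{-\infty\}\cup\mathrm g_1$ to $(y,0)$ and cutting at the last intersection with $\mathrm{Graph}(\Gamma_1)$ yields a path that uses only marks to the left of $\Gamma_1$. Here we must check that a path of $\mathcal H_1$ lying weakly left of $\Gamma_1$ after its last meeting avoids the marks kept in $\mathrm{Left}_{\Gamma_1}$-type conventions; the orientation should be the mirror of Definition~\ref{def_LeftRightPlus}, with $\mathrm{Right}_{\Gamma_1}$ the preserved side and $\mathrm{Left}^+$ the complement. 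For the converse we need a mirrored form of the remark on active paths: a type-$1$ infection path lying to the left of the active path $\Gamma_1$ on $[0,T]$ is itself active. This holds because type-$2$ particles can never be to the left of $r_t\ge\Gamma_1(t)$, and $\Gamma_1$ being active forces everything to its left to be in $\{0,1\}$. The case split on whether the path starts before or after time $0$ is then as in Proposition~\ref{lem_first_law}. The type-$2$ side is the exact reflection.

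The second step is the resampling. We need the analogue of Lemma~\ref{lem_change_H_patchwork}: conditionally on $(T,\Gamma_1,\Gamma_2,\mathrm{Right}_{\Gamma_1}(\mathcal H_1),\mathrm{Left}_{\Gamma_2}(\mathcal H_2))$, the pair $(\mathrm{Left}^+_{\Gamma_1}(\mathcal H_1),\mathrm{Right}^+_{\Gamma_2}(\mathcal H_2))$ has the same law as the corresponding pieces of independent copies $(\mathcal H_1',\mathcal H_2')$. Note that the conditioning in the statement should be on the ``inner'' pieces; I would read the statement's $\mathrm{Left}_{\Gamma_1}(\mathcal H_1)$ as the region between the paths in the appropriate convention. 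I would prove this by truncating at time $-n$ as in Claims~\ref{cl_Nk}--\ref{clNk4}. Then $\Gamma_1^{(n)}$ is an LPRIP for $\mathcal H_1$ and $\Gamma_2^{(n)}$ is an RPRIP for $\mathcal H_2$, jointly: the event $\{\Gamma_1^{(n)}\ge\pi_1,\ \Gamma_2^{(n)}\le\pi_2\}$ should be measurable with respect to the information between $\pi_1$ and $\pi_2$ together with the other construction. The idea is to apply Lemma~\ref{lem_RPRIP_property} twice: first to $\Gamma_2$ in $\mathcal H_2$, treating $\mathcal H_1$ as extra independent data (as in the footnote handling $\mathcal I$), and then to $\Gamma_1$ in $\mathcal H_1$ by reflection. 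We then pass to the limit $n\to\infty$ almost surely.

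The final step mirrors the end of the proof of Proposition~\ref{lem_first_law}. In the resampled constructions, the ``in $\mathrm{Left}^+$'' restriction can be dropped by cutting at the last intersection. The resulting indicators depend only on $(\mathcal H_1',\mathcal H_2')$ and $(\Gamma_1,\Gamma_2)$, which gives the law $\mu_{(\mathrm g_1\sqcup\Gamma_1,\mathrm g_2\sqcup\Gamma_2)}$.

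I expect the main obstacle to be this joint measurability and independence in the second step. Activity of the type-$1$ path depends on $\mathcal H_2$ (through where type-$2$ particles are), and vice versa, so the two one-sided preservation properties are entangled. The definitions of $T$, $\mathbf X_1$ and $\mathbf X_2$ must be checked to depend only on marks between $\Gamma_1$ and $\Gamma_2$. What I would try first is to show that on $[0,T]$ the process restricted to $[\Gamma_1,\Gamma_2]$ evolves autonomously given the two paths, via a Lemma~\ref{lem_sameInterfaceMCP}-type argument.
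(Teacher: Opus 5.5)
Your proposal follows the paper's route: the paper proves this by repeating the proof of Proposition~\ref{lem_first_law} on both sides, with the resampling supplied by Lemma~\ref{changeGraphicalConstructionMCP}, which conditions on $T,\Gamma_1,\Gamma_2,\mathrm{Right}_{\Gamma_1}(\mathcal H_1),\mathrm{Left}_{\Gamma_2}(\mathcal H_2)$ exactly as you reinterpret the statement, and whose proof is omitted as analogous to Lemma~\ref{lem_change_H_patchwork}. You are more careful than the paper in requiring the joint (not merely marginal) resampling identity and the correctly mirrored activity monotonicity for type~$1$, and your two-step use of Lemma~\ref{lem_RPRIP_property} closes once you verify, as you anticipate, that $\Gamma_1$ is a function of $(\mathcal H_1,\Gamma_2,\mathrm{Left}_{\Gamma_2}(\mathcal H_2))$ and $\Gamma_2$ is a function of $(\mathcal H_2,\Gamma_1,\mathrm{Right}_{\Gamma_1}(\mathcal H_1))$, so that each path can be carried along when the opposite side is resampled.
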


The proof of Proposition~\ref{lemma_lawatTimeTMCP} is also postponed to Section~\ref{sec_proofElementsMCP}. 

\begin{definition}[Law~$\mathrm{Q_{(\mathrm{g}_1, \mathrm{g}_2)}}$]
We let~$\mathrm{Q_{(\mathrm{g}_1, \mathrm{g}_2)}}$ be the law of the 5-tuple

\begin{equation} \label{XiDefMCP}
    \Xi := (T,\Gamma_1, \Gamma_2,D,(I_t)_{0 \le t \le T})
\end{equation}
for the multitype contact process~$(\xi_t)_{t \ge 0}$ started from~$\xi_0 \sim \mu_{(\mathrm g_1, \mathrm g_2)}$.
\end{definition}

\begin{definition}
     Define
        \begin{equation}\label{eq_def_ItprimeMCP}
            I_t':=I_{T+t}-I_T, \quad t \ge 0.
    \end{equation}
\end{definition}

\begin{corollary} \label{FirstPatchworkInterfaceMCP}
   Conditionally on~$\Xi$, the law of~$(I_t')_{t \ge 0}$ is~$P_{(\mathrm g_1\sqcup \Gamma_1,\mathrm g_2\sqcup \Gamma_2)}$.
\end{corollary}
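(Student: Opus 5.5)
We follow the proof of Corollary~\ref{FirstPatchworkInterface} essentially verbatim, with Proposition~\ref{lemma_lawatTimeTMCP} in place of Proposition~\ref{lem_first_law}. We must show that for every bounded measurable $f$,
\[
\mathbb E[f((I_t')_{t\ge0})\mid \Xi]=\int f\,\mathrm dP_{(\mathrm g_1\sqcup\Gamma_1,\mathrm g_2\sqcup\Gamma_2)} .
\]
Put $F(\xi):=\int f\,\mathrm dP^0_\xi$. The interface is translation covariant: $I'_t=I_{T+t}-I_T$ depends only on increments. Hence $F(\xi_T)=F(\xi_T(\cdot-r_T))$, and the recentred configuration (recentred so that $r_T\mapsto 0$ and $\ell_T$ lies to its right) is what Proposition~\ref{lemma_lawatTimeTMCP} describes. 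Also, $\mu_{(\mathrm g_1\sqcup\Gamma_1,\mathrm g_2\sqcup\Gamma_2)}$ gives a configuration with $\xi(0)=1$, $\xi(1)=2$ and types separated at $0$. For such a configuration the interface starts at $(0,1)$, so no centring issue arises. We should check that the recentring in the proposition matches this, i.e.\ that $\ell_T=r_T+1$. This holds because at time $T$ the descendants of $\mathbf X_1,\mathbf X_2$ are at $r_T$ and $\ell_T$ and are adjacent.

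First step: strong Markov property. Let $\mathcal F_{(-\infty,T]}$ be the $\sigma$-algebra generated by $\mathcal H_{\mathrm{pair}}$ restricted to $(-\infty,T]$. Since $T$ is a stopping time for the filtration generated by the graphical construction, and the multitype process is a Markov process driven by independent Poisson processes after $T$, we get
\[
\mathbb E[f(I')\mid \mathcal F_{(-\infty,T]}]=F(\xi_T).
\]
Here one should note that $T$ is a stopping time even though $\mathcal S_1,\mathcal S_2$ depend on negative times: these are included in $\mathcal F_{(-\infty,T]}$. The quantity $\Xi=(T,\Gamma_1,\Gamma_2,D,(I_t)_{t\le T})$ is $\mathcal F_{(-\infty,T]}$-measurable. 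Therefore $\mathbb E[f(I')\mid\Xi]=\mathbb E[F(\xi_T)\mid\Xi]$.

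Second step: let $\mathcal G:=\sigma(T,\Gamma_1,\Gamma_2,\mathrm{Left}_{\Gamma_1}(\mathcal H_1),\mathrm{Right}_{\Gamma_2}(\mathcal H_2))$. Proposition~\ref{lemma_lawatTimeTMCP} gives
\[
\mathbb E[F(\xi_T)\mid\mathcal G]=\int F\,\mathrm d\mu_{(\mathrm g_1\sqcup\Gamma_1,\mathrm g_2\sqcup\Gamma_2)}=\int f\,\mathrm dP_{(\mathrm g_1\sqcup\Gamma_1,\mathrm g_2\sqcup\Gamma_2)},
\]
the last equality by Definition~\ref{measuresMultitype}. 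This quantity is a function of $(\Gamma_1,\Gamma_2)$ only. So once we know $\Xi$ is $\mathcal G$-measurable, the tower property finishes the proof.

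Main point to check: $\mathcal G$-measurability of $\Xi$. $T,\Gamma_1,\Gamma_2$ are generators of $\mathcal G$. $D$ depends on backward infection paths from sites in $[\mathbf X_1,\mathbf X_2]$ at time $0$, and these sites lie between $\Gamma_1(0)$ and $\Gamma_2(0)$. We must argue that such paths, and also $\mathcal S_1,\mathcal S_2$ on that interval, are determined by $\mathrm{Left}_{\Gamma_1}(\mathcal H_1)$ and $\mathrm{Right}_{\Gamma_2}(\mathcal H_2)$ on negative times. Similarly, $(I_t)_{t\le T}$ must be determined by marks between the two special paths together with the paths themselves. The claim that $r_t,\ell_t$ are determined between $\Gamma_1$ and $\Gamma_2$ uses the crossing-path facts ($\Gamma_1<\Gamma_2$, rightmost/leftmost choices) and Lemma~\ref{lem_sameInterfaceMCP}. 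If this inclusion fails literally, the fallback is to replace $\Xi$-measurability by its measurability with respect to the $\sigma$-algebra generated by $\Xi$ and $\mathcal G$, which still suffices since the conditional law computed above depends only on $(\Gamma_1,\Gamma_2)\in\sigma(\Xi)$. This is the step requiring most care; the rest is formal.
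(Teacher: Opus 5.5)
Your skeleton (strong Markov property at $T$, then Proposition~\ref{lemma_lawatTimeTMCP}, then the tower property) is the one the paper intends. However, the step you single out as the crux is argued incorrectly, and your fallback does not rescue it.

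\textbf{Why $\Xi$ is not $\mathcal G$-measurable.} You take $\mathcal G=\sigma(T,\Gamma_1,\Gamma_2,\mathrm{Left}_{\Gamma_1}(\mathcal H_1),\mathrm{Right}_{\Gamma_2}(\mathcal H_2))$, following the printed proposition. With this $\mathcal G$, $\Xi$ is not $\mathcal G$-measurable.
\begin{itemize}
\item $\mathrm{Left}_{\Gamma_1}(\mathcal H_1)$ contains only $\mathcal H_1$-marks on or to the left of $\Gamma_1$, and $\mathrm{Right}_{\Gamma_2}(\mathcal H_2)$ only $\mathcal H_2$-marks on or to the right of $\Gamma_2$.
\item But $r_t,\ell_t$ for $t<T$, and the backward paths entering $D$ from sites strictly between $\mathbf X_1$ and $\mathbf X_2$, are governed by $\mathcal H_1$-marks to the right of $\Gamma_1$ and $\mathcal H_2$-marks to the left of $\Gamma_2$.
\item These are exactly the marks ``between the two special paths'' that you yourself say are needed, and exactly the ones these $\sigma$-algebras exclude.
\end{itemize}
With this $\mathcal G$ the proposition cannot even hold: the type-$1$ part of $\xi_T$ to the left of $r_T$ is a function of $\Gamma_1$, $\mathrm g_1$ and $\mathrm{Left}_{\Gamma_1}(\mathcal H_1)$, so nothing is resampled there.

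\textbf{Why the fallback fails.} The identity $\mathbb E[F(\xi_T)\mid\Xi]=\mathbb E[\mathbb E[F(\xi_T)\mid\mathcal G]\mid\Xi]$ requires $\sigma(\Xi)\subseteq\mathcal G$. Knowing that $\mathbb E[F(\xi_T)\mid\mathcal G]$ depends only on $(\Gamma_1,\Gamma_2)$ tells you nothing about $\mathbb E[F(\xi_T)\mid\sigma(\Xi)\vee\mathcal G]$. Extra conditioning information, here precisely the marks between the paths, can change the conditional law; take $\mathcal G$ trivial and $\Xi=F(\xi_T)$ to see this.

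\textbf{The fix.} Condition instead on $\mathcal G'=\sigma(T,\Gamma_1,\Gamma_2,\mathrm{Right}_{\Gamma_1}(\mathcal H_1),\mathrm{Left}_{\Gamma_2}(\mathcal H_2))$, the $\sigma$-algebra of Lemma~\ref{changeGraphicalConstructionMCP}; the printed proposition has the two sides swapped. This mirrors Proposition~\ref{lem_first_law}. There, one conditions on what lies to the left of the leftmost path $\Gamma$ and resamples what lies to its right. Here $\Gamma_2$ is leftmost, so one resamples $\mathrm{Right}^+_{\Gamma_2}(\mathcal H_2)$; symmetrically $\Gamma_1$ is rightmost, so one resamples $\mathrm{Left}^+_{\Gamma_1}(\mathcal H_1)$.

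With $\mathcal G'$, measurability of $\Xi$ does hold:
\begin{itemize}
\item Since $\xi_t\in\mathcal C$ and $\Gamma_1(t),\Gamma_2(t)$ are occupied by types $1$ and $2$, we have $\Gamma_1(t)\le r_t<\ell_t\le\Gamma_2(t)$ on $[0,T]$.
\item Any $\mathcal H_1$-path entering the region to the right of $\Gamma_1$ can be rerouted to start on $\Gamma_1$, which comes from $-\infty$; similarly for $\mathcal H_2$ and $\Gamma_2$.
\item In particular, a backward $\mathcal H_1$-path from $x\in(\mathbf X_1,0]\setminus\mathcal S_1$ can never touch $\Gamma_1$, otherwise $x\in\mathcal S_1$.
\item Hence the initial configuration strictly between $\mathbf X_1$ and $\mathbf X_2$, the two-type dynamics between the paths, and the backward clusters entering $D$ are all functions of $\Gamma_1,\Gamma_2,\mathrm g_1,\mathrm g_2$ and the marks of $\mathrm{Right}_{\Gamma_1}(\mathcal H_1)$ and $\mathrm{Left}_{\Gamma_2}(\mathcal H_2)$.
\end{itemize}
Your tower argument then closes.

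A minor point: $F(\xi)=\int f\,\mathrm dP^0_\xi$ is not shift-invariant, so writing $F(\xi_T)=F(\xi_T(\cdot-r_T))$ requires defining $F$ through $f(I-I_0)$. The resulting constant offset between $I'_0=(0,0)$ and the initial interface $(0,1)$ under $P_{(\mathrm g_1\sqcup\Gamma_1,\mathrm g_2\sqcup\Gamma_2)}$ is suppressed in the paper as well.
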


The proof of the above corollary is also postponed to Section~\ref{sec_proofElementsMCP}. 
\begin{definition}
    Let~$Q_{(\mathrm g_1, \mathrm g_2)}'$ denote the law of the pair~$(\Xi,(I_t')_{t \ge 0})$ for the multitype contact process~$(\xi_t)_{t \ge 0}$ started from~$\xi_0 \sim \mu_{(\mathrm g_1, \mathrm g_2)}$.
\end{definition}

\begin{lemma} \label{onePatchMCP}
    If $(\Xi,(I_t')_{t\ge 0}) = ((T,(\Gamma_1, \Gamma_2),D,(I_t)_{0 \le t \le T}),(I_t')_{t \ge 0}) \sim Q_{(\mathrm{g}_1, \mathrm g_2)}'$, then it follows that $\mathrm{Sew}((I_t)_{0 \le t \le T}, (I_t')_{t \ge 0}) \sim P_{(\mathrm{g}_1, \mathrm g_2)}$.  
\end{lemma}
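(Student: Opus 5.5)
The statement is Lemma~\ref{onePatchMCP}, the multitype analogue of Lemma~\ref{onePatch}. The plan is to prove it directly from the definitions, by realizing both components of $(\Xi,(I'_t)_{t\ge 0})$ on a single probability space. Take the space on which $\mathcal H_{\mathrm{pair}}=(\mathcal H_1,\mathcal H_2)$ is defined for all times in $\mathbb R$. On it, let $\xi_0\sim\mu_{(\mathrm g_1,\mathrm g_2)}$ be built as in~\eqref{eq_def_mu_g1g2}, and let $(\xi_t)_{t\ge0}$ be the multitype contact process driven by $\mathcal H_{\mathrm{pair}}$ restricted to $[0,\infty)$. By the definition of $Q'_{(\mathrm g_1,\mathrm g_2)}$, the pair $(\Xi,(I'_t)_{t\ge0})$ in the statement has the same law as the pair obtained from this single process. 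In that pair, $\Xi$ contains $T$ and the interface path $(I_t)_{0\le t\le T}$, and $I'_t=I_{T+t}-I_T$ by~\eqref{eq_def_ItprimeMCP}. Since $\mathrm{Sew}$ is a measurable function of its two arguments, it is enough to identify the sewn path pathwise in this realization.

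The key step is the pathwise identity $\mathrm{Sew}((I_t)_{0\le t\le T},(I'_t)_{t\ge0})=(I_t)_{t\ge0}$. For $s\le T$ this holds by the first clause of the definition of $\mathrm{Sew}$. For $s>T$, the second clause gives
\[
I_T+\bigl(I'_{s-T}-I'_0\bigr)=I_T+\bigl(I_s-I_T\bigr)-0=I_s,
\]
where I use $I'_0=I_T-I_T=0$. Finiteness of $T$ is needed for this to make sense; it holds almost surely by Lemma~\ref{TBehavesWellMCP}. The interface takes values in $\mathbb Z^2$, so the vector-space setting of $\mathrm{Sew}$ applies.

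Because $\xi_0\sim\mu_{(\mathrm g_1,\mathrm g_2)}$ and the evolution is driven by an independent copy of $\mathcal H_{\mathrm{pair}}$ on positive times, the law of $(I_t)_{t\ge0}$ is $\int P^0_\xi\,\mu_{(\mathrm g_1,\mathrm g_2)}(d\xi)=P_{(\mathrm g_1,\mathrm g_2)}$ by Definition~\ref{measuresMultitype}. Transferring this equality of laws back to an arbitrary pair distributed as $Q'_{(\mathrm g_1,\mathrm g_2)}$ concludes the argument.

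The only real subtlety concerns this independence. The configuration $\xi_0$ depends on $\mathcal H_{\mathrm{pair}}$ only at negative times, while the dynamics use only positive times. Disjointness of the corresponding Poisson processes makes these independent, which legitimizes the mixture formula. I expect this to be routine, and the proof to be as short as that of Lemma~\ref{onePatch}. Notably, Corollary~\ref{FirstPatchworkInterfaceMCP} is not needed here.
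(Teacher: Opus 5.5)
Your proposal is correct and follows the same route as the paper. The paper's proof says only that the claim follows from the definitions, since both pieces are built from the same graphical construction. Your pathwise identity $\mathrm{Sew}((I_t)_{0\le t\le T},(I'_t)_{t\ge0})=(I_t)_{t\ge0}$ (using $I'_0=0$ and $T<\infty$ a.s.), combined with the definition of $P_{(\mathrm g_1,\mathrm g_2)}$ and the measurability of $\mathrm{Sew}$, is exactly the content of that one-line argument spelled out.
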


\begin{proof}
    Follows readily from the definition of those objects as once more we have used the same graphical construction to build them. 
\end{proof}

\subsubsection{Proof of properties of patchwork elements} \label{sec_proofElementsMCP}

\begin{proof}[Proof of Lemma~\ref{TBehavesWellMCP}]
    Let~$\lambda_1, \lambda_2 > \lambda_c$ be given but fixed, and let~$\alpha_1 = \alpha_1(\lambda_1)$ and~$\alpha_2 = \alpha_2(\lambda_2)$ denote the associated speeds of the contact process with parameters~$\lambda_1$ and~$\lambda_2$, respectively, as defined in~\eqref{speedCP}. Let~$a>4\max\{\lambda_1,\lambda_2\}$ and let~$t:=\frac{2a}{\alpha_1+\alpha_2}$ so that~$-a+\alpha_1t = a - \alpha_2t$. Let
    \begin{align*}
        E_1 &:= \left\{\max\{x\in\mathbb Z\,:\,\exists\, s\in[0,1]\text{ such that }(-\infty,-a]\}\overset{\mathcal H_1}{\rightsquigarrow}(x,s)<-\frac{a}{2} \right\}, \\
        E_2 &:= \left\{\min \{x\in\mathbb Z\,:\,\exists\, s\in[0,1] \text{ such that }[a,\infty)\overset{\mathcal H_2}{\rightsquigarrow}(x,s)\}>\frac{a}{2}\right\},
    \end{align*}
    and note that for~$i\in\{1,2\}$, one has that~$\mathbb P(E_i^c)<Ce^{-ca}$ since the probability of this event is bounded by the probability of a Poisson random variable with parameter~$\lambda_i$ to be larger than~$2\lambda_i$. Consider the good event~$G_0:=E_1\cap E_2$, so that~$\mathbb P (G_0)>1-Ce^{-ca}$. Let also
    \begin{align*}
        G_1&:=\left\{\left(\mathcal S_1\cap [-2a,-a]\right)\times \{0\}\overset{\mathcal H_1}{\rightsquigarrow} \left[-a+\alpha_1t,\infty\right)\times\{t\}\right\} \\
        G_2&:=\left\{\left(\mathcal S_2\cap [a,2a]\right)\times \{0\}\overset{\mathcal H_2}{\rightsquigarrow} \left(-\infty,a-\alpha_2t\right]\times\{t\}\right\}
    \end{align*}

    We claim that~$G_0\cap G_1 \cap G_2 \subseteq \left\{\mathbf X_1\geq -2a,\, \mathbf X_2 \leq 2a,\, T<t\right\}$. Indeed, if~$G_1$ occurs, then there exists~$x_1\in\mathcal S_1\cap [-2a,-a]$ and~$\gamma_1:[0,t]\rightarrow \mathbb Z$ an infection path in~$\mathcal H_1$ such that~$\gamma(0)=x_1$ and~$\gamma_1(t)\geq -a+\alpha_1 t$. Similarly, if~$G_2$ occurs, then there exists~$x_2\in\mathcal S_1\cap [a,2a]$ and~$\gamma_2:[0,t]\rightarrow \mathbb Z$ an infection path in~$\mathcal H_2$ such that~$\gamma(0)=x_2$ and~$\gamma_2(t)\leq a-\alpha_2 t$. By the choice of~$t$ we have that~$\gamma_1(t)\geq \gamma_2(t)$, so we can consider~$t'$ the first moment where those infection paths are adjacent, i.e.,~$\gamma_1(t')+1=\gamma_2(t')$. Moreover, if~$G_0$ occurs, we have that the restriction of those infection paths to the time interval~$[0,1]$ are active, and therefore~$T\leq t'< t$. 

    For~$i\in\{1,2\}$, we have that~$\mathbb P(G_i) >1-Ce^{-ca}$ because of Corollary~\ref{CorRightmostWellBehaved}. Thus
    \[\mathbb P\left(\mathbf X_1\geq -2a,\,\mathbf X_2\leq 2a,\,T<\frac{2a}{\lambda_1+\lambda_2}\right)>1-Ce^{-ca}-e^{ca}\]
    and we get the desired result by a change of constant if necessary. 
\end{proof}

\begin{proof}[Proof of Lemma~\ref{DepthBehavesWellMCP}]
    The argument follows closely the proof of Lemma~\ref{DepthBehavesWell}. For each~$x\in\mathbb Z$, we define
    \[
    \sigma_x':=\sup\left\{t\geq 0\,:\,\mathbb Z\times \{-t\}\overset{\mathcal H_1 \text{ or }\mathcal H_2}{\rightsquigarrow}(x,0)\right\}
    \]
    
    Again by duality and~\eqref{eq_small_cluster}, we obtain that~$\mathbb P\left(t<\sigma_x'<\infty\right)<e^{-ct}$ for some~$c>0$, and all~$x$ and~$t\geq 0$. The event~$\{D>d\}$ is the same as the event where there exists~$x\in\{\mathbf X_1,\dots,\mathbf X_2\}$ such that~$t<\sigma_x'<\infty$. Because of an union bound and Lemma~\ref{TBehavesWellMCP}, we have
    \[\mathbb P(D>d)\leq \mathbb P\left(\mathbf X_1<-d \text{ or }\mathbf X_2>d\right) + 2de^{-cd} \leq Ce^{-c\sqrt d}\]
    for all~$d>0$ for some constant~$C>0$ large enough. 
\end{proof}

We carry on working under the same probability space where a graphical construction~$\mathcal H_{\mathrm{pair}}$ for the multitype contact process is defined. We consider a ``truncated'' version of the initial configuration given as in~\eqref{eq_def_mu_g1g2} by letting
\[
\xi_0^{(n)} = \begin{cases}
\mathds{1}(\{\mathbb Z\times \{-n\}\}\cup \mathrm g_1\overset{\mathcal H_1}{\rightsquigarrow} (x,0)\})&\text{if } x \leq 0;\\
2\times \mathds{1}(\{\mathbb Z\times \{-n\}\}\cup \mathrm g_2\overset{\mathcal H_2}{\rightsquigarrow} (x,0)\})&\text{if } x \geq 1 .
\end{cases}
\]

Similarly to~\eqref{eq_defS1} and~\eqref{eq_defS2}, we let
\begin{align*}
\mathcal S_1^{(n)}&:=\{x\in\mathbb Z\,:\,x\leq 0 \text{ and }\mathbb Z\times \{-n\}\overset{\mathcal H_1}{\rightsquigarrow}(x,0)\} \\
\mathcal S_2^{(n)}&:=\{x\in\mathbb Z\,:\,x\geq  1 \text{ and }\mathbb Z\times \{-n\}\overset{\mathcal H_2}{\rightsquigarrow}(x,0)\}
\end{align*}

The following is easily checked. 
\begin{claim}\label{cl_NkMCP}
    Almost surely, for all $k \in \mathbb N$ there exists $N(k)$ such that for all $n \ge N(k)$ we have
    \[\xi_0^{(n)} \cdot \mathds{1}_{[-k,k]} = \xi_0 \cdot \mathds{1}_{[-k,k]} \qquad \text{and} \qquad \mathcal S_i^{(n)} \cap [-k,k] = \mathcal S_i \cap [-k,k]\text{ for } i\in\{1,2\}.\] 
\end{claim}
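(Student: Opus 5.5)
The statement to prove is Claim~\ref{cl_NkMCP}, the multitype analogue of Claim~\ref{cl_Nk}. The plan is to reduce it to a single-site, single-type statement and then take a finite maximum. Since $[-k,k]$ is finite and there are two types, it suffices to show the following: almost surely, for each fixed site $x$ and type $i\in\{1,2\}$, there exists $N_i(x)$ such that for all $n\ge N_i(x)$,
\[
\mathds{1}\big(\{\mathbb Z\times\{-n\}\}\cup \mathrm g_i \overset{\mathcal H_i}{\rightsquigarrow}(x,0)\big)=\mathds{1}\big(\{-\infty\}\cup \mathrm g_i\overset{\mathcal H_i}{\rightsquigarrow}(x,0)\big)
\]
and
\[
\mathds{1}\big(\mathbb Z\times\{-n\}\overset{\mathcal H_i}{\rightsquigarrow}(x,0)\big)=\mathds{1}\big(-\infty\overset{\mathcal H_i}{\rightsquigarrow}(x,0)\big).
\]
We then set $N(k):=\max_{|x|\le k,\,i\in\{1,2\}}N_i(x)$. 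Using $\mathcal H_1$ for $x\le 0$ and $\mathcal H_2$ for $x\ge1$ matches the definitions of $\xi_0^{(n)}$, $\xi_0$, $\mathcal S_i^{(n)}$ and $\mathcal S_i$.

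For a fixed $x$ and $i$, we use the backward (dual) contact process in $\mathcal H_i$ from $(x,0)$, together with monotonicity in $n$. The events $E_n:=\{\mathbb Z\times\{-n\}\overset{\mathcal H_i}{\rightsquigarrow}(x,0)\}$ are decreasing in $n$, since any path from level $-(n+1)$ passes through level $-n$. By definition, $\{-\infty\rightsquigarrow(x,0)\}$ means there is an infinite backward path, which is contained in $\bigcap_n E_n$. The converse inclusion uses the dual process $\hat\zeta_s:=\{y:(y,-s)\rightsquigarrow(x,0)\}$. Being nonempty for all $s$ is exactly $\bigcap_n E_n$, and survival of the dual implies the existence of an infinite backward path. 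This follows by a compactness/König-type argument: the dual stays in a finite (random) interval on each compact time window, so there are finitely many path segments per window. Alternatively, one can simply adopt ``$\hat\zeta_s\ne\varnothing$ for all $s$'' as the meaning of $-\infty\rightsquigarrow(x,0)$, consistent with how $\mathcal S$ was used earlier.

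Given this, we take cases. If the dual survives, then $E_n$ holds for all $n$, so both indicators in the first and second displays equal $1$ for every $n$. If it dies, then for $n>\sigma'_x$ (the extinction time, finite almost surely on this event, as in the proof of Lemma~\ref{DepthBehavesWellMCP}) the events $E_n$ fail and the $-\infty$ events also fail. Both indicators then reduce to $\mathds{1}(\mathrm g_i\rightsquigarrow(x,0))$, respectively $0$, on both sides. We take $N_i(x):=\lceil\sigma'_x\rceil+1$ on extinction and $N_i(x):=0$ otherwise.

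The only step needing care is the identification of $\{-\infty\rightsquigarrow(x,0)\}$ with $\bigcap_n E_n$ in the surviving case. If the compactness argument feels delicate, I would avoid it by noting that only almost-sure equality is required: apart from a null set, survival of the dual forward from $(x,0)$ backward in time yields an infinite path via the standard leftmost-path construction used for $\Gamma$. Everything else is a finite union over $|x|\le k$ and $i\in\{1,2\}$ of almost-sure statements, which is again almost sure simultaneously for all $k\in\mathbb N$.
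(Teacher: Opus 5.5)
Your argument is correct and is the routine verification the paper leaves as ``easily checked''. Monotonicity of $E_n$ in $n$ makes each single-site indicator eventually constant. The König/local-finiteness step identifies the limit $\bigcap_n E_n$ with $\{-\infty\overset{\mathcal H_i}{\rightsquigarrow}(x,0)\}$, and a finite maximum over $|x|\le k$ (countably many sites overall) gives $N(k)$.

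One small notational point: the extinction time you need is the single-type quantity $\sup\{t\ge0:\mathbb Z\times\{-t\}\overset{\mathcal H_i}{\rightsquigarrow}(x,0)\}$. The $\sigma'_x$ from the proof of Lemma~\ref{DepthBehavesWellMCP} uses paths in $\mathcal H_1$ or $\mathcal H_2$, so it can be infinite when only the type-$i$ dual dies. Monotonicity alone lets you avoid this by taking $N_i(x)$ to be the first $n$ at which $E_n$ fails.
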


Let~$(\xi_t^{(n)})_{t\geq 0}$ be the multitype contact process constructed evolved using the positive part of~$\mathcal H_{\mathrm{pair}}$ started from~$\xi_0^{(n)}$. We can then define the analogue of Definition~\ref{defTMCP} for this process started from this truncated configuration in the following way:

\[
T^{(n)}:=\inf \left\{ 
\begin{array}{l}
t \geq 1 : \text{ there exist }x_1\in\mathcal S_1^{(n)} \text{ and }x_2\in\mathcal S_2^{(n)} \text{ such that } \\
(x_1,0)\overset{\mathcal{H}_1,\mathrm{ active}}{\rightsquigarrow}(\ell_t^{(n)}-1,t) \text{ and } (x_2,0)\overset{\mathcal{H}_2,\mathrm{ active}}{\rightsquigarrow}(r_t^{(n)}+1,t)
\end{array}
\right\}
\]
where~$r_t^{(n)}$ and~$\ell_t^{(n)}$ are the positions of the rightmost particle of type~$1$ and the leftmost particle of type~$2$ for the process~$(\xi_t^{(n)})_{t\geq 0}$. Consider also the analogues of~\eqref{eq_X1} and~\eqref{eq_X2}:
\begin{align*}
    \mathbf X^{(n)}_1 = \sup \left\{x_1\in\mathcal S_1^{(n)}\,:\,(x_1,0)\overset{\mathcal H_1,\mathrm{ active}}{\rightsquigarrow}\mathbb Z\times \{T^{(n)}\}\right\} \\
    \mathbf X^{(n)}_2 = \sup \left\{x_2\in\mathcal S_2^{(n)}\,:\,(x_2,0)\overset{\mathcal H_2,\mathrm{ active}}{\rightsquigarrow}\mathbb Z\times \{T^{(n)}\}\right\}
\end{align*}

Consider~$(I_t^{(n)})_{t\geq 0} = (r_t^{(n)},\ell_t^{(n)})_{t\geq 0}$ the interface process for the multitype contact process started from the truncated initial configuration. At last, let~$\mathbf X=\max\{-\mathbf X_1,\mathbf X_2\}$. It is not hard to see that the following is true:

\begin{claim}\label{claim_MCPN(X)}
    Almost surely, if~$n\geq N(\mathbf X)$, it follows that:
\[
\mathbf X_1 = \mathbf X_1^{(n)},\quad \mathbf X_2 = \mathbf X_2^{(n)},\quad T^{(n)} = T,\quad \text{and } I_t^{(n)} = I_t \text{ for all } t \in [0, T].
\]

Moreover, the rightmost active infection path in~$\mathcal H_1$ for the process~$(\xi_t^{(n)})_{t\geq 0}$ from~$(\mathbf X^{(n)}_1,0)$ to~$(\ell_T-1,T)$ is the same as the rightmost active infection path in~$\mathcal H_1$ for the process~$(\xi_t)_{t\geq 0}$ from~$(\mathbf X_1,0)$ to~$(\ell_T-1,T)$, and the leftmost active infection path in~$\mathcal H_2$ for the process~$(\xi_t^{(n)})_{t\geq 0}$ from~$(\mathbf X^{(n)}_2,0)$ to~$(r_T+1,T)$ is the same as the leftmost active infection path in~$\mathcal H_2$ for the process~$(\xi_t)_{t\geq 0}$ from~$(\mathbf X_2,0)$ to~$(r_T+1,T)$. 
\end{claim}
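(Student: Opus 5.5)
The plan is to follow the route of Claims~\ref{cl_Nk}--\ref{cl_Nk3} for the contact-and-barrier process. The key tool is the localisation property of Lemma~\ref{lem_sameInterfaceMCP}. The only input about $n$ is Claim~\ref{cl_NkMCP}: for $n\ge N(\mathbf X)$, the configurations $\xi_0^{(n)}$ and $\xi_0$ coincide on $[-\mathbf X,\mathbf X]$. Moreover, $\mathcal S_i^{(n)}\cap[-\mathbf X,\mathbf X]=\mathcal S_i\cap[-\mathbf X,\mathbf X]$ for $i\in\{1,2\}$. Here $\mathbf X=\max\{-\mathbf X_1,\mathbf X_2\}$ is finite almost surely by Lemma~\ref{TBehavesWellMCP}.

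\textbf{Step 1: the interfaces agree up to $T$.} Apply Lemma~\ref{lem_sameInterfaceMCP} with $\xi_1=\xi_0$, $\xi_2=\xi_0^{(n)}$, $k_1=k_2=\mathbf X$ and $t'=T$. The witnesses are $x=\mathbf X_1$ and $y=\mathbf X_2$. By definition~\eqref{eq_X1}--\eqref{eq_X2}, these carry active infection paths up to time $T$ in $(\xi_t)$.

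There is an asymmetry in the hypotheses of Lemma~\ref{lem_sameInterfaceMCP}: the first witness must be active for $(\xi^1_t)$, the second for $(\xi^2_t)$. To handle it, I use the lemma with the roles of the two processes swapped, or check directly by the symmetric argument that activeness of $\mathbf X_2$'s path in $(\xi_t)$ suffices. Since the proof of Lemma~\ref{lem_sameInterfaceMCP} is symmetric, I would simply note that the hypotheses can be stated for the same process. This gives $I^{(n)}_s=I_s$ for $s\in[0,T]$. By Remark~\ref{rk__sameInterfaceMCP}, every active path realising $r_s$ or $\ell_s$ from inside $[-\mathbf X,\mathbf X]$ is active in both processes.

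\textbf{Step 2: $T^{(n)}=T$.} Take the paths from $\mathbf X_1,\mathbf X_2$ realising the adjacency at time $T$. They are active in $(\xi^{(n)}_t)$ by Step~1, and their starting points lie in $\mathcal S_i^{(n)}$ by Claim~\ref{cl_NkMCP}. Hence $T^{(n)}\le T$.

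For the reverse inequality, suppose $T^{(n)}<T$. Then there are $x_1\in\mathcal S_1^{(n)}$ and $x_2\in\mathcal S_2^{(n)}$ whose descendants realise adjacency at time $T^{(n)}$. By a crossing-paths argument, these can be replaced by $\mathbf X_1$ and $\mathbf X_2$. The reason is that $\mathbf X_1$'s lineage is the rightmost surviving type-1 lineage among $\mathcal S_1$ up to $T$, so by planarity it dominates the lineage ending at $\ell^{(n)}_{t}-1$ from the right; symmetrically for type $2$. Since the interfaces agree on $[0,T]$, this would give an adjacency time smaller than $T$ for $(\xi_t)$, contradicting the definition of $T$.

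\textbf{Step 3: the labels and paths agree.} With $T^{(n)}=T$ and $\mathcal S_i^{(n)}=\mathcal S_i$ on the relevant window, the suprema and infima defining $\mathbf X_i^{(n)}$ coincide with those defining $\mathbf X_i$. One point needs checking: points of $\mathcal S_i^{(n)}$ outside $[-\mathbf X,\mathbf X]$ cannot produce a larger supremum. This holds because any such $x_1$ surviving to $T$ would, by crossing with $\mathbf X_1$'s lineage, force an element of $\mathcal S_1$ beyond $\mathbf X_1$ to survive.

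Finally, the rightmost and leftmost active paths are determined by the graphical construction and by the set of active paths. Active paths in the relevant region agree for the two processes by Step~1 and Remark~\ref{rk__sameInterfaceMCP}, so the extremal paths coincide.

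\textbf{Main obstacle.} The main obstacle is the crossing-paths bookkeeping in Steps~2--3. The danger is that truncation at level $-n$ creates extra particles of $\mathcal S_i^{(n)}$ far from the origin which could alter the interface before time $T$. I would control this by showing that any active type-1 lineage from $x_1<-\mathbf X$ stays weakly left of the active lineage of $\mathbf X_1$. It therefore never determines $r_t$ beyond what the lineage of $\mathbf X_1$ already determines, and symmetrically for type $2$.
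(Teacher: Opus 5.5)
Your route (window agreement from Claim~\ref{cl_NkMCP}, the localisation argument of Lemma~\ref{lem_sameInterfaceMCP} run between the lineages of $\mathbf X_1$ and $\mathbf X_2$, then crossing paths) is the one the paper has in mind; the paper itself only says the claim is ``not hard to see''. Your handling of the asymmetric hypotheses of Lemma~\ref{lem_sameInterfaceMCP} is fine. The first-discrepancy argument only needs the two bounding lineages to be active in one process, and agreement of the region between them then makes them active in the other.

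The step that fails as written is $T^{(n)}\ge T$. Crossing paths lets you replace $x_1$ by $\mathbf X_1$ only when $x_1\le\mathbf X_1$. If $x_1\in(\mathbf X_1,0]$, nothing forces the $(\xi^{(n)}_t)$-active path $\eta$ from $(x_1,0)$ to $(r_t,t)$ to meet the lineage $\gamma'$ of $\mathbf X_1$. Your justification, that $\gamma'$ dominates ``from the right'' the lineage ending at $\ell^{(n)}_t-1=r_t$, cannot be correct, since $r_t$ is the rightmost type-$1$ site. The repair uses the window instead: if $x_1>\mathbf X_1$, then $x_1\in[-\mathbf X,0]$, so $x_1\in\mathcal S_1$ by Claim~\ref{cl_NkMCP}. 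Recall that $\gamma'$ is alive at $t<T$ and $\gamma'(t)\le r_t$. In both cases the pointwise maximum of $\gamma'$ and $\eta$ on $[0,t]$ is an $\mathcal H_1$-infection path from $\max\{x_1,\mathbf X_1\}\in\mathcal S_1$ to $(r_t,t)$. It stays between the two special lineages, where the processes coincide, so it is active for $(\xi_t)$. Together with the symmetric statement for type $2$, the time $t$ is admissible in the definition of $T$, which is a contradiction.

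Two smaller points. In Step~3, the points of $\mathcal S_1^{(n)}$ outside the window lie to the left of $-\mathbf X\le\mathbf X_1$, so they trivially cannot increase the supremum (symmetrically for $\mathbf X_2^{(n)}$). What actually needs checking is that a candidate $x_1\in(\mathbf X_1,0]$ whose lineage survives to $T$ in $(\xi^{(n)}_t)$ also survives in $(\xi_t)$. This follows from the same maximum-with-$\gamma'$ argument and then contradicts the maximality of $\mathbf X_1$. Also, the assertion in your last paragraph that active lineages from $x_1<-\mathbf X$ stay weakly left of the lineage of $\mathbf X_1$ is false, since they can cross it. What is true, and all you need, is that any space-time point to the right of $\gamma'$ reached from such an $x_1$ is also reached from $(\mathbf X_1,0)$.
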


Finally, we defined the analogue of Definition~\ref{pairSpecialPaths} for the process started from the truncated initial configuration. We let~$\Gamma_1^{(n)}:[-n,T^{(n)}]\rightarrow\mathbb Z$ and~$\Gamma_2^{(n)}:[-n,T^{(n)}]\rightarrow\mathbb Z$ be the infection paths defined by
\begin{itemize}
    \item in~$[-n,0]$,~$\Gamma_1^{(n)}$ is the rightmost infection path in~$\mathcal H_1$ from~$\mathbb Z \times \{-n\}$to~$(\mathbf X_1^{(n)},0)$ and~$\Gamma_2^{(n)}$ is the leftmost infection path in~$\mathcal H_2$ from~$\mathbb Z \times \{-n\}$ to~$(\mathbf X_2^{(n)},0)$
    \item in~$[0,T^{(n)}]$,~$\Gamma_1^{(n)}$ is the rightmost active (with respect to~$(\beta_t^{(n)})_{t\geq 0}$) infection path in~$\mathcal H_1$ from~$(\mathbf X_1^{(n)},0)$ to~$(\ell_{T^{(n)}}-1,T^{(n)})$ and~$\Gamma_2^{(n)}$ is the leftmost active (with respect to~$(\beta_t^{(n)})_{t\geq 0}$) infection path in~$\mathcal H_2$ from~$(\mathbf X_2^{(n)},0)$ to~$(r_{T^{(n)}}+1,T^{(n)})$. 
\end{itemize}

We finally have:
\begin{claim}
    Almost surely, for every~$s<0$, there exists~$N'>N(\mathbf X)$ such that for all~$n\geq N'$ we have~$\Gamma_1^{(n)}\vert_{[s,T]} = \Gamma_1\vert_{[s,T]}$ and~$\Gamma_2^{(n)}\vert_{[s,T]} = \Gamma_2\vert_{[s,T]}$. 
\end{claim}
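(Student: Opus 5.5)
The plan mirrors the proof of Claim~\ref{clNk4}, treating the two types separately, since $\Gamma_1$ lives only in $\mathcal H_1$ and $\Gamma_2$ only in $\mathcal H_2$.

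\emph{Positive-time portions.} By Claim~\ref{claim_MCPN(X)}, for every $n\ge N(\mathbf X)$ we have $T^{(n)}=T$, $\mathbf X_i^{(n)}=\mathbf X_i$ and $I^{(n)}_t=I_t$ on $[0,T]$. The same claim says that the rightmost active $\mathcal H_1$-path from $(\mathbf X_1,0)$ and the leftmost active $\mathcal H_2$-path from $(\mathbf X_2,0)$ coincide for $(\xi^{(n)}_t)$ and $(\xi_t)$. There is a small point to reconcile here: the endpoints in the claim are written $(\ell_T-1,T)$ and $(r_T+1,T)$, whereas the endpoints in Definition~\ref{pairSpecialPaths} are $(r_T,T)$ and $(\ell_T,T)$. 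I would bridge this with the crossing-paths observation made after \eqref{eq_X2}. That observation shows $(\mathbf X_1,0)$ is actively connected to $(r_T,T)$, and at time $T$ one has $\ell_T-1=r_T$: at the adjacency time the type-1 descendant sits next to $\ell_T$, and it is the rightmost type-1 particle. Symmetrically, $\ell_T=r_T+1$. So the endpoints agree, and $\Gamma_i^{(n)}|_{[0,T]}=\Gamma_i|_{[0,T]}$ for $i=1,2$ whenever $n\ge N(\mathbf X)$.

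\emph{Negative-time portions.} Fix $s<0$. I treat $\Gamma_2$ first; $\Gamma_1$ is the mirror image, with "rightmost" in place of "leftmost". Let $\gamma^{(n)}$ be the leftmost $\mathcal H_2$-path from $\mathbb Z\times\{-n\}$ to $(\mathbf X_2,0)$, and $\gamma$ the leftmost one from $-\infty$.

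For $n\ge m$, every path from $\mathbb Z\times\{-n\}$ restricts to a path from $\mathbb Z\times\{-m\}$. Every path from $-\infty$ restricts to a path from $\mathbb Z\times\{-n\}$. Hence the set of admissible restrictions to $[-m,0]$ shrinks as $n$ grows. By minimality, $\gamma^{(n)}(u)$ is therefore nondecreasing in $n$ and bounded above by $\gamma(u)$ for each $u\in[s,0]$.

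To get eventual equality on $[s,0]$, I use local finiteness. On the a.s.\ finite space-time box swept by $\gamma|_{[s,0]}$ and the paths to its left, only finitely many arrows of $\mathcal H_2$ lie in $[s,0]$. Hence $\gamma^{(n)}|_{[s,0]}$ takes only finitely many values, and it is monotone in $n$, so it stabilizes.

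It remains to show the stabilized path $\tilde\gamma$ equals $\gamma|_{[s,0]}$. The point $(\tilde\gamma(s),s)$ is reached from $\mathbb Z\times\{-n\}$ for infinitely many $n$, hence from every level. By the standard compactness/K\"onig argument, using that backward infection clusters from a point are a.s.\ locally finite, it is then reached from $-\infty$. Concatenating gives a path from $-\infty$ to $(\mathbf X_2,0)$ that agrees with $\tilde\gamma$ on $[s,0]$. By minimality of $\gamma$, this forces $\gamma\le\tilde\gamma$ on $[s,0]$. Combined with $\tilde\gamma\le\gamma$, we get $\tilde\gamma=\gamma|_{[s,0]}$.

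Taking $N'$ larger than $N(\mathbf X)$ and than the stabilization indices for both types completes the argument.

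\emph{Main obstacle.} The delicate step is the compactness argument producing an infinite backward path through $(\tilde\gamma(s),s)$. In particular, one has to make sure that the leftmost-ordering in Example~\ref{example_Rightmost} (comparing paths at the first time they differ) is compatible with the pointwise comparisons used above. I would handle this exactly as it is tacitly handled in the proof of Claim~\ref{clNk4}.
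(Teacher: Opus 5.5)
Your proposal is correct and takes the same route the paper indicates. The paper omits the proof and defers to the proof of Claim~\ref{clNk4}: the positive-time portions come from the truncation claim for $n\ge N(\mathbf X)$, and the negative-time portions from the eventual agreement, on $[s,0]$, of the truncated leftmost/rightmost paths with those from $-\infty$. You also supply details the paper leaves implicit: the identity $r_T=\ell_T-1$, which reconciles the endpoints in Definition~\ref{pairSpecialPaths} with those in the truncation claim, and the monotonicity, finite-box and K\"onig argument for why the negative-time portions stabilize (the box's lower bound, which you left implicit, comes from $\gamma^{(n)}\ge\gamma^{(n_0)}$ on $[s,0]$ for $n\ge n_0\ge|s|$).
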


The proof of the above claim is similar to that of Claim~\ref{clNk4} and is therefore omitted.

\begin{lemma} \label{changeGraphicalConstructionMCP}
    On the same probability space where~$\mathcal{H}_{\mathrm{pair}}$ is defined, let~$\mathcal H_{\mathrm{pair}}' = (\mathcal H_1',\mathcal H_2')$ be another graphical construction for a multitype contact process with the same parameters~$\lambda_1,\lambda_2>\lambda_c$ defined for all times in~$\mathbb R$ and independent of~$\mathcal H_{\mathrm{pair}}$. Let~$\mathcal{G}$ be the~$\sigma$-algebra generated by~$T$,~$\Gamma_1$,~$\Gamma_2$,~$\mathrm{Right}_{\Gamma_1}(\mathcal H_1)$ and~$\mathrm{Left}_{\Gamma_2}(\mathcal H_2)$. Then, almost surely:

    \begin{align*}
        \mathrm{Law}\big( \mathrm{Left}_{\Gamma_1}^+(\mathcal{H}_1) \,\big|\mathcal G \big) &= \mathrm{Law}\big( \mathrm{Left}_{\Gamma_1}^+(\mathcal{H}_1') \,\big|\mathcal G \big) \\
        \mathrm{Law}\big( \mathrm{Right}_{\Gamma_2}^+(\mathcal{H}_2) \,\big|\mathcal G \big) &= \mathrm{Law}\big( \mathrm{Right}_{\Gamma_2}^+(\mathcal{H}_2') \,\big|\mathcal G \big)
    \end{align*}
\end{lemma}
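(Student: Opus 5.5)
We follow the proof of Lemma~\ref{lem_change_H_patchwork}: first work with the truncated objects $T^{(n)},\Gamma_1^{(n)},\Gamma_2^{(n)}$, for which the path property of Lemma~\ref{lem_RPRIP_property} applies, and then let $n\to\infty$. For the first step we must check two things. The path $\Gamma_2^{(n)}$ should be a right-preserving random infection path with respect to $\mathcal H_2$, with $\mathcal H_1$ treated as auxiliary randomness. The path $\Gamma_1^{(n)}$ should be left-preserving with respect to $\mathcal H_1$, with $\mathcal H_2$ treated as auxiliary. The leftmost-path structure of $\Gamma_2^{(n)}$ is a variant of Example~\ref{example_Rightmost}, and $\Gamma_1^{(n)}$ is its mirror image. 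The difficulty is that both paths depend on the multitype dynamics, hence on both graphical constructions at once.

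The key measurability step is as follows. Fix deterministic paths $\pi_1,\pi_2$. We must show that the event $\{\Gamma_2^{(n)}\text{ has domain in that of }\pi_2,\ \Gamma_2^{(n)}\le\pi_2\}$, intersected with the analogous event for $\Gamma_1^{(n)}$ and $\pi_1$, is measurable with respect to $\sigma(\mathrm{Right}_{\pi_1}(\mathcal H_1),\mathrm{Left}_{\pi_2}(\mathcal H_2))$. The argument uses Lemma~\ref{lem_sameInterfaceMCP} and Remark~\ref{rk__sameInterfaceMCP}. Up to time $T^{(n)}$, the interface and the activity of paths to the right of $\Gamma_1^{(n)}$ (for type $1$) and to the left of $\Gamma_2^{(n)}$ (for type $2$) are determined by the marks lying between the two paths. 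Type-$1$ marks to the left of $\Gamma_1^{(n)}$ and type-$2$ marks to the right of $\Gamma_2^{(n)}$ cannot affect the interface: any type-$1$ descendant they produce lies to the left of an active path descending from $\mathbf X_1^{(n)}$, and similarly for type $2$. Hence the candidate paths, the time $T^{(n)}$ and the events above can be decided by exploring only $\mathrm{Right}_{\pi_1}(\mathcal H_1)$ and $\mathrm{Left}_{\pi_2}(\mathcal H_2)$. The truncation at $-n$ makes this exploration finite in the negative time direction.

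With this established, we apply a two-sided version of Lemma~\ref{lem_RPRIP_property}. Its proof in~\cite{den2014scaling} approximates the random path by discrete-valued ones and uses independence of Poisson marks on disjoint space-time regions, so it extends verbatim to two paths in two independent constructions, as in the footnote to Lemma~\ref{lem_change_H_patchwork}. The conclusion is that
\[(T^{(n)},\Gamma_1^{(n)},\Gamma_2^{(n)},\mathrm{Right}_{\Gamma_1^{(n)}}(\mathcal H_1),\mathrm{Left}_{\Gamma_2^{(n)}}(\mathcal H_2),\mathrm{Left}^+_{\Gamma_1^{(n)}}(\mathcal H_1),\mathrm{Right}^+_{\Gamma_2^{(n)}}(\mathcal H_2))\]
has the same law as the vector obtained by replacing $\mathcal H_i$ with $\mathcal H_i'$ in the last two coordinates. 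Next we let $n\to\infty$. By Claim~\ref{claim_MCPN(X)} and the claim following it, $T^{(n)}=T$ and $\Gamma_i^{(n)}\to\Gamma_i$ on every window $[s,T]$ almost surely, so both vectors converge almost surely in the product Skorokhod topology and their limits have equal laws. Equality of the joint laws of the limits gives the two conditional identities on $\mathcal G$ in the statement. (In fact it gives them jointly, which is stronger than what is stated.)

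We expect the main obstacle to be the measurability step. In the contact-and-barrier case the barrier motion is driven by the independent flight plan $\mathcal I$. Here the opposing boundary is generated by the other species, so one has to argue carefully, through a first-discrepancy argument as in Lemma~\ref{lem_sameInterfaceMCP}, that revealing only the marks between $\pi_1$ and $\pi_2$ suffices to decide whether the special paths stay inside them. We would attempt this by induction over the successive mark times in the truncated box, in the same way the process was built from the times $\sigma_k$.
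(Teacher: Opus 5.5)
Your proposal is correct and follows the route the paper indicates; the paper omits this proof as "similar to Lemma~\ref{lem_change_H_patchwork}". As there, you truncate at time $-n$, apply a two-path, two-construction version of Lemma~\ref{lem_RPRIP_property}, and pass to the limit using the almost sure stabilisation $T^{(n)}=T$, $\Gamma_i^{(n)}\to\Gamma_i$. You also single out the step the paper leaves implicit: the joint measurability of $\{\Gamma_1^{(n)}\ge\pi_1,\ \Gamma_2^{(n)}\le\pi_2\}$ with respect to $\sigma(\mathrm{Right}_{\pi_1}(\mathcal H_1),\mathrm{Left}_{\pi_2}(\mathcal H_2))$, which you propose to check by a first-discrepancy argument.
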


The proof of the above lemma is similar to that of Lemma~\ref{lem_change_H_patchwork} and is therefore omitted.

\begin{proof}[Proof of Proposition~\ref{lemma_lawatTimeTMCP}]
    The proof proceeds exactly as in Proposition~\ref{lem_first_law}, with only notational modifications.
\end{proof}

\begin{proof}[Proof of Corollary~\ref{FirstPatchworkInterfaceMCP}]
The proof proceeds as in Corollary~\ref{FirstPatchworkInterface}, with only minor notational adjustments, and we omit the details.
\end{proof}

\subsubsection{Sewing the patchwork} \label{sec_sewing_MCP}

\begin{proposition}[Patchwork construction of~$P_{(\mathrm{g}_1, \mathrm g_2)}$] \label{prop_patchwork_MCP}
Fix~$(\mathrm g_1, \mathrm g_2) \in \mathcal R'$. Let~$(\Xi^n)_{n \ge 0}$ be a sequence with~$\Xi^n=(T^n,(\Gamma^n_1, \Gamma^n_2),D^n,(I^n_t)_{t \le T^n})$, and distribution specified inductively as follows:
\begin{align}
&\mathrm{Law}(\Xi^0) = Q_{(\rmg_1, \rmg_2)};\\[.1cm]
&\mathrm{Law}(\Xi^{n+1} \mid \Xi^0,\ldots, \Xi^n) = Q_{(\mathrm g_1\sqcup \Gamma_1^0\sqcup\dots\sqcup\Gamma_1^n,\mathrm g_2\sqcup \Gamma_2^0\sqcup\dots\sqcup\Gamma_2^n)} \; \text{ for all }n \in \mathbb N_0. \label{eq_nome_xi_n_MCP}
\end{align}

Then,~$\mathrm{Sew}((I^0_t)_{t \le T^0}, (I^1_t)_{t \le T^1},\ldots)$ has law~$P_{(\mathrm g_1, \rmg_2)}$.
\end{proposition}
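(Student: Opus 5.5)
The plan is to mirror the proof of Proposition~\ref{prop_patchwork} essentially verbatim. The only input specific to the multitype model is Lemma~\ref{onePatchMCP}, which in turn rests on Corollary~\ref{FirstPatchworkInterfaceMCP}, together with the fact that $T\ge 1$ by Definition~\ref{defTMCP}.

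\textbf{Auxiliary construction.} On an augmented probability space we build $(\Xi^0,\Xi^1,\ldots)$ jointly with auxiliary interface trajectories $\hat I^1,\hat I^2,\ldots$. Each $\hat I^n=(\hat I^n_t)_{t\ge0}$ is a c\`adl\`ag path with values in $\{(a,b)\in\mathbb Z^2:a<b\}$. We set $\mathrm{Law}(\Xi^0)=Q_{(\rmg_1,\rmg_2)}$. Inductively, conditionally on $(\Xi^0,\hat I^1,\ldots,\Xi^n,\hat I^n)$, we draw $\Xi^{n+1}$ and $\hat I^{n+1}$ independently. Writing $\mathbf h^n:=(\rmg_1\sqcup\Gamma_1^0\sqcup\dots\sqcup\Gamma_1^n,\;\rmg_2\sqcup\Gamma_2^0\sqcup\dots\sqcup\Gamma_2^n)$, their laws are $Q_{\mathbf h^n}$ and $P_{\mathbf h^n}$ respectively. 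The marginal law of $(\Xi^n)_n$ is then exactly the one prescribed in \eqref{eq_nome_xi_n_MCP}.

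\textbf{Induction.} We show by induction that
\[
\mathrm{Law}(\mathrm{Sew}(I^0,\ldots,I^n,\hat I^{n+1}))=P_{(\rmg_1,\rmg_2)}\quad\text{for all } n\in\mathbb N_0.
\]
For the base case, $(\Xi^0,\hat I^1)$ has law $Q'_{(\rmg_1,\rmg_2)}$ by Corollary~\ref{FirstPatchworkInterfaceMCP}, since $\hat I^1$ given $\Xi^0$ has law $P_{(\rmg_1\sqcup\Gamma_1^0,\rmg_2\sqcup\Gamma_2^0)}$. Lemma~\ref{onePatchMCP} then gives the claim.

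For the inductive step, conditionally on $\Xi^0,\ldots,\Xi^n$ the pair $(\Xi^{n+1},\hat I^{n+2})$ has law $Q'_{\mathbf h^n}$. Lemma~\ref{onePatchMCP}, applied with the random pair of trails $\mathbf h^n$, yields
\[
\mathrm{Law}(\mathrm{Sew}(I^{n+1},\hat I^{n+2})\mid\Xi^0,\ldots,\Xi^n)=P_{\mathbf h^n}=\mathrm{Law}(\hat I^{n+1}\mid\Xi^0,\ldots,\Xi^n).
\]
Sewing is associative and $\mathrm{Sew}(I^0,\ldots,I^n,\cdot)$ is a measurable function of $(\Xi^0,\ldots,\Xi^n)$ and its last argument. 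Hence replacing $\hat I^{n+1}$ by $\mathrm{Sew}(I^{n+1},\hat I^{n+2})$ preserves the law of the full sewn path, and the induction hypothesis closes the step.

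\textbf{Passage to the limit.} Since every $T^k\ge1$, the paths $\mathscr I^n:=\mathrm{Sew}(I^0,\ldots,I^n,\hat I^{n+1})$ and $\mathscr I^\infty:=\mathrm{Sew}(I^0,I^1,\ldots)$ coincide on $[0,T^0+\dots+T^n]\supseteq[0,n+1]$. Therefore $\mathscr I^n\to\mathscr I^\infty$ almost surely in the Skorokhod topology, and so $\mathrm{Law}(\mathscr I^\infty)=P_{(\rmg_1,\rmg_2)}$.

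\textbf{Main obstacle.} The only genuinely delicate point is the conditional use of Lemma~\ref{onePatchMCP} with a random pair of trails. This requires the maps $(\mathbf g_1,\mathbf g_2)\mapsto Q_{(\mathbf g_1,\mathbf g_2)}$ and $(\mathbf g_1,\mathbf g_2)\mapsto P_{(\mathbf g_1,\mathbf g_2)}$ to be measurable (probability kernels) on $\mathcal R'$, and the map $(\mathbf g,\Gamma_1,\Gamma_2)\mapsto(\mathbf g_1\sqcup\Gamma_1,\mathbf g_2\sqcup\Gamma_2)$ to be measurable. I would verify this by noting that $\mu_{(\mathbf g_1,\mathbf g_2)}$ depends measurably on the trails through the product $\sigma$-algebra, since events of the form $\mathbf g_i\rightsquigarrow(x,0)$ are measurable jointly in $(\mathbf g_i,\mathcal H_i)$. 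Beyond this, everything is identical to the contact-and-barrier case.
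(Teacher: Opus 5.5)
Your proposal is correct and follows the paper's approach. The paper omits this proof as a transcription of the proof of Proposition~\ref{prop_patchwork}, and that is exactly what you give: auxiliary conditionally independent paths $\hat I^{n}$, induction on $\mathrm{Sew}(I^0,\ldots,I^n,\hat I^{n+1})$ via Lemma~\ref{onePatchMCP} applied conditionally on $\Xi^0,\ldots,\Xi^n$, and passage to the limit using $T^k\ge 1$. Your indexing (base case $n=0$ with $I^0$ included in the sewing) and your remark on the measurability of the kernels $Q_{(\rmg_1,\rmg_2)}$ and $P_{(\rmg_1,\rmg_2)}$ in the trails are slightly more careful than the paper's own write-up.
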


The proof follows the same lines as that of Proposition~\ref{prop_patchwork} and is therefore omitted.

\begin{definition}
    We call a sequence~$(\Xi^n)_{n \ge 0}$ with law as prescribed in the statement of Proposition~\ref{prop_patchwork_MCP}, corresponding to~$(\mathrm g_1,\rmg_2) \in \mathcal R'$, a \emph{patchwork sequence with initial pair of trails $(\mathrm g_1,\rmg_2)$}.
\end{definition}

\subsubsection{Renewals of patchwork construction}
\begin{definition} \label{def_kappa_n_MCP}
    Let~$(\mathrm g_1,\rmg_2) \in \mathcal R'$ and let~$(\Xi^n)_{n \ge 0}$ be a patchwork sequence with initial pair of trails~$(\mathrm g_1,\rmg_2)$. We define the stopping times~$\kappa_n$ as in \eqref{eq_def_kappa_n}. 
\end{definition}

\begin{lemma} \label{kappa_m_kappa_n_MCP}
    If~$m < n$ and~$\kappa_m \ge n$, then~$\kappa_m \ge \kappa_n$.
\end{lemma}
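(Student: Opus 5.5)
The plan is to transcribe the proof of Lemma~\ref{kappa_m_kappa_n}. Definition~\ref{def_kappa_n_MCP} defines $\kappa_n$ by the same formula~\eqref{eq_def_kappa_n}. The only inputs are the depths $D^{n'}$ and the durations $T^{n'}$ of the patchwork sequence, so the statement is purely combinatorial and no property of the multitype contact process is needed. The one fact used beyond the definition is that every $T^{n'}$ is nonnegative; here in fact $T^{n'}\ge 1$ by Definition~\ref{defTMCP}.

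First, I would dispose of the trivial cases. If $\kappa_n=n$, then $\kappa_m\ge n=\kappa_n$ by hypothesis. If $\kappa_m=\infty$, there is nothing to prove. So I would assume $\kappa_n>n$.

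Next, I would reformulate the hypothesis $\kappa_m\ge n$. By definition of the infimum, it is equivalent to
\[
D^{\ell} < T^{m}+\cdots+T^{\ell-1}\quad\text{for all } \ell\in\{m,\ldots,n-1\}.
\]
Similarly, the definition of $\kappa_n$ gives $D^{\ell}<T^{n}+\cdots+T^{\ell-1}$ for all $\ell\in\{n,\ldots,\kappa_n-1\}$; when $\kappa_n=\infty$, this holds for all $\ell\ge n$. Since $m<n$ and the $T^{j}$ are nonnegative, we have $T^{n}+\cdots+T^{\ell-1}\le T^m+\cdots+T^{\ell-1}$. The second family of inequalities therefore upgrades to
\[
D^{\ell}<T^{m}+\cdots+T^{\ell-1}\quad\text{for all }\ell\in\{n,\ldots,\kappa_n-1\}.
\]

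Finally, I would concatenate the two families. They show that no index $\ell\in\{m,\ldots,\kappa_n-1\}$ satisfies $D^{\ell}\ge T^m+\cdots+T^{\ell-1}$, and hence $\kappa_m\ge\kappa_n$; this covers $\kappa_n=\infty$ as well. There is no real obstacle here. The only point requiring care is checking that the monotonicity $T^n+\cdots+T^{\ell-1}\le T^m+\cdots+T^{\ell-1}$ applies with the convention that empty sums vanish, and this follows from nonnegativity of the $T^j$.
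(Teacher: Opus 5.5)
Your proof is correct and is essentially the paper's argument. The paper proves this lemma by pointing to the proof of Lemma~\ref{kappa_m_kappa_n}, which rewrites $\kappa_m\ge n$ as $D^\ell<T^m+\cdots+T^{\ell-1}$ for $\ell\in\{m,\ldots,n-1\}$, upgrades the defining inequalities of $\kappa_n$ on $\{n,\ldots,\kappa_n-1\}$ via nonnegativity of the $T^j$, and concatenates the two families exactly as you do; your explicit handling of $\kappa_n=\infty$ and of the empty-sum convention only spells out steps the paper leaves implicit.
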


\begin{proof}
    Exactly the same as the proof of Lemma~\ref{kappa_m_kappa_n}. 
\end{proof}

\begin{lemma}\label{lem_kappaPoswithPosProbMCP}
    Let~$(\Xi^n)_{n\geq 0}$ be a patchwork sequence with initial pair of trails~$(\mathrm g_1, \rmg_2)$ which is defined under some probability measure~$\mathbb P$. Then,~$\mathbb P(\kappa_0=\infty)>0$. 
\end{lemma}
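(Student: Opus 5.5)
We will mirror the proof of Lemma~\ref{lem_kappaPoswithPosProb}, with the single trail replaced by the pair of trails and the uniform depth bound supplied by Lemma~\ref{DepthBehavesWellMCP}. The argument has two parts. First we show that a run of ``good'' patches has positive probability. Then we show that, once this run has occurred, the accumulated elapsed time is so large that no later depth can exceed it, except with conditional probability less than~$1$.

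\textbf{Step 1: good patches.} Let $\mathcal G$ be the set of 5-tuples $\Xi$ as in~\eqref{XiDefMCP} with $T=1$ and $D=-\infty$. We need a constant $p>0$, uniform over pairs of trails, with
\[
Q_{(\rmg_1,\rmg_2)}(\Xi\in\mathcal G)\ge p .
\]
A sufficient event is the following:
\begin{itemize}
\item there is an infinite backward infection path in $\mathcal H_1$ ending at $(0,0)$, so $0\in\mathcal S_1$;
\item there is an infinite backward infection path in $\mathcal H_2$ ending at $(1,0)$, so $1\in\mathcal S_2$;
\item there are no death marks at sites $0$ and $1$ in $\mathcal H_1$ and $\mathcal H_2$ during $[0,1]$.
\end{itemize}
On this event, site $0$ stays of type $1$ and site $1$ stays of type $2$ throughout $[0,1]$. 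Since these sites are adjacent, neither can be overwritten, so $r_1=0$ and $\ell_1=1$. Hence $T=1$, $\mathbf X_1=0$ and $\mathbf X_2=1$. Then $\{\mathbf X_1,\dots,\mathbf X_2\}\setminus(\mathcal S_1\cup\mathcal S_2)=\varnothing$, which gives $D=-\infty$.

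The probability of this event does not depend on the trails, because it is expressed purely through $\mathcal H_{\mathrm{pair}}$. It is positive: the backward-survival events (negative times) and the absence of death marks (times in $[0,1]$) are independent, and each has positive probability since $\lambda_1,\lambda_2>\lambda_c$. Conditioning iteratively through~\eqref{eq_nome_xi_n_MCP}, as in Lemma~\ref{lem_kappaPoswithPosProb}, gives
\[
\mathbb P(G(n))\ge p^{n+1}, \qquad G(n):=\{\Xi^0,\dots,\Xi^n\in\mathcal G\}.
\]

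\textbf{Step 2: depth control.} On $G(n)$ we have $T^0=\dots=T^n=1$ and $D^0=\dots=D^n=-\infty$, so $\kappa_0>n$. Moreover every $T^k\ge 1$, so $T^0+\dots+T^{m-1}\ge m$. Therefore
\[
\{\kappa_0<\infty\}\cap G(n)\subseteq \bigcup_{m>n}\{D^m\ge m\}\cap G(n).
\]
Conditioning on $\Xi^0,\dots,\Xi^{m-1}$ and applying Lemma~\ref{DepthBehavesWellMCP} to $Q$ at the random pair of trails, which is legitimate because the constant is uniform in the trails, we get
\[
\mathbb P(\{D^m\ge m\}\cap G(n))\le Ce^{-\sqrt m}\,\mathbb P(G(n)).
\]
Summing over $m>n$,
\[
\mathbb P(\kappa_0<\infty\mid G(n))\le \sum_{m>n}Ce^{-\sqrt m}<1
\]
for $n$ large enough.

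\textbf{Conclusion and main obstacle.} Combining the two steps, $\mathbb P(\kappa_0=\infty)\ge \mathbb P(G(n))\bigl(1-\sum_{m>n}Ce^{-\sqrt m}\bigr)>0$. The step that needs the most care is the positivity of $p$ in Step~1. One must check that, in the multitype dynamics, $T=1$ really follows, which requires that the infimum in Definition~\ref{defTMCP} be attained at $t=1$ through active paths staying at sites $0$ and $1$. One must also check that the lower bound is uniform in $(\rmg_1,\rmg_2)$. Both hold because the chosen event forces $\xi_0(0)=1$ and $\xi_0(1)=2$ from the special sets themselves, independently of what the trails contribute.
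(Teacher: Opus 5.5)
Your proposal is correct and follows the paper's proof essentially step for step. You use the same good event: infinite backward paths to $(0,0)$ in $\mathcal H_1$ and to $(1,0)$ in $\mathcal H_2$, together with no relevant death marks on $[0,1]$, which forces $T=1$ and $D=-\infty$; then iterated conditioning to get $\mathbb P(G(n))\ge p^{n+1}$, and the trail-uniform depth tail summed over $m>n$. Your writing of $\ge p^{n+1}$ with a trail-uniform $p$ is in fact slightly more accurate than the equality written in the paper's Lemma~\ref{lem_kappaPoswithPosProb}.
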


\begin{proof}
    Let~$\mathcal G$ be a space of the 4-tuples~$\Xi$ as in~\eqref{XiDefMCP} that are such that~$T=1$ and~$D=-\infty$. Let~$n\in\mathbb N$ be fixed and let~$G(n):=\{\Xi^0\in \mathcal G,\dots, \Xi^n\in\mathcal G\}$. One has that~$P(G(n))\geq p^{n+1}$ for some~$p>0$. Indeed, note that the event~$\{T=1\}\cap\{D=-\infty\}$ follows if the following conditions are verified: there exits~$\gamma_1:(-\infty,0]\rightarrow \mathbb Z$ infection path in~$\mathcal H_1$ such that~$\gamma_1(0)=0$,  there exits~$\gamma_2:(-\infty,0]\rightarrow \mathbb Z$ infection path in~$\mathcal H_2$ such that~$\gamma_2(0)=1$ and~$D^0_1\cap[0,1]=D^1_2\cap[0,1]=\emptyset$. The rest of the proof follows exactly as in the proof of Lemma~\ref{lem_kappaPoswithPosProb}. 
\end{proof}

\begin{lemma}\label{lem_coupled_patchworks_MCP}
    Let~$(\Xi^n)_{n \ge 0}$ be a patchwork sequence with initial pair of trails~$(\mathrm g,\rmg_2)\in\mathcal R'$, defined under a probability measure~$\mathbb P$. Then, for every bounded and measurable function~$f$, the value of
    \[\mathbb E[f(\Xi^0,\Xi^1,\ldots) \cdot \mathds{1}\{\kappa_0 = \infty\}]\]
    does not depend on the initial trail~$(\mathrm g_1,\rmg_2)$.
\end{lemma}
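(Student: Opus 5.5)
The plan is to mirror the proof of Lemma~\ref{lem_coupled_patchworks}, replacing single trails by pairs of trails. The first step is a multitype analogue of Lemma~\ref{lem_depth_and_influence}. Fix two pairs $(\rmg_1,\rmg_2),(\rmg_1',\rmg_2')\in\mathcal R'$. Using one graphical construction $\mathcal H_{\mathrm{pair}}$ (for negative and positive times), build initial configurations $\xi_0\sim\mu_{(\rmg_1,\rmg_2)}$ and $\xi_0'\sim\mu_{(\rmg_1',\rmg_2')}$ via~\eqref{eq_def_mu_g1g2}, and run both processes with the same $\mathcal H_{\mathrm{pair}}$. This yields coupled 5-tuples $\Xi$ and $\Xi'$ as in~\eqref{XiDefMCP}, with joint law denoted $Q_{(\rmg_1,\rmg_2),(\rmg_1',\rmg_2')}$. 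The claim is the analogue of~\eqref{eq_cases_Q}: if the two pairs agree on $\mathbb Z\times[-t,0]$ (componentwise), then almost surely either $D=D'<t$ and $\Xi=\Xi'$, or $D,D'\ge t$. The case $t=0$ gives the analogue of~\eqref{eq_cases_Q_0}.

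To prove the claim, assume $D<t$. The sets $\mathcal S_1,\mathcal S_2$ do not depend on the trails. Every site of $\{\mathbf X_1,\dots,\mathbf X_2\}\setminus(\mathcal S_1\cup\mathcal S_2)$ either has no infection path coming from $\mathbb Z\times\{-t\}$, or is occupied for both pairs through the same portion of trail above time $-t$. Hence $\xi_0$ and $\xi_0'$ coincide on $[\mathbf X_1,\mathbf X_2]$. Since $\mathbf X_1\in\mathcal S_1$ and $\mathbf X_2\in\mathcal S_2$ have active paths up to time $T$, Lemma~\ref{lem_sameInterfaceMCP} (with $k_1=-\mathbf X_1$, $k_2=\mathbf X_2$) gives $I_s=I_s'$ on $[0,T]$. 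Remark~\ref{rk__sameInterfaceMCP} then shows that the relevant active paths coincide.

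The step I expect to be the main obstacle is showing that $T'=T$, $\mathbf X_i'=\mathbf X_i$ and $\Gamma_i'=\Gamma_i$. I will first argue by crossing paths that any pair $x_1\in\mathcal S_1$, $x_2\in\mathcal S_2$ realizing adjacency before $T$ in either process can be replaced by $\mathbf X_1,\mathbf X_2$, or by points inside $[\mathbf X_1,\mathbf X_2]$. Then the infimum in~\eqref{eq_def_TMCP} is determined by the common dynamics restricted to the region between the descendants of $\mathbf X_1$ and $\mathbf X_2$. This yields $T'\le T$, and $T\le T'$ follows by the symmetric argument. The same restriction shows that $D'=D$. The negative parts of $\Gamma_1,\Gamma_2$ depend only on $\mathcal H_{\mathrm{pair}}$ and on $\mathbf X_1,\mathbf X_2$, so they agree as well. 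Exchanging the roles of the two pairs gives the symmetric inclusion, which proves the claim.

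With the claim in hand, define $\varphi$ as in~\eqref{eq_def_var_phi} for pairs of trails. Build the coupled sequence $(\dot\Xi^n,\ddot\Xi^n)_{n\ge0}$ by drawing $(\dot\Xi^0,\ddot\Xi^0)$ from the coupled law and, inductively, drawing the next pair from the coupled law for the updated pairs $(\rmg_1\sqcup\dot\Gamma^0_1\sqcup\cdots,\rmg_2\sqcup\dot\Gamma^0_2\sqcup\cdots)$ and the analogous dotted-dotted pair. Define $\dot\kappa_n,\ddot\kappa_n$ from Definition~\ref{def_kappa_n_MCP}. Then prove $\mathbb P(A_n\cup B_n)=1$ by induction, exactly as before. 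On $A_n$ the updated pairs agree on $\mathbb Z\times[-(\dot T^0+\cdots+\dot T^n),0]$, since their difference lies below the accumulated time, so $\varphi=1$. This gives~\eqref{eq_equality_with_dots}, from which the trail-independence of $\mathbb E[f(\Xi^0,\Xi^1,\ldots)\mathds 1\{\kappa_0=\infty\}]$ follows verbatim.
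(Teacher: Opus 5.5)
Your proposal is correct and follows essentially the same route as the paper: a multitype depth-and-influence lemma (Lemma~\ref{lem_depth_and_influence_MCP}), followed by the coupled patchwork induction on $A_n\cup B_n$ with $\varphi=1$ on $A_n$. One small correction: the base case needs \eqref{easyCase} for arbitrary initial pairs, which need not agree on $\mathbb Z\times\{0\}$, so it is not literally the $t=0$ case of your conditional claim; it still holds because $D<0$ forces $D=-\infty$, so every site of $\{\mathbf X_1,\dots,\mathbf X_2\}$ is special and the trails are never consulted there.
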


The proof is done in Subsection~\ref{proofLemmaCoupledMCP}. 

\begin{corollary} \label{cor_sameAsInitial_MCP}
    Let~$(\Xi^n)_{n \ge 0}$ be a patchwork sequence with initial pair of trails~$(\mathrm g_1,\rmg_2)\in\mathcal R'$, defined under a probability measure~$\mathbb P$. Then, for every bounded and measurable function~$f$ and every~$n \ge 1$,
\begin{align*}
        &\mathbb E[f(\Xi^{n},\Xi^{n+1},\ldots)\cdot \mathds{1}\{\kappa_{n}=\infty\} \mid \Xi^0,\ldots, \Xi^{n-1}] \\[.1cm]&\hspace{4cm}= \mathbb E[f(\Xi^0,\Xi^1,\ldots) \mid \kappa_0=\infty] \text{ a.s.}
\end{align*}
\end{corollary}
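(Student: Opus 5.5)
The proof mirrors that of Corollary~\ref{cor_sameAsInitial} with only notational changes. I will first establish the unnormalised identity
\[
\mathbb E[f(\Xi^{n},\Xi^{n+1},\ldots)\,\mathds{1}\{\kappa_{n}=\infty\} \mid \Xi^0,\ldots,\Xi^{n-1}] = \mathbb E[f(\Xi^0,\Xi^1,\ldots)\,\mathds{1}\{\kappa_0=\infty\}],
\]
and then explain how the right-hand side of the statement, $\mathbb E[f(\Xi^0,\Xi^1,\ldots)\mid\kappa_0=\infty]$, relates to it.

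\textbf{Step 1.} Write $\mathbb E_{(\rmg_1,\rmg_2)}$ for the expectation under which a patchwork sequence with initial pair of trails $(\rmg_1,\rmg_2)$ is defined. Iterating \eqref{eq_nome_xi_n_MCP} shows that, conditionally on $\Xi^0,\ldots,\Xi^{n-1}$, the shifted sequence $(\Xi^n,\Xi^{n+1},\ldots)$ is a patchwork sequence with initial pair
\[
\mathrm h:=(\rmg_1\sqcup\Gamma_1^0\sqcup\cdots\sqcup\Gamma_1^{n-1},\;\rmg_2\sqcup\Gamma_2^0\sqcup\cdots\sqcup\Gamma_2^{n-1}).
\]
By Definition~\ref{def_kappa_n_MCP}, $\kappa_n$ is the same functional of $(\Xi^n,\Xi^{n+1},\ldots)$ as $\kappa_0$ is of $(\Xi^0,\Xi^1,\ldots)$. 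Hence the left-hand side equals $\mathbb E_{\mathrm h}[f(\Xi^0,\Xi^1,\ldots)\mathds{1}\{\kappa_0=\infty\}]$, evaluated at the random pair $\mathrm h$.

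\textbf{Step 2.} By Lemma~\ref{lem_coupled_patchworks_MCP}, this quantity does not depend on the pair of trails. So I may replace $\mathrm h$ by $(\rmg_1,\rmg_2)$, which gives the unnormalised identity almost surely. The only measure-theoretic point is that the map $\mathrm h\mapsto\mathbb E_{\mathrm h}[\cdots]$ is measurable. It is in fact constant by Lemma~\ref{lem_coupled_patchworks_MCP}, so no regular-conditional-probability issue arises.

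\textbf{Step 3: relation to the stated right-hand side.} Set $p:=\mathbb P(\kappa_0=\infty)$, which is positive by Lemma~\ref{lem_kappaPoswithPosProbMCP}. Then
\[
\mathbb E[f\,\mathds{1}\{\kappa_0=\infty\}]=p\,\mathbb E[f\mid\kappa_0=\infty].
\]
So the left-hand side of the statement equals $p\,\mathbb E[f(\Xi^0,\ldots)\mid\kappa_0=\infty]$, not $\mathbb E[f(\Xi^0,\ldots)\mid\kappa_0=\infty]$ itself. Taking $f\equiv1$ makes this concrete. The literal statement would give $\mathbb P(\kappa_n=\infty\mid\Xi^0,\ldots,\Xi^{n-1})=1$. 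That is false: repeating the lower bound from the proof of Lemma~\ref{lem_kappaPoswithPosProbMCP} with $D^n$ large gives $\mathbb P(\kappa_n=n<\infty)>0$.

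\textbf{Conclusion.} The correct content of the corollary is the Step~2 identity, equivalently the stated display with the factor $p=\mathbb P(\kappa_0=\infty)$ inserted on the right. This is exactly condition~(iii) of Lemma~\ref{lem_renewal}, which is what the corollary is used for. The stated normalised form holds only once one conditions additionally on $\{\kappa_n=\infty\}$, that is, after dividing both sides by $\mathbb P(\kappa_n=\infty\mid\Xi^0,\ldots,\Xi^{n-1})=p$. The same remark applies to Corollary~\ref{cor_sameAsInitial}.

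The only substantive ingredient is Lemma~\ref{lem_coupled_patchworks_MCP}, which I take as given. The main thing to get right here is this normalisation.
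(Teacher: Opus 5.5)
Your Steps 1--2 are exactly the paper's argument: the paper proves this corollary by deferring to the proof of Corollary~\ref{cor_sameAsInitial}, which identifies the conditional law of the shifted sequence as a patchwork sequence with the concatenated pair of trails and then invokes Lemma~\ref{lem_coupled_patchworks_MCP}. Your normalisation remark is also correct: that argument yields $\mathbb E[f(\Xi^0,\Xi^1,\ldots)\cdot\mathds{1}\{\kappa_0=\infty\}]$ on the right-hand side, which is precisely condition~(iii) of Lemma~\ref{lem_renewal}, whereas the printed $\mathbb E[f(\Xi^0,\Xi^1,\ldots)\mid\kappa_0=\infty]$ is a misprint that already fails for $f\equiv 1$, because $\mathbb P(\kappa_0=\infty)<1$ (indeed $\kappa_0=0$ whenever $D^0\ge 0$, for instance when $0\notin\mathcal S_1$).
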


\begin{proof}
    The proof is exactly the same as the proof of Corollary~\ref{cor_sameAsInitial}.
\end{proof}

\begin{lemma} \label{time_and_space_betweenRenewals_MCP}
    Let~$(\Xi^n)_{n\in\mathbb N_0}$ be a patchwork construction with initial pair of trails~$(\rmg_1,\rmg_2)\in\mathcal R'$ defined under some law~$\mathbb P$. Let~$N_0<N_1<\dots$ denote the indexes~$n\in\mathbb N$ for which one has~$\kappa_n=\infty$. Then, there exist constants~$c,C,p>0$ independent of the pair of trails~$(\mathrm g_1,\mathrm g_2)$ such that the following is true for all~$t\geq 0$:
    \begin{equation}\label{timeBetweenRenBehavesWellMCP}
        \mathbb P\left(\sum_{n=N_m}^{N_{m+1}-1}T^n >t\right) \leq Ce^{-ct^p}
    \end{equation}

    Moreover, if we let~$\tau_m:=\sum_{n=0}^{N_m-1}T^n$, then it follows that there exists constants~$c,C,p>0$ (again independent of the pair of trails~$\rmgonegtwo$) such that the following is true for all~$x\geq 0$:

    \begin{equation} \label{spaceBetweenRenBehavesWellMCP}
        \mathbb P \left(\sup\left\{|i_t - i_{\tau_m}|\,:\,t\in[\tau_m,\tau_{m+1}]\right\}>x\right) \leq Ce^{-cx^p}
    \end{equation}
\end{lemma}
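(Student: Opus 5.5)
The argument will mirror the proof of Lemma~\ref{time_and_space_betweenRenewals} for the contact-and-barrier process. First, Lemma~\ref{lem_renewal} applies to the patchwork sequence with initial pair of trails $(\rmg_1,\rmg_2)$. Its hypotheses are checked as follows: (i) holds by definition; (ii) is Lemma~\ref{kappa_m_kappa_n_MCP}; (iii) is Corollary~\ref{cor_sameAsInitial_MCP}; and (iv) is Lemma~\ref{lem_kappaPoswithPosProbMCP}. Consequently, the blocks between consecutive renewals are i.i.d. with the law of the first block under $\mathbb P(\cdot\mid \kappa_0=\infty)$. It therefore suffices to prove both bounds for $m=0$ conditionally on $\{N_0=0\}$. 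For $m\ge 1$ the increments have exactly this conditional law. The initial block before $N_0$ is handled by the same argument without the conditioning, since that argument never uses $N_0=0$ except through the bound $\mathbb P(N_0=0)=\mathbb P(\kappa_0=\infty)>0$. Because the conditional probability equals $\mathbb P(E\cap\{N_0=0\})/\mathbb P(\kappa_0=\infty)$, I only need unconditional stretched-exponential bounds for $\mathbb P(E\cap\{N_0=0\})$.

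For \eqref{timeBetweenRenBehavesWellMCP}, set $E_1=\{\sum_{n<N_1}T^n>t\}$ and split on the good event $G_1=\{N_1\le t^{1/2}\}$. On $G_1\cap E_1$, some $T^n$ with $n\le t^{1/2}$ exceeds $t^{1/2}$. Conditioning on $\Xi^0,\dots,\Xi^{n-1}$ and using \eqref{eq_nome_xi_n_MCP} together with the trail-uniform bound of Lemma~\ref{TBehavesWellMCP}, a union bound gives at most $\lceil t^{1/2}\rceil Ce^{-ct^{1/2}}$. For $G_1^c$, I will show that the events $\{\kappa_i<\infty\}$ for $i\le t^{1/2}$ have a uniformly positive conditional probability of failing, namely $\mathbb P(\kappa_i=\infty\mid \Xi^0,\dots,\Xi^{i-1})=\mathbb P(\kappa_0=\infty)=:1-q>0$ by Corollary~\ref{cor_sameAsInitial_MCP}. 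This yields a geometric bound $q^{\lfloor t^{1/2}\rfloor}$.

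The main obstacle is this last step. The events $\{\kappa_i<\infty\}$ are not measurable with respect to $\Xi^0,\dots,\Xi^i$, so naive iterated conditioning fails. To get around this, I will use the renewal structure itself. After $N_0=0$, the index $N_1$ is the first $n\ge1$ with $\kappa_n=\infty$. I will run the usual "attempt" argument: if $\kappa_n<\infty$, then $\kappa_n$ is a finite stopping time, and its overshoot $\kappa_n-n$ has stretched-exponential tails. These tails come from Lemma~\ref{DepthBehavesWellMCP}, because $\kappa_n=n'$ requires $D^{n'}\ge n'-n$, and by the conditional law each such depth has tail $Ce^{-\sqrt{n'-n}}$ uniformly in the trail. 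At the stopping time $\kappa_n$ I make a fresh attempt, which by Corollary~\ref{cor_sameAsInitial_MCP} succeeds with probability $1-q$. Thus $N_1$ is dominated by a geometric sum of independent stretched-exponential overshoots, and this gives $\mathbb P(N_1>t^{1/2})\le Ce^{-ct^{p}}$.

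For \eqref{spaceBetweenRenBehavesWellMCP}, let $E_2=\{\sup_{s\le\tau_1}|i_s-i_0|>x\}$ and $G_2=\{\tau_1\le x^{1/2}\}$. The previous part bounds $\mathbb P(G_2^c)$. On $G_2$, both $r_s$ and $\ell_s$ can only move by jumps coming from arrivals of $\mathcal H_1$ or $\mathcal H_2$ adjacent to them. Jumps away from the interface are births and occur at rate at most $\lambda_1+\lambda_2$. Jumps toward the opposite side (deaths of the extremal particle) can move $r_s$ or $\ell_s$ by large amounts, so a Poisson bound alone does not suffice. To handle them, I will use the fact that $r_s\ge\Gamma_1^n(s)$-type descendants and $\ell_s\le\Gamma_2^n(s)$ within each patch, together with the ordering $r_s<\ell_s$. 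This reduces the displacement of $i_s$ to the displacement of the special paths $\Gamma_1^n,\Gamma_2^n$ plus the interface width. Each of these is controlled by Poisson counts of transmission arrows over a time $x^{1/2}$ and by the rightmost/leftmost-particle bounds of Proposition~\ref{RightmostWellBehaved}. The result is $\mathbb P(\text{Poisson}(Cx^{1/2})>x/4)\le Ce^{-cx^{1/2}}$, and adjusting constants finishes the proof.
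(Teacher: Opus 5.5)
Your proposal is correct. It keeps the paper's skeleton: the reduction through Lemma~\ref{lem_renewal} to the first block under $\mathbb P(\cdot\mid\kappa_0=\infty)$, the good events $\{N_1\le t^{1/2}\}$ and $\{\tau_1\le x^{1/2}\}$, and the union bound on the $T^n$ via the trail-uniform Lemma~\ref{TBehavesWellMCP}. At the two steps that carry the real content, however, you argue differently from the paper (whose proof is that of Lemma~\ref{time_and_space_betweenRenewals}), and your versions are the sound ones.

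\emph{The geometric bound on $N_1$.} For $\{N_1>t^{1/2}\}$ the paper writes $\mathbb P(\bigcap_{i\le k}\{\kappa_i<\infty\})=\mathbb E[\prod_{i<k}\mathds 1\{\kappa_i<\infty\}\,\mathbb P(\kappa_k<\infty\mid\Xi^0,\dots,\Xi^{k-1})]\le q^k$. This pulls out factors that are not $\sigma(\Xi^0,\dots,\Xi^{k-1})$-measurable. In fact, property (ii) gives $\mathbb P(\kappa_1<\infty,\kappa_2<\infty)=q-(1-q)\,\mathbb P(\kappa_1=1)$ with $q=\mathbb P(\kappa_0<\infty)$, and this exceeds $q^2$ as soon as $\mathbb P(2\le\kappa_1<\infty)>0$. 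Your attempt scheme, with overshoot tails from Lemma~\ref{DepthBehavesWellMCP} through $\{\kappa_n=n'\}\subseteq\{D^{n'}\ge n'-n\}$, is exactly the missing ingredient. The cost is a smaller exponent $p$.

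\emph{The displacement bound.} The paper bounds $|i_t-i_{\tau_m}|$ by a Poisson count of jumps. This ignores that $r_t$ or $\ell_t$ can move arbitrarily far when an extremal particle dies. Your sandwich $\Gamma_1^n(s)\le r_s<\ell_s\le\Gamma_2^n(s)$ inside each patch handles precisely this: both special paths are active, and $r_0=0$, $\ell_0=1$ in patch coordinates.

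A few points to tighten:
\begin{itemize}
\item After a failed attempt at $n$, the next attempt is at $\kappa_n+1$. The fact that no index in $(n,\kappa_n]$ can be a renewal is Lemma~\ref{kappa_m_kappa_n_MCP}.
\item The overshoots are not independent, but their tail bounds hold conditionally on the past. A union bound over the first $k\approx t^{1/4}$ attempts therefore suffices.
\item In the spatial part you also need the offsets $-\mathbf X_1^n,\mathbf X_2^n$ of the special paths, which have exponential tails by Lemma~\ref{TBehavesWellMCP}.
\item The displacement of $\Gamma_i^n$ is controlled by the rate-$\lambda_i$ Poisson domination of the edges of the set reachable from a fixed site, plus a union bound over starting sites. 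Proposition~\ref{RightmostWellBehaved} is not the relevant tool here.
\item The conclusion is a union bound over at most $x^{1/2}$ patches, not a single $\mathrm{Poisson}(Cx^{1/2})$ tail. It still gives a stretched-exponential bound.
\item Uniformity in the trails requires that $\mathbb P(\kappa_0=\infty)$ be trail-independent, which is Lemma~\ref{lem_coupled_patchworks_MCP}.
\item The form of Corollary~\ref{cor_sameAsInitial_MCP} you use, with $\mathds 1\{\kappa_0=\infty\}$ on the right-hand side, is the one its proof actually gives.
\item Your separate treatment of the block before $N_0$ is unnecessary: every block $[N_m,N_{m+1}-1]$ with $m\ge 0$ already has the conditional law.
\end{itemize}
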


\begin{proof}
    The proof of this lemma is exactly the same as the proof of Lemma~\ref{time_and_space_betweenRenewals} with minor modifications.
\end{proof}

\subsubsection{Coupled patchworks: proof of Lemma~\ref{lem_coupled_patchworks_MCP}} \label{proofLemmaCoupledMCP}

\begin{definition}\label{def_depth_and_influence_MCP}

Let~$(\rmg_1,\rmg_2), (\rmg_1',\rmg_2')\in\mathcal R'$. Consider two multitype contact processes started from~$\mu_{(\rmg_1,\rmg_2)}$ and~$\mu_{(\rmg_1',\rmg_2')}$ constructed using the same graphical construction~$\mathcal H_{\mathrm{pair}}$. Let
\begin{align*}
    &\Xi^{(\rmg_1, \rmg_2)} := (T^{(\rmg_1, \rmg_2)}, \Gamma_1^{(\rmg_1, \rmg_2)},\Gamma_2^{(\rmg_1, \rmg_2)},I^{(\rmg_1, \rmg_2)},D^{(\rmg_1, \rmg_2)}) \\
    &\Xi^{(\rmg_1', \rmg_2')} := (T^{(\rmg_1', \rmg_2')}, \Gamma_1^{(\rmg_1', \rmg_2')},\Gamma_2^{(\rmg_1', \rmg_2')},I^{(\rmg_1', \rmg_2')},D^{(\rmg_1', \rmg_2')})
\end{align*}
the 5-tuples for which their entries are defined as in Section~\ref{patchworkElementsMCP} taken with respect to the processes started from~$\mu_{(\rmg_1, \rmg_2)}$ and~$\mu_{(\rmg_1', \rmg_2')}$, respectively. At last, let~$Q_{(\rmg_1,\rmg_2),(\rmg_1',\rmg_2')}$ denote the law of this coupled pair~$\left(\Xi_(\rmg_1, \rmg_2),\Xi_{(\rmg_1', \rmg_2')}\right)$. 
\end{definition}

\begin{lemma}[Depth and influence]\label{lem_depth_and_influence_MCP}
    For all~$(\rmg_1,\rmg_2),(\rmg_1',\rmg_2') \in \mathcal R'$, we have
\begin{equation}
\begin{aligned}
Q_{(\rmg_1,\rmg_2), (\rmg_1',\rmg_2')}\big(&\{D_{(\rmg_1,\rmg_2)} = D_{(\rmg_1',\rmg_2')} = -\infty,\;
\Xi_{(\rmg_1,\rmg_2)} = \Xi_{(\rmg_1',\rmg_2')}\} \\\label{easyCase}
&\cup \{D_{(\rmg_1, \rmg_2)} \ge 0,\; D_{(\rmg_1',\rmg_2')} \ge 0\} \big) = 1.
\end{aligned}
\end{equation}
    Moreover, if~$(\rmg_1,\rmg_2) \cap (\mathbb Z \times [-t,0])^2 = (\rmg_1',\rmg_2') \cap (\mathbb Z \times [-t,0])^2$ for some~$t \ge 0$, then
\begin{equation}
\begin{aligned}
Q_{(\rmg_1,\rmg_2), (\rmg_1',\rmg_2')}\big(&\{D_{(\rmg_1,\rmg_2)} = D_{(\rmg_1',\rmg_2')} < t,\; 
\Xi_{(\rmg_1,\rmg_2)} = \Xi_{(\rmg_1',\rmg_2')} \} \\ \label{hardCase}
&\cup \{D_{(\rmg_1,\rmg_2)} \ge t,\; D_{(\rmg_1',\rmg_2')} \ge t\} \big) = 1.
\end{aligned}
\end{equation}
\end{lemma}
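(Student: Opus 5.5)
We follow the proof of Lemma~\ref{lem_depth_and_influence} and prove only~\eqref{hardCase}; \eqref{easyCase} is the same argument with $t=0$. Fix $t\ge 0$ and two pairs of trails that agree on $\mathbb Z\times[-t,0]$. Couple $\xi_0\sim\mu_{(\rmg_1,\rmg_2)}$ and $\xi_0'\sim\mu_{(\rmg_1',\rmg_2')}$ through the same $\mathcal H_{\mathrm{pair}}$, as in~\eqref{eq_def_mu_g1g2}. The special sets $\mathcal S_1,\mathcal S_2$ are defined from $\mathcal H_{\mathrm{pair}}$ only, through the condition $-\infty\rightsquigarrow(x,0)$. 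Hence they are common to both processes.

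Assume $D_{(\rmg_1,\rmg_2)}<t$, and abbreviate $\mathbf X_1,\mathbf X_2$ for the quantities of the unprimed process. \textbf{First step:} $\xi_0$ and $\xi_0'$ agree on $\{\mathbf X_1,\dots,\mathbf X_2\}$.
\begin{itemize}
\item On $\mathcal S_1\cup\mathcal S_2$ both configurations have the correct type. Type $1$ is forced on $\mathcal S_1\subseteq(-\infty,0]$ and type $2$ on $\mathcal S_2\subseteq[1,\infty)$.
\item Take a site $x$ in $\{\mathbf X_1,\dots,\mathbf X_2\}$ but not in $\mathcal S_1\cup\mathcal S_2$. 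Since $D<t$, any $\mathcal H_i$-infection path ending at $(x,0)$ starts no earlier than time $-D>-t$.
\item So whether $\rmg_i\rightsquigarrow(x,0)$ depends only on $\rmg_i\cap(\mathbb Z\times[-t,0])$, which is the same as $\rmg_i'\cap(\mathbb Z\times[-t,0])$. Also $-\infty\not\rightsquigarrow(x,0)$ for such $x$.
\end{itemize}

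\textbf{Second step:} transfer $T$, $\mathbf X_1,\mathbf X_2$ and the interface to the primed process using Lemma~\ref{lem_sameInterfaceMCP}. Take $k_1=-\mathbf X_1$, $k_2=\mathbf X_2$ and $t'=T$. By~\eqref{eq_X1} and~\eqref{eq_X2}, $(\mathbf X_1,0)$ has an active $\mathcal H_1$-path to time $T$, and $(\mathbf X_2,0)$ has an active $\mathcal H_2$-path to time $T$, in the unprimed process. The lemma then gives $I_s=I_s'$ for $s\le T$.

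By Remark~\ref{rk__sameInterfaceMCP}, active paths from $[\mathbf X_1,0]$ and from $[1,\mathbf X_2]$ realising $r_s$ and $\ell_s$ stay active in the primed process, and conversely. Combined with the agreement of $\mathcal S_i$ on the relevant window, this shows that the set of times in~\eqref{eq_def_TMCP} coincides for the two processes on $[1,T]$. Hence $T'=T$. By a crossing-paths argument, $\mathbf X_1'=\mathbf X_1$ and $\mathbf X_2'=\mathbf X_2$, since the extremal special sites with active descendants at time $T$ lie in the common window.

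It follows that $D'=D$, because depth depends only on $\mathcal H_{\mathrm{pair}}$, $\mathbf X_1,\mathbf X_2$ and $\mathcal S_1\cup\mathcal S_2$. The special paths also agree:
\begin{itemize}
\item their negative parts depend only on $\mathcal H_{\mathrm{pair}}$ and $\mathbf X_i$;
\item their positive parts are the extremal active paths, and activity of paths between $\Gamma_1$ and $\Gamma_2$ coincides in the two processes by the same remark.
\end{itemize}
So $\Xi=\Xi'$ on $\{D<t\}$. By symmetry the same holds on $\{D'<t\}$, which proves~\eqref{hardCase}.

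The main obstacle is the second step. The issue is circularity: $T'$ and $\mathbf X_i'$ are defined from the primed process, while Lemma~\ref{lem_sameInterfaceMCP} needs a window fixed in advance. We resolve it by taking the window from the unprimed process, deducing $T'\le T$ and that the candidate sites are shared. Then we use symmetry only after establishing $D'<t$, via equality of the depths.
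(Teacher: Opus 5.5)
Your proposal is correct and follows the route the paper intends when it defers to the proof of Lemma~\ref{lem_depth_and_influence}. The steps are the same: when $D<t$ the two initial configurations agree on $\{\mathbf X_1,\dots,\mathbf X_2\}$, the interface transfers up to time $T$ via Lemma~\ref{lem_sameInterfaceMCP}, $T$, $\mathbf X_1,\mathbf X_2$, $D$ and $\Gamma_1,\Gamma_2$ are then identified, and symmetry finishes. The ``and conversely'' you use for $T'\ge T$ needs a justification you do not give, since a primed witness in~\eqref{eq_def_TMCP} may even start at a special site outside the window. It is supplied by noting that, once the interfaces agree on $[0,T]$, an $\mathcal H_1$-path from a site of type $1$ in both configurations is blocked exactly when it jumps onto $\ell_{u-}$ (symmetrically for type $2$ and $r_{u-}$), so its activity is the same in both processes.
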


\begin{proof}
    The proof follows the same lines as Lemma~\ref{lem_depth_and_influence}, with only minor modifications, and is therefore omitted.
\end{proof}

\begin{proof}[Proof of Lemma~\ref{lem_coupled_patchworks_MCP}]
    The proof is the same as that of Lemma~\ref{lem_coupled_patchworks}, with only minor modifications.
\end{proof}

\subsection{Proof of Theorems~\ref{Thm_MCP_Tight} and~\ref{Thm_MCP_CLT}} \label{sec_proofMCP}

\paragraph{} The proofs of the main theorems for the multitype contact process follow very closely the proofs done in Section~\ref{sec_proofsThmsCBP}. We first observe that we can obtain the Heaviside configuration from a configuration of the form~$\mu_{\rmgonegtwo}$ as in~\eqref{eq_def_mu_g1g2} by considering the pair of trails~$(\mathrm{g}_1^{\mathrm h},\mathrm{g}_2^{\mathrm h})\in\mathcal R'$ given by:
\[
\mathrm{g}_1^{\mathrm h} = \bigcup_{x\in \mathbb Z,\,x\leq 0}\left\{\{x\}\times \{0\}\right\} \quad \text{ and }\quad \mathrm{g}_2^{\mathrm h} = \bigcup_{x\in \mathbb Z,\,x\geq 1}\left\{\{x\}\times \{0\}\right\} 
\]

Throughout the rest of this section, we fix the following. Let~$\mathbb P$ be a probability measure under which is defined a patchwork sequence~$(\Xi^n)_{n\in\mathbb N_0}$ for the multitype contact process started from the Heaviside configuration constructed using~$\mu_{(\mathrm{g}_1^{\mathrm h},\mathrm{g}_2^{\mathrm h})}$. 

Let~$(\kappa_n)_{n\in\mathbb N_0}$ be as in Definition~\ref{def_kappa_n_MCP}, and let~$N_0, N_1,\dots$ be the increasing sequence of indexes~$n\in\mathbb N_0$ for which we have~$\kappa_n=\infty$. To see that such infinite sequence exist, we argue that we are in the conditions of Lemma~\ref{lem_renewal}. Indeed,~$\kappa_n\geq n$ by definition so that condition~$(i)$ is satisfied. Because of Lemmas~\ref{kappa_m_kappa_n_MCP} and~\ref{lem_kappaPoswithPosProbMCP}, we are also in condition~$(ii)$  and~$(iv)$ of Lemma~\ref{lem_renewal}. Moreover, Corollary~\ref{cor_sameAsInitial_MCP} guarantees that we are under condition~$(iii)$ of~\ref{lem_renewal}. 

Again using the same notation as in Lemma~\ref{lem_renewal}, let~$Y_n:=\Xi^n$. In that way, the sequence as in~\eqref{seq_iid_CBP} is again i.i.d. distributed with the same law as~$(N_1,\Xi^0,\dots,\Xi^{N_1-1})$ conditioned on~$\{\kappa_0=0\}$ now regarding the multitype contact process. The objects~$\tau_n$,~$N(t)$,~$\sigma(t)$ and~$\Bar{\mathbb P}$ are defined as in~\eqref{eq_taun_CBP},~\eqref{eq_Nt_CBP},~\eqref{eq_sigmat_CBP} and~\eqref{eq_conditionalLaw_CBP}, respectively. 

Regarding now the multitype contact process, the analogue result of Lemma~\ref{lem_timeSinceLastRenewal_CBP} is true:

\begin{lemma} \label{lem_timeSinceLastRenewal_MCP}
    On the above conditions, there exist constants~$c,C,p>0$ such that for all~$t\in[0,\infty)$ it follows that:
    \begin{equation*}
        \Bar{\mathbb P}\left(\max\left\{(t-\sigma(t)),\sup\{|i_s-i_{\sigma(t)}|\,:\,s\in[\sigma(t),t]\}\right\}>L\right)\leq Ce^{-cL^p} \quad \forall L\geq 0
    \end{equation*}
\end{lemma}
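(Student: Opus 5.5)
The plan is to reproduce the argument of Lemma~\ref{lem_timeSinceLastRenewal_CBP} almost verbatim. The multitype-specific input is the renewal structure already established in this section (Lemma~\ref{lem_renewal}, via Corollary~\ref{cor_sameAsInitial_MCP}) and the tail bounds of Lemma~\ref{time_and_space_betweenRenewals_MCP}. Set $M_k:=\sup\{|i_s-i_{\tau_k}|: s\in[\tau_k,\tau_{k+1}]\}$ and $\varphi(k):=\max\{\tau_{k+1}-\tau_k,M_k\}$. On the event that $N(t)=k$ we have $\tau_k\le t<\tau_{k+1}$. The event in the statement is then contained in $\{\varphi(k)>L\}$, since $t-\sigma(t)\le\tau_{k+1}-\tau_k$ and the sup over $[\sigma(t),t]$ is at most $M_k$. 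A union bound over $k$ therefore bounds the probability by
\[
\sum_{k\ge0}\bar{\mathbb P}(\tau_k\le t<\tau_{k+1},\ \varphi(k)>L).
\]

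Next I condition on $\tau_k$. Under $\bar{\mathbb P}$ we have $N_0=0$, so by Lemma~\ref{lem_renewal} the blocks $(\tau_{k+1}-\tau_k,(i_s-i_{\tau_k})_{s\in[\tau_k,\tau_{k+1}]})$ are i.i.d. and independent of $\tau_k$, with the law of $(\tau_1,(i_s-i_0)_{s\le\tau_1})$. Each summand is then $\int_0^t\bar{\mathbb P}(\tau_1\ge t-u,\ \max\{\tau_1,M_0\}>L)\,\bar{\mathbb P}(\tau_k\in du)$. I bound the integrand by the minimum of $\bar{\mathbb P}(\tau_1\ge t-u)$ and $\bar{\mathbb P}(\max\{\tau_1,M_0\}>L)$, and estimate both with Lemma~\ref{time_and_space_betweenRenewals_MCP} (case $m=0$ under $N_0=0$). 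The first term is at most $Ce^{-c(t-u)^p}$ and the second at most $Ce^{-cL^p}$. For the interface displacement I use \eqref{spaceBetweenRenBehavesWellMCP} directly. Alternatively, on $\{\tau_1<L/(2(2+\lambda_1+\lambda_2))\}$ the number of moves of $r_t$ and $\ell_t$ is dominated by the number of Poisson marks of $\mathcal H_{\mathrm{pair}}$ relevant to the interface. In the multitype case I should use only that $\ell_t-r_t$ and the displacement are controlled through Lemma~\ref{time_and_space_betweenRenewals_MCP}, rather than a raw Poisson count, because a death at $r_t$ can make $r_t$ jump far to the left. This is why I would rely on \eqref{spaceBetweenRenBehavesWellMCP} rather than the Poisson comparison used in the CBP case.

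Finally I split $[0,t]$ into unit intervals $[i-1,i]$. Since $T^n\ge1$ gives $\tau_{k+1}-\tau_k\ge1$, each unit interval contains at most one $\tau_k$, so $\sum_k\bar{\mathbb P}(\tau_k\in[i-1,i])\le1$. This gives a bound of
\[
C\sum_{i\ge1}\bigl(e^{-c(t-i)_+^p}\wedge e^{-cL^p}\bigr)\le C\lceil L\rceil e^{-cL^p}+C\sum_{j>L}e^{-cj^p},
\]
which is at most $C'e^{-c'L^p}$ after adjusting constants; this bound is uniform in $t$.

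The main obstacle is justifying that Lemma~\ref{time_and_space_betweenRenewals_MCP} holds under $\bar{\mathbb P}$ for the first block, including its interface displacement. Under $\bar{\mathbb P}$, $N_0=0$ and the first block already has the common i.i.d. law, so dividing by $\mathbb P(\kappa_0=\infty)>0$ (Lemma~\ref{lem_kappaPoswithPosProbMCP}) transfers the bounds. I would check that the proof of that lemma controls $M_0$ using only $\tau_1$ and finitely many $T^n$'s together with the tail of the displacement within each patch.
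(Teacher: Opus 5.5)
Your proposal is correct and follows the paper's route, which is the proof of Lemma~\ref{lem_timeSinceLastRenewal_CBP} carried over to the multitype setting: decompose over $\{N(t)=k\}$, use the i.i.d.\ block structure from Lemma~\ref{lem_renewal} (blocks independent of $\tau_k$ under $\bar{\mathbb P}$), take the minimum of the two tail bounds from Lemma~\ref{time_and_space_betweenRenewals_MCP}, and sum over unit intervals using $\tau_{k+1}-\tau_k\ge 1$. Your one deviation is a legitimate simplification: you bound $\bar{\mathbb P}(M_0>L)$ directly by \eqref{spaceBetweenRenBehavesWellMCP}, transferred to $\bar{\mathbb P}$ by dividing by $\mathbb P(\kappa_0=\infty)>0$, instead of the Poisson jump count, and your remark that jump counts alone do not control the large jumps of $r_t$ and $\ell_t$ is well taken.
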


\begin{proof}
    The proof is the same as the proof of Lemma~\ref{lem_timeSinceLastRenewal_CBP} with only a minor modifications. 
\end{proof}

We are finally in conditions to prove the main results regarding the multitype contact process.

\begin{proof}[Proof of Theorems~\ref{Thm_MCP_Tight}]
    The proof follows the same lines as the proof of Theorem~\ref{ThmCBP_Tightness}, with a few minor modifications.  
\end{proof}

\begin{proof}[Proof of Theorems~\ref{Thm_MCP_CLT}]
    The proof is exactly the same as the proof of the Theorem~\ref{ThmCBP_CLT_Speed}, with the reference to Lemma~\ref{time_and_space_betweenRenewals} being replaced by a reference to Lemma~\ref{time_and_space_betweenRenewals_MCP}. 
\end{proof}

\section{Proofs of Lemma~\ref{lem_renewal} and~\ref{lem_CLT_RenewalProcess}} \label{sec_proofRenewalLemmas}

We first prove Lemma~\ref{lem_renewal}.

\begin{proof}[Proof of Lemma~\ref{lem_renewal}]
	We split part of the proof in a sequence of claims.
	\begin{claim} \label{claim_infinitelyManyInfinity}
Almost surely, there are infinitely many~$n$ for which~$\kappa_n = \infty$.
	\end{claim}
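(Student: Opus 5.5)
We prove Claim~\ref{claim_infinitelyManyInfinity} by a geometric-trials argument built from hypotheses (ii), (iii) and (iv) of Lemma~\ref{lem_renewal}. Set $p:=\mathbb P(\kappa_0=\infty)>0$. Applying (iii) with $g\equiv 1$ gives, for every $n\ge 1$,
\[
\mathbb P(\kappa_n=\infty\mid\mathcal F_{n-1})=p \quad\text{a.s.}
\]
So at every step there is a fresh conditional chance $p$ of a renewal, whatever the past. The difficulty is that $\{\kappa_n=\infty\}$ is not $\mathcal F_n$-measurable, so we cannot simply multiply conditional probabilities along consecutive indices. We circumvent this by using the finite values of the $\kappa$'s to jump ahead.

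\textbf{Step 1: exploration scheme.} Fix a starting index $m_0$. If $\kappa_{m_0}=\infty$ we have found a renewal. Otherwise $\kappa_{m_0}<\infty$, and since $\kappa_{m_0}$ is a stopping time, the event $\{\kappa_{m_0}=k\}$ lies in $\mathcal F_k$. In that case set $m_1:=\kappa_{m_0}+1$ and repeat, obtaining a sequence of stopping indices $m_0<m_1<\cdots$ that is defined for as long as every $\kappa_{m_j}$ is finite.

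\textbf{Step 2: the geometric bound.} We show by induction on $j$ that
\[
\mathbb P(\kappa_{m_0}<\infty,\ldots,\kappa_{m_j}<\infty)\le (1-p)^{j+1}.
\]
The event $\{\kappa_{m_0}<\infty,\ldots,\kappa_{m_{j-1}}<\infty,\ m_j=n\}$ is $\mathcal F_{n-1}$-measurable, because it is determined by the values $\kappa_{m_i}\le n-1$. Decompose on $\{m_j=n\}$ and apply (iii) at time $n$ on each piece. This gives the factor $1-p$, and the induction closes.

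\textbf{Step 3: conclusion.} Since $(1-p)^{j}\to 0$, almost surely some $\kappa_{m_j}$ equals $\infty$, so starting from any deterministic $m_0$ there is almost surely some $n\ge m_0$ with $\kappa_n=\infty$. Taking $m_0=1,2,\ldots$ and intersecting countably many almost-sure events yields infinitely many such $n$.

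The main obstacle is the measurability bookkeeping in Step 2: we must check that the events on which we condition lie in $\mathcal F_{n-1}$ and not merely in $\mathcal F_n$, and that (iii) applies on $\{m_j=n\}$. This works because the $\kappa$'s are stopping times with respect to $(\mathcal F_n)_n$.

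Hypothesis (ii) is not needed for this claim. It will be used later, to show that the renewal indices $N_k$ are themselves stopping-time-like and to obtain the i.i.d. block structure.
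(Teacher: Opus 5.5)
Your proposal is correct and is essentially the paper's own argument: your exploration indices satisfy $\kappa_{m_j}=T_{j+1}$ for the paper's sequence $T_1=\kappa_{m}$, $T_{j+1}=\kappa_{T_j+1}$ (with $m=m_0$), and the geometric bound is obtained in the same way, by conditioning on $\mathcal F_{n-1}$ with $n-1=\kappa_{m_{j-1}}$ and applying (iii) with $g\equiv 1$. One small bookkeeping correction: the strict increase $m_0<m_1<\cdots$, which guarantees that the renewal you find lies at or beyond $m_0$, comes from hypothesis (i) ($\kappa_n\ge n$), not from (ii), so your opening sentence should cite (i), (iii) and (iv).
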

	\begin{proof}
It suffices to prove that
		\begin{equation}\label{eq_infinite_m}
			\mathbb P(\exists n \ge m:\; \kappa_n = \infty) = 1 \quad \text{for every }m \in \mathbb N.
		\end{equation}
	To do so, fix~$m \in \mathbb N$. Define a sequence~$(T_j)_{j \ge 1}$ by letting~$T_1 = \kappa_m$, and recursively,
    \[
    T_{j+1}:= \begin{cases}
        \kappa_{T_j + 1}& \text{if } T_j < \infty;\\
        \infty&\text{otherwise.}
    \end{cases}
    \]

	For any~$k \in \mathbb N$ and~$n_1,\ldots,n_{k-1} \in \mathbb N$ with~$m \le n_1 <\cdots < n_{k-1}$, we have
	\begin{align*}
		&\mathbb P(T_1=n_1,\;\ldots,\; T_{k-1}=n_{k-1},\; T_k < \infty)\\
		&= \mathbb P(T_1=n_1,\;\ldots,\; T_{k-1}=n_{k-1}) \cdot \mathbb P(\kappa_0 < \infty),
	\end{align*}
	by conditioning to~$\mathcal F_{n_{k-1}}$ and using property~$\mathrm{(iii)}$ with~$g \equiv 1$. Summing over~$n_1,\ldots, n_{k-1}$ then gives
	\[\mathbb P(T_k<\infty) \le \mathbb P(T_{k-1}<\infty) \cdot \mathbb P(\kappa_0 < \infty).\]
		Iterating gives~$\mathbb P(T_k < \infty) < \mathbb P(\kappa_0 < \infty)^k$. Using property~$\mathrm{(iv)}$, this tends to zero as~$k \to \infty$, so~\eqref{eq_infinite_m} follows.
	\end{proof}

	Before proceeding to the next claims, we give a definition.
For each~$n \in \mathbb N_0$, let~${\mathcal F}^+_n$ denote the~$\sigma$-algebra generated by~$\mathcal F_n$ and the event~$\{\kappa_{n+1} = \infty\}$. 
	\begin{claim}\label{cl_cons_ii}
		For any~$k \in \mathbb N$ and any natural numbers~$n_0 < \cdots < n_k$, the event~$\{N_0 = n_0,\ldots, N_k=n_k\}$ belongs to~${\mathcal F}^+_{n_k-1}$.
	\end{claim}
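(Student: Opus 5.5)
The plan is to write the event $\{N_0=n_0,\ldots,N_k=n_k\}$ explicitly as the intersection of $\{\kappa_{n_k}=\infty\}$ with an event in $\mathcal F_{n_k-1}$. The idea is that, once we know $\kappa_{n_k}=\infty$, property $\mathrm{(ii)}$ reduces the question ``is $\kappa_j=\infty$?'' for $j<n_k$ to the question ``is $\kappa_j\ge n_k$?''. The latter is decided by time $n_k-1$ because $\kappa_j$ is a stopping time.

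First, by the definition of the $N_i$ as the increasing enumeration of $\{n:\kappa_n=\infty\}$, we have
\[
\{N_0=n_0,\ldots,N_k=n_k\}=\{\kappa_{n_k}=\infty\}\cap\bigcap_{i=0}^{k-1}\{\kappa_{n_i}=\infty\}\cap\bigcap_{j\in J}\{\kappa_j<\infty\},
\]
where $J:=\{0,\ldots,n_k-1\}\setminus\{n_0,\ldots,n_{k-1}\}$. The indices $j<n_0$ are included in $J$, which encodes that $n_0$ is the first index with $\kappa_j=\infty$.

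The key step is the following observation. For $j<n_k$, on the event $\{\kappa_{n_k}=\infty\}$ we have $\{\kappa_j=\infty\}=\{\kappa_j\ge n_k\}$. One inclusion is trivial. Conversely, if $\kappa_j\ge n_k$ with $j<n_k$, then property $\mathrm{(ii)}$ gives $\kappa_j\ge\kappa_{n_k}=\infty$. Hence
\[
\{N_0=n_0,\ldots,N_k=n_k\}=\{\kappa_{n_k}=\infty\}\cap\bigcap_{i=0}^{k-1}\{\kappa_{n_i}\ge n_k\}\cap\bigcap_{j\in J}\{\kappa_j\le n_k-1\}.
\]

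Since each $\kappa_j$ is an $(\mathcal F_n)$-stopping time with values in $\mathbb N_0\cup\{\infty\}$, the events $\{\kappa_j\le n_k-1\}$ and their complements $\{\kappa_j\ge n_k\}$ lie in $\mathcal F_{n_k-1}$. The remaining factor $\{\kappa_{n_k}=\infty\}$ is by definition in $\mathcal F^+_{n_k-1}$, which is generated by $\mathcal F_{n_k-1}$ and $\{\kappa_{n_k}=\infty\}$. This proves the claim.

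The only point requiring care is the use of $\mathrm{(ii)}$ to replace ``$\kappa_j=\infty$'' by ``$\kappa_j\ge n_k$''. Without this replacement, the events $\{\kappa_j=\infty\}$ for $j<n_k$ are not in $\mathcal F_{n_k-1}$ in general. Everything else is bookkeeping.
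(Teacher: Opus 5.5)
Your proof is correct and is essentially the paper's argument: property (ii) lets you rewrite the event so that, apart from $\{\kappa_{n_k}=\infty\}$, it involves only conditions of the form $\{\kappa_j\le n_k-1\}$ or $\{\kappa_j\ge n_k\}$, and these lie in $\mathcal F_{n_k-1}$ because each $\kappa_j$ is a stopping time. The only cosmetic difference is that the paper uses the sharper thresholds $\kappa_i<n_m$ (with $n_m$ the next renewal index after $i$), whereas your uniform threshold $n_k-1$ needs (ii) only at $n=n_k$ and correctly starts the index set at $0$.
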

	\begin{proof}
		Letting~$[n_k]:=\{1,\ldots,n_k\}$ and~$\Lambda := \{n_0,n_1,\ldots, n_k\}$, we have
\begin{equation}\label{eq_new_event_kappa}
    \{N_0 = n_0,\ldots, N_k=n_k\} = \left\{\begin{array}{l}\kappa_i = \infty \; \forall i \in \Lambda,\\[.1cm] \kappa_i < \infty \; \forall i \in [n_k] \backslash \Lambda \end{array}\right\}.
\end{equation}
        
We now observe that property~$(\mathrm{ii})$ implies that, if~$m < n$ and~$\kappa_n = \infty$, then~$\kappa_m$ is either~$< n$ or~$=\infty$.
Using this, we see that the event on the right-hand side of~\eqref{eq_new_event_kappa} is the same as the event that:
		\begin{align*}
			&\kappa_i < n_0 \text{ for } i \in \{0,\ldots, n_0-1\},\; \kappa_{n_0} \ge n_k\\
			&\kappa_i < n_1 \text{ for } i \in \{n_0+1,\ldots, n_1-1\},\; \kappa_{n_1} \ge n_k, \ldots \\
			&\kappa_i < n_{k-1} \text{ for } i \in \{n_{k-2}+1,\ldots, n_{k-1}-1\},\; \kappa_{n_{k-1}} \ge n_k,\\
			&\kappa_i < n_k \text{ for }i \in \{n_{k-1}+1,\ldots, n_k-1\},\; \kappa_{n_k}=\infty.
		\end{align*}
    The statement of the claim then follows from the fact that the~$\kappa_i$'s are stopping times and from the definition of~${\mathcal F}^+_{n_{k}-1}$.
	\end{proof}

	\begin{claim} \label{cl_alt_iii} For every bounded and measurable $g$ and every~$n \ge 1$,  we have
	\begin{equation*} \begin{split}
		&\mathbb E[g(Y_{n},\hat \kappa_{n},Y_{n+1},\hat\kappa_{n+1},\ldots) \mid {\mathcal F}^+_{n-1}]= \mathbb E[g(Y_0,\hat{\kappa}_0,Y_1,\hat{\kappa}_1,\ldots) \mid \kappa_0 = \infty]
        \end{split}
	\end{equation*}
    almost surely on~$\{\kappa_n = \infty\}$.
	\end{claim}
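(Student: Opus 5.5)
The plan is to show that the identity holds after multiplying both sides by $\mathds 1_A \mathds 1\{\kappa_n=\infty\}$ and taking expectations, for all $A$ in a generating $\pi$-system of ${\mathcal F}^+_{n-1}$. By definition, ${\mathcal F}^+_{n-1}$ is generated by $\mathcal F_{n-1}$ and the single event $\{\kappa_n=\infty\}$. Hence every $A\in{\mathcal F}^+_{n-1}$ has the form
\[
A=(F_1\cap\{\kappa_n=\infty\})\cup(F_2\cap\{\kappa_n<\infty\}),\qquad F_1,F_2\in\mathcal F_{n-1},
\]
so $A\cap\{\kappa_n=\infty\}=F\cap\{\kappa_n=\infty\}$ with $F\in\mathcal F_{n-1}$. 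It therefore suffices to prove
\[
\mathbb E\big[g(Y_n,\hat\kappa_n,\ldots)\,\mathds 1_F\,\mathds 1\{\kappa_n=\infty\}\big]
=\mathbb P(F\cap\{\kappa_n=\infty\})\cdot\mathbb E[g(Y_0,\hat\kappa_0,\ldots)\mid\kappa_0=\infty]
\]
for every $F\in\mathcal F_{n-1}$. We also need to check that the right-hand constant is a version of the conditional expectation restricted to $\{\kappa_n=\infty\}$. This holds because the right-hand side is constant and $\{\kappa_n=\infty\}\in{\mathcal F}^+_{n-1}$.

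\textbf{Step 1: the left side.} Condition on $\mathcal F_{n-1}$ and use property (iii). Since $\mathds 1_F$ is $\mathcal F_{n-1}$-measurable, the left side equals
\[
\mathbb E\big[\mathds 1_F\,\mathbb E[g(Y_n,\ldots)\mathds 1\{\kappa_n=\infty\}\mid\mathcal F_{n-1}]\big]=\mathbb P(F)\,\mathbb E[g(Y_0,\ldots)\mathds 1\{\kappa_0=\infty\}].
\]

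\textbf{Step 2: the right side.} Apply (iii) with $g\equiv1$ to get $\mathbb P(\kappa_n=\infty\mid\mathcal F_{n-1})=\mathbb P(\kappa_0=\infty)$ a.s. Hence $\mathbb P(F\cap\{\kappa_n=\infty\})=\mathbb P(F)\,\mathbb P(\kappa_0=\infty)$. Multiplying by $\mathbb E[g\mid\kappa_0=\infty]$, which is well defined by (iv), gives $\mathbb P(F)\,\mathbb E[g\,\mathds 1\{\kappa_0=\infty\}]$. This matches Step 1.

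The only point requiring care is the description of ${\mathcal F}^+_{n-1}$, and I expect it to be the main (mild) obstacle. One must verify that the displayed form exhausts the $\sigma$-algebra generated by $\mathcal F_{n-1}\cup\{\{\kappa_n=\infty\}\}$. This is standard: the collection of such sets contains $\mathcal F_{n-1}$ and $\{\kappa_n=\infty\}$, and it is closed under complements and countable unions, since these operations act separately on the two pieces $F_1,F_2$. A monotone-class argument is therefore not even needed.
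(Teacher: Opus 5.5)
Your proposal is correct and follows the paper's own proof: both reduce the claim to checking $\mathbb E[g(Y_n,\hat\kappa_n,\ldots)\,\mathds{1}_{\{\kappa_n=\infty\}\cap B}] = \mathbb E[g(Y_0,\hat\kappa_0,\ldots)\mid\kappa_0=\infty]\cdot\mathbb P(\{\kappa_n=\infty\}\cap B)$. In both, this is first reduced from $B\in\mathcal F^+_{n-1}$ to $B\in\mathcal F_{n-1}$, and then each side is evaluated with property (iii), once for $g$ and once for $g\equiv 1$. Your explicit description of $\mathcal F^+_{n-1}$ (every $A\in\mathcal F^+_{n-1}$ satisfies $A\cap\{\kappa_n=\infty\}=F\cap\{\kappa_n=\infty\}$ for some $F\in\mathcal F_{n-1}$) is exactly the justification the paper leaves implicit when it says ``it suffices''.
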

	\begin{proof}
It will be useful to note that, applying property~$\mathrm{(iii)}$ with~$g \equiv 1$, we have
	\begin{equation}
		\label{eq_iii_1} \{\kappa_n = \infty\} \text{ is independent of } \mathcal F_{n-1}
	\end{equation}
	and
	\begin{equation}
		\label{eq_iii_2} \mathbb P(\kappa_n = \infty) = \mathbb P(\kappa_0 = \infty)
	\end{equation}
	for all~$n$.

	Fix~$g$ bounded and measurable, and abbreviate
		\[W_n:=g(Y_{n},\hat \kappa_n, Y_{n+1}, \hat{\kappa}_{n+1},\ldots),\quad A_n:=\{\kappa_n = \infty\}, \quad c:= \mathbb E[W_0  \mid A_0].\]
        We need to prove that
\begin{equation*}
    \mathbb E[W_n \mid \mathcal F_{n-1}^+] \cdot \mathds{1}_{A_n} = c \cdot \mathds{1}_{A_n},
\end{equation*}
or equivalently (since~$A_n \in \mathcal F_{n-1}^+$), that
\begin{equation*}
    \mathbb E[W_n \cdot \mathds{1}_{A_n} \mid \mathcal F_{n-1}^+]  = c \cdot \mathds{1}_{A_n}.
\end{equation*}
Recall that~$\mathbb E[W_n \cdot \mathds{1}_{A_n} \mid \mathcal F_{n-1}^+]$ is the almost surely unique random variable~$X$ that satisfies~$\mathbb E[X \cdot \mathds{1}_B] = \mathbb E[W_n \cdot \mathds{1}_{A_n} \cdot  \mathds{1}_B]$ for all~$B \in \mathcal F_{n-1}^+$. We then need to show that~$X:=c\cdot \mathds{1}_{A_n}$ does the job, that is, we need to check that~$\mathbb E[c \cdot \mathds{1}_{A_n} \cdot \mathds{1}_B] = \mathbb E[W_n \cdot \mathds{1}_{A_n} \cdot \mathds{1}_B]$ for all~$B \in \mathcal F^+_{n-1}$, that is,
\begin{equation}\label{eq_check_cond0}
    c \cdot \mathbb P(A_n \cap B) = \mathbb E[W_n \cdot \mathds{1}_{A_n \cap B}]\quad \text{for all }B \in \mathcal F^+_{n-1}.
\end{equation}
Since~$\mathcal F^+_{n-1}$ is the~$\sigma$-algebra generated by~$\mathcal F_{n-1}$ and~$A_n$, to prove~\eqref{eq_check_cond0}, it suffices to prove that
\begin{equation}\label{eq_check_cond}
    c \cdot \mathbb P(A_n \cap B) = \mathbb E[W_n \cdot \mathds{1}_{A_n \cap B}]\quad \text{for all }B \in \mathcal F_{n-1}.
\end{equation}

By the definition of~$c$ and~\eqref{eq_iii_1} and~\eqref{eq_iii_2}, we have
\[
c \cdot \mathbb P(A_n \cap B) = \mathbb E[W_0 \mid A_0] \cdot \mathbb P(A_0) \cdot \mathbb P(B) = \mathbb E[W_0 \cdot \mathds{1}_{A_0}] \cdot \mathbb P(B).
\]
On the other hand, we have
\begin{align*}
    \mathbb E[W_n \cdot \mathds{1}_{A_n \cap B}] = \mathbb E[\mathbb E[W_n \cdot \mathds{1}_{A_n} \mid \mathcal F_{n-1}] \cdot \mathds{1}_B] \stackrel{\eqref{eq_property_iii}}{=} \mathbb E[W_0  \cdot \mathds{1}_{A_0}] \cdot \mathbb P(B),
\end{align*}
so~\eqref{eq_check_cond} holds and the proof is complete.\qedhere
\end{proof}

	We are now ready to complete the proof of the lemma, by induction.
		For a bounded and measurable function~$f$, using~$\mathbb P(N_0<\infty)=1$, we have
	\begin{align*}
		\mathbb E[f(N_1-N_0, Y_{N_0},\ldots, Y_{N_1-1})] = \sum_n\mathbb E[\mathds{1}_{\{N_0=n\}}\cdot f(N_1-N_0, Y_{N_0},\ldots, Y_{N_1-1})].
	\end{align*}
   We now condition on~${\mathcal F}^+_{n-1}$ inside the expectation inside the sum.
    Note that
    \[\{N_0 = n\} = \{\kappa_m < \infty \; \forall m < n,\; \kappa_n = \infty\} \in {\mathcal F}^+_{n-1}\]
    by Claim~\ref{cl_cons_ii}. Moreover, by Claim~\ref{cl_alt_iii}, 
    \[\mathbb E[f(N_1-N_0, Y_{N_0},\ldots, Y_{N_1-1})\mid {\mathcal F}^+_{n-1}] = \mathbb E[f(N_1,Y_0,\ldots,Y_{N_1-1}) \mid \kappa_0=\infty]\]
almost surely on~$\{\kappa_n = \infty\}$, and hence also on~$\{N_0=n\}$, which is a smaller event. Hence, the above sum equals
	\[\sum_n \mathbb P(N_0=n) \cdot \mathbb E[f(N_1,Y_0,\ldots, Y_{N_1-1}) \mid \kappa_0 = \infty].\]
	Again using~$\mathbb P(N_0 < \infty)=1$, the above equals
    \[\mathbb E[f(N_1,Y_0,\ldots, Y_{N_1-1}) \mid \kappa_0 = \infty].\] This shows that the law of~$(N_1-N_0,Y_{N_0},\ldots, Y_{N_1-1})$ under~$\mathbb P$ is the same as the law of $(N_1,Y_0,\ldots, Y_{N_1-1})$ under~$\mathbb P(\cdot \mid \kappa_0=\infty)$.

The induction step is very similar.
	Let us abbreviate
	\[
\Psi_k:=(N_k-N_{k-1}, Y_{N_{k-1}},\ldots, Y_{N_k-1}), \quad k \in \mathbb N_0.
	\]
	Assume that we have proved that~$\Psi_0,\ldots,\Psi_k$ are independent and identically distributed, with the correct distribution. For bounded and measurable functions~$f$ and~$g$, similarly to the base case, we have
\begin{align*}
	\mathbb E[f(\Psi_0,\ldots, \Psi_k) \cdot g(\Psi_{k+1})] &= \sum_{n} \mathbb E[f(\Psi_0,\ldots, \Psi_k)\cdot \mathds{1}_{\{N_k=n\}} \cdot \mathbb E[g(\Psi_{k+1})\mid {\mathcal F}^+_{n-1}]]\\
	&= \sum_n \mathbb E[f(\Psi_0,\ldots, \Psi_k)\cdot \mathds{1}_{\{N_k=n\}}] \cdot  \mathbb E[g(\Psi_{0})\mid \kappa_0 = \infty]\\
	&=\mathbb E[f(\Psi_0,\ldots, \Psi_k)] \cdot  \mathbb E[g(\Psi_{0})\mid \kappa_0 = \infty],
\end{align*}
completing the proof.
\end{proof}

We finally prove Lemma~\ref{lem_CLT_RenewalProcess}. 

\begin{proof}[Proof of Lemma~\ref{lem_CLT_RenewalProcess}]
For $t\ge0$, define $N(t)=0$ if $t<\tau^1$ and
\[
N(t)=\sup\{n\in\mathbb N:\tau^n\le t\}
\quad\text{otherwise},
\]
and set $\tau^0=0$. Let
\[
\mu_1=\mathbf E(\tau^n-\tau^{n-1}),\qquad
\mu_2=\mathbf E\big[X(\tau^n)-X(\tau^{n-1})\big],
\qquad
\mu=\frac{\mu_2}{\mu_1}.
\]

For $t\ge0$, we have that~$\frac{X(t)-t\mu}{\sqrt t}$ is equal to
\[
\underbrace{\frac{X(t)-X(\tau^{N(t)})}{\sqrt t}}_{(1)}
+
\underbrace{\frac{X(\tau^{N(t)})-X(\tau^{\lfloor t/\mu_1\rfloor})}{\sqrt t}}_{(2)}
+
\underbrace{\frac{X(\tau^{\lfloor t/\mu_1\rfloor})-X(\tau^1)-t\mu}{\sqrt t}}_{(3)}
+
\underbrace{\frac{X(\tau^1)}{\sqrt t}}_{(4)}.
\]

We analyze the four terms separately as $t\to\infty$.

\medskip
\noindent
\textbf{(1)} For $\varepsilon>0$,~$(1)$ is bounded by
\begin{equation*}
    \mathbf P\!\left(
\left|\frac{N(t)}{t}-\frac1{\mu_1}\right|>\tfrac12
\right)+
\mathbf P\!\left(
|X(t)-X(\tau^{N(t)})|>\varepsilon\sqrt t,
\ \left|\frac{N(t)}{t}-\frac1{\mu_1}\right|\le\tfrac12
\right).
\end{equation*}
The first term vanishes by the renewal theorem. Writing
$n_1=\lfloor t/\mu_1-\delta t\rfloor$ and
$n_2=\lceil t/\mu_1+\delta t\rceil$, the second term is bounded by
\[
\mathbf P\!\left(
\bigcup_{n_1\le n\le n_2}
\sup_{s\in[\tau^n,\tau^{n+1})}
|X(s)-X(\tau^n)|
>\varepsilon\sqrt t
\right)
\le (2\delta t+1)Ce^{-c(\varepsilon\sqrt t)^p},
\]
which tends to $0$.

\medskip
\noindent
\textbf{(2)} Similarly,~$(2)$ is bounded by
\begin{equation*}
\mathbf P\!\left(
\left|\frac{N(t)}{t}-\frac1{\mu_1}\right|>\delta
\right)+
\mathbf P\!\left(
\max_{n_0\le n\le n_0+\delta t}
|X(\tau^n)-X(\tau^{n_0})|
>\varepsilon\sqrt t
\right)
\end{equation*}
where $n_0=\lfloor t/\mu_1\rfloor$.  
By Kolmogorov’s inequality, the second term is bounded by~$\frac{\delta\,\mathrm{Var}(X(\tau^1)-X(\tau^0))}{\varepsilon^2}$, which can be made arbitrarily small by choosing $\delta$ small.

\medskip
\noindent
\textbf{(3)} Since~$X(\tau^{\lfloor t/\mu_1\rfloor})-X(\tau^0)
=
\sum_{n=1}^{\lfloor t/\mu_1\rfloor}
\big(X(\tau^n)-X(\tau^{n-1})\big)$,
a sum of i.i.d.\ variables with mean $\mu_2$, the central limit theorem yields
\[
\frac{X(\tau^{\lfloor t/\mu_1\rfloor})-X(\tau^0)-t\mu}{\sqrt t}
\Rightarrow \mathcal N(0,\sigma^2).
\]

\medskip
\noindent
\textbf{(4)} Since $X(\tau^1)$ is finite a.s.,
$\frac{X(\tau^1)}{\sqrt t}\to0$ in probability.

\end{proof}

\newpage
\printbibliography
\end{document}